\definecolor{myblue}{rgb}{.8, .8, 1}
  \newcommand*\mybluebox[1]{
    \colorbox{myblue}{\hspace{1em}#1\hspace{1em}}}
\crefname{equation}{}{}
\crefname{chapter}{Chapter}{Chapters}
\crefname{item}{item}{items}
\crefname{figure}{Figure}{Figures}
\crefname{theorem}{Theorem}{Theorems}
\crefname{lemma}{Lemma}{Lemmas}
\crefname{proposition}{Proposition}{Propositions}
\crefname{corollary}{Corollary}{Corollarys}
\crefname{definition}{Definition}{Definitions}
\crefname{fact}{Fact}{Facts}
\crefname{example}{Example}{Examples}
\crefname{algorithm}{Algorithm}{Algorithms}
\crefname{remark}{Remark}{Remarks}
\crefname{note}{Note}{Notes}
\crefname{notation}{Notation}{Notations}
\crefname{case}{Case}{Cases}
\crefname{exercise}{Exercise}{Exercises}
\crefname{question}{Question}{Questions}
\crefname{claim}{Claim}{Claims}
\crefname{enumi}{}{}
\numberwithin{equation}{section}
\theoremstyle{plain}
\newtheorem{theorem}{Theorem}[section]
\newtheorem{fact}[theorem]{Fact}
\newtheorem{lemma}[theorem]{Lemma}
\newtheorem{proposition}[theorem]{Proposition}
\theoremstyle{definition}
\newtheorem{definition}[theorem]{Definition}
\newtheorem{example}[theorem]{Example}
\newtheorem{remark}[theorem]{Remark}
\newcommand{\zer}{\ensuremath{\operatorname{zer}}}
\newcommand{\weakly}{\ensuremath{{\;\operatorname{\rightharpoonup}\;}}}
\newcommand{\dom}{\ensuremath{\operatorname{dom}}}
\newcommand{\gra}{\ensuremath{\operatorname{gra}}}
\newcommand{\Fix}{\ensuremath{\operatorname{Fix}}}
\newcommand{\Id}{\ensuremath{\operatorname{Id}}}
\newcommand{\Pro}{\ensuremath{\operatorname{P}}}
\newcommand{\J}{\ensuremath{\operatorname{J}}}
\providecommand{\abs}[1]{\left|#1\right|}
\providecommand{\norm}[1]{\left\lVert#1\right\rVert}
\providecommand{\innp}[1]{\left\langle#1\right\rangle}
\providecommand{\lr}[1]{\left(#1\right)}
\begin{document}

\title{Weak and Strong Convergence of Generalized Proximal Point Algorithms with Relaxed Parameters}

\author{
         Hui\ Ouyang\thanks{
                 Mathematics, University of British Columbia, Kelowna, B.C.\ V1V~1V7, Canada.
                 E-mail: \href{mailto:hui.ouyang@alumni.ubc.ca}{\texttt{hui.ouyang@alumni.ubc.ca}}.}
                 }

\date{March 27, 2022}

\maketitle

\begin{abstract}
	\noindent
In this work, we propose and study a framework of generalized proximal point algorithms associated with a maximally monotone operator. We indicate sufficient conditions on the regularization and relaxation parameters of  generalized proximal point algorithms for the equivalence of the boundedness of the sequence of iterations generated by this algorithm and the non-emptiness of the zero set of the maximally monotone operator, and for the weak and strong convergence of the algorithm. Our results cover or improve many results on generalized proximal point algorithms in our references. Improvements of our results are illustrated by comparing our results with related known ones. 
\end{abstract}

{\small
\noindent
{\bfseries 2020 Mathematics Subject Classification:}
{
	Primary 65J15,  47J25, 47H05;  
	Secondary 90C25,  90C30.
}

\noindent{\bfseries Keywords:}
Proximal point algorithm, 
maximally monotone operators, resolvent, firmly nonexpansiveness, weak convergence, strong convergence
}

\section{Introduction} \label{sec:Introduction}

Throughout this paper,  
\begin{empheq}[box = \mybluebox]{equation*}
\text{$\mathcal{H}$ is a real Hilbert space},
\end{empheq}
with inner product $\innp{\cdot,\cdot}$ and induced norm $\|\cdot\|$.
Moreover, we assume that $\mathcal{H} \neq \{0\}$ and that $m \in \mathbb{N}
\smallsetminus \{0\}$, where $\mathbb{N}=\{0,1,2,\ldots\}$.
 
In 1976, Rockafellar, in the seminal work \cite{Rockafellar1976}, generalized the proximal point algorithm for minimizing lower semicontinuous proper convex functions by weakening the exact minimization at each iteration and by replacing the subgradient mapping with an arbitrary maximally monotone operator. In particular, Rockafellar's proximal point algorithm solves the fundamental problem:
\begin{empheq}[box = \mybluebox]{equation}\label{eq:problem}
\text{determine an element $x \in \mathcal{H}$ ~s.t.~ $0 \in A(x)$, where $A:\mathcal{H} \to 2^{\mathcal{H}}$ is maximally monotone},
\end{empheq}
 which includes minimization problems subject to implicit constraints, variational inequality problems, and minimax problems as special cases (see \cite{Rockafellar1976} and the references therein for details).  For example, given a proper lower semicontinuous and convex function $f$,   it is well-known that $\partial f $ is maximally monotone (see, e.g., \cite[Theorem~20.25]{BC2017}),   that if there exist a closed convex subset $C$ of $\mathcal{H}$ and $\xi \in \mathbb{R}$ such that 
the set $\{ x \in C ~:~ f(x) \leq \xi \}$
  is nonempty and bounded, then $\zer \partial f \neq \varnothing$ (see, e.g., \cite[Theorems~11.10 and 16.3]{BC2017} for details) and solving \cref{eq:problem} with $A=\partial f$  is equivalent to finding the minimizer of $f$.
 
Synthesizing the work of Rockafellar \cite{Rockafellar1976} with that of Gol'shtein and Tret'yakov \cite{GolcprimeTretcprime1979},  Eckstein and Bertsekas  in \cite{EcksteinBertsekas1992} proposed a generalized form of the proximal point algorithm and elaborated that  the Douglas-Rachford splitting algorithm is a special case of the proximal point algorithm. In addition, because generally  the proximal point algorithm   converges weakly but not strongly   (see, e.g., \cite{Guler1991} for details), various modified  proximal point algorithms were studied in many articles (see, e.g., \cite{BoikanyoMorosanu2010}, \cite{CormanYuan2014}, \cite{EcksteinBertsekas1992}, \cite{MarinoXu2004}, \cite{SolodovSvaiter2000}, \cite{Xu2002}, \cite{Xu2006}, \cite{YaoNoor2008}, and \cite{YaoShahzad2012}) to obtain the strong convergence.
 
 Henceforth, 
 \begin{empheq}[box = \mybluebox]{equation*}
 \text{$A:\mathcal{H} \to 2^{\mathcal{H}}$ is maximally monotone.}
 \end{empheq}
 Then, via \cite[Proposition~20.22]{BC2017}, $(\forall \gamma \in \mathbb{R}_{++})$ $\gamma A$ is maximally monotone.  
 In the whole work, given a point $u \in \mathcal{H}$, we investigate the sequence  $(x_{k})_{k \in \mathbb{N}}$ of iterations  generated by the \emph{generalized proximal point algorithm with relaxed parameters}:
\begin{empheq}[box = \mybluebox]{equation} \label{eq:xk+1}
 (\forall k \in \mathbb{N}) \quad x_{k+1} := \alpha_{k} u +\beta_{k} x_{k} + \gamma_{k} \J_{c_{k} A} (x_{k}) +\delta_{k} e_{k},
 \end{empheq}
 where  $x_{0} \in \mathcal{H}$ is the \emph{initial point} and  $(\forall k \in \mathbb{N})$ $e_{k} \in \mathcal{H}$ is   the \emph{error term}, 
 $c_{k} \in \mathbb{R}_{++}$ is the \emph{stepsize} or \emph{regularization parameter}, 
 and $ \alpha_{k}$,  $\beta_{k}$, $\gamma_{k}$, and $\delta_{k} $ are the  \emph{relaxation parameters} in $  \mathbb{R}$.  For simplicity, in this work, we refer to generalized proximal point algorithms with relaxed parameters as generalized proximal point algorithms.

We compare the scheme \cref{eq:xk+1} with some known proximal point algorithms in the literatures below. 
\begin{enumerate}
	\item Suppose that  $(\forall k \in \mathbb{N})$ $\alpha_{k} =\beta_{k} \equiv 0$ and  $\gamma_{k} =\delta_{k} \equiv 1$.
	 Then \cref{eq:xk+1} reduces to the proximal point algorithm devised by Rockafellar in \cite{Rockafellar1976}.
	\item  Suppose that  $(\forall k \in \mathbb{N})$  $\alpha_{k}  \equiv 0$,    $\gamma_{k} \in \left[0,2\right]$, and  $\beta_{k} =1-\gamma_{k}$.
	Then \cref{eq:xk+1} turns to the generalized proximal point algorithm developed by Eckstein and Bertsekas  in \cite{EcksteinBertsekas1992}.
	\item Suppose that $(\forall k \in \mathbb{N})$ $\alpha_{k} =\delta_{k} \equiv 0$, $\gamma_{k} \equiv  \gamma$, and $\beta_{k} \equiv 1-\gamma$ where $\gamma  \in \mathbb{R}_{++}$. Then \cref{eq:xk+1} reduces to the generalized proximal point algorithm scheme proposed by Corman and Yuan in \cite{CormanYuan2014}. In particular, Corman and Yuan provided examples where the generalized proximal point algorithm scheme with $\gamma >2$ converges faster than that with $\gamma \in \left]0,2\right]$.
	
	\item  Suppose that $u=x_{0}$ and $(\forall k \in \mathbb{N})$ $\alpha_{k}  \in \left[0,1\right]$, $\beta_{k} \equiv 0$, and  $\gamma_{k} =\delta_{k} =1-\alpha_{k}$, or that $u=x_{0}$ and $(\forall k \in \mathbb{N})$ $\alpha_{k} \equiv 0$, $\beta_{k} \in \left[0,1\right]$, and $\gamma_{k} =\delta_{k} =1-\beta_{k}$. Then \cref{eq:xk+1} becomes the modified proximal point algorithms introduced by Xu in \cite{Xu2002}.
	
	\item Suppose that $(\forall k \in \mathbb{N})$ $\alpha_{k}  \in \left]0,1\right]$,  $\beta_{k} \equiv 0$, $\gamma_{k} =1-\alpha_{k}$, and $\delta_{k} \equiv 1$. Then  \cref{eq:xk+1} turns to the contraction-proximal point algorithm introduced by Marino and Xu in 
	\cite{MarinoXu2004}.  Note that by some natural substitution one can easily see that the regularization method for the proximal point algorithm proposed by Xu in \cite{Xu2006} is equivalent to the contraction-proximal point algorithm and hence a special case of the scheme \cref{eq:xk+1} as well. Moreover, as it is verified in  \cite{Xu2006}, the prox-Tikhonov algorithm of Lehdili and Moudafi \cite{LehdiliMoudafi1996} deals essentially with a  special case of the regularization method for the proximal point algorithm of Xu in \cite{Xu2006}, which in turn shows that  \cref{eq:xk+1} also covers the prox-Tikhonov algorithm in \cite{LehdiliMoudafi1996}.
	
	\item Suppose that $(\forall k \in \mathbb{N})$ $\delta_{k} \equiv 1 $ and $\{ \alpha_{k},  \beta_{k} , \gamma_{k} \} \subseteq \left]0,1\right[$ with $\alpha_{k}+ \beta_{k} +\gamma_{k} =1$. Then  \cref{eq:xk+1} deduces  the contraction proximal point algorithm proposed by Yao and Noor in \cite{YaoNoor2008}.
	\item Suppose that $u=0$ and that $(\forall k \in \mathbb{N})$  $ \alpha_{k} \equiv 0$ and $\{    \beta_{k} , \gamma_{k}, \delta_{k} \} \subseteq \left]0,1\right[$ with $ \beta_{k} +\gamma_{k}+\delta_{k} =1$. Then  \cref{eq:xk+1} becomes the proximal point algorithm with general errors constructed by Yao and Shahzad in  \cite{YaoShahzad2012}.
\end{enumerate}

For the  generalized proximal point algorithm conforming the recursion \cref{eq:xk+1}, the advantage  of considering the  range $\mathbb{R}$ of the parameters  $(\alpha_{k})_{k \in \mathbb{N}}$, $(\beta_{k})_{k \in \mathbb{N}}$, $(\gamma_{k})_{k \in \mathbb{N}}$, and $(\delta_{k})_{k \in \mathbb{N}}$ is suggested by \cite{CormanYuan2014}; 
 the necessity of the consideration of the coefficient $(\delta_{k})_{k \in \mathbb{N}}$ preceding the error terms  is illustrated by  \cite{Xu2002} and \cite{YaoShahzad2012}; and the term $u$ is  motivated by \cite{Xu2002} and \cite{MarinoXu2004}.

\emph{The goal of this work is to explore the equivalence of the boundedness of $(x_{k})_{k \in \mathbb{N}}$ generated by \cref{eq:xk+1} and $\zer A \neq \varnothing$  and  to deduce sufficient conditions for the weak and strong convergence of the scheme \cref{eq:xk+1} for solving \cref{eq:problem} when $\zer A \neq \varnothing$.}

Main results of this work are the following. 
\begin{enumerate}
	\item[\textbf{R1:}]  
\Cref{Theorem:ZerBounded,theorem:ZerBoundedIFF} present requirements on the regularization and relaxation parameters  of  \cref{eq:xk+1} for the equivalence of 
 the non-emptiness of $\zer A$ and the boundedness of the sequence of iterations generated by the scheme \cref{eq:xk+1}.  
  
  	\item[\textbf{R2:}] The weak convergence of  the generalized proximal point algorithms is illustrated in \cref{theorem:WeakConverg}.
		
	\item[\textbf{R3:}]    \Cref{theorem:StrongConvergence,theorem:StrongConvergence:YS} exhibit sufficient conditions for the strong convergence of the sequence of iterations conforming the scheme \cref{eq:xk+1}.
\end{enumerate}

In  \Cref{remark:WeakConvergence,reamark:StrongConvergence} below, we shall compare our convergence results with  related known results in   references mentioned above and demonstrate our improvements.

The paper is organized as follows.   In  \cref{sec:Preliminaries}, we provide some fundamental and essential results for proving the convergence of generalized proximal point algorithms.
The boundedness and asymptotic regularity of the sequence of iterations generated by the generalized proximal point algorithm is elaborated in \cref{sec:GeneralizedPPAs}.   The equivalence of the boundedness of this sequence and $\zer A \neq \varnothing$ is also established in \cref{sec:GeneralizedPPAs}.  Convergence results are exhibited in the last section, \cref{sec:Convergence}.

We now turn to the notation used in this work.  $\Id$ stands for the \emph{identity mapping}.  
Denote by  $\mathbb{R}_{+}:=\{\lambda \in \mathbb{R} ~:~ \lambda \geq 0 \}$ and $\mathbb{R}_{++}:=\{\lambda \in \mathbb{R} ~:~ \lambda >0 \}$. 
Let $\bar{x} $ be in $ \mathcal{H}$, let $r \in \mathbb{R}_{+}$, and let $(x_{k})_{k \in \mathbb{N}}$ be a sequence in $\mathcal{H}$. $B(\bar{x};r) := \{ y\in \mathcal{H} ~:~ \norm{y-\bar{x}} <r \}$ and $B[\bar{x};r]:= \{ y\in \mathcal{H} ~:~ \norm{y-\bar{x}} \leq r \}$ are the \emph{open and closed ball centered at $\bar{x}$ with radius $r$}, respectively. 
 If  $(x_{k})_{k \in \mathbb{N}}$ \emph{converges strongly} to $\bar{x}$, then we denote by $x_{k} \to \bar{x}$.  $(x_{k})_{k \in \mathbb{N}}$ \emph{converges weakly} to  $\bar{x} $ if, for every $y \in \mathcal{H}$, $\innp{x_{k},y} \rightarrow \innp{x,y}$; in symbols, $x_{k} \weakly \bar{x}$. 
Let $C$ be a nonempty closed convex subset of $\mathcal{H}$. The \emph{projector} (or \emph{projection operator}) onto $C$ is the operator, denoted by $\Pro_{C}$,  that maps every point in $\mathcal{H}$ to its unique projection onto $C$.  $\iota_{C}$ is the \emph{indicator function of $C$}, that is, $(\forall x \in C)$ $\iota_{C} (x) =0$ and $(\forall x \in \mathcal{H} \smallsetminus C)$ $\iota_{C} (x) = \infty$.  Let $f:\mathcal{H} \to \left]-\infty, \infty\right]$ be  \emph{proper}, i.e., $\dom f \neq \varnothing$.
The \emph{subdifferential of $f$} is the set-valued operator $\partial f :\mathcal{H} \to 2^{\mathcal{H}} : x \mapsto \left\{ z \in \mathcal{H} ~:~ (\forall y \in \mathcal{H}) \innp{z,y-x} \leq f(y) -f(x)  \right\}$.
Let $\mathcal{D} $ be a nonempty subset of $\mathcal{H}$ and let $T: \mathcal{D} \to  \mathcal{H}$. $\Fix T :=\{ x \in \mathcal{D}~:~ x = T(x) \}$ is the \emph{set of fixed points  of $T$}. 
Let	$G: \mathcal{H} \to 2^{\mathcal{H}}$ be a set-valued operator. Then $G$ is characterized by its \emph{graph} $\gra G:= \{ (x,w) \in \mathcal{H} \times \mathcal{H} ~:~ w\in G(x) \}$. The \emph{inverse} of $G$, denoted by $G^{-1}$, is defined through its graph $\gra G^{-1} :=  \{ (w,x) \in \mathcal{H} \times \mathcal{H} ~:~ (x,w) \in \gra G \}$.
The \emph{set of zeros of $G$} is $\zer G: =G^{-1}(0) = \{ x \in \mathcal{H} ~:~ 0 \in G (x) \} $.
$G$ is \emph{monotone} if  $(\forall (x,u) \in \gra G)$ $(\forall (y,v) \in \gra G)$ $\innp{x-y,u-v} \geq 0$. $G$ is \emph{maximally monotone} if there exists no monotone operator $B: \mathcal{H} \to 2^{\mathcal{H}} $ such that $\gra B$ properly contains $\gra G$, i.e., for every $(x,u) \in \mathcal{H} \times \mathcal{H}$,  $(x,u) \in \gra G$ if and only if $(\forall (y,v) \in \gra G) $ $ \innp{ x-y, u-v} \geq 0$.

For other notation not explicitly defined here, we refer the reader to \cite{BC2017}.

\section{Preliminaries} \label{sec:Preliminaries}
In order to facilitate our investigation in the following sections, we gather some auxiliary results in this section. 
The ideas of these  results are frequently used in  proofs of the convergence of   generalized  proximal point algorithms in references of this work. 
Clearly,  results in this section are interesting in their own right and are helpful to study various generalized proximal point algorithms.

\subsection*{Limits of sequences}
\begin{fact} {\rm \cite[Lemma~2.5]{Xu2002}} \label{fact:sequence:sk}
	Let $(s_{k})_{k \in \mathbb{N}}$ be a sequence in $\mathbb{R}_{+}$ satisfying
	\begin{align*}
	(\forall k \in \mathbb{N}) \quad s_{k+1} \leq (1- a_{k}) s_{k} +a_{k} b_{k} +\epsilon_{k},
	\end{align*}
	where $(a_{k})_{k \in \mathbb{N}}$, $(b_{k})_{k \in \mathbb{N}}$, and $(\epsilon_{k})_{k \in \mathbb{N}}$  are sequences in $\mathbb{R}$ satisfying the conditions:
	\begin{enumerate}
		\item  $(a_{k})_{k \in \mathbb{K}}$ is a sequence in $\left[0,1 \right]$ such that $\sum_{k \in \mathbb{N}} a_{k} = \infty $, or equivalently,  $ \prod_{k \in \mathbb{N}} (1- a_{k}) =0$;
		\item $\limsup_{k \to \infty} b_{k} \leq 0$;
		\item $(\forall k \in \mathbb{N}) \epsilon_{k} \in \mathbb{R}_{+}$ and $\sum_{k \in \mathbb{N}} \epsilon_{k} < \infty$.
	\end{enumerate}
Then $\lim_{k \to \infty} s_{k} =0$.
\end{fact}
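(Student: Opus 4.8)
The plan is to fix an arbitrary $\varepsilon > 0$, reduce the inhomogeneous term $b_k$ to the constant $\varepsilon$ for all sufficiently large indices, and then unroll the resulting linear recursion to show that $\limsup_{k \to \infty} s_k \leq 0$. Since $s_k \geq 0$, this forces $\lim_{k \to \infty} s_k = 0$.

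First I would invoke condition (ii): because $\limsup_{k \to \infty} b_k \leq 0$, there is an index $N$ with $b_k \leq \varepsilon$ for every $k \geq N$; enlarging $N$ if necessary, condition (iii) also lets me assume $\sum_{i \geq N} \epsilon_i < \varepsilon$. For $k \geq N$ the hypothesis then reads $s_{k+1} \leq (1-a_k) s_k + a_k \varepsilon + \epsilon_k$. The key manipulation is the substitution $u_k := s_k - \varepsilon$, which absorbs the term $a_k \varepsilon$: subtracting $\varepsilon$ from both sides and using $a_k \varepsilon - \varepsilon = -(1-a_k)\varepsilon$ yields the clean recursion $u_{k+1} \leq (1-a_k) u_k + \epsilon_k$ for all $k \geq N$, valid regardless of the sign of $u_k$.

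Next I would unroll this recursion from $N$ up to $k$, collecting the factors $1-a_j$ into products, to obtain
\[
u_{k+1} \leq \left(\prod_{j=N}^{k}(1-a_j)\right) u_N + \sum_{i=N}^{k}\left(\prod_{j=i+1}^{k}(1-a_j)\right)\epsilon_i .
\]
Because $a_j \in [0,1]$ forces each factor into $[0,1]$, every partial product is at most $1$, so the second summand is bounded above by $\sum_{i \geq N}\epsilon_i < \varepsilon$, while the first summand tends to $0$ as $k \to \infty$. Letting $k \to \infty$ and adding back $\varepsilon$ then gives $\limsup_{k \to \infty} s_k \leq 2\varepsilon$; since $\varepsilon > 0$ was arbitrary and $s_k \geq 0$, the conclusion $\lim_{k \to \infty} s_k = 0$ follows.

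The main obstacle — really the only genuinely nontrivial point — is justifying that $\prod_{j=N}^{k}(1-a_j) \to 0$ as $k \to \infty$, which is precisely the equivalence recorded in condition (i). From $\sum_{j \geq N} a_j = \infty$ (this persists after deleting the finite prefix $j < N$) together with the elementary inequality $1 - a_j \leq e^{-a_j}$, one gets $\prod_{j=N}^{k}(1-a_j) \leq \exp\!\left(-\sum_{j=N}^{k} a_j\right) \to 0$. A minor point worth flagging is that $u_N = s_N - \varepsilon$ may be negative, but this only strengthens the desired upper bound, since the vanishing product multiplies a fixed finite quantity and so its contribution tends to $0$ in any case.
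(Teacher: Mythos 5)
Your argument is correct; note that the paper never proves this Fact itself (it is quoted from Xu's Lemma~2.5) but instead proves the equivalent \cref{prop:tkleq}\cref{prop:tkleq:conve0} by essentially the same route: fix $\varepsilon$, choose $N$ from conditions (ii)--(iii), unroll the linear recursion, and let $\prod_{j=N}^{k}(1-a_j)\to 0$ (justified there, as by you, from $\sum_k a_k=\infty$). The only cosmetic difference is in handling the term $a_k\varepsilon$: you absorb it with the shift $u_k:=s_k-\varepsilon$, while the paper bounds $\sum_{i}\bigl(\prod_{j=i+1}^{k}(1-a_j)\bigr)a_i$ by $1-\prod_{i}(1-a_i)$ via the separate induction \cref{lemma:alphabetaleq} --- two packagings of the same telescoping identity, so your proof is sound as written.
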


Inspired by the proof of \cref{fact:sequence:sk}, we obtain the following \cref{prop:tkleq}, which is critical to some results in the next sections.  
It is not difficult to prove that \cref{prop:tkleq}\cref{prop:tkleq:conve0}  is actually equivalent to  \cref{fact:sequence:sk}.  We present \cref{prop:tkleq}\cref{prop:tkleq:conve0} because comparing with  \cref{fact:sequence:sk}, \cref{prop:tkleq}\cref{prop:tkleq:conve0} is more convenient to use.
The following lemma is necessary to prove \cref{prop:tkleq}.  
\begin{lemma} \label{lemma:alphabetaleq}
	Let $(\alpha_{k})_{k \in \mathbb{N}}$ be in $\mathbb{R}_{+}$ and let $(\beta_{k})_{k \in \mathbb{N}}$ be  in $\mathbb{R}$ such that $(\forall k \in \mathbb{N})$ $\alpha_{k} +\beta_{k} \leq 1$. Then 
		\begin{align}\label{eq:lemma:alphabetaleq}
		(\forall  m \in \mathbb{N}) (\forall k \in \mathbb{N}) \quad \sum^{m+k}_{i=m}  \prod^{m+k}_{j=i+1} \alpha_{j} \beta_{i}  \leq 1 -  \prod^{m+k}_{i=m} \alpha_{i}. 
		\end{align}
		Consequently,  $(\forall  m \in \mathbb{N}) (\forall k \in \mathbb{N}) $ $ \sum^{m+k}_{i=m}  \prod^{m+k}_{j=i+1} \alpha_{j} (1-\alpha_{i}) \leq 1 -  \prod^{m+k}_{i=m} \alpha_{i}. $ 
\end{lemma}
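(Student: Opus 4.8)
The plan is to fix $m \in \mathbb{N}$ and prove \cref{eq:lemma:alphabetaleq} by induction on $k$, exploiting the observation that the left-hand side satisfies a simple one-step recursion in $k$ once the summand carrying the largest index is peeled off. Throughout I adopt the usual convention that an empty product equals $1$.

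First I would abbreviate $S_{m,k} := \sum_{i=m}^{m+k} \prod_{j=i+1}^{m+k} \alpha_j \, \beta_i$ for the quantity on the left. For the base case $k=0$ the only summand is $i=m$, whose product $\prod_{j=m+1}^{m} \alpha_j$ is empty, so $S_{m,0} = \beta_m$; the hypothesis $\alpha_m + \beta_m \leq 1$ then gives exactly $S_{m,0} \leq 1 - \alpha_m$, which is the desired bound for $k=0$.

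For the inductive step I would isolate in $S_{m,k+1}$ the term $i = m+k+1$, whose product is again empty and contributes $\beta_{m+k+1}$, and note that in every remaining term ($m \leq i \leq m+k$) the factor $\alpha_{m+k+1}$ occurs in the product $\prod_{j=i+1}^{m+k+1}\alpha_j$ and can be pulled out. This yields the recursion
\[
S_{m,k+1} = \alpha_{m+k+1}\, S_{m,k} + \beta_{m+k+1}.
\]
Since $\alpha_{m+k+1}\geq 0$, I can substitute the induction hypothesis $S_{m,k}\leq 1-\prod_{i=m}^{m+k}\alpha_i$ without reversing the inequality, and then apply $\beta_{m+k+1}\leq 1-\alpha_{m+k+1}$ a second time. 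The two estimates combine to give $S_{m,k+1}\leq 1-\alpha_{m+k+1}\prod_{i=m}^{m+k}\alpha_i = 1-\prod_{i=m}^{m+k+1}\alpha_i$, closing the induction.

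The main thing to get right is the index bookkeeping around the empty-product convention and the clean extraction of the single factor $\alpha_{m+k+1}$ from the inner products; the hypothesis $\alpha_k+\beta_k\le 1$ enters precisely twice — once to launch the base case and once at the top index in each inductive step — while the nonnegativity of $\alpha_k$ is exactly what permits substituting the induction hypothesis monotonically. Finally, the stated consequence follows immediately by specializing to $\beta_i := 1-\alpha_i$, for which $\alpha_i+\beta_i=1\le 1$, so the main inequality applies verbatim.
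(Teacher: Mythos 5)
Your proposal is correct and follows essentially the same route as the paper: induction on $k$ for fixed $m$, with the base case $\beta_m \leq 1-\alpha_m$ and the inductive step driven by the recursion $S_{m,k+1}=\alpha_{m+k+1}S_{m,k}+\beta_{m+k+1}$, using nonnegativity of $\alpha_{m+k+1}$ to substitute the induction hypothesis and then $\beta_{m+k+1}\leq 1-\alpha_{m+k+1}$. The specialization $\beta_i=1-\alpha_i$ for the final assertion also matches the paper.
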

\begin{proof}
	 Let $m \in \mathbb{N}$. 
If $k=0$, then \cref{eq:lemma:alphabetaleq} turns to $\beta_{m} \leq 1- \alpha_{m}$, which is true by assumption.\footnotemark
	\footnotetext{As is the custom, in the whole work, we use the empty sum convention and empty product convention, that is, given a sequence $(t_{k})_{ k\in \mathbb{N}}$ in $\mathbb{R}$, for every $m$ and $n$ in $\mathbb{N}$ with $m > n$, we have   $\sum^{n}_{i=m} t_{i} =0$ and  $\prod^{n}_{i=m} t_{i} =1$. }  
	Suppose that \cref{eq:lemma:alphabetaleq} holds for some $k \in \mathbb{N}$.
	 Then apply the induction hypothesis in the  first inequality below to observe that 
	\begin{align*}
	\sum^{m+k+1}_{i=m}  \prod^{m+k+1}_{j=i+1} \alpha_{j} \beta_{i}   = \beta_{m+k+1} +   \alpha_{m+k+1} \sum^{m+k}_{i=m}  \prod^{m+k}_{j=i+1} \alpha_{j} \beta_{i} 
	 \leq  \beta_{m+k+1} +   \alpha_{m+k+1} \left( 1 -  \prod^{m+k}_{i=m} \alpha_{i}.  \right) 
  \leq 1 -  \prod^{m+k+1}_{i=m} \alpha_{i}.
	\end{align*}
	So, we proved \cref{eq:lemma:alphabetaleq} by induction.  The last assertion is clear with $(\forall k\in \mathbb{N})$ $\beta_{k} =1-\alpha_{k}$.
\end{proof}

\begin{proposition}  \label{prop:tkleq}
	Let $(t_{k})_{k \in \mathbb{N}}$ and $(\alpha_{k})_{k \in \mathbb{N}}$ be  sequences in $\mathbb{R}_{+}$, and let $(\beta_{k})_{k \in \mathbb{N}}$,  $(\gamma_{k})_{k \in \mathbb{N}}$, and $(\omega_{k})_{k \in \mathbb{N}}$   be sequences in $\mathbb{R}$ such that 
	\begin{align}  \label{eq:lemma:tkleq:tk}
	(\forall k \in \mathbb{N}) \quad t_{k+1} \leq \alpha_{k} t_{k} + \beta_{k} \omega_{k} +\gamma_{k}.
	\end{align}
	The following statements hold.
 \begin{enumerate}
 		\item  \label{prop:tkleq:supbounded} Suppose that $ \limsup_{k \to \infty} \alpha_{k} <1$ and  $M:=\sup_{k \in \mathbb{N}} \left( \beta_{k} \omega_{k} +\gamma_{k} \right) <\infty$. Then $(t_{k})_{k \in \mathbb{N}}$ is bounded. 
 	\item  \label{prop:tkleq:bounded} Suppose that $(\forall k \in \mathbb{N})$ $\alpha_{k} \in \left[0,1\right]$, $\alpha_{k} +\beta_{k} \leq 1$, and $\omega_{k} \in \mathbb{R}_{+}$,  that $ \hat{\omega} := \sup_{k \in \mathbb{N}} \omega_{k} < \infty  $, and that  $(\forall k \in \mathbb{N}) $ $ \alpha_{k} +\gamma_{k} \leq 1 $ or    $ \sum_{k \in \mathbb{N}}  \abs{\gamma_{k}} <\infty$.	
 	Then $(t_{k})_{k \in \mathbb{N}}$ is bounded. 
 	 
 	 	\item \label{prop:tkleq:conve0} Suppose that $(\forall k \in \mathbb{N})$ $\alpha_{k} \in \left[0,1\right]$ and $\beta_{k} \in \left[0,1\right]$ with $\alpha_{k} +\beta_{k} \leq 1$ and $\sum_{k \in \mathbb{N}} (1-\alpha_{k}) = \infty $, that $\limsup_{k \to \infty} \omega_{k} \leq 0$, and that
 	 $\sum_{k \in \mathbb{N}} | \gamma_{k} |<\infty$. Then $\lim_{k \to \infty} t_{k} =0$.
 	 
 	 	 \item \label{prop:tkleq:betakalphak} Suppose that $(\forall k \in \mathbb{N})$ $\alpha_{k} \in \left[0,1\right[$ and $ \beta_{k}    \in \mathbb{R}_{+}$ with $\sup_{k \in \mathbb{N}} \frac{\beta_{k}   }{1-\alpha_{k}}   <\infty$ and $\sum_{k \in \mathbb{N}} (1-\alpha_{k}) = \infty $, 
 	 	  that $\limsup_{k \to \infty} \omega_{k}\leq 0$, and that $\sum_{k \in \mathbb{N}} \abs{ \gamma_{k} }<\infty$. Then $\lim_{k \to \infty} t_{k} =0$.
 \end{enumerate}
\end{proposition}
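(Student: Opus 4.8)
The plan is to reduce the two convergence statements \cref{prop:tkleq:conve0} and \cref{prop:tkleq:betakalphak} to \cref{fact:sequence:sk}, and to settle the two boundedness statements \cref{prop:tkleq:supbounded} and \cref{prop:tkleq:bounded} by elementary recursive estimates. Throughout I would write $x^{+}:=\max\{x,0\}$ and set $a_{k}:=1-\alpha_{k}$. For \cref{prop:tkleq:supbounded}, I would pick $a:=\tfrac{1}{2}\big(1+\limsup_{k}\alpha_{k}\big)\in\big(\limsup_{k}\alpha_{k},1\big)$ and some $N\in\mathbb{N}$ with $\alpha_{k}\leq a$ for all $k\geq N$. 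Since $\beta_{k}\omega_{k}+\gamma_{k}\leq M\leq M^{+}$, the recursion \cref{eq:lemma:tkleq:tk} gives $t_{k+1}\leq a\,t_{k}+M^{+}$ for $k\geq N$; iterating yields $t_{N+j}\leq a^{j}t_{N}+M^{+}/(1-a)$ for every $j$, so $\sup_{k\geq N}t_{k}<\infty$, and adjoining the finitely many terms $t_{0},\dots,t_{N-1}$ bounds $(t_{k})_{k\in\mathbb{N}}$.

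For \cref{prop:tkleq:bounded}, the first step would be the pointwise bound $\beta_{k}\omega_{k}\leq(1-\alpha_{k})\hat{\omega}$, which holds because $0\leq\omega_{k}\leq\hat{\omega}$, $\beta_{k}\leq1-\alpha_{k}$, and $\hat{\omega}\geq0$ (treating $\beta_{k}\geq0$ and $\beta_{k}<0$ separately). Thus $t_{k+1}\leq\alpha_{k}t_{k}+(1-\alpha_{k})\hat{\omega}+\gamma_{k}$, and I would split into the two stated cases. If $\alpha_{k}+\gamma_{k}\leq1$ for all $k$, then $\gamma_{k}\leq1-\alpha_{k}$ turns this into the convex-combination estimate $t_{k+1}\leq\alpha_{k}t_{k}+(1-\alpha_{k})(\hat{\omega}+1)$, and an easy induction gives $t_{k}\leq\max\{t_{0},\hat{\omega}+1\}$. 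If instead $\sum_{k}|\gamma_{k}|<\infty$, I would put $v_{k}:=t_{k}-\hat{\omega}$; subtracting $\hat{\omega}$ from the displayed bound and using $\alpha_{k}v_{k}\leq v_{k}^{+}$ (valid since $\alpha_{k}\in[0,1]$) yields $v_{k+1}\leq v_{k}^{+}+|\gamma_{k}|$, whence $v_{k}^{+}\leq v_{0}^{+}+\sum_{j}|\gamma_{j}|$ by telescoping, so $t_{k}=v_{k}+\hat{\omega}$ is bounded above (and $t_{k}\geq0$).

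The substance lies in \cref{prop:tkleq:conve0} and \cref{prop:tkleq:betakalphak}, which I would handle together. I would set $b_{k}:=\beta_{k}\omega_{k}/a_{k}$ whenever $a_{k}>0$ and $b_{k}:=0$ otherwise, so that $\beta_{k}\omega_{k}=a_{k}b_{k}$ in every case: in \cref{prop:tkleq:conve0} the exceptional value $a_{k}=0$ forces $\alpha_{k}=1$ and hence $0\leq\beta_{k}\leq1-\alpha_{k}=0$, giving $\beta_{k}\omega_{k}=0=a_{k}b_{k}$, while in \cref{prop:tkleq:betakalphak} one always has $a_{k}>0$. Bounding $\gamma_{k}\leq|\gamma_{k}|$, the recursion \cref{eq:lemma:tkleq:tk} becomes
\begin{align*}
t_{k+1}\leq(1-a_{k})t_{k}+a_{k}b_{k}+|\gamma_{k}|.
\end{align*}
I would then verify the three hypotheses of \cref{fact:sequence:sk} with $s_{k}=t_{k}$, the above $a_{k}$ and $b_{k}$, and $\epsilon_{k}=|\gamma_{k}|$: condition (i) holds since $a_{k}\in[0,1]$ and $\sum_{k}a_{k}=\sum_{k}(1-\alpha_{k})=\infty$, and condition (iii) holds since $\sum_{k}|\gamma_{k}|<\infty$. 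The only nonroutine point is condition (ii), $\limsup_{k}b_{k}\leq0$: in \cref{prop:tkleq:conve0} the ratio $\beta_{k}/a_{k}$ lies in $[0,1]$, so $b_{k}\leq\omega_{k}^{+}$, whereas in \cref{prop:tkleq:betakalphak} it is bounded by $K:=\sup_{k}\beta_{k}/(1-\alpha_{k})<\infty$, so $b_{k}\leq K\omega_{k}^{+}$. Since $\limsup_{k}\omega_{k}\leq0$ forces $\omega_{k}^{+}\to0$, both cases give $\limsup_{k}b_{k}\leq0$, and \cref{fact:sequence:sk} delivers $t_{k}\to0$.

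The hard part will not be any single estimate but the bookkeeping needed to cast \cref{prop:tkleq:conve0} and \cref{prop:tkleq:betakalphak} into the exact format of \cref{fact:sequence:sk} so that all three of its hypotheses hold simultaneously: one must replace $\beta_{k}\omega_{k}$ by $a_{k}b_{k}$ with the correct $b_{k}$, pass from $\gamma_{k}$ to $|\gamma_{k}|$ to secure nonnegativity, and — the genuinely load-bearing step — use the structural hypotheses ($\alpha_{k}+\beta_{k}\leq1$ in \cref{prop:tkleq:conve0}, boundedness of $\beta_{k}/(1-\alpha_{k})$ in \cref{prop:tkleq:betakalphak}) precisely to dominate $b_{k}$ by a multiple of $\omega_{k}^{+}$ and thereby obtain $\limsup_{k}b_{k}\leq0$. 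The degenerate case $a_{k}=0$ in \cref{prop:tkleq:conve0}, where the ratio defining $b_{k}$ is undefined, must be dispatched separately as above. I note that this route bypasses \cref{lemma:alphabetaleq}; a self-contained alternative would instead iterate \cref{eq:lemma:tkleq:tk} over a block $\{m,\dots,m+k\}$ and invoke the telescoped product bound of \cref{lemma:alphabetaleq} to control the accumulated $\beta_{i}\omega_{i}$- and $\gamma_{i}$-terms.
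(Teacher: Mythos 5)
Your proof is correct, but it departs from the paper's in two places worth noting. The paper's strategy is to first iterate \cref{eq:lemma:tkleq:tk} into the closed-form bound $t_{m+k}\leq \prod_{j}\alpha_{j}\,t_{m}+\sum_{i}\prod_{j}\alpha_{j}\beta_{i}\omega_{i}+\sum_{i}\prod_{j}\alpha_{j}\gamma_{i}$ and then control the accumulated sums via \cref{lemma:alphabetaleq}; this is how it handles both \cref{prop:tkleq:bounded} and \cref{prop:tkleq:conve0} (the latter by an explicit $\epsilon$-argument using $\prod_{k}\alpha_{k}=0$), reserving the reduction to \cref{fact:sequence:sk} for \cref{prop:tkleq:betakalphak} alone. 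You instead bypass \cref{lemma:alphabetaleq} entirely: for \cref{prop:tkleq:bounded} you use the pointwise domination $\beta_{k}\omega_{k}\leq(1-\alpha_{k})\hat{\omega}$ followed by a convex-combination induction (giving the cleaner explicit bound $\max\{t_{0},\hat{\omega}+1\}$) in one case and a positive-part telescoping of $v_{k}=t_{k}-\hat{\omega}$ in the other, and for \cref{prop:tkleq:conve0} you recast the recursion as $t_{k+1}\leq(1-a_{k})t_{k}+a_{k}b_{k}+|\gamma_{k}|$ and invoke \cref{fact:sequence:sk}, handling the degenerate case $a_{k}=0$ correctly (there $\beta_{k}=0$). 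Your treatment of \cref{prop:tkleq:supbounded} and \cref{prop:tkleq:betakalphak} matches the paper's. What your route buys is uniformity and economy — both convergence statements become instances of \cref{fact:sequence:sk}, consistent with the paper's own remark that \cref{prop:tkleq:conve0} is equivalent to that fact — at the cost of not producing the iterated inequality \cref{eq:prop:tkleq:tk} and the product estimates of \cref{lemma:alphabetaleq}, which the paper's self-contained argument makes available and which carry the quantitative information ($\limsup_{k}t_{k}\leq 2\epsilon$ after an explicit index) that a citation of \cref{fact:sequence:sk} hides.
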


\begin{proof}
Based on \cref{eq:lemma:tkleq:tk}, by induction, it is easy to get that  
\begin{align}  \label{eq:prop:tkleq:tk}
(\forall m \in \mathbb{N}) (\forall k \in \mathbb{N}) \quad t_{m+k} \leq  \prod^{m+k-1}_{j=m} \alpha_{j} t_{m} + \sum^{m+k-1}_{i=m}  \prod^{m+k-1}_{j=i+1} \alpha_{j} \beta_{i} \omega_{i}  +  \sum^{m+k-1}_{i=m}  \prod^{m+k-1}_{j=i+1} \alpha_{j} \gamma_{i}.
\end{align}

\cref{prop:tkleq:supbounded}: Because $ \limsup_{k \to \infty} \alpha_{k} <1$, there exists $\hat{\alpha} \in \mathbb{R}_{++}$ and $N \in \mathbb{N}$ such that $ \limsup_{k \to \infty} \alpha_{k}  <\hat{\alpha} <1$ and $(\forall k \geq N)$ $\alpha_{k} \leq \hat{\alpha}$. 
This and the assumption that $(\forall k \in \mathbb{N})$ $\alpha_{k} \in \mathbb{R}_{+}$ ensure that
\begin{align*}
(\forall k \in \mathbb{N}) \quad \sum^{N+k}_{i=N}  \prod^{N+k}_{j=i+1} \alpha_{j} \leq \sum^{N+k}_{i=N}  \prod^{N+k}_{j=i+1} \hat{\alpha} =\sum^{N+k}_{i=N} \hat{\alpha}^{N+k-i} =   \sum^{k}_{j=0} \hat{\alpha}^{j} \leq  (1-\hat{\alpha})^{-1},
\end{align*}
which, combining with \cref{eq:prop:tkleq:tk}, entails that 
\begin{align*}
(\forall k  \in \mathbb{N}) \quad  t_{N+k+1} &\leq \prod^{N+k}_{j=N} \alpha_{j} t_{N} + \sum^{N+k}_{i=N}  \prod^{N+k}_{j=i+1} \alpha_{j} \left( \beta_{i} \omega_{i}  + \gamma_{i} \right)\\
&\leq t_{N} + (1-\hat{\alpha})^{-1} \max \{ M, 0 \} < \infty.
\end{align*}

\cref{prop:tkleq:bounded}: 
In view of  \cref{eq:prop:tkleq:tk}, 
\begin{align}\label{prop:tkleq:bounded:tk}
(\forall k \in \mathbb{N}) \quad t_{k+1} &\leq  \prod^{k}_{j=0} \alpha_{j} t_{0} + \sum^{k}_{i=0}  \prod^{k}_{j=i+1} \alpha_{j} \beta_{i} \omega_{i}  +  \sum^{k}_{i=0}  \prod^{k}_{j=i+1}  \alpha_{j} \gamma_{i}.
\end{align}
  Because  $(\forall k \in \mathbb{N})$ $\alpha_{k} \in \left[0,1\right]$, we know that  $(\forall k \in \mathbb{N})$ $ \prod^{k}_{i=0} \alpha_{i} \in \left[0,1\right]$ and $1 -  \prod^{k}_{i=0} \alpha_{i} \in \left[0,1\right]$. 
Then combine \cref{lemma:alphabetaleq} with the assumption  to get that
\begin{align}\label{prop:tkleq:bounded:alphabetaomega}
(\forall k \in \mathbb{N}) \quad  \sum^{k}_{i=0}  \prod^{k}_{j=i+1} \alpha_{j} \beta_{i} \omega_{i}  \leq    \left( 1 -  \prod^{k}_{i=0} \alpha_{i} \right) \hat{\omega} \leq     \hat{\omega}.
\end{align}

If  $(\forall k \in \mathbb{N}) $ $ \alpha_{k} +\gamma_{k} \leq 1 $, then by  \cref{lemma:alphabetaleq},   $(\forall k \in \mathbb{N}) $ $ \sum^{k}_{i=0}  \prod^{k}_{j=i+1} \alpha_{j} \gamma_{i} \leq 1 -  \prod^{k}_{i=0} \alpha_{i} \leq 1 $, which, combining with \cref{prop:tkleq:bounded:tk} and \cref{prop:tkleq:bounded:alphabetaomega}, forces that $(\forall k \in \mathbb{N})$ $t_{k+1} \leq t_{0} + \hat{\omega} +1 <  \infty$.

On the other hand, if $ \sum_{i \in \mathbb{N}}  \abs{\gamma_{i}} <\infty$, then   $(\forall k \in \mathbb{N}) $ $ \sum^{k}_{i=0}  \prod^{k}_{j=i+1} \alpha_{j} \gamma_{i} \leq \sum^{k}_{i =0}   \abs{\gamma_{i}} \leq \sum_{i \in \mathbb{N}}  \abs{\gamma_{i}} <\infty$. 
Combine this with \cref{prop:tkleq:bounded:tk} and \cref{prop:tkleq:bounded:alphabetaomega} to get that $(\forall k \in \mathbb{N})$ $t_{k+1} \leq t_{0} + \hat{\omega} +  \sum_{i \in \mathbb{N}}   \abs{\gamma_{i} } <  \infty.$ 

Hence, in both cases,  $(t_{k})_{k \in \mathbb{N}}$ is bounded.

\cref{prop:tkleq:conve0}:  
Let $\epsilon \in \mathbb{R}_{++}$. Because $\limsup_{k \to \infty} \omega_{k} \leq 0$ and $\sum_{k \in \mathbb{N}} |\gamma_{k}| <\infty$, there exists $N \in \mathbb{N}$ such that 
\begin{align}\label{eq:prop:tkleq:conve0:epsolon}
(\forall k \geq N) \quad \omega_{k} \leq \epsilon \quad \text{and} \quad \sum^{\infty}_{i =k} |\gamma_{i } |< \epsilon.
\end{align}
Taking \cref{eq:prop:tkleq:tk} and \cref{lemma:alphabetaleq} into account, we  establish that 
\begin{align*}
(\forall k \in \mathbb{N} \setminus \{0\}) \quad t_{N+k} &\leq  \prod^{N+k-1}_{j=N} \alpha_{j} t_{N} + \sum^{N+k-1}_{i=N}  \prod^{N+k-1}_{j=i+1} \alpha_{j} \beta_{i} \omega_{i}  +  \sum^{N+k-1}_{i=N}  \prod^{N+k-1}_{j=i+1} \alpha_{j} \gamma_{i}\\
& \leq \prod^{N+k-1}_{j=N} \alpha_{j} t_{N} + \left( 1 -  \prod^{N+k-1}_{j=N} \alpha_{j} \right) \epsilon +  \sum^{N+k-1}_{i=N} |\gamma_{i}|\\
& \leq  \prod^{N+k-1}_{j=N} \alpha_{j} t_{N} +\epsilon +\epsilon,
\end{align*}
which implies that $\limsup_{k \to \infty} t_{k} \leq 2 \epsilon$, since $(\forall k \in \mathbb{N})$ $\alpha_{k} \in \left[0,1\right]$ and  $\sum_{i \in \mathbb{N}} (1-\alpha_{i}) = \infty $ imply that 
$ \prod_{k \in \mathbb{N}} \alpha_{k} =0$ and that $ \lim_{k \to \infty} \prod^{N+k-1}_{j=N} \alpha_{j}  =0$.
Because $\epsilon \in \mathbb{R}_{++}$ is chosen arbitrarily, and $(t_{k})_{k \in \mathbb{N}}$ is in $\mathbb{R}_{+}$,  we obtain that $\lim_{k \to \infty} t_{k} =0$. 

\cref{prop:tkleq:betakalphak}: Because $(\forall k \in \mathbb{N})$   $\frac{\beta_{k}   }{1-\alpha_{k}}  \in \mathbb{R}_{+}$ with $\sup_{k \in \mathbb{N}} \frac{\beta_{k}   }{1-\alpha_{k}}   <\infty$ and $\limsup_{k \to \infty} \omega_{k}\leq 0$, it is easy to prove that $\limsup_{k \to \infty} \frac{\beta_{k}   }{1-\alpha_{k}}  \omega_{k}\leq 0$. Moreover,  inasmuch as \cref{eq:lemma:tkleq:tk},
\begin{align*}
	(\forall k \in \mathbb{N}) \quad t_{k+1} \leq \alpha_{k} t_{k} + \beta_{k} \omega_{k} +\gamma_{k } \leq \alpha_{k} t_{k} + (1-\alpha_{k}) \frac{ \beta_{k}}{1-\alpha_{k}} \omega_{k} +\abs{\gamma_{k }}.
\end{align*}
So the required result follows easily from \cref{fact:sequence:sk}.
\end{proof}

\begin{fact} {\rm \cite[Lemma~2.2]{Suzuki2005}} \label{fact:sequence:ukvk}
	Let $(u_{k})_{k \in \mathbb{N}}$ and $(v_{k})_{k \in \mathbb{N}}$ be bounded sequences in $\mathcal{H}$  and let $(\alpha_{k})_{k \in \mathbb{K}}$ be a sequence in $\left[0,1 \right]$ with
	$0 < \liminf_{k \to \infty} \alpha_{k} \leq \limsup_{k \to \infty} \alpha_{k} <1$. Suppose that 
	\begin{align*}
	(\forall k \in \mathbb{N}) ~ u_{k+1} =\alpha_{k} v_{k} + (1-\alpha_{k}) u_{k} \quad \text{and} \quad  \limsup_{i \to \infty} \left( \norm{v_{i+1} -v_{i}} -\norm{u_{i+1} -u_{i}}  \right) \leq 0.
	\end{align*} 
	Then $\lim_{k \to \infty} \norm{v_{k} -u_{k}} =0$.
\end{fact}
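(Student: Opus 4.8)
The plan is to reduce the claim to an asymptotic-regularity statement and then exploit the geometry of $\HH$ together with the boundedness hypothesis. Writing $p_{k}:=v_{k}-u_{k}$ and $s_{k}:=\norm{p_{k}}$, the recursion yields at once the two exact relations $\norm{u_{k+1}-u_{k}}=\alpha_{k}s_{k}$ and $p_{k+1}=(v_{k+1}-v_{k})+(1-\alpha_{k})p_{k}$. Because $0<\liminf_{k}\alpha_{k}\le\limsup_{k}\alpha_{k}<1$, I would first fix constants $0<a\le b<1$ and $N_{0}\in\mathbb{N}$ with $a\le\alpha_{k}\le b$ for all $k\ge N_{0}$; in particular $\liminf_{k}\alpha_{k}>0$ shows $s_{k}\to0$ is equivalent to $\norm{u_{k+1}-u_{k}}\to0$, so it suffices to analyse $(s_{k})_{k\in\mathbb{N}}$.

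Setting $\xi_{k}:=\norm{v_{k+1}-v_{k}}-\norm{u_{k+1}-u_{k}}$ (so $\limsup_{k}\xi_{k}\le0$), the decomposition of $p_{k+1}$ and the triangle inequality give the one-step estimate $s_{k+1}\le\norm{v_{k+1}-v_{k}}+(1-\alpha_{k})s_{k}=s_{k}+\xi_{k}$. This alone is too weak — it forbids $(s_{k})_{k\in\mathbb{N}}$ from increasing but does not force it to $0$ — so the decisive ingredient must be boundedness. I would therefore argue by contradiction: put $R:=\sup_{k\in\mathbb{N}}\norm{u_{k}}<\infty$ and $c:=\limsup_{k}s_{k}$, and suppose $c>0$.

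Pick a subsequence with $s_{n_{k}}\to c$. Since $s_{m+1}\le s_{m}+\xi_{m}$ gives $s_{m}\ge s_{m+1}-\xi_{m}$, taking $m=n_{k}-1$ and using $\limsup_{k}\xi_{n_{k}-1}\le0$ forces $\liminf_{k}s_{n_{k}-1}\ge c$; as also $\limsup_{k}s_{n_{k}-1}\le c$, we get $s_{n_{k}-1}\to c$, and by induction $s_{n_{k}-j}\to c$ for every fixed $j\ge0$. The geometric heart of the argument is to show that, along any pair of consecutive indices $m,m+1$ with $s_{m}\to c$ and $s_{m+1}\to c$, the increment $v_{m+1}-v_{m}$ asymptotically aligns with $p_{m}$. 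Expanding in $\HH$,
\[ s_{m+1}^{2}=\norm{v_{m+1}-v_{m}}^{2}+2(1-\alpha_{m})\scal{v_{m+1}-v_{m}}{p_{m}}+(1-\alpha_{m})^{2}s_{m}^{2}, \]
and passing to the limit along a further subsequence on which $\alpha_{m}$, $\norm{v_{m+1}-v_{m}}$ and $\scal{v_{m+1}-v_{m}}{p_{m}}$ converge, the bounds $\norm{v_{m+1}-v_{m}}\le\alpha_{m}s_{m}+\xi_{m}$ together with Cauchy--Schwarz squeeze every inequality into an equality; this forces $\norm{(v_{m+1}-v_{m})-\alpha_{m}p_{m}}\to0$, i.e. $p_{m+1}-p_{m}\to0$. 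Applying this to each consecutive pair inside a window, for every fixed $J$ the vectors $p_{n_{k}},p_{n_{k}-1},\dots,p_{n_{k}-J}$ are all asymptotically equal to $p_{n_{k}}$, whose norm tends to $c$.

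Finally I would sum the $u$-increments across such a window: since $u_{k+1}-u_{k}=\alpha_{k}p_{k}$ with $\alpha_{k}\ge a$ and the $p_{n_{k}-j}$ point asymptotically in one common direction, $\norm{u_{n_{k}}-u_{n_{k}-J}}\ge\tfrac{1}{2}Jac$ for all large $k$. As $(u_{k})_{k\in\mathbb{N}}$ is bounded, $\norm{u_{n_{k}}-u_{n_{k}-J}}\le 2R$ independently of $J$, so choosing $J>4R/(ac)$ yields a contradiction; hence $c=0$, that is $\norm{v_{k}-u_{k}}\to0$. I expect the alignment step — turning the limiting Cauchy--Schwarz equality into genuine vector parallelism of the increments, with clean bookkeeping of the nested subsequences — to be the main obstacle, since it is exactly where the two-sided control on $\alpha_{k}$ and the boundedness of the iterates must be combined; the remaining pieces are the two elementary identities, the backward propagation of the limit, and the boundedness count.
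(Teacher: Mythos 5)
Your proof is correct, but be aware that the paper offers no proof of this statement at all: it is quoted verbatim as Lemma~2.2 of \cite{Suzuki2005}, so the only thing to compare against is Suzuki's original argument. On its own merits your proof checks out. The identities $u_{k+1}-u_{k}=\alpha_{k}p_{k}$ and $p_{k+1}=(v_{k+1}-v_{k})+(1-\alpha_{k})p_{k}$ are exact, the one-step bound $s_{k+1}\le s_{k}+\xi_{k}$ and the backward propagation $s_{n_{k}-j}\to c$ are sound, and the alignment step does close: since $\norm{v_{m+1}-v_{m}}=\alpha_{m}s_{m}+\xi_{m}$ is in fact an equality, passing to a sub-subsequence gives $c^{2}\bigl(2\alpha-\alpha^{2}\bigr)=L^{2}+2(1-\alpha)I$ with $L\le\alpha c$ and $I\le Lc$, and because $\alpha\le b<1$ the map $L\mapsto L^{2}+2(1-\alpha)Lc$ is strictly increasing, forcing $L=\alpha c$ and $I=\alpha c^{2}$, hence $\norm{(v_{m+1}-v_{m})-\alpha_{m}p_{m}}^{2}\to L^{2}-2\alpha I+\alpha^{2}c^{2}=0$ and $p_{m+1}-p_{m}\to 0$; the window count $\norm{u_{n_{k}}-u_{n_{k}-J}}\ge\tfrac12 Jac$ against the uniform bound $2R$ then delivers the contradiction. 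The genuine difference from Suzuki is that your argument is intrinsically Hilbertian: the inner-product expansion is what converts the limiting norm equalities into vector parallelism of the increments. Suzuki proves the lemma in an arbitrary Banach space by a purely metric counting argument (propagating lower bounds of the form $\norm{v_{n+j}-u_{n}}\gtrsim(1+ja)c$ along the extremal subsequence), which is why the fact can be cited in that generality. You trade generality for a more transparent geometric mechanism; since the present paper works exclusively in a Hilbert space, your proof is a legitimate self-contained replacement for the citation.
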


The existence of the limit $\lim_{k \to \infty} a_{k}$ in the following \cref{fact:SequenceConverg} was directly used in proofs of \cite{EcksteinBertsekas1992}, \cite{MarinoXu2004}, \cite{Rockafellar1976},  \cite{Xu2002} and many other papers on the convergence of proximal point algorithms. 
For completeness, we present a detailed proof below. 
\begin{fact}\label{fact:SequenceConverg}
	Let $(a_{k})_{k \in \mathbb{N}}$ and $(b_{k})_{k \in \mathbb{N}}$ be sequences in $\mathbb{R}_{+}$ such that $\sum_{k \in \mathbb{N}} b_{k} <  \infty$ and 
	\begin{align}\label{eq:fact:SequenceConverg:akbk}
	(\forall k \in \mathbb{N}) \quad a_{k+1} \leq a_{k} +b_{k}.
	\end{align}
	Then   $\lim_{k \to \infty} a_{k} = \liminf_{k \to \infty} a_{k} \in \mathbb{R}_{+}$.  
\end{fact}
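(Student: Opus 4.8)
The plan is to telescope the one-step estimate \cref{eq:fact:SequenceConverg:akbk} and to control the accumulated error by the tail of the convergent series $\sum_{k \in \mathbb{N}} b_{k}$. First I would fix $n \in \mathbb{N}$ and iterate the recursion: a trivial induction on $m$ gives, for every $m \geq n$, the bound $a_{m} \leq a_{n} + \sum_{i=n}^{m-1} b_{i}$. Since $\sum_{k \in \mathbb{N}} b_{k} < \infty$, the tail $B_{n} := \sum_{i=n}^{\infty} b_{i}$ is finite and satisfies $B_{n} \to 0$ as $n \to \infty$; moreover $\sum_{i=n}^{m-1} b_{i} \leq B_{n}$ because all $b_{i} \geq 0$. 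Hence $a_{m} \leq a_{n} + B_{n}$ for all $m \geq n$, which is the single substantive observation of the argument.

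Next I would pass to limits. Taking the limit superior over $m$ in the uniform bound yields $\limsup_{m \to \infty} a_{m} \leq a_{n} + B_{n}$ for every fixed $n$. The left-hand side is a constant, so taking the limit inferior over $n$ on the right and using $B_{n} \to 0$ (adding a null sequence does not alter the $\liminf$) gives $\limsup_{m \to \infty} a_{m} \leq \liminf_{n \to \infty} a_{n}$. Combined with the elementary inequality $\liminf_{k \to \infty} a_{k} \leq \limsup_{k \to \infty} a_{k}$, this forces equality, so $\lim_{k \to \infty} a_{k}$ exists and coincides with $\liminf_{k \to \infty} a_{k}$. Finally, because $(a_{k})_{k \in \mathbb{N}}$ lies in $\mathbb{R}_{+}$, its limit inferior is nonnegative, whence the common value belongs to $\mathbb{R}_{+}$, as required.

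An equivalent and perhaps slicker route is to introduce the auxiliary sequence $c_{k} := a_{k} + B_{k}$: a one-line computation from $a_{k+1} \leq a_{k} + b_{k}$ and $B_{k} = b_{k} + B_{k+1}$ shows $c_{k+1} \leq c_{k}$, so $(c_{k})_{k \in \mathbb{N}}$ is nonincreasing and bounded below by $0$, hence convergent; since $a_{k} = c_{k} - B_{k}$ and $B_{k} \to 0$, the sequence $(a_{k})_{k \in \mathbb{N}}$ converges to the same limit. I would present the first argument because it delivers the identity $\lim = \liminf$ directly. There is no genuine obstacle here: the conclusion is a standard consequence of summability of the increments, and the only point requiring a little care is the order of the $\limsup$/$\liminf$ passage, which is legitimate precisely because $B_{n} \to 0$.
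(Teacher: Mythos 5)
Your proof is correct and rests on the same key inequality as the paper's own argument, namely the telescoped bound $a_{m} \leq a_{n} + \sum_{i=n}^{m-1} b_{i} \leq a_{n} + B_{n}$ together with the vanishing of the tail $B_{n}$. The difference is only presentational: the paper runs an explicit $\epsilon/3$ argument, extracting a subsequence converging to $\liminf_{k} a_{k}$ and combining it with the lower bound $a_{k} > \liminf_{k} a_{k} - \epsilon/3$, whereas you reach the same conclusion more cleanly by taking $\limsup_{m}$ and then $\liminf_{n}$ in the uniform bound (and your alternative via the nonincreasing sequence $c_{k} = a_{k} + B_{k}$ is equally valid).
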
 
\begin{proof}
	
	Let $\epsilon \in \mathbb{R}_{++}$. Because  $\sum_{k \in \mathbb{N}} b_{k} <\infty$ and $(b_{k})_{k \in \mathbb{N}}$ is in $\mathbb{R}_{+}$ , there exists $K_{1} \in \mathbb{N}$ such that 
	\begin{align}\label{eq:fact:SequenceConverg:sumbi}
	(\forall k \geq K_{1}) \quad \sum_{i=k}^{\infty} b_{i} < \frac{\epsilon}{3}.
	\end{align} 
Denote by $\bar{a}:=\liminf_{k \to \infty} a_{k} $.  By the definition of $\liminf$, there exists a subsequence $(a_{n_{k}})_{k \in \mathbb{N}}$ of $(a_{k})_{k \in \mathbb{N}}$ such that $a_{n_{k}} \to \bar{a} \in \mathbb{R}_{+}$.
	Then, there exists	$K_{2}  \in \mathbb{N}$ such that 
	\begin{align}\label{eq:fact:SequenceConverg:akj}
	(\forall k  \geq K_{2}) \quad \abs{a_{n_{k}} -\bar{a} } < \frac{\epsilon}{3}.
	\end{align}
	Employ the definition of $\liminf$ again to know that there exists $K_{3} \in \mathbb{N}$ such that
	\begin{align}\label{eq:fact:SequenceConverg:>alpha}
	(\forall k \geq K_{3}) \quad a_{k} > \bar{a}  -  \frac{\epsilon}{3}.
	\end{align} 
	Set $K:= \max \{ K_{1}, K_{2}, K_{3} \}$. Then 
	\begin{align*}
	(\forall i >  n_{K}) \quad \bar{a}  -  \frac{\epsilon}{3} \stackrel{\cref{eq:fact:SequenceConverg:>alpha}}{<} a_{i}  \stackrel{\cref{eq:fact:SequenceConverg:akbk}}{\leq} a_{i-1} +b_{i-1} \stackrel{\cref{eq:fact:SequenceConverg:akbk}}{\leq}  \cdots \stackrel{\cref{eq:fact:SequenceConverg:akbk}}{\leq} a_{n_{K}} +\sum_{j=n_{K}}^{i-1}b_{j} \stackrel{\cref{eq:fact:SequenceConverg:sumbi}\cref{eq:fact:SequenceConverg:akj}}{\leq} \bar{a}  +\frac{2\epsilon}{3},
	\end{align*}
	which implies the desired result. 
\end{proof}

\begin{fact} {\rm \cite[Lemma~2.5]{MarinoXu2004}} \label{fact:x+y} 
	Let $x$ and $y$ be in $\mathcal{H}$. Then $\norm{x+y}^{2} \leq \norm{x}^{2} + 2\innp{y,x+y} \leq \norm{x}^{2} + 2\norm{y}\norm{x+y}$ and  $\norm{x+y}^{2} \leq \norm{x}^{2} + \norm{y} ( 2\norm{x} +\norm{y})$.
\end{fact}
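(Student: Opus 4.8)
The plan is to reduce everything to the expansion of the squared norm in terms of the inner product, together with the Cauchy--Schwarz and triangle inequalities; no monotone-operator machinery is needed here, and each of the three claimed inequalities is essentially a one-line computation. I would treat the chain $\norm{x+y}^{2} \leq \norm{x}^{2} + 2\innp{y,x+y} \leq \norm{x}^{2} + 2\norm{y}\norm{x+y}$ and the separate bound $\norm{x+y}^{2} \leq \norm{x}^{2} + \norm{y}(2\norm{x}+\norm{y})$ by slightly different elementary means.

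First I would establish the leftmost inequality $\norm{x+y}^{2} \leq \norm{x}^{2} + 2\innp{y,x+y}$ by computing both sides via the inner product. Expanding gives $\norm{x+y}^{2} = \norm{x}^{2} + 2\innp{x,y} + \norm{y}^{2}$, while $2\innp{y,x+y} = 2\innp{x,y} + 2\norm{y}^{2}$. Subtracting yields the exact identity $\norm{x}^{2} + 2\innp{y,x+y} - \norm{x+y}^{2} = \norm{y}^{2} \geq 0$, which delivers the inequality (indeed with a quantified gap of $\norm{y}^{2}$). The middle inequality $2\innp{y,x+y} \leq 2\norm{y}\norm{x+y}$ is then just Cauchy--Schwarz, $\innp{y,x+y} \leq \norm{y}\,\norm{x+y}$, doubled.

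For the final, independent inequality I would use the triangle inequality $\norm{x+y} \leq \norm{x}+\norm{y}$, square it, and regroup: $(\norm{x}+\norm{y})^{2} = \norm{x}^{2} + 2\norm{x}\norm{y} + \norm{y}^{2} = \norm{x}^{2} + \norm{y}(2\norm{x}+\norm{y})$. I would deliberately keep this step separate from the chain above, since deriving it by substituting $\norm{x+y} \leq \norm{x}+\norm{y}$ into the middle bound would only produce the looser factor $2\norm{y}(\norm{x}+\norm{y})$ rather than the claimed $\norm{y}(2\norm{x}+\norm{y})$.

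There is no genuine obstacle in this statement; it is a routine Hilbert-space estimate. The only points requiring minor care are the sign bookkeeping in the first expansion (ensuring the leftover term is exactly $+\norm{y}^{2}$) and the temptation to obtain the last inequality from the preceding chain, which I would avoid for the reason just noted.
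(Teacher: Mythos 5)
Your proof is correct. The paper does not prove this statement at all --- it is quoted as a known result with a citation to \cite[Lemma~2.5]{MarinoXu2004} --- so there is no in-paper argument to compare against. Your three steps are the standard ones and all check out: the exact identity $\norm{x}^{2}+2\innp{y,x+y}-\norm{x+y}^{2}=\norm{y}^{2}\geq 0$ gives the first inequality, Cauchy--Schwarz gives the second, and squaring the triangle inequality gives the third; your remark that the third bound should be derived independently rather than by substituting the triangle inequality into the middle term (which would only yield the weaker $2\norm{y}(\norm{x}+\norm{y})$) is a correct and worthwhile observation.
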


\subsection*{Maximally monotone operators}
\begin{definition} {\rm \cite[Definition~23.1]{BC2017}} \label{defn:ResolventApproxi}
	Let $G: \mathcal{H} \to 2^{\mathcal{H}}$ and let $\gamma \in \mathbb{R}_{++}$. The \emph{resolvent of $G$} is 
	\begin{align*}
	\J_{G} = (\Id + G)^{-1}
	\end{align*}
	and the \emph{Yosida approximation of $G$ of index $\gamma$} is
	\begin{align*}
  \prescript{\gamma}{}{G} = \frac{1}{\gamma} (\Id -  	\J_{\gamma G}).
	\end{align*}
\end{definition}
\begin{definition} \label{defn:FirmNonexpansive} {\rm \cite[Definition~4.1]{BC2017}}
	Let $D$ be a nonempty subset of $\mathcal{H}$ and let $T:D \rightarrow \mathcal{H}$. Then $T$ is	\begin{enumerate}
			\item \label{defn:FirmNonexpansive:Firm} \emph{firmly nonexpansive} if  $(\forall x  \in D)$ 	$(\forall y \in D)$ $\norm{Tx -Ty}^{2} +\norm{(\Id -T) x -(\Id -T) y}^{2} \leq \norm{x-y}^{2}$;
		\item \label{defn:FirmNonexpansive:Nonex} \emph{nonexpansive} if 
		 $(\forall x  \in D)$ 	$(\forall y \in D)$ $\norm{Tx -Ty} \leq \norm{x-y}$.
	\end{enumerate}
\end{definition}
Remember that  throughout this work,
\begin{empheq}[box = \mybluebox]{equation*}
\text{$A:\mathcal{H} \to 2^{\mathcal{H}}$ is maximally monotone.}
\end{empheq}

The following properties of the resolvent and Yosida approximation of  maximally monotone operators are fundamental to our analysis later and will be frequently used in the next sections. 
\begin{fact} \label{fact:resolvent}
\begin{enumerate}
		\item  \label{fact:resolvent:demiclosed} {\rm \cite[Proposition~20.38(ii)]{BC2017}} $\gra A$ is sequentially closed in $\mathcal{H}^{\text{weak}} \times \mathcal{H}^{\text{strong}}$, i.e., for every sequence $(x_{k}, u_{k})_{k \in \mathbb{N}}$ in $\gra A$ and every $(x,u) \in \mathcal{H}\times \mathcal{H}$, if $x_{k} \weakly x$ and $u_{k} \to u$, then $(x,u) \in \gra A$.	
	\item \label{fact:resolvent:ingraphA} {\rm \cite[Proposition~23.7(i)]{BC2017}} $(\forall \gamma \in \mathbb{R}_{++})$ $(\forall x \in \mathcal{H})$ $ \frac{1}{\gamma} (x -  	\J_{\gamma A}x)=\prescript{\gamma}{}{A} (x) \in A \left(\J_{\gamma A}x \right)$, that is, $\left( \J_{\gamma A}x ,  \prescript{\gamma}{}{A} (x) \right) \in \gra A$.
	\item \label{fact:resolvent:FN} {\rm \cite[Proposition~23.10]{BC2017}}  $\J_{A} $ is full domain, single-valued, and firmly nonexpansive. 
	\item  \label{fact:resolvent:zer} {\rm \cite[Proposition~23.38]{BC2017}} Let $\gamma \in \mathbb{R}_{++}$. Then $\zer A =\Fix \J_{\gamma A} = \zer  \prescript{\gamma}{}{A} $.
	\item  \label{fact:resolvent:closedconvex} {\rm \cite[Proposition~23.39]{BC2017}} $\zer A$ is closed and convex. 
\end{enumerate}	
\end{fact}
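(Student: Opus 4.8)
The plan is to verify the five items in order, since each rests on the definitions of resolvent and Yosida approximation in \cref{defn:ResolventApproxi} together with the monotonicity and maximal monotonicity characterizations recorded in \cref{sec:Introduction}. None of the items is self-contained except through these definitions, so I would treat them as a short chain culminating in the closedness/convexity statement.

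For \cref{fact:resolvent:demiclosed} I would argue directly from maximal monotonicity. Given $(x_{k},u_{k}) \in \gra A$ with $x_{k} \weakly x$ and $u_{k} \to u$, and any fixed $(y,v) \in \gra A$, monotonicity yields $\innp{x_{k}-y,u_{k}-v} \geq 0$. I would then pass to the limit: expanding the inner product, the weak convergence $x_{k}\weakly x$ handles the terms paired against the fixed elements $v$ and $y$, and $\innp{x_{k},u_{k}} \to \innp{x,u}$ because a weakly convergent sequence is bounded and $u_{k}\to u$ strongly. Hence $\innp{x-y,u-v} \geq 0$ for every $(y,v)\in\gra A$, and the maximality characterization forces $(x,u)\in\gra A$. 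For \cref{fact:resolvent:ingraphA} I would set $p := \J_{\gamma A}x = (\Id+\gamma A)^{-1}x$; by the definition of the inverse graph this means $x \in (\Id+\gamma A)(p) = p+\gamma A(p)$, so $\frac{1}{\gamma}(x-p) \in A(p)$, and recognizing $\frac{1}{\gamma}(x-p) = \prescript{\gamma}{}{A}(x)$ from \cref{defn:ResolventApproxi} gives both the membership and the assertion that the pair lies in $\gra A$.

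Item \cref{fact:resolvent:FN} is where the real work lies, and I expect the full-domain clause to be the main obstacle. Single-valuedness and firm nonexpansiveness I would extract from monotonicity alone: writing $p_{i} = \J_{A}x_{i}$ gives $x_{i}-p_{i} \in A(p_{i})$, and monotonicity applied to these two pairs yields $\innp{p_{1}-p_{2},(x_{1}-p_{1})-(x_{2}-p_{2})} \geq 0$, which rearranges exactly into the firm-nonexpansiveness inequality of \cref{defn:FirmNonexpansive}, with single-valuedness falling out of the same inequality when $x_{1}=x_{2}$. The assertion that $\J_{A}$ has full domain is the genuinely deep point: it is equivalent to the surjectivity of $\Id+A$ for maximally monotone $A$, i.e.\ Minty's theorem, whose proof uses the Hilbert-space structure in an essential way. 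I would simply invoke it rather than reprove it, since reconstructing it is out of scope for a preliminaries section.

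Finally, \cref{fact:resolvent:zer,fact:resolvent:closedconvex} follow quickly from what precedes. The equivalences $x\in\zer A \iff 0\in A(x) \iff x \in (\Id+\gamma A)(x) \iff x = \J_{\gamma A}x$ establish $\zer A = \Fix \J_{\gamma A}$, and since $\prescript{\gamma}{}{A}(x) = \frac{1}{\gamma}(x-\J_{\gamma A}x)$ vanishes precisely at such fixed points, this also gives $\zer A = \zer\prescript{\gamma}{}{A}$. For closedness and convexity I would use $\zer A = \Fix\J_{A}$ together with the standard fact that the fixed-point set of a nonexpansive operator is closed and convex, deriving the latter from the nonexpansiveness inequality in \cref{defn:FirmNonexpansive} (closedness by continuity, convexity by a short estimate showing midpoints of fixed points are again fixed).
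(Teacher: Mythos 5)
Your sketches are correct and coincide with the standard arguments behind the results cited here: the paper offers no proof of this Fact at all, simply pointing to Propositions~20.38(ii), 23.7(i), 23.10, 23.38 and 23.39 of [BC2017], and your limit argument for the graph closedness, the unravelling of the resolvent definition, the monotonicity computation for firm nonexpansiveness, and the fixed-point identifications are precisely the proofs found there. Your choice to invoke Minty's theorem for the full-domain clause rather than reprove it is the right call, since surjectivity of $\Id+A$ is the only genuinely deep ingredient and is exactly why the paper records these statements as a cited Fact.
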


\begin{fact} {\rm \cite[Lemma~2.4]{MarinoXu2004}} \label{fact:ineqJIden}
  Let $\lambda$ and $\mu$ be in $\mathbb{R}_{++}$. Then 
	\begin{align*}
	(\forall x \in \mathcal{H}) \quad \J_{\lambda A}(x ) = \J_{\mu A} \left( \frac{\mu}{\lambda}x + \left( 1- \frac{\mu}{\lambda} \right)  \J_{\lambda A}x  \right). 
	\end{align*}
\end{fact}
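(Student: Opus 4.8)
The plan is to reduce each side to a membership statement in $\gra A$ and match them. Fix $x \in \mathcal{H}$ and put $p := \J_{\lambda A}(x)$. By the definition $\J_{\lambda A} = (\Id + \lambda A)^{-1}$ (equivalently by \cref{fact:resolvent}\cref{fact:resolvent:ingraphA} with $\gamma = \lambda$), this means $x \in p + \lambda A(p)$, i.e.\ $\frac{1}{\lambda}(x - p) \in A(p)$.

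Next I would verify that this same $p$ is the value of $\J_{\mu A}$ at the prescribed argument. Set $y := \frac{\mu}{\lambda}x + \left(1 - \frac{\mu}{\lambda}\right)p$. A one-line computation gives $y - p = \frac{\mu}{\lambda}(x - p)$, whence $\frac{1}{\mu}(y - p) = \frac{1}{\lambda}(x - p) \in A(p)$. Therefore $y \in p + \mu A(p) = (\Id + \mu A)(p)$, that is, $p \in (\Id + \mu A)^{-1}(y) = \J_{\mu A}(y)$. Since $\J_{\mu A}$ is single-valued by \cref{fact:resolvent}\cref{fact:resolvent:FN}, we conclude $\J_{\mu A}(y) = p = \J_{\lambda A}(x)$, which is the asserted identity.

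There is essentially no obstacle here: the whole content is the algebraic observation that $y$ has been engineered so that the increment $\frac{1}{\mu}(y - p)$ equals $\frac{1}{\lambda}(x - p)$ and hence lands in $A(p)$. The only external inputs are the graph characterization of the resolvent and its single-valuedness, both collected in \cref{fact:resolvent}. If one wished to bypass single-valuedness, running the computation in reverse shows that any $q \in \J_{\mu A}(y)$ satisfies $\frac{1}{\lambda}(x - q) \in A(q)$ and thus $q = \J_{\lambda A}(x)$; but appealing to \cref{fact:resolvent}\cref{fact:resolvent:FN} is cleaner.
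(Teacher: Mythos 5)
Your proof is correct and complete: the computation $y-p=\tfrac{\mu}{\lambda}(x-p)$, hence $\tfrac{1}{\mu}(y-p)=\tfrac{1}{\lambda}(x-p)\in A(p)$, together with the single-valuedness of $\J_{\mu A}$ (which holds since $\mu A$ is maximally monotone), is exactly the standard argument for this resolvent identity. The paper itself states this as a quoted Fact from \cite[Lemma~2.4]{MarinoXu2004} and gives no proof, so there is nothing to compare against beyond noting that your route is the canonical one.
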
	
The following \cref{fact:ineqTIden}\cref{fact:ineqTIden:eq} is  used in the proof of \cite[Theorem~3.6]{MarinoXu2004}.
\begin{fact}\label{fact:ineqTIden}
  Let $\lambda$ and $\mu$ be in $\mathbb{R}_{++}$. Set $T_{\lambda}:=2  \J_{\lambda A} -\Id$ and $T_{\mu}:=2  \J_{\mu A} -\Id$. Then the following hold. 
	\begin{enumerate}
		\item \label{fact:ineqTIden:eq} $(\forall x \in \mathcal{H})$ $T_{\lambda}x =T_{\mu} \left(  \frac{\mu}{\lambda}x+\left( 1- \frac{\mu}{\lambda} \right)  \J_{\lambda A}x   \right) + \left( 1-  \frac{\mu}{\lambda} \right) (  \J_{\lambda A}(x)-x )$.
		\item\label{fact:ineqTIden:ineq} $(\forall x \in \mathcal{H})$ $\norm{T_{\lambda}(x) -T_{\mu}(x) } \leq \left|  1-  \frac{\mu}{\lambda} \right| \norm{T_{\lambda}(x) -x}$.
	\end{enumerate}
\end{fact}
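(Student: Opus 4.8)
The plan is to derive part \emph{(i)} by a direct algebraic identity resting on \cref{fact:ineqJIden}, and then to obtain the estimate in part \emph{(ii)} from that identity together with the triangle inequality and the nonexpansiveness of the reflected resolvent $T_{\mu}=2\J_{\mu A}-\Id$.

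For part \emph{(i)}, I would fix $x\in\mathcal{H}$ and abbreviate $y:=\frac{\mu}{\lambda}x+\lr{1-\frac{\mu}{\lambda}}\J_{\lambda A}x$, which is exactly the point occurring in \cref{fact:ineqJIden}. That fact yields $\J_{\mu A}(y)=\J_{\lambda A}x$, so that $T_{\mu}(y)=2\J_{\mu A}(y)-y=2\J_{\lambda A}x-y$. Substituting the definition of $y$ into $T_{\mu}(y)+\lr{1-\frac{\mu}{\lambda}}\lr{\J_{\lambda A}x-x}$ and collecting the coefficients of $\J_{\lambda A}x$ and of $x$ separately, the $\J_{\lambda A}x$-terms add up to $2\J_{\lambda A}x$ while the $x$-terms add up to $-x$; hence the whole expression equals $2\J_{\lambda A}x-x=T_{\lambda}x$, which is the claimed identity. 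No inequality is used here, only careful bookkeeping of the factor $\frac{\mu}{\lambda}$.

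For part \emph{(ii)}, I would rewrite the identity just proved as $T_{\lambda}x-T_{\mu}x=\lr{T_{\mu}(y)-T_{\mu}x}+\lr{1-\frac{\mu}{\lambda}}\lr{\J_{\lambda A}x-x}$ and apply the triangle inequality. The first term is bounded via nonexpansiveness of $T_{\mu}$ by $\norm{T_{\mu}(y)-T_{\mu}x}\leq\norm{y-x}$, and a one-line computation gives $y-x=\lr{1-\frac{\mu}{\lambda}}\lr{\J_{\lambda A}x-x}$, whence $\norm{y-x}=\abs{1-\frac{\mu}{\lambda}}\,\norm{\J_{\lambda A}x-x}$. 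Since $T_{\lambda}x-x=2\lr{\J_{\lambda A}x-x}$, we have $\norm{\J_{\lambda A}x-x}=\frac{1}{2}\norm{T_{\lambda}x-x}$, and combining these estimates gives $\norm{T_{\lambda}x-T_{\mu}x}\leq 2\abs{1-\frac{\mu}{\lambda}}\,\norm{\J_{\lambda A}x-x}=\abs{1-\frac{\mu}{\lambda}}\,\norm{T_{\lambda}x-x}$, as required.

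The one ingredient not already recorded in the facts above is the nonexpansiveness of $T_{\mu}=2\J_{\mu A}-\Id$, which is standard and follows from the firm nonexpansiveness of $\J_{\mu A}$ in \cref{fact:resolvent}\cref{fact:resolvent:FN}: writing $a:=\J_{\mu A}x-\J_{\mu A}z$ and $b:=(\Id-\J_{\mu A})x-(\Id-\J_{\mu A})z$, firm nonexpansiveness reads $\norm{a}^{2}+\norm{b}^{2}\leq\norm{a+b}^{2}$, i.e.\ $\innp{a,b}\geq 0$, and therefore $\norm{T_{\mu}x-T_{\mu}z}^{2}=\norm{a-b}^{2}\leq\norm{a+b}^{2}=\norm{x-z}^{2}$. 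I do not expect a genuine obstacle here: part \emph{(i)} is a bookkeeping identity and, once it is in hand, part \emph{(ii)} is essentially immediate; the only thing to watch is keeping the coefficient $\frac{\mu}{\lambda}$ and the factor $2$ consistent throughout.
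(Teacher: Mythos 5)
Your proposal is correct and follows essentially the same route as the paper: part (i) is obtained from \cref{fact:ineqJIden} plus the definitions of $T_{\lambda}$ and $T_{\mu}$, and part (ii) is deduced from (i) via the triangle inequality and the nonexpansiveness of $T_{\mu}$ (which the paper cites from \cite[Proposition~4.4]{BC2017} and you verify directly from firm nonexpansiveness). Your write-up simply supplies the bookkeeping that the paper leaves implicit; all coefficients and the factor $2$ check out.
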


\begin{proof}
	\cref{fact:ineqTIden:eq}: The desired result follows immediately  from \cref{fact:ineqJIden} and the definitions of $T_{\lambda}$ and $T_{\mu}$.
	
	\cref{fact:ineqTIden:ineq}: Notice that, via \cref{fact:resolvent}\cref{fact:resolvent:FN} and \cite[Proposition~4.4]{BC2017}, $T_{\lambda}$ and $T_{\mu}$ are nonexpansive. Hence, 
	the required inequality follows easily from \cref{fact:ineqTIden:eq}.
\end{proof}

\begin{fact} {\rm \cite[Lemma~3.3]{MarinoXu2004}} \label{fact:ineqJ}
  Let $c$ and $\bar{c}$ be in $\mathbb{R}_{++}$ with $\bar{c} \leq c$. Then $(\forall x \in \mathcal{H})$ $\norm{\J_{\bar{c} A}(x )-x} \leq 2 \norm{\J_{c A}(x )-x}$.
	In particular, for every sequences $(y_{k})_{k \in \mathbb{N}}$ in $\mathcal{H}$ and $(c_{k})_{k \in \mathbb{N}}$ in $\mathbb{R}_{++}$ such that $(\forall k \in \mathbb{N})$ $\bar{c} \leq c_{k}$, we have 
	\begin{align*}
	(\forall k \in \mathbb{N}) \quad \norm{\J_{\bar{c} A}(y_{k} )-y_{k}} \leq 2 \norm{\J_{c_{k} A}(y_{k} )-y_{k}}.
	\end{align*} 
\end{fact}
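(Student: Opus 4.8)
The plan is to reduce the estimate to the self-referential resolvent identity in \cref{fact:ineqJIden}, combined with the nonexpansiveness of $\J_{\bar{c}A}$. Writing $t := \bar{c}/c$, the hypothesis $\bar{c} \leq c$ guarantees $t \in \left]0,1\right]$, which is exactly the regime in which the forthcoming estimate $2 - t \leq 2$ will close the argument.

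First I would invoke \cref{fact:ineqJIden} with $\lambda = c$ and $\mu = \bar{c}$ to express
\[
\J_{cA}(x) = \J_{\bar{c}A}\left( t x + (1-t) \J_{cA}(x) \right),
\]
so that $\J_{cA}(x)$ is itself the value of $\J_{\bar{c}A}$ at the point $z := t x + (1-t)\J_{cA}(x)$. The purpose of this step is to realize both $\J_{\bar{c}A}(x)$ and $\J_{cA}(x)$ as images of the single operator $\J_{\bar{c}A}$, so that the gap between them can be controlled by the gap between their arguments.

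Next I would apply the nonexpansiveness of $\J_{\bar{c}A}$ — which holds because $\J_{\bar{c}A}$ is firmly nonexpansive by \cref{fact:resolvent}\cref{fact:resolvent:FN} — to the pair $x$ and $z$, yielding $\norm{\J_{\bar{c}A}(x) - \J_{cA}(x)} \leq \norm{x - z}$. A direct computation shows that the argument difference collapses to a scalar multiple of $x - \J_{cA}(x)$, namely $x - z = (1-t)\left(x - \J_{cA}(x)\right)$, whence $\norm{\J_{\bar{c}A}(x) - \J_{cA}(x)} \leq (1-t)\norm{x - \J_{cA}(x)}$. A single triangle inequality then gives $\norm{\J_{\bar{c}A}(x) - x} \leq \norm{\J_{\bar{c}A}(x) - \J_{cA}(x)} + \norm{\J_{cA}(x) - x} \leq (2-t)\norm{\J_{cA}(x) - x}$, and since $t \in \left]0,1\right]$ forces $2 - t \leq 2$, the main inequality follows. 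The \emph{in particular} statement is then immediate: applying the established inequality at each $y_{k}$ with the stepsize $c_{k} \geq \bar{c}$ in place of $c$ delivers the claimed family of inequalities.

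The only real subtlety — and hence the step I would treat most carefully — is the bookkeeping of the ratio $t = \bar{c}/c$: one must assign $\lambda$ and $\mu$ in \cref{fact:ineqJIden} so that the coefficient $1 - t$ multiplying $x - \J_{cA}(x)$ is nonnegative and at most $1$, which is precisely what the hypothesis $\bar{c} \leq c$ provides. No deeper obstacle arises, since everything beyond this identification is a one-line nonexpansiveness estimate followed by a triangle inequality.
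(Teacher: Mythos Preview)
Your proof is correct. The paper does not supply its own proof of this statement --- it is recorded as a fact cited from \cite[Lemma~3.3]{MarinoXu2004} --- but your argument is precisely the standard one: invoke the resolvent identity \cref{fact:ineqJIden} with $\lambda = c$, $\mu = \bar{c}$ to write $\J_{cA}(x) = \J_{\bar{c}A}(z)$, apply nonexpansiveness of $\J_{\bar{c}A}$ from \cref{fact:resolvent}\cref{fact:resolvent:FN}, and finish with the triangle inequality to obtain the factor $2 - \bar{c}/c \leq 2$.
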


\subsection*{Sets of zeroes}

The technique of the following  proof  was used in  \cite[Theorem~1]{Rockafellar1976} 
to prove  the  uniqueness of the weak sequential  cluster point of the sequence of iterations generated by Rockafellar's proximal point algorithm. 
According to Rockafellar's remark, one similar uniqueness argument was used by B.~Martinet in 1970 and it was suggested to Martinet by H. Br\'ezis. 
 
\begin{proposition} \label{prop:less1weakcluster}
	Let $(y_{k})_{k \in \mathbb{N}}$ be a sequence in $\mathcal{H}$.  Set $\Omega $ as the set of all weak sequential cluster points of $ (y_{k} )_{k \in \mathbb{N}}$. Suppose that for every $z \in \Omega$, the limit $\lim_{k \to \infty} \norm{y_{k} -z}$ exists.
	Then there is at most one element in $\Omega$, that is, there cannot be more than one weak sequential cluster point of  $ (y_{k} )_{k \in \mathbb{N}}$.
\end{proposition}

\begin{proof}
	Suppose to the contrary that there exist $z_{1}$ and $z_{2}$ in $\Omega $ with $z_{1} \neq z_{2}$. Then based on the assumption, there exist $q_{1}$ and $q_{2}$ in $ \mathbb{R}_{+}$ such that $(\forall i \in \{1,2\})$ $\lim_{k \to \infty} \norm{y_{k} -z_{i}} =q_{i}$.
	Note that for every $k \in \mathbb{N}$,
	\begin{align*}
	&\norm{y_{k} -z_{2}}^{2} =\norm{y_{k} -z_{1}}^{2} + 2 \innp{ y_{k} -z_{1}, z_{1} -z_{2}} +\norm{z_{1} -z_{2}}^{2}  \text{and} \\
	& \norm{y_{k} -z_{1}}^{2} =\norm{y_{k} -z_{2}}^{2} + 2 \innp{ y_{k} -z_{2}, z_{2} -z_{1}} +\norm{z_{2} -z_{1}}^{2}, 
	\end{align*}
	which imply, respectively,  that 
	\begin{subequations}\label{eq:prop:less1weakcluster:q1q2}
		\begin{align}
		&2 \innp{ y_{k} -z_{1}, z_{1} -z_{2}} =     \norm{y_{k} -z_{2}}^{2}  -\norm{y_{k} -z_{1}}^{2} -\norm{z_{1} -z_{2}}^{2}   \to   q^{2}_{2} -q^{2}_{1} -\norm{z_{1} -z_{2}}^{2}, \text{ and}\\
		& 2	\innp{ y_{k} -z_{2}, z_{2} -z_{1}} =   \norm{y_{k} -z_{1}}^{2} - \norm{y_{k} -z_{2}}^{2} - \norm{z_{2} -z_{1}}^{2} \to  q^{2}_{1} -q^{2}_{2} -\norm{z_{1} -z_{2}}^{2}.
		\end{align}
	\end{subequations}
	On the other hand, $\{z_{1}, z_{2}\} \subseteq \Omega$ forces that once  $\lim_{k \to \infty}  \innp{ y_{k} -z_{1}, z_{1} -z_{2}}$ and $\lim_{k \to \infty} \innp{ y_{k} -z_{2}, z_{2} -z_{1}}$ exist, these two limits must be $0$. Combine this with \cref{eq:prop:less1weakcluster:q1q2} and $z_{1} \neq z_{2}$ to deduce that 
	\begin{align*}
	q^{2}_{2} -q^{2}_{1}= \norm{z_{1} -z_{2}}^{2} > 0  \quad \text{and}  \quad q^{2}_{1} -q^{2}_{2} =\norm{z_{1} -z_{2}}^{2} > 0,
	\end{align*}
	which is absurd. Therefore, the desired result holds. 
\end{proof}

The following result is inspired by the proof of \cite[Theorem~1]{Rockafellar1976}, which shows  the weak convergence of Rockafellar's proximal point algorithm.
\begin{proposition}\label{prop:AAtilde}
	   Let $r \in \mathbb{R}_{++}$. Define $\tilde{A} := A +\partial \iota_{B[0;r]}$. The  following assertions hold.
	\begin{enumerate}
		\item \label{prop:AAtilde:partialiota} $(\forall x \in \mathcal{H})$ $\partial \iota_{B[0;r]}x =
		\begin{cases}
		\{0\}, \quad &\text{if } \norm{x} <r;\\
		\mathbb{R}_{+}x,  \quad &\text{if } \norm{x} =r;\\
		\varnothing, \quad &\text{if } \norm{x} >r,
		\end{cases}$
		$~$ and $~$ 
		$\tilde{A} x  =
		\begin{cases}
		A(x), \quad &\text{if } \norm{x} <r;\\
		A(x)+ \mathbb{R}_{+}x,  \quad &\text{if } \norm{x} =r;\\
		\varnothing, \quad &\text{if } \norm{x} >r.
		\end{cases}$
		\item \label{prop:AAtilde:cap} Suppose that $\dom A \cap B(0;r) \neq \varnothing$. Then the following hold.
		\begin{enumerate}\label{prop:AAtilde:domAB}
			\item   \label{prop:AAtilde:cap:maximonot} $\tilde{A}$ is maximally monotone. Consequently, $(\forall \gamma \in \mathbb{R}_{++})$ $\J_{\gamma \tilde{A}}: \mathcal{H} \to \mathcal{H}$ is full-domain and firmly nonexpansive.  
			\item  \label{prop:AAtilde:cap:zer} $\zer \tilde{A} \neq \varnothing$.
			\item  \label{prop:AAtilde:cap:J} Let $  \gamma  \in  \mathbb{R}_{++}$ and let $x \in \mathcal{H}$. If $ x \in \J_{\gamma A}^{-1} \left( B(0;r) \right)$  or $  x \in \J_{\gamma \tilde{A}}^{-1} \left( B(0;r) \right)$, then $ \J_{\gamma \tilde{A}}  x = \J_{\gamma A} x$. Consequently, $ B(0;r) \cap \zer A =B(0;r) \cap \zer \tilde{A}  $.
			\item   \label{prop:AAtilde:cap:zerT} If $\zer \tilde{A}$ is not a singleton, then $ \zer A\neq \varnothing$.
		\end{enumerate}
	\end{enumerate}
\end{proposition}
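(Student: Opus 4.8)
The plan is to establish the four assertions in order, since each leans on its predecessors. The formula in \cref{prop:AAtilde:partialiota} follows by computing the normal cone $\partial\iota_{B[0;r]}=N_{B[0;r]}$ directly from the subdifferential inequality, namely $v\in\partial\iota_{B[0;r]}(x)$ iff $x\in B[0;r]$ and $\innp{v,y-x}\le 0$ for every $y\in B[0;r]$. If $\norm{x}<r$, then $x+\epsilon v\in B[0;r]$ for small $\epsilon>0$ forces $v=0$; if $\norm{x}=r$, then Cauchy--Schwarz shows each $\lambda x$ with $\lambda\ge 0$ is normal and conversely every normal vector is such a multiple; and if $\norm{x}>r$, then $x\notin\dom\iota_{B[0;r]}$, so the subdifferential is empty. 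The expression for $\tilde A$ is then immediate from $\tilde A(x)=A(x)+\partial\iota_{B[0;r]}(x)$ and the convention that a sum of sets is empty whenever one summand is.

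For \cref{prop:AAtilde:cap:maximonot}, I observe that $\iota_{B[0;r]}$ is proper, lower semicontinuous and convex, so $\partial\iota_{B[0;r]}$ is maximally monotone with $\dom\partial\iota_{B[0;r]}=B[0;r]$ and $\inte\dom\partial\iota_{B[0;r]}=B(0;r)$. The hypothesis $\dom A\cap B(0;r)\ne\varnothing$ is exactly the domain qualification $\dom A\cap\inte\dom\partial\iota_{B[0;r]}\ne\varnothing$, so the sum theorem for maximally monotone operators \cite{BC2017} shows that $\tilde A$ is maximally monotone; consequently $\gamma\tilde A$ is maximally monotone for every $\gamma\in\mathbb{R}_{++}$ and, by \cref{fact:resolvent}\cref{fact:resolvent:FN}, $\J_{\gamma\tilde A}$ is full-domain, single-valued and firmly nonexpansive. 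For \cref{prop:AAtilde:cap:zer}, the crucial point is that $\dom\tilde A=\dom A\cap B[0;r]$ is bounded: since $\J_{\tilde A}=(\Id+\tilde A)^{-1}$ maps $\mathcal{H}$ into $\dom\tilde A\subseteq B[0;r]$, its restriction is a nonexpansive self-map of the nonempty bounded closed convex set $B[0;r]$, so the Browder--G\"ohde--Kirk fixed point theorem yields a fixed point, which by \cref{fact:resolvent}\cref{fact:resolvent:zer} lies in $\zer\tilde A$. (Equivalently, a maximally monotone operator with bounded domain is surjective, so $0\in\Range\tilde A$.)

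For \cref{prop:AAtilde:cap:J}, the key observation is that $\gamma\partial\iota_{B[0;r]}=\partial\iota_{B[0;r]}$ for $\gamma\in\mathbb{R}_{++}$, because scaling fixes both $\{0\}$ and $\mathbb{R}_{+}x$; hence $\gamma\tilde A=\gamma A+\partial\iota_{B[0;r]}$. If $p:=\J_{\gamma A}x$ has $\norm{p}<r$, then $x\in p+\gamma A(p)$ and $\partial\iota_{B[0;r]}(p)=\{0\}$ give $x\in p+\gamma\tilde A(p)$, so $p=\J_{\gamma\tilde A}x$; symmetrically, if $q:=\J_{\gamma\tilde A}x$ has $\norm{q}<r$, the inclusion $x\in q+\gamma A(q)+\gamma\partial\iota_{B[0;r]}(q)$ collapses to $x\in q+\gamma A(q)$, so $q=\J_{\gamma A}x$. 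The set identity then follows because $\tilde A$ and $A$ coincide on $B(0;r)$ by \cref{prop:AAtilde:partialiota}, whence $0\in\tilde A(z)\Leftrightarrow 0\in A(z)$ for every $z$ with $\norm{z}<r$.

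Finally, for \cref{prop:AAtilde:cap:zerT} I use that $\zer\tilde A$ is closed, convex (by \cref{fact:resolvent}\cref{fact:resolvent:closedconvex} applied to $\tilde A$) and contained in $B[0;r]$. If it is not a singleton, then it contains distinct points $z_{1},z_{2}$, so by convexity the midpoint $m=\tfrac12(z_{1}+z_{2})\in\zer\tilde A$; the parallelogram law gives $\norm{m}^{2}=\tfrac12\norm{z_{1}}^{2}+\tfrac12\norm{z_{2}}^{2}-\tfrac14\norm{z_{1}-z_{2}}^{2}<r^{2}$, so $m\in B(0;r)$ and thus $m\in B(0;r)\cap\zer\tilde A=B(0;r)\cap\zer A$ by \cref{prop:AAtilde:cap:J}, giving $\zer A\ne\varnothing$. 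I expect the only non-routine ingredients to be the two classical citations in \cref{prop:AAtilde:cap:maximonot} and \cref{prop:AAtilde:cap:zer}; the remaining steps are bookkeeping with the defining inclusions and the strict convexity of the Hilbert ball.
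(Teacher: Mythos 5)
Your proposal is correct and follows essentially the same route as the paper's own proof: the normal-cone formula for part \cref{prop:AAtilde:partialiota}, Rockafellar's sum theorem under the domain qualification $\dom A\cap\inte\dom\partial\iota_{B[0;r]}\neq\varnothing$ for \cref{prop:AAtilde:cap:maximonot}, boundedness of $\dom\tilde A$ for \cref{prop:AAtilde:cap:zer}, the resolvent inclusion $x\in\J_{\gamma A}x+\gamma\tilde A(\J_{\gamma A}x)$ for \cref{prop:AAtilde:cap:J}, and strict convexity of the ball for \cref{prop:AAtilde:cap:zerT}. The only deviations are cosmetic and all valid: you invoke Browder--G\"ohde--Kirk instead of the surjectivity of maximally monotone operators with bounded domain (which you also note), you derive the set identity in \cref{prop:AAtilde:cap:J} directly from $\tilde A=A$ on $B(0;r)$ rather than through resolvents, and you use the midpoint in \cref{prop:AAtilde:cap:zerT}, which avoids the paper's case split on whether one zero already lies in the open ball.
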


\begin{proof}
	\cref{prop:AAtilde:partialiota}: The explicit formula of $\partial \iota_{B[0;r]}$ is a direct result from \cite[Examples~6.39 and 16.13]{BC2017}, which   immediately implies the formula of $\tilde{A}$.
	
	\cref{prop:AAtilde:cap:maximonot}: Clearly, because $ B[0;r] $ is a nonempty closed and convex set, we have that $ \iota_{B[0;r]}$  is a proper lower semicontinuous and convex function. Then the required results are guaranteed by \cite[Theorem~20.25, Corollary~25.5(ii), and Proposition~23.10(iii)]{BC2017}.
	
	\cref{prop:AAtilde:cap:zer}: According to \cref{prop:AAtilde:partialiota}, $\dom \tilde{A} \subseteq B[0;r]$ is bounded. Hence, the desired result is immediate from the maximal monotonicity of $\tilde{A}$ and \cite[Proposition~23.36(iii)]{BC2017}.
	
	\cref{prop:AAtilde:cap:J}: If $x \in  \J_{\gamma A}^{-1} \left( B(0;r) \right)$, i.e., $ \J_{\gamma A}(x) \in B(0;r) $, then, via \cref{defn:ResolventApproxi}, 
	\begin{align*}
	\J_{\gamma A}(x) = (\Id + \gamma A)^{-1} (x) & \Leftrightarrow x \in \J_{\gamma A}(x)  + \gamma A \left( \J_{\gamma A}(x)  \right)\\
	& \Leftrightarrow  x \in \J_{\gamma A}(x)  + \gamma \tilde{A} \left( \J_{\gamma A}(x)  \right) \quad (\text{by $\J_{\gamma A}(x) \in B(0;r)$ and \cref{prop:AAtilde:partialiota}})\\
	& \Leftrightarrow  \J_{\gamma A}(x)  \in (\Id + \gamma \tilde{A})^{-1} (x) =  \J_{\gamma \tilde{A}}(x) \\
	& \Leftrightarrow   \J_{\gamma A}(x) = \J_{\gamma \tilde{A}}(x). \quad (\text{ $\J_{\gamma \tilde{A}}$ is single-valued})
	\end{align*}
	On the other hand, switch $A$ and $\tilde{A}$ in the proof above to obtain that $x \in  \J_{\gamma \tilde{A}}^{-1} \left( B(0;r) \right)$ implies $  \J_{\gamma \tilde{A}}(x)  =   \J_{\gamma A}(x)$. Hence, the first required result is true. 
	
In addition,   for every $ y  \in B(0;r) \cap \zer A  $, by \cref{fact:resolvent}\cref{fact:resolvent:zer}, $	\J_{\gamma A}(y) =y \in B(0;r)$,
	which, combining with the result proved above,  entails that $y= \J_{\gamma A}(y) = \J_{\gamma \tilde{A}}(y) \in \Fix  \J_{\gamma \tilde{A}} =\zer \tilde{A}$. Hence, $B(0;r) \cap \zer A \subseteq B(0;r) \cap \zer \tilde{A} $.	
Moreover, applying the similar technique, we obtain that  $B(0;r) \cap \zer \tilde{A}   \subseteq B(0;r) \cap \zer A $.  Altogether, $B(0;r) \cap \zer A =  B(0;r) \cap \zer \tilde{A} $.

	\cref{prop:AAtilde:cap:zerT}: Suppose that $\{x,y\} \subseteq \zer \tilde{A} $ with $x \neq y$. If $\norm{x} <r$ or $\norm{y} <r$, then,  via \cref{prop:AAtilde:cap:J},  $\varnothing \neq B(0;r) \cap \zer \tilde{A} \subseteq \zer A$. 
	
	Suppose that $\norm{x}=r$ and $\norm{y} =r$. Notice that, due to 	\cref{prop:AAtilde:cap:maximonot} and  \cref{fact:resolvent}\cref{fact:resolvent:closedconvex}, $\zer \tilde{A} $ is closed and convex. Let $\alpha \in \left]0,1\right[\,$. Then based on \cite[Corollary~2.15]{BC2017},
	\begin{align*}
	\norm{\alpha x + (1-\alpha) y }^{2} =\alpha \norm{x}^{2} +(1-\alpha)\norm{y}^{2} - \alpha (1-\alpha) \norm{x -y}^{2} < r^{2}, 
	\end{align*}  
	which leads to $ \alpha x + (1-\alpha) y \in B(0;r)  \cap  \zer \tilde{A}  \subseteq \zer A$ by \cref{prop:AAtilde:cap:J}. 
	
	Altogether, the proof is complete.
\end{proof}

\begin{proposition} \label{prop:tildeAzerA}
	Let $(y_{k})_{k \in \mathbb{N}}$ be a sequence in $\mathcal{H}$ and let $(c_{k})_{k \in \mathbb{N}}$ be in $\mathbb{R}_{++}$.  Suppose that $(y_{k})_{k \in \mathbb{N}}$ and $\left( \J_{c_{k} A}y_{k}  \right)_{k \in \mathbb{N}}$ are bounded. Set $\Omega $ as the set of all weak sequential cluster points of $ (y_{k} )_{k \in \mathbb{N}}$.  Then there exists  $r \in \mathbb{R}_{++}$ such that  $\tilde{A} := A +\partial \iota_{B[0;r]}$  is a maximally monotone operator and that
	\begin{align*}
\zer \tilde{A} \neq   \varnothing,  \quad   \left( \Omega \cap \zer \tilde{A} \right) \subseteq \zer A, \quad  \text{and} \quad  (\forall  k \in \mathbb{N}) ~ \J_{c_{k} A}y_{k} = \J_{c_{k} \tilde{A}}y_{k}.
	\end{align*}
\end{proposition}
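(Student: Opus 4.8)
The plan is to reduce everything to \cref{prop:AAtilde} by choosing the radius $r$ large enough that both bounded sequences, as well as every weak sequential cluster point of $(y_{k})_{k \in \mathbb{N}}$, lie strictly inside the open ball $B(0;r)$. Concretely, I would set $R := \max\{\sup_{k \in \mathbb{N}} \norm{y_{k}}, \ \sup_{k \in \mathbb{N}} \norm{\J_{c_{k} A} y_{k}}\}$, which is finite by the boundedness hypotheses, and fix any $r > R$ (say $r := R+1$), so that $(\forall k \in \mathbb{N})$ $\norm{y_{k}} < r$ and $\norm{\J_{c_{k} A} y_{k}} < r$.

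First I would verify the standing hypothesis $\dom A \cap B(0;r) \neq \varnothing$ of \cref{prop:AAtilde:cap}. By \cref{fact:resolvent}\cref{fact:resolvent:ingraphA}, each $\J_{c_{k} A} y_{k}$ belongs to $\dom A$, and by the choice of $r$ it lies in $B(0;r)$; hence the intersection is nonempty. This unlocks \cref{prop:AAtilde:cap} in full: \cref{prop:AAtilde:cap:maximonot} gives that $\tilde{A}$ is maximally monotone, and \cref{prop:AAtilde:cap:zer} gives $\zer \tilde{A} \neq \varnothing$. For the resolvent identity, I would apply \cref{prop:AAtilde:cap:J} with $\gamma = c_{k}$ and $x = y_{k}$: since $\norm{\J_{c_{k} A} y_{k}} < r$ means $y_{k} \in \J_{c_{k} A}^{-1}(B(0;r))$, that part yields $\J_{c_{k} \tilde{A}} y_{k} = \J_{c_{k} A} y_{k}$ for every $k \in \mathbb{N}$.

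For the inclusion $\left( \Omega \cap \zer \tilde{A} \right) \subseteq \zer A$, I would take $z \in \Omega \cap \zer \tilde{A}$, so there is a subsequence with $y_{n_{j}} \weakly z$. Weak lower semicontinuity of the norm then gives $\norm{z} \leq \liminf_{j \to \infty} \norm{y_{n_{j}}} \leq R < r$, whence $z \in B(0;r)$. Combining $z \in B(0;r) \cap \zer \tilde{A}$ with the equality $B(0;r) \cap \zer A = B(0;r) \cap \zer \tilde{A}$ supplied by the consequence of \cref{prop:AAtilde:cap:J} forces $z \in \zer A$, as desired.

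There is no deep obstacle; the only point demanding care is the strict-versus-closed-ball bookkeeping. Since \cref{prop:AAtilde:cap:J} is phrased in terms of the \emph{open} ball $B(0;r)$, one must pick $r$ strictly larger than both suprema so that the uniform bounds place every $y_{k}$, every $\J_{c_{k} A} y_{k}$, and every weak cluster point inside $B(0;r)$ rather than merely on the boundary of $B[0;r]$; with $r = R+1$ this is automatic.
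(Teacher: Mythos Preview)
Your proof is correct and follows essentially the same approach as the paper: choose $r$ large enough that both sequences sit strictly inside $B(0;r)$, verify $\dom A \cap B(0;r) \neq \varnothing$ via $\J_{c_{k}A}y_{k} \in \dom A$, and then read off every conclusion from \cref{prop:AAtilde}\cref{prop:AAtilde:cap}. The only cosmetic difference is that the paper takes $r$ with $\sup_{k}\norm{y_{k}} \leq r/2$ and $\sup_{k}\norm{\J_{c_{k}A}y_{k}} \leq r/2$ and cites \cite[Lemmas~2.42 and 2.45]{BC2017} to place $\Omega \subseteq B[0;r/2] \subseteq B(0;r)$, whereas you invoke weak lower semicontinuity of the norm directly; both arguments are equivalent.
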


\begin{proof}
Because $(y_{k})_{k \in \mathbb{N}}$ and $\left( \J_{c_{k} A}y_{k}  \right)_{k \in \mathbb{N}}$ are bounded,	there exists $r \in \mathbb{R}_{++}$ such that 
	\begin{align}\label{eq:prop:boundedZerA:zerA:<r/2}
	(\forall k \in \mathbb{N}) \quad \norm{ y_{k} } \leq \frac{r}{2} \quad \text{and} \quad \norm{ \J_{c_{k} A}y_{k}   } \leq \frac{r}{2},
	\end{align}
	which, due to \cite[Lemmas~2.42 and 2.45]{BC2017}, implies that 	$  \varnothing   \neq \Omega \subseteq B[0; \frac{r}{2}] \subseteq B(0;r)$.
	
	Set   $\tilde{A} := A +\partial \iota_{B[0;r]}$.  In view of \cref{fact:resolvent}\cref{fact:resolvent:ingraphA}, $(\forall k \in \mathbb{N})$ $ \frac{1}{c_{k}} (y_{k} -  \J_{c_{k} A}y_{k} )=\prescript{c_{k}}{}{A} (y_{k})  \in A \left(\J_{c_{k}  A} y_{k}  \right)$, which, by \cref{eq:prop:boundedZerA:zerA:<r/2},  yields that $(\forall k \in \mathbb{N}) $ $ \J_{c_{k}  A} y_{k} \in B(0;r) \cap \dom A$.
	Combine this with 	\cref{prop:AAtilde}\cref{prop:AAtilde:domAB} to entail that $\tilde{A} $ is maximally monotone and that $ \zer \tilde{A}  \neq \varnothing$, 
	$	\Omega \cap \zer \tilde{A} \subseteq B(0;r) \cap  \zer \tilde{A}  \subseteq \zer A$, and $ (\forall k \in \mathbb{N}) $ $ \J_{c_{k}  A} y_{k} = \J_{c_{k}  \tilde{A}} y_{k}$.
\end{proof}

The result of \cref{prop:weakclusterCBAR} under the condition \cref{prop:weakclusterCBAR:>0} was  also proved in  proofs of \cite[Theorem~1]{Rockafellar1976} and \cite[Theorem~3]{EcksteinBertsekas1992} for   related proximal point algorithms  by applying   \cref{fact:resolvent}\cref{fact:resolvent:ingraphA} and employing the definition of maximal monotonicity.
In addition,  the idea of the proof of \cref{prop:weakclusterCBAR} under the hypothesis \cref{prop:weakclusterCBAR:i} with $t=1$ was adopted in the Step~2 of the proof of \cite[Theorem~5.1]{Xu2002}.

\begin{proposition} \label{prop:weakclusterCBAR}
	Let $(y_{k})_{k \in \mathbb{N}}$ be a sequence in $\mathcal{H}$ and let $(c_{k})_{k \in \mathbb{N}}$ be in $\mathbb{R}_{++}$. 	 Set $\Omega $ as the set of all weak sequential cluster points of $ (y_{k} )_{k \in \mathbb{N}}$. Suppose that one of the following holds.
	\begin{enumerate}
		\item  \label{prop:weakclusterCBAR:>0} $\bar{c}:=\inf_{k \in \mathbb{N}}c_{k} >0$ and   $y_{k } - \J_{c_{k} A}(y_{k} ) \to 0$. 
		\item  \label{prop:weakclusterCBAR:i} $c_{k} \to \infty$,   $(y_{k})_{k \in \mathbb{N}}$ is bounded,  and there exists $t \in \mathbb{N}$ such that $y_{k+t} - \J_{c_{k} A}(y_{k} ) \to 0$.  
	\end{enumerate}
	Then $\Omega \subseteq \zer A$.   
\end{proposition}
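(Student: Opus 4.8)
The plan is to exploit the two structural facts about the resolvent recorded in \cref{fact:resolvent}: for each $k$ the pair $\left( \J_{c_{k} A} y_{k}, \prescript{c_{k}}{}{A}(y_{k}) \right)$ lies in $\gra A$ by \cref{fact:resolvent}\cref{fact:resolvent:ingraphA}, where $\prescript{c_{k}}{}{A}(y_{k}) = \frac{1}{c_{k}} \left( y_{k} - \J_{c_{k} A} y_{k} \right)$; and $\gra A$ is sequentially closed in $\mathcal{H}^{\text{weak}} \times \mathcal{H}^{\text{strong}}$ by \cref{fact:resolvent}\cref{fact:resolvent:demiclosed}. Writing $p_{k} := \J_{c_{k} A} y_{k}$ and $v_{k} := \prescript{c_{k}}{}{A}(y_{k})$, I would fix $z \in \Omega$ together with a subsequence $y_{k_{n}} \weakly z$, produce a sequence of pairs in $\gra A$ whose first coordinates converge weakly to $z$ and whose second coordinates converge strongly to $0$, and then invoke demiclosedness to conclude $(z,0) \in \gra A$, i.e.\ $z \in \zer A$. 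Since $z \in \Omega$ is arbitrary, this gives $\Omega \subseteq \zer A$.

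Under hypothesis \cref{prop:weakclusterCBAR:>0}, the Yosida term is immediately controlled: $\norm{v_{k}} = \frac{1}{c_{k}} \norm{y_{k} - p_{k}} \leq \frac{1}{\bar{c}} \norm{y_{k} - p_{k}} \to 0$, using $c_{k} \geq \bar{c} > 0$ and $y_{k} - p_{k} \to 0$. Moreover $p_{k_{n}} = y_{k_{n}} - (y_{k_{n}} - p_{k_{n}})$ is a weakly convergent sequence minus a strongly null one, hence $p_{k_{n}} \weakly z$. Applying \cref{fact:resolvent}\cref{fact:resolvent:demiclosed} to $(p_{k_{n}}, v_{k_{n}}) \in \gra A$ then yields $z \in \zer A$.

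Under hypothesis \cref{prop:weakclusterCBAR:i} the same scheme works but requires an index shift, which I expect to be the only delicate point. Here $y_{k} - p_{k} = (y_{k} - y_{k+t}) + (y_{k+t} - p_{k})$ is bounded (the first summand because $(y_{k})_{k \in \mathbb{N}}$ is bounded, the second because it is strongly null), so $c_{k} \to \infty$ forces $v_{k} = \frac{1}{c_{k}}(y_{k} - p_{k}) \to 0$. To get first coordinates converging weakly to $z$ I would not use $p_{k_{n}}$ but rather $p_{k_{n}-t}$: for $n$ large enough that $k_{n} \geq t$, the hypothesis applied with $k = k_{n} - t$ gives $y_{k_{n}} - p_{k_{n}-t} = y_{(k_{n}-t)+t} - \J_{c_{k_{n}-t}A} y_{k_{n}-t} \to 0$, whence $p_{k_{n}-t} = y_{k_{n}} - (y_{k_{n}} - p_{k_{n}-t}) \weakly z$. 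Since $(p_{k_{n}-t}, v_{k_{n}-t}) \in \gra A$ and $v_{k_{n}-t} \to 0$, \cref{fact:resolvent}\cref{fact:resolvent:demiclosed} again delivers $(z,0) \in \gra A$, i.e.\ $z \in \zer A$. The main obstacle is thus recognizing that the correct pair to feed into the demiclosedness principle is indexed at $k_{n}-t$ rather than $k_{n}$, so that the hypothesis $y_{k+t} - \J_{c_{k}A}y_{k} \to 0$ exactly matches $y_{k_{n}}$ with the resolvent evaluated at the shifted index; boundedness of $(y_{k})_{k \in \mathbb{N}}$ together with $c_{k} \to \infty$ is what then annihilates the Yosida term despite $y_{k} - p_{k}$ not necessarily vanishing.
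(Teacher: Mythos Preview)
Your proposal is correct. For hypothesis \cref{prop:weakclusterCBAR:i} it is essentially identical to the paper's argument: both shift the index to $k_{n}-t$, note that $\J_{c_{k_{n}-t}A}(y_{k_{n}-t}) \weakly z$, verify that the corresponding Yosida term tends to $0$ by boundedness and $c_{k}\to\infty$, and invoke the weak--strong sequential closedness of $\gra A$ from \cref{fact:resolvent}\cref{fact:resolvent:demiclosed}.

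For hypothesis \cref{prop:weakclusterCBAR:>0} your route differs from the paper's. The paper first invokes \cref{fact:ineqJ} to pass from the varying resolvents $\J_{c_{k}A}$ to the fixed resolvent $\J_{\bar{c}A}$, obtaining $\J_{\bar{c}A}(y_{k_{i}}) - y_{k_{i}} \to 0$, and then applies the demiclosedness principle for nonexpansive maps (\cite[Corollary~4.28]{BC2017}) together with $\Fix \J_{\bar{c}A} = \zer A$. You instead bypass \cref{fact:ineqJ} entirely and use the same graph-closedness tool as in case \cref{prop:weakclusterCBAR:i}: the bound $\norm{v_{k}} \leq \tfrac{1}{\bar{c}}\norm{y_{k}-p_{k}}$ already forces the Yosida term to vanish, and $p_{k_{n}} \weakly z$ follows directly. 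Your approach is more uniform (one demiclosedness principle handles both cases) and avoids the auxiliary estimate \cref{fact:ineqJ}; the paper's approach, on the other hand, illustrates that under \cref{prop:weakclusterCBAR:>0} the problem can be reduced to a \emph{single} nonexpansive map, which is a structural observation of independent interest.
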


\begin{proof}
	If $\Omega  =\varnothing$, then the desired inclusion is trivial. Suppose that $\Omega \neq \varnothing$.  Take $\bar{y} \in \Omega$, that is, there exists a subsequence  $(y_{k_{i}})_{i \in \mathbb{N}}$  of $(y_{k})_{k \in \mathbb{N}}$ such that $y_{k_{i}} \weakly \bar{y} $. 
	
	Assume that	\cref{prop:weakclusterCBAR:>0} holds.  Then  \cref{fact:ineqJ} and the assumption that $y_{k} - \J_{c_{k} A}(y_{k} ) \to 0$ imply that 
	$\J_{\bar{c} A}(y_{k_{i}} )-y_{k_{i}} \to 0$. Therefore, by \cref{fact:resolvent}\cref{fact:resolvent:FN}$\&$\cref{fact:resolvent:zer} and \cite[Corollary~4.28]{BC2017}, we conclude that $\bar{y} \in \Fix \J_{\bar{c} A} =\zer A$.
	
	Assume that \cref{prop:weakclusterCBAR:i} holds. Clearly, the boundedness of $(y_{k})_{k \in \mathbb{N}}$ and the convergence $y_{k+t} - \J_{c_{k} A}(y_{k} ) \to 0$ imply that $(\J_{c_{k} A}(y_{k} ) )_{k \in \mathbb{N}}$ is bounded and that 
	\begin{align}\label{eq:prop:weakclusterCBAR:i:J} 
	\J_{c_{k_{i} -t} A}(y_{k_{i} -t} ) =y_{k_{i}}  - \left( y_{k_{i}}  - \J_{c_{k_{i} -t} A}(y_{k_{i} -t} ) \right) \weakly \bar{y}.
	\end{align}
	Moreover, as a consequence of  \cref{fact:resolvent}\cref{fact:resolvent:ingraphA},
	\begin{align}\label{eq:prop:weakclusterCBAR:i:graA}
	(\forall i \in \mathbb{N}) \quad  \left( \J_{c_{k_{i} -t} A}(y_{k_{i} -t} ) , \frac{1}{ c_{k_{i} -t}} \left( y_{k_{i} -t}  - \J_{c_{k_{i} -t} A}(y_{k_{i} -t} )  \right) \right) \in \gra A.
	\end{align}
	Because $c_{k} \to \infty$ and the boundedness of $(y_{k})_{k \in \mathbb{N}}$ and $(\J_{c_{k} A}(y_{k} ) )_{k \in \mathbb{N}}$ yield $\frac{1}{ c_{k_{i} -t}} \left( y_{k_{i} -t}  - \J_{c_{k_{i} -t} A}(y_{k_{i} -t} )  \right) \to 0$, combine \cref{eq:prop:weakclusterCBAR:i:J}, \cref{eq:prop:weakclusterCBAR:i:graA}, and \cref{fact:resolvent}\cref{fact:resolvent:demiclosed}  to establish that $(\bar{y}, 0) \in \gra A$, i.e., $\bar{y} \in \zer A$.
	
	Altogether, 
	the required result is correct, since $\bar{y} \in \Omega$ is arbitrary. 
\end{proof}

\subsection*{Asymptotic regularity and convergence}
Given a sequence $(y_{k})_{k \in \mathbb{N}}$   in $\mathcal{H}$ and a sequence $(c_{k})_{k \in \mathbb{N}}$  in $\mathbb{R}_{++}$, we say the \emph{asymptotic regularity holds for $(y_{k})_{k \in \mathbb{N}}$  and $(c_{k})_{k \in \mathbb{N}}$}, if $ y_{k} -  \J_{c_{k} A}y_{k} \to 0$. 
We shall see that the asymptotic regularity plays an important role in the proof of the convergence of generalized proximal point algorithms. 

\begin{proposition} \label{prop:ykJckykto0}
	Suppose that $\zer A \neq \varnothing$. Let $p \in \zer A$ and let $(y_{k})_{k \in \mathbb{N}}$ be a  sequence in $\mathcal{H}$.  Suppose that $\lim_{k \to \infty} \norm{ y_{k} -p } $ exists in $ \mathbb{R}_{+}$ and that $y_{k+t} - \J_{c_{k} A}y_{k} \to 0$ for some $t \in \mathbb{N}$. Then $y_{k} -  \J_{c_{k} A}y_{k} \to 0$ and $y_{k}  -y_{k+t} \to 0 $.
\end{proposition}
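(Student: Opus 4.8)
The plan is to exploit the firm nonexpansiveness of the resolvent against the fact that $p$ is a common fixed point of \emph{all} the resolvents $\J_{c_{k} A}$. Since $p \in \zer A$, \cref{fact:resolvent}\cref{fact:resolvent:zer} (applied with $\gamma = c_{k}$) gives $p \in \Fix \J_{c_{k} A}$, i.e.\ $\J_{c_{k} A} p = p$ for every $k$. Each $c_{k} A$ is maximally monotone, so $\J_{c_{k} A}$ is firmly nonexpansive by \cref{fact:resolvent}\cref{fact:resolvent:FN}. Instantiating the firm-nonexpansiveness inequality of \cref{defn:FirmNonexpansive}\cref{defn:FirmNonexpansive:Firm} at the pair $(y_{k}, p)$ and using $\J_{c_{k} A} p = p$ (so that $(\Id - \J_{c_{k} A})p = 0$), I would obtain the master inequality
\begin{align*}
(\forall k \in \mathbb{N}) \quad \norm{\J_{c_{k} A} y_{k} - p}^{2} + \norm{y_{k} - \J_{c_{k} A} y_{k}}^{2} \leq \norm{y_{k} - p}^{2}.
\end{align*}

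Next I would pin down the limit of $\norm{\J_{c_{k} A} y_{k} - p}$. Writing $\J_{c_{k} A} y_{k} = y_{k+t} - (y_{k+t} - \J_{c_{k} A} y_{k})$ and using the hypothesis $y_{k+t} - \J_{c_{k} A} y_{k} \to 0$, the reverse triangle inequality yields $\abs{\norm{\J_{c_{k} A} y_{k} - p} - \norm{y_{k+t} - p}} \leq \norm{y_{k+t} - \J_{c_{k} A} y_{k}} \to 0$. Setting $L := \lim_{k \to \infty} \norm{y_{k} - p}$, which exists by hypothesis, the shifted sequence $(\norm{y_{k+t} - p})_{k \in \mathbb{N}}$ converges to the same limit $L$, and therefore $\norm{\J_{c_{k} A} y_{k} - p} \to L$ as well.

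With both $\norm{y_{k} - p}^{2} \to L^{2}$ and $\norm{\J_{c_{k} A} y_{k} - p}^{2} \to L^{2}$, the master inequality gives
\begin{align*}
(\forall k \in \mathbb{N}) \quad 0 \leq \norm{y_{k} - \J_{c_{k} A} y_{k}}^{2} \leq \norm{y_{k} - p}^{2} - \norm{\J_{c_{k} A} y_{k} - p}^{2},
\end{align*}
and since the right-hand side tends to $L^{2} - L^{2} = 0$, a squeeze forces $y_{k} - \J_{c_{k} A} y_{k} \to 0$, which is the first conclusion. The second conclusion is then immediate from the decomposition $y_{k} - y_{k+t} = (y_{k} - \J_{c_{k} A} y_{k}) + (\J_{c_{k} A} y_{k} - y_{k+t})$, where the first summand vanishes by what was just proved and the second by hypothesis.

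I do not anticipate a serious obstacle: the whole argument is a telescoping of firm nonexpansiveness against a common fixed point. The only step requiring a little care is the shift $k \mapsto k+t$ --- one must invoke the \emph{existence} of $\lim_{k \to \infty} \norm{y_{k} - p}$ so that the shifted sequence $(\norm{y_{k+t} - p})_{k \in \mathbb{N}}$ inherits the same limit $L$, which is precisely what makes the two squared-norm terms cancel and drives $\norm{y_{k} - \J_{c_{k} A} y_{k}}$ to zero.
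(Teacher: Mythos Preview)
Your proof is correct and rests on the same core idea as the paper's: the firm-nonexpansiveness inequality $\norm{\J_{c_{k} A} y_{k} - p}^{2} + \norm{y_{k} - \J_{c_{k} A} y_{k}}^{2} \leq \norm{y_{k} - p}^{2}$ applied at the common fixed point $p$, together with the existence of $L := \lim_{k} \norm{y_{k} - p}$. The only difference is in how the right-hand side is driven to zero. You use the reverse triangle inequality to identify the limit $\norm{\J_{c_{k} A} y_{k} - p} \to L$ directly, then squeeze. The paper instead rearranges to
\[
\norm{y_{k} - \J_{c_{k} A} y_{k}}^{2} \leq \norm{y_{k} - p}^{2} - \norm{y_{k+t} - p}^{2} + \norm{y_{k+t} - \J_{c_{k} A} y_{k}}\bigl(\norm{y_{k+t} - p} + \norm{y_{k} - p}\bigr),
\]
obtained via the difference-of-squares factorization $\norm{y_{k+t} - p}^{2} - \norm{\J_{c_{k} A} y_{k} - p}^{2} = \innp{y_{k+t} - \J_{c_{k} A} y_{k}, y_{k+t} + \J_{c_{k} A} y_{k} - 2p}$ and Cauchy--Schwarz. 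Your route is a touch more direct; the paper's is the one that generalizes more readily when only boundedness (rather than an exact limit) is available, as in \cref{prop:Rockafellar}.
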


\begin{proof}
Taking  \cref{fact:resolvent}\cref{fact:resolvent:FN}$\&$\cref{fact:resolvent:zer} into account and employing \cref{defn:FirmNonexpansive}\cref{defn:FirmNonexpansive:Firm}, we observe that 
	\begin{align*}
	(\forall k \in \mathbb{N}) \quad \norm{ \J_{c_{k} A}y_{k} -  p}^{2} + \norm{y_{k} -\J_{c_{k} A}y_{k} }^{2} \leq \norm{y_{k} -p}^{2},
	\end{align*} 
	which yields that for every $k \in \mathbb{N}$, 
	\begin{align*}
	\norm{y_{k} -\J_{c_{k} A}y_{k} }^{2} -\norm{y_{k} -p}^{2} + \norm{y_{k+t}-p}^{2} &\leq  - \norm{ \J_{c_{k} A}y_{k} -  p}^{2} +\norm{y_{k+t}-p}^{2} \\ 
	&= \innp{ y_{k+t}-p -\left(\J_{c_{k} A}y_{k} -  p \right), y_{k+t}-p  +  \left(\J_{c_{k} A}y_{k} -  p  \right)}\\
	&\leq \norm{ y_{k+t} -\J_{c_{k} A}y_{k} } \left(\norm{y_{k+t}-p} +\norm{y_{k}-p} \right),
	\end{align*}
	where in the last inequality we use  the Cauchy-Schwarz inequality, the nonexpansiveness of $\J_{c_{k} A}$, and $p \in \zer A = \Fix \J_{c_{k} A}$.  	Hence, 
	\begin{align*} 
	(\forall k \in \mathbb{N}) \quad 	\norm{y_{k} -\J_{c_{k} A}y_{k} }^{2}  \leq \norm{y_{k} -p}^{2} - \norm{y_{k+t}-p}^{2} +\norm{ y_{k+t} -\J_{c_{k} A}y_{k} } \left(\norm{y_{k+t}-p} +\norm{y_{k}-p}    \right),
	\end{align*}
	which ensures $y_{k} -  \J_{c_{k} A}y_{k} \to 0$, since the existence of $\lim_{k \to \infty} \norm{ y_{k} -p } $  yields  $\norm{y_{k} -p}^{2} - \norm{y_{k+t}-p}^{2}  \to 0$ and the boundedness of $\left(\norm{y_{k+t}-p} +\norm{y_{k}-p}    \right)_{k \in \mathbb{N}}$. 
	
	Moreover, in consideration of $	(\forall k \in \mathbb{N})  $ $ \norm{y_{k}  -y_{k+t} } \leq \norm{ y_{k} -  \J_{c_{k} A}y_{k}} +\norm{  \J_{c_{k} A}y_{k} -y_{k+t} }$, we  reach the last required convergence by using $y_{k} -  \J_{c_{k} A}y_{k} \to 0$ and  $y_{k+t} - \J_{c_{k} A}y_{k} \to 0$.
\end{proof}

The following result will play an essential role to prove the weak convergence of the generalized proximal point algorithm.
\begin{fact} { \rm \cite[Lemma~2.47]{BC2017}} \label{fact:weakconveRockEcks}
		Let $(y_{k})_{k \in \mathbb{N}}$ be a sequence in $\mathcal{H}$ and let $C$ be a nonempty subset of $\mathcal{H}$. 
	Suppose that every weak sequential cluster point of $(y_{k})_{k \in \mathbb{N}}$ belongs to $C$, that is,  $  \Omega \lr{ (y_{k})_{k \in \mathbb{N}} }\subseteq C$, and that   $\lr{ \forall z \in C}$ $\lim_{k \to \infty} \norm{y_{k} -z} $ exists in $\mathbb{R}_{+}$. Then $(y_{k})_{k \in \mathbb{N}}$  converges weakly to a point in $C$. 
\end{fact}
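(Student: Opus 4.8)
The plan is to combine the uniqueness result of \cref{prop:less1weakcluster} with the weak sequential compactness of bounded sets in the Hilbert space $\HH$, and then to upgrade the statement ``$(y_{k})_{k \in \mathbb{N}}$ has a unique weak sequential cluster point'' to genuine weak convergence.

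First I would extract boundedness from the hypothesis on the limits. Since $C \neq \varnothing$, fix any $z_{0} \in C$; by assumption $\lim_{k \to \infty} \norm{y_{k} -z_{0}}$ exists in $\mathbb{R}_{+}$, so $(\norm{y_{k} -z_{0}})_{k \in \mathbb{N}}$ is bounded and hence $(y_{k})_{k \in \mathbb{N}}$ is bounded. By the weak sequential compactness of bounded subsets of $\HH$ (e.g., \cite[Lemma~2.45]{BC2017}), this guarantees $\Omega \neq \varnothing$, where $\Omega := \Omega\lr{(y_{k})_{k \in \mathbb{N}}}$.

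Next I would pin $\Omega$ down to a single point. Because $\Omega \subseteq C$, the hypothesis that $\lim_{k \to \infty}\norm{y_{k}-z}$ exists for every $z \in C$ applies in particular to every $z \in \Omega$; this is exactly the standing assumption of \cref{prop:less1weakcluster}, so that proposition yields that $\Omega$ contains at most one element. Combined with $\Omega \neq \varnothing$ from the previous step, this forces $\Omega = \{\bar{y}\}$ for some $\bar{y} \in \HH$, and moreover $\bar{y} \in \Omega \subseteq C$.

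The remaining and main step is to deduce $y_{k} \weakly \bar{y}$, i.e.\ to show that boundedness together with a unique weak sequential cluster point implies weak convergence. I would argue by contradiction: if $(y_{k})_{k \in \mathbb{N}}$ did not converge weakly to $\bar{y}$, there would exist $w \in \HH$, $\epsilon \in \mathbb{R}_{++}$, and a subsequence $(y_{k_{j}})_{j \in \mathbb{N}}$ with $\abs{\innp{y_{k_{j}} -\bar{y}, w}} \geq \epsilon$ for all $j$. Being bounded, $(y_{k_{j}})_{j \in \mathbb{N}}$ admits a weakly convergent sub-subsequence whose weak limit is again a weak sequential cluster point of $(y_{k})_{k \in \mathbb{N}}$, hence lies in $\Omega = \{\bar{y}\}$ and so equals $\bar{y}$; but then the inner products along that sub-subsequence tend to $0$, contradicting the lower bound $\epsilon$. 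This establishes $y_{k} \weakly \bar{y}$ with $\bar{y} \in C$, as required. The only delicate point is the careful nested extraction of subsequences and the observation that every weak limit of a sub-subsequence of $(y_{k})_{k \in \mathbb{N}}$ is forced into the singleton $\Omega$; the rest is routine.
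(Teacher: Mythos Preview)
Your proof is correct. Note, however, that the paper does not supply its own proof of this statement: it records it as a fact imported from \cite[Lemma~2.47]{BC2017}. Your argument is essentially the standard one behind that lemma, and it is nicely tailored to the present paper in that it invokes \cref{prop:less1weakcluster} for the uniqueness of the weak sequential cluster point rather than redoing that computation from scratch.
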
 

 
The following \cref{prop:weakcluster:Pu} is inspired by the Step~2 of the proof of \cite[Theorem~5.1]{Xu2002}. The following result is critical to prove the strong convergence of generalized proximal point algorithms. 
\begin{proposition} \label{prop:weakcluster:Pu}
	 Let $(y_{k})_{k \in \mathbb{N}}$ be a bounded sequence in $\mathcal{H}$ and let $u \in \mathcal{H}$.
	Set $\Omega $ as the set of all weak sequential cluster points of $ (y_{k} )_{k \in \mathbb{N}}$. Suppose that $\Omega \subseteq \zer A$.   Then 
	\begin{align*}
	\limsup_{k \to \infty} \innp{u-  \Pro_{\zer A} u, y_{k} - \Pro_{\zer A} u} \leq 0.
	\end{align*}
	\end{proposition}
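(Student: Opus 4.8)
The plan is to realize the $\limsup$ along a suitable weakly convergent subsequence whose weak limit lies in $\Omega \subseteq \zer A$, and then to invoke the variational (obtuse-angle) characterization of the projection onto the closed convex set $\zer A$.

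First I would observe that since $(y_{k})_{k \in \mathbb{N}}$ is bounded, it admits at least one weak sequential cluster point, so $\Omega \neq \varnothing$; combined with the hypothesis $\Omega \subseteq \zer A$ this forces $\zer A \neq \varnothing$. By \cref{fact:resolvent}\cref{fact:resolvent:closedconvex}, $\zer A$ is nonempty, closed, and convex, so $\Pro_{\zer A} u$ is well defined. I would write $\bar{x} := \Pro_{\zer A} u$ and set $a_{k} := \innp{u - \bar{x}, y_{k} - \bar{x}}$ for each $k \in \mathbb{N}$; since $(y_{k})_{k \in \mathbb{N}}$ is bounded, $(a_{k})_{k \in \mathbb{N}}$ is a bounded sequence in $\mathbb{R}$, so $\limsup_{k \to \infty} a_{k}$ is finite.

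Next I would extract a subsequence $(y_{k_{i}})_{i \in \mathbb{N}}$ with $a_{k_{i}} \to \limsup_{k \to \infty} a_{k}$. Because this subsequence is still bounded, a further subsequence converges weakly, say $y_{k_{i_{j}}} \weakly \bar{y}$, and by definition of weak sequential cluster point $\bar{y} \in \Omega \subseteq \zer A$. Pairing against the fixed vector $u - \bar{x}$ and using weak convergence gives $a_{k_{i_{j}}} = \innp{u - \bar{x}, y_{k_{i_{j}}} - \bar{x}} \to \innp{u - \bar{x}, \bar{y} - \bar{x}}$; and since $(a_{k_{i_{j}}})_{j \in \mathbb{N}}$ is a subsequence of the convergent sequence $(a_{k_{i}})_{i \in \mathbb{N}}$, it shares its limit, so $\limsup_{k \to \infty} a_{k} = \innp{u - \bar{x}, \bar{y} - \bar{x}}$.

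Finally, the variational characterization of the projection onto the convex set $\zer A$ yields $\innp{u - \bar{x}, z - \bar{x}} \leq 0$ for every $z \in \zer A$; applied to $z = \bar{y} \in \zer A$ this gives $\limsup_{k \to \infty} a_{k} \leq 0$, which is exactly the claim. I do not anticipate a serious obstacle: the only steps requiring care are the two-stage subsequence extraction (first to realize the $\limsup$, then to secure weak convergence) and the correct identification of the weak limit $\bar{y}$ as an element of $\Omega$; the remainder reduces to standard properties of weak convergence and of metric projections onto closed convex sets in a Hilbert space.
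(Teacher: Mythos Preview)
Your proof is correct and follows essentially the same approach as the paper's: extract a subsequence realizing the $\limsup$, pass to a further weakly convergent subsequence whose limit lies in $\Omega\subseteq\zer A$, and then apply the variational characterization of the projection onto the closed convex set $\zer A$. Your version is slightly more explicit in justifying that $\zer A\neq\varnothing$ and that the $\limsup$ is finite, but the structure is identical.
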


\begin{proof}
By the definition of $\limsup$, there exists a subsequence $(y_{k_{i}})_{i \in \mathbb{N}}$ of $(y_{k})_{k \in \mathbb{N}}$ 
such that 
\begin{align}\label{eq:prop:weakcluster:Pu:limsup}
\limsup_{k \to \infty} \innp{u-  \Pro_{\zer A} u, y_{k} - \Pro_{\zer A} u} = \lim_{i \to \infty} \innp{u-  \Pro_{\zer A} u, y_{k_{i}} - \Pro_{\zer A} u}.
\end{align}
 Because $(y_{k})_{k \in \mathbb{N}}$ is  bounded, without loss of generality (otherwise take a subsequence of $(y_{k_{i}})_{i \in \mathbb{N}}$),  we assume that $ y_{k_{i}} \weakly \bar{y}$ for some $\bar{y} \in   \Omega \subseteq \zer A$. Hence, due to \cite[Proposition~3.16]{BC2017} and \cref{fact:resolvent}\cref{fact:resolvent:closedconvex},
 \begin{align}\label{eq:prop:weakcluster:Pu:leq}
  \lim_{i \to \infty} \innp{u-  \Pro_{\zer A} u, y_{k_{i}} - \Pro_{\zer A} u} = \innp{u-  \Pro_{\zer A} u, \bar{y} - \Pro_{\zer A} u} \leq 0.
 \end{align}
 Combine \cref{eq:prop:weakcluster:Pu:limsup} and \cref{eq:prop:weakcluster:Pu:leq} to obtain the required inequality. 
\end{proof}


\section{Generalized proximal point algorithms} \label{sec:GeneralizedPPAs}

Recall that 
\begin{empheq}[box = \mybluebox]{equation*}
A: \mathcal{H} \to 2^{\mathcal{H}} \text{ is a maximally monotone operator}.
\end{empheq}
In the rest of this work, $u \in \mathcal{H}$ 
 and $x_{0} \in \mathcal{H}$ are arbitrary but fixed, and  the generalized proximal point algorithm is generated by  conforming the following recursion:
\begin{align} \label{eq:xk}
(\forall k \in \mathbb{N}) \quad x_{k+1} = \alpha_{k} u +\beta_{k} x_{k} + \gamma_{k} \J_{c_{k} A} (x_{k}) +\delta_{k} e_{k},
\end{align}
where  $(\forall k \in \mathbb{N})$ $e_{k} \in \mathcal{H}$, 
$c_{k} \in \mathbb{R}_{++}$, 
and $\{ \alpha_{k},  \beta_{k}, \gamma_{k}, \delta_{k}  \} \subseteq \mathbb{R}$. From now on, 
\begin{empheq}[box = \mybluebox]{equation*}
\Omega  \text{ is the set of all weak sequential cluster points of } (x_{k} )_{k \in \mathbb{N}}.
\end{empheq}

In this section, we investigate the boundedness and asymptotic regularity of $(x_{k} )_{k \in \mathbb{N}}$; after that, we demonstrate the equivalence of the boundedness of $(x_{k} )_{k \in \mathbb{N}}$ and $\zer A \neq \varnothing$.
\subsection*{Boundedness}

\begin{lemma} \label{leamma:xkTk}
	Set $(\forall k \in \mathbb{N})$ $T_{k}:= 2\J_{c_{k } A} -\Id  $. Then $(\forall k \in \mathbb{N})$ $T_{k}$ is nonexpansive and $\Fix T_{k} =\zer A$. Moreover, 
	\begin{align*} 
	(\forall k \in \mathbb{N})  \quad x_{k+1} =\left( \beta_{k} +\frac{\gamma_{k}}{2} \right)  x_{k} + \frac{\gamma_{k}}{2} T_{k}(x_{k}) +\alpha_{k} u+\delta_{k} e_{k}.
	\end{align*}
\end{lemma}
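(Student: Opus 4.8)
The plan is to prove the two claimed properties of $T_k$ and then verify the recursion for $x_{k+1}$ by direct algebra.

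First I would establish that each $T_k := 2\J_{c_k A} - \Id$ is nonexpansive. Since $A$ is maximally monotone and $c_k \in \mathbb{R}_{++}$, the operator $c_k A$ is maximally monotone by \cite[Proposition~20.22]{BC2017}, so by \cref{fact:resolvent}\cref{fact:resolvent:FN} the resolvent $\J_{c_k A}$ is full-domain, single-valued, and firmly nonexpansive. The reflection operator $2\J_{c_k A} - \Id$ of a firmly nonexpansive operator is nonexpansive; this is exactly \cite[Proposition~4.4]{BC2017} (the same reference invoked in the proof of \cref{fact:ineqTIden}\cref{fact:ineqTIden:ineq}). Hence $T_k$ is nonexpansive for every $k \in \mathbb{N}$.

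Next I would identify the fixed point set. For any $x \in \mathcal{H}$, we have $T_k(x) = x \Leftrightarrow 2\J_{c_k A}(x) - x = x \Leftrightarrow \J_{c_k A}(x) = x \Leftrightarrow x \in \Fix \J_{c_k A}$. By \cref{fact:resolvent}\cref{fact:resolvent:zer}, $\Fix \J_{c_k A} = \zer A$ (applied with $\gamma = c_k$). Therefore $\Fix T_k = \zer A$ for every $k$.

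Finally, for the recursion I would simply substitute the definition $\J_{c_k A}(x_k) = \tfrac{1}{2}\big(T_k(x_k) + x_k\big)$, which follows from $T_k = 2\J_{c_k A} - \Id$, into the defining recursion \cref{eq:xk}:
\begin{align*}
x_{k+1} &= \alpha_k u + \beta_k x_k + \gamma_k \J_{c_k A}(x_k) + \delta_k e_k \\
&= \alpha_k u + \beta_k x_k + \frac{\gamma_k}{2}\big(T_k(x_k) + x_k\big) + \delta_k e_k \\
&= \Big(\beta_k + \frac{\gamma_k}{2}\Big) x_k + \frac{\gamma_k}{2} T_k(x_k) + \alpha_k u + \delta_k e_k,
\end{align*}
which is exactly the claimed identity. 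None of these steps presents a genuine obstacle: this lemma is essentially a bookkeeping reformulation that rewrites the resolvent step as a relaxation of the reflected resolvent, packaging \cref{eq:xk} into the Krasnoselskii--Mann-type form that later convergence arguments exploit. The only point requiring a cited external fact is the nonexpansiveness of the reflected resolvent, and that is covered by \cite[Proposition~4.4]{BC2017} together with \cref{fact:resolvent}\cref{fact:resolvent:FN}.
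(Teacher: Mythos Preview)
Your proof is correct and follows essentially the same approach as the paper: both invoke \cref{fact:resolvent}\cref{fact:resolvent:FN}$\&$\cref{fact:resolvent:zer} together with \cite[Proposition~4.4]{BC2017} for the nonexpansiveness and fixed-point claims, and then substitute $\J_{c_k A}(x_k)=\tfrac{1}{2}(x_k+T_k x_k)$ into \cref{eq:xk} to obtain the stated recursion.
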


\begin{proof}
	Based on \cref{fact:resolvent}\cref{fact:resolvent:FN}$\&$\cref{fact:resolvent:zer} and \cite[Proposition~4.4]{BC2017},   $(\forall k \in \mathbb{N})$ $T_{k}$ is nonexpansive and $\Fix T_{k} =\Fix \J_{c_{k } A}=\zer A$. In consideration of \cref{eq:xk},
	\begin{subequations} \label{eq:prop:weakclustersXY:xy}
		\begin{align*}
		(\forall k \in \mathbb{N})  \quad x_{k+1} & = \alpha_{k} u +\beta_{k} x_{k} + \gamma_{k} \J_{c_{k} A} (x_{k}) +\delta_{k} e_{k}\\
		&=\alpha_{k} u +\beta_{k} x_{k} +  \frac{1}{2}\gamma_{k} ( x_{k} + T_{k}x_{k}) +\delta_{k} e_{k}\\
		&=\left( \beta_{k} +\frac{\gamma_{k}}{2} \right)  x_{k} + \frac{\gamma_{k}}{2} T_{k}(x_{k}) +\alpha_{k} u+\delta_{k} e_{k},
		\end{align*}
		which implies directly the desired equality. 
	\end{subequations}
\end{proof}

The following inequalities will be used frequently later. 
\begin{lemma} \label{lemma:xk+1-p}
Let $p \in \zer A$. Set $(\forall k \in \mathbb{N})$ $T_{k}:= 2\J_{c_{k } A} -\Id  $.  Then the following hold.
\begin{enumerate}
	\item \label{lemma:xk+1-p:norm} $(\forall k \in \mathbb{N})$ $	\norm{x_{k+1} -p } \leq \left( \abs{   \beta_{k} +\frac{\gamma_{k}}{2} } + \abs{\frac{\gamma_{k}}{2}  } \right) \norm{ x_{k} -p } + \norm{\alpha_{k} u+\delta_{k}e_{k} - (1-\beta_{k} -\gamma_{k}) p}$.
	\item \label{lemma:xk+1-p:square}  
	Denote by $(\forall k \in \mathbb{N})$  $\xi_{k} := \left( \abs{   \beta_{k} +\frac{\gamma_{k}}{2} } + \abs{\frac{\gamma_{k}}{2}  } \right)^{2}$,  $ \phi_{k}:= 1 -\beta_{k}- \gamma_{k}$,  $\varphi_{k}:= 1- \alpha_{k} -\beta_{k}- \gamma_{k}$, $F(k):= \norm{\delta_{k} e_{k} -\varphi_{k}u} $, and 
	$G(k):= F(k) +2\norm{ \left( \beta_{k} +\frac{\gamma_{k}}{2} \right)  (x_{k} -p )+ \frac{\gamma_{k}}{2} (T_{k}(x_{k}) -p)+\phi_{k} \left( u-p \right) } $. 	
	 Then
 $(\forall k \in \mathbb{N})$ $\norm{x_{k+1} -p }^{2}  
	\leq  \xi_{k}  \norm{ x_{k} -p }^{2}   
	+ 2 \phi_{k}  \innp{u-p, x_{k+1}-p - \delta_{k} e_{k}  + \varphi_{k} u}  + F(k)G(k)$.
 \item \label{lemma:xk+1-p:xk-Jckto}  Set $(\forall k \in \mathbb{N})$ $M(k) :=\norm{\alpha_{k} u + \delta_{k} e_{k} - (1-\beta_{k} -\gamma_{k} )p}$. Suppose $\inf_{k \in \mathbb{N}} \gamma_{k}( \beta_{k} +\gamma_{k}) \geq 0$. Then $(\forall k \in \mathbb{N})$ $ \norm{x_{k+1} -p}^{2} \leq  (  \beta_{k} +\gamma_{k} )^{2} \norm{  x_{k} -p}^{2} -\gamma_{k}	(2\beta_{k}+\gamma_{k}) \norm{x_{k} -  \J_{c_{k} A} x_{k} }^{2}+2 M (k)\norm{x_{k+1} -p}$.
 \item  \label{lemma:xk+1-p:alphak} $(\forall k \in \mathbb{N})$ $\norm{x_{k+1} -p }^{2}  
 \leq  \left( \abs{   \beta_{k} +\frac{\gamma_{k}}{2} } + \abs{\frac{\gamma_{k}}{2}  } \right)^{2} \norm{ x_{k} -p }^{2}   +2 \innp{ \alpha_{k} u+ \delta_{k} e_{k} - \left( 1-  \beta_{k} -\gamma_{k}   \right) p, x_{k+1} -p}$.
\end{enumerate}
\end{lemma}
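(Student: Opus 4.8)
The plan is to derive all four estimates from the single reformulation $x_{k+1}=\left(\beta_{k}+\frac{\gamma_{k}}{2}\right)x_{k}+\frac{\gamma_{k}}{2}T_{k}(x_{k})+\alpha_{k}u+\delta_{k}e_{k}$ furnished by \cref{leamma:xkTk}, together with the elementary inequalities of \cref{fact:x+y}, using throughout that $p\in\zer A=\Fix T_{k}=\Fix\J_{c_{k}A}$. For \cref{lemma:xk+1-p:norm} I would subtract $p$ and, inserting $T_{k}(p)=p$, regroup the right-hand side as
\begin{align*}
x_{k+1}-p=\left(\beta_{k}+\tfrac{\gamma_{k}}{2}\right)(x_{k}-p)+\tfrac{\gamma_{k}}{2}\bigl(T_{k}(x_{k})-p\bigr)+\bigl(\alpha_{k}u+\delta_{k}e_{k}-(1-\beta_{k}-\gamma_{k})p\bigr).
\end{align*}
The triangle inequality, followed by nonexpansiveness of $T_{k}$ (so that $\norm{T_{k}(x_{k})-p}=\norm{T_{k}x_{k}-T_{k}p}\le\norm{x_{k}-p}$), gives \cref{lemma:xk+1-p:norm} at once. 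This decomposition is the common source of the other parts.

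For \cref{lemma:xk+1-p:square} I would split $x_{k+1}-p=a_{k}+b_{k}$, where $a_{k}:=\left(\beta_{k}+\frac{\gamma_{k}}{2}\right)(x_{k}-p)+\frac{\gamma_{k}}{2}(T_{k}(x_{k})-p)+\phi_{k}(u-p)$ and $b_{k}:=\delta_{k}e_{k}-\varphi_{k}u$; one checks $\norm{b_{k}}=F(k)$ and $G(k)=F(k)+2\norm{a_{k}}$, the split being legitimate because $\alpha_{k}=\phi_{k}-\varphi_{k}$. Applying the third inequality of \cref{fact:x+y} to $\norm{a_{k}+b_{k}}^{2}$ yields $\norm{x_{k+1}-p}^{2}\le\norm{a_{k}}^{2}+F(k)G(k)$; applying the \emph{first} inequality of \cref{fact:x+y} to $\norm{a_{k}}^{2}$ (keeping the cross term as an inner product rather than expanding it) and bounding the homogeneous part by $\xi_{k}\norm{x_{k}-p}^{2}$ via nonexpansiveness of $T_{k}$, I recognize $a_{k}=(x_{k+1}-p)-b_{k}$, so the surviving term is exactly $2\phi_{k}\innp{u-p,\,x_{k+1}-p-\delta_{k}e_{k}+\varphi_{k}u}$. \Cref{lemma:xk+1-p:alphak} is the same idea but cheaper: apply the first inequality of \cref{fact:x+y} directly to $x_{k+1}-p=\tilde a_{k}+m_{k}$, with $\tilde a_{k}$ the first two summands above and $m_{k}:=\alpha_{k}u+\delta_{k}e_{k}-(1-\beta_{k}-\gamma_{k})p$, and bound $\norm{\tilde a_{k}}^{2}$.

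For \cref{lemma:xk+1-p:xk-Jckto} I would instead work directly from the $\J$-form \cref{eq:xk}, writing $x_{k+1}-p=\bigl(\beta_{k}(x_{k}-p)+\gamma_{k}(\J_{c_{k}A}x_{k}-p)\bigr)+m_{k}$ and peeling off $m_{k}$ through \cref{fact:x+y} to get $\norm{x_{k+1}-p}^{2}\le\norm{\beta_{k}(x_{k}-p)+\gamma_{k}(\J_{c_{k}A}x_{k}-p)}^{2}+2M(k)\norm{x_{k+1}-p}$. The crux is the algebraic identity, with $X:=x_{k}-p$ and $Y:=\J_{c_{k}A}x_{k}-p$,
\begin{align*}
\norm{\beta_{k}X+\gamma_{k}Y}^{2}=(\beta_{k}+\gamma_{k})^{2}\norm{X}^{2}-\gamma_{k}(2\beta_{k}+\gamma_{k})\norm{X-Y}^{2}-2\gamma_{k}(\beta_{k}+\gamma_{k})\bigl(\innp{X,Y}-\norm{Y}^{2}\bigr),
\end{align*}
verified by expanding both sides. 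Firm nonexpansiveness of $\J_{c_{k}A}$ with fixed point $p$ (\cref{defn:FirmNonexpansive}\cref{defn:FirmNonexpansive:Firm}) gives $\norm{Y}^{2}+\norm{X-Y}^{2}\le\norm{X}^{2}$, hence $\innp{X,Y}-\norm{Y}^{2}\ge0$; with the hypothesis $\gamma_{k}(\beta_{k}+\gamma_{k})\ge0$ the last term is nonpositive and is dropped, and since $\norm{X-Y}=\norm{x_{k}-\J_{c_{k}A}x_{k}}$ this is \cref{lemma:xk+1-p:xk-Jckto}.

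I expect the bookkeeping in \cref{lemma:xk+1-p:square} and \cref{lemma:xk+1-p:xk-Jckto} to be the only real obstacle: one must choose which of the two forms of \cref{fact:x+y} to apply to which summand so that the right-hand side lands exactly on the stated expression. In \cref{lemma:xk+1-p:square} this choice is forced by the requirement of retaining the cross term as an inner product $\innp{u-p,\cdot}$ (the form later consumed by \cref{prop:weakcluster:Pu}), and in \cref{lemma:xk+1-p:xk-Jckto} it is precisely the sign condition $\inf_{k\in\mathbb{N}}\gamma_{k}(\beta_{k}+\gamma_{k})\ge0$ that licenses discarding the firm-nonexpansiveness remainder.
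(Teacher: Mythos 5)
Your proposal is correct and follows essentially the same route as the paper: the same decomposition via \cref{leamma:xkTk}, the same choices of which inequality from \cref{fact:x+y} to apply to which summand in \cref{lemma:xk+1-p:square} and \cref{lemma:xk+1-p:alphak}, and the same identification $a_k = (x_{k+1}-p)-b_k$ to produce the inner-product term. The only cosmetic deviation is in \cref{lemma:xk+1-p:xk-Jckto}, where you expand $\norm{\beta_k X+\gamma_k Y}^2$ through an explicit identity and the sign of $\innp{X,Y}-\norm{Y}^2$, whereas the paper groups as $(\beta_k+\gamma_k)(x_k-p)+\gamma_k(\J_{c_kA}x_k-x_k)$ and uses $\innp{x_k-p,\J_{c_kA}x_k-x_k}\leq-\norm{x_k-\J_{c_kA}x_k}^2$; these are the same firm-nonexpansiveness computation in different coordinates.
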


\begin{proof}
 Let $k \in \mathbb{N}$.   

\cref{lemma:xk+1-p:norm}: In view of \cref{leamma:xkTk},  
	\begin{align*}
 	\norm{x_{k+1} -p }  
	= & \norm{ \left( \beta_{k} +\frac{\gamma_{k}}{2} \right)   x_{k}  + \frac{\gamma_{k}}{2}  T_{k}(x_{k}) +\alpha_{k} u+\delta_{k} e_{k} -p}\\
	= 	& \norm{ \left( \beta_{k} +\frac{\gamma_{k}}{2} \right)  (x_{k} -p )+ \frac{\gamma_{k}}{2} (T_{k}(x_{k}) -p)+\alpha_{k} u +  \delta_{k} e_{k}  - \left( 1  -  \beta_{k} -\gamma_{k}   \right) p}\\
	\leq & \abs{ \beta_{k} +\frac{\gamma_{k}}{2} } \norm{x_{k} -p} + \abs{ \frac{\gamma_{k}}{2}} \norm{T_{k}(x_{k}) -p} + \norm{\alpha_{k} u +  \delta_{k} e_{k}  - \left( 1  -  \beta_{k} -\gamma_{k}   \right) p}\\
	\leq &   \left( \abs{   \beta_{k} +\frac{\gamma_{k}}{2} } + \abs{\frac{\gamma_{k}}{2}  } \right) \norm{x_{k} -p} + \norm{\alpha_{k} u +  \delta_{k} e_{k}  - \left( 1  -  \beta_{k} -\gamma_{k}   \right) p},
	\end{align*}	
where in the  last inequality we used the nonexpansiveness of $T_{k}$  and the fact that $p \in \zer A =\Fix T_{k}$.

\cref{lemma:xk+1-p:square}: Applying   \cref{leamma:xkTk}  in the first equality and the last inequality, and employing \cref{fact:x+y} in the first and second inequalities, we obtain that
	\begin{align*}
	& \norm{x_{k+1} -p }^{2}  \\
	=&  \norm{ \left( \beta_{k} +\frac{\gamma_{k}}{2} \right)  (x_{k} -p )+ \frac{\gamma_{k}}{2} (T_{k}(x_{k}) -p)+(1- \beta_{k} -\gamma_{k}) \left( u-p \right) +  \delta_{k} e_{k}  - \left( 1-\alpha_{k} -  \beta_{k} -\gamma_{k}   \right) u}^{2}\\
	\leq &\norm{ \left( \beta_{k} +\frac{\gamma_{k}}{2} \right)  (x_{k} -p )+ \frac{\gamma_{k}}{2} (T_{k}(x_{k}) -p)+\phi_{k} \left( u-p \right) }^{2} + F(k)G(k)\\
		\leq & \norm{ \left( \beta_{k} +\frac{\gamma_{k}}{2} \right)  (x_{k} -p )+ \frac{\gamma_{k}}{2} (T_{k}(x_{k}) -p)}^{2}  + 2 \phi_{k}  \innp{u-p, x_{k+1}-p - \delta_{k} e_{k}  +\varphi_{k} u} + F(k)G(k)  \\
	\leq 	& \left( \abs{   \beta_{k} +\frac{\gamma_{k}}{2} } + \abs{\frac{\gamma_{k}}{2}  } \right)^{2} \norm{ x_{k} -p }^{2} + 2 \phi_{k}  \innp{u-p, x_{k+1}-p - \delta_{k} e_{k}  +\varphi_{k} u}   + F(k)G(k).
	\end{align*}

	\cref{lemma:xk+1-p:xk-Jckto}:	  According to \cref{fact:resolvent}\cref{fact:resolvent:FN}$\&$\cref{fact:resolvent:zer} and \cite[Proposition~4.4]{BC2017},  
	\begin{subequations} \label{eq:prop:xk-Jckto0:-innp}
		\begin{align}
		\innp{ x_{k} -p, \J_{c_{k} A} (x_{k})  -x_{k}} &= -\innp{ x_{k} -p,( \Id -\J_{c_{k} A}) (x_{k})  -( \Id -\J_{c_{k} A}) p } \\
		&\leq -\norm{( \Id -\J_{c_{k} A}) (x_{k})  -( \Id -\J_{c_{k} A}) p}^{2}\\
		&=-\norm{x_{k} -  \J_{c_{k} A} x_{k} }^{2}.
		\end{align}	
	\end{subequations}
Utilizing  $\inf_{k \in \mathbb{N}} \gamma_{k}( \beta_{k} +\gamma_{k}) \geq 0$ in the last inequality, we establish that
	\begin{align*}
	&	 \norm{x_{k+1} -p}^{2} \\
	\stackrel{\cref{eq:xk}}{=}& \norm{ (  \beta_{k} +\gamma_{k} )  x_{k} 
		+\gamma_{k} \left( \J_{c_{k} A} (x_{k})  -x_{k} \right) +\alpha_{k} u + \delta_{k} e_{k} - p }^{2} \\
	=\,&  \norm{ (  \beta_{k} +\gamma_{k} ) ( x_{k} -p)
		+\gamma_{k} \left( \J_{c_{k} A} (x_{k})  -x_{k} \right) +\alpha_{k} u + \delta_{k} e_{k} - (1-\beta_{k} -\gamma_{k} )p }^{2} \\
	\leq\,&  \norm{ (  \beta_{k} +\gamma_{k} ) ( x_{k} -p)	+\gamma_{k} \left( \J_{c_{k} A} (x_{k})  -x_{k} \right)}^{2} +2 M (k)\norm{x_{k+1} -p} \quad (\text{by \cref{fact:x+y}})\\
	=\,&(  \beta_{k} +\gamma_{k} )^{2} \norm{  x_{k} -p}^{2} +\gamma_{k}^{2} \norm{\J_{c_{k} A} (x_{k})  -x_{k}}^{2} +2\gamma_{k}( \beta_{k} +\gamma_{k}) \innp{ x_{k} -p, \J_{c_{k} A} (x_{k})  -x_{k}}+2 M(k)\norm{x_{k+1} -p}\\
	\stackrel{\cref{eq:prop:xk-Jckto0:-innp}}{\leq}& (  \beta_{k} +\gamma_{k} )^{2} \norm{  x_{k} -p}^{2} -\gamma_{k}	(2\beta_{k}+\gamma_{k}) \norm{x_{k} -  \J_{c_{k} A} x_{k} }^{2}+2 M (k)\norm{x_{k+1} -p}.
	\end{align*}
	
\cref{lemma:xk+1-p:alphak}:	Apply \cref{leamma:xkTk} and \cref{fact:x+y} in the following  first equality and first inequality, respectively, and employ the nonexpansiveness of  $T_{k}$ and the fact that $p= T_{k}(p)$ in the second inequality   to observe that 
	\begin{align*}
	&\norm{x_{k+1} -p }^{2}\\
	=& \norm{ \left( \beta_{k} +\frac{\gamma_{k}}{2} \right)   x_{k}  + \frac{\gamma_{k}}{2}  T_{k}(x_{k}) +\alpha_{k} u+\delta_{k} e_{k} -p}^{2}\\
	=& \norm{ \left( \beta_{k} +\frac{\gamma_{k}}{2} \right)  (x_{k} -p )+ \frac{\gamma_{k}}{2} (T_{k}(x_{k}) -p)+\alpha_{k}  u+ \delta_{k} e_{k} - \left( 1 -  \beta_{k} -\gamma_{k}   \right) p}^{2}\\
	\leq&\norm{ \left( \beta_{k} +\frac{\gamma_{k}}{2} \right)  (x_{k} -p )+ \frac{\gamma_{k}}{2} (T_{k}(x_{k}) -p) }^{2} + 2 \innp{   \alpha_{k} u + \delta_{k} e_{k} - \left( 1 -  \beta_{k} -\gamma_{k}   \right) p, x_{k+1} -p}\\
	\leq &  \left( \abs{   \beta_{k} +\frac{\gamma_{k}}{2} } + \abs{\frac{\gamma_{k}}{2}  } \right)^{2} \norm{ x_{k} -p }^{2}   +2 \innp{   \alpha_{k} u + \delta_{k} e_{k} - \left( 1 -  \beta_{k} -\gamma_{k}   \right) p, x_{k+1} -p}.
	\end{align*}
	
	Altogether, the proof is complete.
\end{proof}

We present sufficient conditions for the boundedness of $(x_{k})_{k \in \mathbb{N}}$ in the remaining subsection. 

	Note that if $(\forall k \in \mathbb{N})$  $\{ \alpha_{k}, \beta_{k} , \gamma_{k} \} \subseteq \left[0,1 \right]$ with  $\alpha_{k}+\beta_{k} +\gamma_{k}  = 1$ (which is the case in many publications on generalized proximal point algorithms), then based on \cref{prop:xkbounded}\cref{prop:xkbounded:leq1} or \cref{prop:xkbounded}\cref{prop:xkbounded:alpha}, we deduce the classical statement: $\zer A \neq \varnothing$ and $\sum_{k \in \mathbb{N}}  \norm{\delta_{k} e_{k}}< \infty$ imply the boundedness of $(x_{k})_{k \in \mathbb{N}}$.
\begin{proposition} \label{prop:xkbounded}
Suppose that $\zer A \neq \varnothing$ and that one of the following holds.
	\begin{enumerate}		
		\item \label{prop:xkbounded:suprhobounded}   $ \limsup_{k \to \infty} \left(  \abs{   \beta_{k} +\frac{\gamma_{k}}{2} } + \abs{\frac{\gamma_{k}}{2}  } \right)<1$,    $ \sup_{k\in \mathbb{N}} \abs{\alpha_{k}} < \infty$, and $ \sup_{k\in \mathbb{N}} \norm{\delta_{k}e_{k}} < \infty$. 
		\item  \label{prop:xkbounded:leq1}    $(\forall k \in \mathbb{N})$  $  \abs{   \beta_{k} +\frac{\gamma_{k}}{2} } + \abs{\frac{\gamma_{k}}{2}  } \leq 1$, and  the following hold:
			\begin{enumerate}
			\item \label{prop:xkbounded:leq1:alpha}   $(\forall k \in \mathbb{N})$    $|\alpha_{k}| +  \abs{   \beta_{k} +\frac{\gamma_{k}}{2} } + \abs{\frac{\gamma_{k}}{2}  } \leq 1$ or $\sum_{i \in \mathbb{N}}  \abs{\alpha_{i}} < \infty$;
			\item \label{prop:xkbounded:leq1:delta}   $\left[  (\forall k \in \mathbb{N}   \abs{   \beta_{k} +\frac{\gamma_{k}}{2} } + \abs{\frac{\gamma_{k}}{2}  } +|\delta_{k}| \leq 1 \text{ and } \sup_{i \in \mathbb{N}} \norm{e_{i}} < \infty \right]$ or $\sum_{i \in \mathbb{N}}  \norm{\delta_{i} e_{i}}< \infty$;
			\item \label{prop:xkbounded:leq1:p}     $ (\forall k \in \mathbb{N})  \abs{   \beta_{k} +\frac{\gamma_{k}}{2} } + \abs{\frac{\gamma_{k}}{2}  }+\abs{1 - \beta_{k}-\gamma_{k}}  \leq 1 $  or $\sum_{i \in \mathbb{N}}  \abs{1 - \beta_{i}-\gamma_{i}}< \infty$.
		\end{enumerate}	
	\item \label{prop:xkbounded:alpha} $(\forall k \in \mathbb{N})$ $\abs{\alpha_{k}} + \abs{   \beta_{k} +\frac{\gamma_{k}}{2} } + \abs{\frac{\gamma_{k}}{2}} \leq 1$,   $\sum_{k \in \mathbb{N}} \abs{1-\alpha_{k}-\beta_{k} -\gamma_{k}}< \infty$, and $\sum_{k \in \mathbb{N}} \norm{\delta_{k} e_{k}} < \infty$.
	\item \label{prop:xkbounded:delta} $(\forall k \in \mathbb{N})$ $ \abs{   \beta_{k} +\frac{\gamma_{k}}{2} } + \abs{\frac{\gamma_{k}}{2}  } +\abs{\delta_{k} } \leq 1$,  $\sup_{k \in \mathbb{N}} \norm{e_{k} } < \infty$, $\sum_{k \in \mathbb{N}} \abs{1-\beta_{k} -\gamma_{k} -\delta_{k}}< \infty$, and $\sum_{k \in \mathbb{N}} \abs{\alpha_{k} }< \infty$.
	
	\end{enumerate}

Then $(x_{k})_{k \in \mathbb{N}}$ is bounded. 
\end{proposition}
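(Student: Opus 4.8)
The plan is to track $t_k := \norm{x_k - p}$ for a fixed $p \in \zer A$ (which exists since $\zer A \neq \varnothing$) and to feed the one-step estimate of \cref{lemma:xk+1-p}\cref{lemma:xk+1-p:norm} into the scalar recursions of \cref{prop:tkleq}. Writing $\rho_k := \abs{\beta_k + \frac{\gamma_k}{2}} + \abs{\frac{\gamma_k}{2}} \in \mathbb{R}_+$, that estimate reads $t_{k+1} \leq \rho_k t_k + \norm{\alpha_k u + \delta_k e_k - (1-\beta_k-\gamma_k)p}$, so everything reduces to controlling the constant term and matching the hypotheses of \cref{prop:tkleq}. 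The recurring device is the triangle inequality applied to a suitable regrouping of $\alpha_k u + \delta_k e_k - (1-\beta_k-\gamma_k)p$, together with the elementary observation $\abs{\beta_k+\gamma_k} \leq \rho_k$.

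For \cref{prop:xkbounded:suprhobounded} I would argue that $\limsup_k \rho_k < 1$ forces $\sup_k \rho_k < \infty$, hence $\sup_k\abs{\beta_k+\gamma_k} < \infty$ and $\sup_k\abs{1-\beta_k-\gamma_k} < \infty$; combined with $\sup_k\abs{\alpha_k} < \infty$ and $\sup_k\norm{\delta_k e_k} < \infty$ this bounds the constant term uniformly, so \cref{prop:tkleq}\cref{prop:tkleq:supbounded} (with coefficient $\rho_k$, the $\beta_k\omega_k$ slot set to $0$, and the $\gamma_k$ slot equal to the constant term) gives boundedness directly.

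For \cref{prop:xkbounded:alpha} and \cref{prop:xkbounded:delta} a single regrouping lets me invoke \cref{prop:tkleq}\cref{prop:tkleq:bounded}. In case \cref{prop:xkbounded:alpha} I write $\alpha_k u + \delta_k e_k - (1-\beta_k-\gamma_k)p = \alpha_k(u-p) + \delta_k e_k - (1-\alpha_k-\beta_k-\gamma_k)p$ and bound its norm by $\abs{\alpha_k}\norm{u-p} + \norm{\delta_k e_k} + \abs{1-\alpha_k-\beta_k-\gamma_k}\norm{p}$; matching $\abs{\alpha_k}$ to the $\beta_k$-slot (legitimate since $\abs{\alpha_k} + \rho_k \leq 1$, which also forces $\rho_k \in [0,1]$), $\norm{u-p}$ to the bounded $\omega_k$, and the two summable pieces to the $\gamma_k$-slot yields the claim. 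Case \cref{prop:xkbounded:delta} is identical after the regrouping $\alpha_k u + \delta_k(e_k-p) - (1-\beta_k-\gamma_k-\delta_k)p$, now matching $\abs{\delta_k}$ to the $\beta_k$-slot ($\rho_k+\abs{\delta_k}\leq 1$) and $\norm{e_k-p}$ (bounded because $\sup_k\norm{e_k}<\infty$) to $\omega_k$.

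The main obstacle is case \cref{prop:xkbounded:leq1}, whose three independent ``or'' hypotheses may simultaneously route two or three of the additive terms through the bounded-coefficient mechanism, so it no longer fits the two-term template of \cref{prop:tkleq}\cref{prop:tkleq:bounded}. Here I would instead re-run the induction underlying \cref{prop:tkleq} by hand: bound the constant term by $\abs{\alpha_k}\norm{u} + \norm{\delta_k e_k} + \abs{1-\beta_k-\gamma_k}\norm{p}$, unfold the recursion to $t_{k+1} \leq \prod_{j=0}^{k}\rho_j\, t_0 + \sum_{i=0}^{k}\prod_{j=i+1}^{k}\rho_j\bigl(\abs{\alpha_i}\norm{u}+\norm{\delta_i e_i}+\abs{1-\beta_i-\gamma_i}\norm{p}\bigr)$, note $\prod_{j=0}^{k}\rho_j \leq 1$ because each $\rho_j \in [0,1]$, and then bound each of the three telescoping sub-sums separately: whenever the relevant ``$\rho_k + (\cdot) \leq 1$'' alternative of \cref{prop:xkbounded:leq1:alpha}, \cref{prop:xkbounded:leq1:delta}, or \cref{prop:xkbounded:leq1:p} holds I apply \cref{lemma:alphabetaleq} to get a bound $\leq \hat{\omega}\,(1-\prod_i\rho_i) \leq \hat{\omega}$ with the appropriate finite $\hat{\omega}$ (namely $\norm{u}$, $\sup_i\norm{e_i}$, or $\norm{p}$), and whenever the summability alternative holds I bound $\sum_i\prod_j\rho_j(\cdot) \leq \sum_i(\cdot) < \infty$. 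The only care needed is to keep the index ranges in \cref{lemma:alphabetaleq} aligned with the expansion and to verify each coefficient-matching inequality.
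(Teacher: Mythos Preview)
Your proposal is correct and, for cases \cref{prop:xkbounded:suprhobounded}, \cref{prop:xkbounded:alpha}, and \cref{prop:xkbounded:delta}, matches the paper's proof exactly: the same regroupings of the constant term and the same invocations of \cref{prop:tkleq}\cref{prop:tkleq:supbounded} and \cref{prop:tkleq}\cref{prop:tkleq:bounded}.

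For case \cref{prop:xkbounded:leq1} your organization differs slightly from the paper's. The paper observes that the three independent ``or'' hypotheses yield eight sub-cases, proves three representative ones (all summable; all coefficient-type via a $\max$ trick that collapses the three coefficients into a single $\xi_k$ with $\rho_k+\xi_k\leq 1$; one mixed case), and each time feeds the result into the packaged \cref{prop:tkleq}\cref{prop:tkleq:bounded}. You instead unfold the recursion once (exactly as in the proof of \cref{prop:tkleq}) and bound each of the three telescoping sub-sums independently via either \cref{lemma:alphabetaleq} or crude summability. Your route is a bit cleaner in that it handles all eight combinations uniformly without case splits or the $\max$ device; the paper's route has the advantage of reusing \cref{prop:tkleq}\cref{prop:tkleq:bounded} as a black box rather than re-deriving its expansion. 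Both rest on the same two ingredients, \cref{lemma:alphabetaleq} and the unfolded inequality \cref{eq:prop:tkleq:tk}.
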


\begin{proof}
	Let $p \in \zer A$. 	In view of \cref{lemma:xk+1-p}\cref{lemma:xk+1-p:norm},  
\begin{align}\label{eq:cor:Rxkbounded}
(\forall k \in \mathbb{N}) \quad 	\norm{x_{k+1} -p }  \leq  \left( \abs{   \beta_{k} +\frac{\gamma_{k}}{2} } + \abs{\frac{\gamma_{k}}{2}  } \right) \norm{ x_{k} -p } + \norm{\alpha_{k} u+\delta_{k}e_{k} - (1-\beta_{k} -\gamma_{k}) p}.
\end{align}

\cref{prop:xkbounded:suprhobounded}: Note that $(\forall k \in \mathbb{N})$ $\abs{1-\beta_{k} -\gamma_{k}} 
\leq 1+ \abs{   \beta_{k} +\frac{\gamma_{k}}{2} } + \abs{\frac{\gamma_{k}}{2}  }$ and that $ \limsup_{i  \to \infty} \left(  \abs{   \beta_{i} +\frac{\gamma_{i}}{2} } + \abs{\frac{\gamma_{i}}{2}  } \right)<1$ implies the boundedness of $\left(\abs{   \beta_{i} +\frac{\gamma_{i}}{2} } + \abs{\frac{\gamma_{i}}{2}  }  \right)_{i \in \mathbb{N}}$ and $ (\abs{1-\beta_{i} -\gamma_{i} }  )_{i \in \mathbb{N}}$.  
The desired result is clear from \cref{eq:cor:Rxkbounded} and \cref{prop:tkleq}\cref{prop:tkleq:supbounded} with $(\forall k \in \mathbb{N})$ $t_{k} =\norm{ x_{k} -p }$, $\alpha_{k} =  \abs{   \beta_{k} +\frac{\gamma_{k}}{2} } + \abs{\frac{\gamma_{k}}{2}  } $, $\beta_{k}\equiv 0$, $\omega_{k} \equiv 0$, and $\gamma_{k} = \norm{\alpha_{k} u+\delta_{k}e_{k} - (1-\beta_{k} -\gamma_{k}) p}$.

\cref{prop:xkbounded:leq1}: Clearly, there are eight cases to prove and it suffices to show the boundedness of $\left(\norm{ x_{k} -p }\right)_{k \in \mathbb{N}}$ in each case. We prove only the following three cases and omit the similar proof of the remaining ones.

\emph{Case~1}: Suppose that   $\sum_{i \in \mathbb{N}}  \abs{\alpha_{i}} < \infty$, $\sum_{i \in \mathbb{N}} \norm{ \delta_{i} e_{i}} < \infty$, and $\sum_{i \in \mathbb{N}} \abs{1 - \beta_{i}-\gamma_{i}}< \infty$.  

Recall \cref{eq:cor:Rxkbounded} and apply  \cref{prop:tkleq}\cref{prop:tkleq:bounded} with $(\forall k \in \mathbb{N})$ $t_{k} =\norm{ x_{k} -p }$, $\alpha_{k} =  \abs{   \beta_{k} +\frac{\gamma_{k}}{2} } + \abs{\frac{\gamma_{k}}{2}  } $, $\beta_{k}\equiv 0$, $\omega_{k} \equiv 0$, and $\gamma_{k} = \norm{\alpha_{k} u+\delta_{k}e_{k} - (1-\beta_{k} -\gamma_{k}) p}$ to obtain the required boundedness of $\left(\norm{ x_{k} -p }\right)_{k \in \mathbb{N}}$.

\emph{Case~2}: Suppose that $(\forall k \in \mathbb{N})$    $|\alpha_{k}| +  \abs{   \beta_{k} +\frac{\gamma_{k}}{2} } + \abs{\frac{\gamma_{k}}{2}  }  \leq 1$, $ \abs{   \beta_{k} +\frac{\gamma_{k}}{2} } + \abs{\frac{\gamma_{k}}{2}  }  +|\delta_{k}|   \leq 1$, $\sup_{i \in \mathbb{N}} \norm{e_{i}} < \infty$, and  $ \abs{   \beta_{k} +\frac{\gamma_{k}}{2} } + \abs{\frac{\gamma_{k}}{2}  }  +\abs{1-  \beta_{k} -\gamma_{k}} \leq 1$.

Denote by $(\forall k \in \mathbb{N})$ $\xi_{k}:= \max \{ |\alpha_{k}|, |\delta_{k}|, \abs{1-  \beta_{k} -\gamma_{k}} \}$. In view of the assumption above, 
\begin{align}\label{eq:theorem:Convergence:leq1Bounded:xi}
(\forall k \in \mathbb{N}) \quad  \abs{   \beta_{k} +\frac{\gamma_{k}}{2} } + \abs{\frac{\gamma_{k}}{2}  }  + \xi_{k} \leq 1.
\end{align}
On the other hand, clearly \cref{eq:cor:Rxkbounded} forces
\begin{align*}
(\forall k \in \mathbb{N}) \quad \norm{ x_{k+1} -p} &\leq   \left(  \abs{   \beta_{k} +\frac{\gamma_{k}}{2} } + \abs{\frac{\gamma_{k}}{2}  }  \right) \norm{  x_{k} -p} + \abs{\alpha_{k}} \norm{u } +\norm{\delta_{k}  e_{k}}  + \abs{1  -\beta_{k} -\gamma_{k}}\norm{  p}\\
&\leq    \left(  \abs{   \beta_{k} +\frac{\gamma_{k}}{2} } + \abs{\frac{\gamma_{k}}{2}  }  \right) \norm{  x_{k} -p} + \xi_{k} \left( \norm{u } + \norm{p} + \sup_{i \in \mathbb{N}} \norm{e_{i}}  \right),
\end{align*}
which, connecting with \cref{eq:theorem:Convergence:leq1Bounded:xi} and applying \cref{prop:tkleq}\cref{prop:tkleq:bounded} with $(\forall k \in \mathbb{N})$ $t_{k} =\norm{ x_{k} -p }$, $\alpha_{k} =  \abs{   \beta_{k} +\frac{\gamma_{k}}{2} } + \abs{\frac{\gamma_{k}}{2}  } $,  $\beta_{k} =\xi_{k}$, $\omega_{k}=    \norm{u } + \norm{p} + \sup_{i \in \mathbb{N}} \norm{e_{i}}  $, and $\gamma_{k} \equiv 0$, guarantees the boundedness of $( \norm{  x_{k} -p})_{k \in \mathbb{N}}$.  

\emph{Case~3}: Suppose that  $(\forall k \in \mathbb{N})$  $ \abs{\alpha_{k}} +\abs{   \beta_{k} +\frac{\gamma_{k}}{2} } + \abs{\frac{\gamma_{k}}{2}  }  \leq 1$, $\sum_{i \in \mathbb{N}} \norm{ \delta_{i} e_{i}} < \infty$,  and $  \abs{   \beta_{k} +\frac{\gamma_{k}}{2} } + \abs{\frac{\gamma_{k}}{2}  }  +\abs{1 - \beta_{k}-\gamma_{k}} \leq 1$.

Denote by $(\forall k \in \mathbb{N})$ $\eta_{k}:= \max \{ |\alpha_{k}|, \abs{1  -\beta_{k} -\gamma_{k}} \}$. Similarly with the proof of Case~2, we observe that $(\forall k \in \mathbb{N})$ $ \abs{   \beta_{k} +\frac{\gamma_{k}}{2} } + \abs{\frac{\gamma_{k}}{2}  } + \eta_{k} \leq 1$, and that
\begin{align*}
(\forall k \in \mathbb{N}) \quad \norm{ x_{k+1} -p} & \leq   \left(  \abs{   \beta_{k} +\frac{\gamma_{k}}{2} } + \abs{\frac{\gamma_{k}}{2}  }  \right) \norm{  x_{k} -p} + \abs{\alpha_{k}} \norm{u } +\norm{\delta_{k}  e_{k}}  + \abs{1  -\beta_{k} -\gamma_{k}}\norm{  p}\\
&\leq    \left(  \abs{   \beta_{k} +\frac{\gamma_{k}}{2} } + \abs{\frac{\gamma_{k}}{2}  }  \right) \norm{  x_{k} -p} + \eta_{k} \left( \norm{u } + \norm{p}   \right) +\norm{\delta_{k}  e_{k}},
\end{align*}
which, applying \cref{prop:tkleq}\cref{prop:tkleq:bounded} with $(\forall k \in \mathbb{N})$ $t_{k} =\norm{ x_{k} -p }$, $\alpha_{k} =  \abs{   \beta_{k} +\frac{\gamma_{k}}{2} } + \abs{\frac{\gamma_{k}}{2}  } $,  $\beta_{k} =\eta_{k}$, $\omega_{k}=    \norm{u} + \norm{p}   $, and $\gamma_{k} = \norm{\delta_{k}  e_{k}}$, ensures the boundedness of $( \norm{  x_{k} -p})_{k \in \mathbb{N}}$.  

\cref{prop:xkbounded:alpha}$\&$\cref{prop:xkbounded:delta}: As a consequence of \cref{eq:cor:Rxkbounded}, for every $k \in \mathbb{N}$,
\begin{subequations}
\begin{align} 
\norm{x_{k+1} -p }  &\leq  \left( \abs{   \beta_{k} +\frac{\gamma_{k}}{2} } + \abs{\frac{\gamma_{k}}{2}  } \right) \norm{ x_{k} -p } + \abs{\alpha_{k}} \norm{ u -p} + \abs{1-\alpha_{k}-\beta_{k} -\gamma_{k}} \norm{p} +\norm{\delta_{k}e_{k}}; \label{eq:prop:xkbounded:alpha}\\
\norm{x_{k+1} -p }  &\leq  \left( \abs{   \beta_{k} +\frac{\gamma_{k}}{2} } + \abs{\frac{\gamma_{k}}{2}  } \right) \norm{ x_{k} -p } + \abs{\delta_{k}} \norm{e_{k} -p} + \abs{1 -\beta_{k} -\gamma_{k} -\delta_{k}} \norm{p} + \abs{\alpha_{k}} \norm{u}. \label{eq:prop:xkbounded:delta}
\end{align}
\end{subequations}
Hence, we obtain \cref{prop:xkbounded:alpha} (resp.\,\cref{prop:xkbounded:delta}) by invoking \cref{eq:prop:xkbounded:alpha} (resp.\,\cref{eq:prop:xkbounded:delta}) and applying  \cref{prop:tkleq}\cref{prop:tkleq:bounded} with $(\forall k \in \mathbb{N})$ $t_{k} =\norm{ x_{k} -p }$, $\alpha_{k} =  \abs{   \beta_{k} +\frac{\gamma_{k}}{2} } + \abs{\frac{\gamma_{k}}{2}  } $,  $\beta_{k} =\alpha_{k}$ (resp.\,$\beta_{k} =\delta_{k}$), $\omega_{k}=    \norm{u-p}   $ (resp.\,$\omega_{k}= \norm{e_{k} -p}$),  and $\gamma_{k} =  \abs{1-\alpha_{k}-\beta_{k} -\gamma_{k}} \norm{p} +\norm{\delta_{k}e_{k}}$ (resp.\,$\gamma_{k} = \abs{1 -\beta_{k} -\gamma_{k} -\delta_{k}} \norm{p} + \abs{\alpha_{k}} \norm{u}$). 

Altogether, the proof is complete.
\end{proof}

The following result is motivated by the Step~1 in the proof of \cite[Theorem~1]{BoikanyoMorosanu2010}.
 \begin{proposition} \label{prop:xkboundedalphak}
 	Suppose that $\zer A \neq \varnothing$, that  $(\forall k \in \mathbb{N})$ $\alpha_{k} \in \left]0,1\right]$ and $\alpha_{k}+ \abs{ \beta_{k} +\frac{ \gamma_{k}}{2} }+ \abs{\frac{ \gamma_{k}}{2}  }  \leq 1$,  
 	and that   $\frac{\delta_{k}e_{k} }{\alpha_{k}} \to 0$ and $\frac{1-\alpha_{k} -\beta_{k} -\gamma_{k} }{\alpha_{k}} \to 0$.
 	Then  $(x_{k})_{k \in \mathbb{N}}$ is bounded.
 \end{proposition}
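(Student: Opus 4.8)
The plan is to fix any $p \in \zer A$ (which exists since $\zer A \neq \varnothing$), track the scalar sequence $\norm{x_{k}-p}$, and reduce the claim to a direct application of \cref{prop:tkleq}\cref{prop:tkleq:bounded}. The natural starting point is the one-step estimate \cref{lemma:xk+1-p}\cref{lemma:xk+1-p:norm}, which, abbreviating $\rho_{k} := \abs{\beta_{k}+\frac{\gamma_{k}}{2}} + \abs{\frac{\gamma_{k}}{2}}$, reads
\[
\norm{x_{k+1}-p} \leq \rho_{k}\norm{x_{k}-p} + \norm{\alpha_{k} u + \delta_{k} e_{k} - (1-\beta_{k}-\gamma_{k})p}.
\]
The standing hypothesis $\alpha_{k} + \rho_{k} \leq 1$ together with $\alpha_{k}\in\left]0,1\right]$ yields at once $\rho_{k}\in\left[0,1\right]$ and $\rho_{k}+\alpha_{k}\leq 1$, which are exactly the structural requirements placed on the contraction coefficient and the multiplier in \cref{prop:tkleq}\cref{prop:tkleq:bounded}.

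The key algebraic step is to extract the factor $\alpha_{k}$ from the additive term. Writing $1-\beta_{k}-\gamma_{k} = \alpha_{k} + (1-\alpha_{k}-\beta_{k}-\gamma_{k})$ and applying the triangle inequality, I would bound
\[
\norm{\alpha_{k} u + \delta_{k} e_{k} - (1-\beta_{k}-\gamma_{k})p} \leq \alpha_{k}\norm{u-p} + \norm{\delta_{k} e_{k}} + \abs{1-\alpha_{k}-\beta_{k}-\gamma_{k}}\norm{p} = \alpha_{k} b_{k},
\]
where, since $\alpha_{k}>0$, the quantity
\[
b_{k} := \norm{u-p} + \frac{\norm{\delta_{k} e_{k}}}{\alpha_{k}} + \frac{\abs{1-\alpha_{k}-\beta_{k}-\gamma_{k}}}{\alpha_{k}}\norm{p}
\]
is well defined and nonnegative. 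This produces the recursion $\norm{x_{k+1}-p} \leq \rho_{k}\norm{x_{k}-p} + \alpha_{k} b_{k}$.

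It then remains only to verify that $(b_{k})_{k\in\mathbb{N}}$ is bounded. Each $b_{k}$ is a sum of norms, hence lies in $\mathbb{R}_{+}$, and the two convergence hypotheses $\frac{\delta_{k}e_{k}}{\alpha_{k}}\to 0$ and $\frac{1-\alpha_{k}-\beta_{k}-\gamma_{k}}{\alpha_{k}}\to 0$ force $\frac{\norm{\delta_{k}e_{k}}}{\alpha_{k}}\to 0$ and $\frac{\abs{1-\alpha_{k}-\beta_{k}-\gamma_{k}}}{\alpha_{k}}\to 0$, so that $b_{k}\to\norm{u-p}$ and therefore $\hat{\omega}:=\sup_{k\in\mathbb{N}}b_{k}<\infty$. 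Applying \cref{prop:tkleq}\cref{prop:tkleq:bounded} with $t_{k}=\norm{x_{k}-p}$, with the proposition's contraction coefficient taken to be $\rho_{k}$ and its multiplier taken to be $\alpha_{k}$, with $\omega_{k}=b_{k}$, and with $\gamma_{k}\equiv 0$ (so that $\sum_{k\in\mathbb{N}}\abs{\gamma_{k}}=0<\infty$), then gives the boundedness of $(\norm{x_{k}-p})_{k\in\mathbb{N}}$, whence $(x_{k})_{k\in\mathbb{N}}$ is bounded. I do not expect a genuinely hard step here; the only point demanding care is the notational clash, since the symbols $\alpha,\beta,\gamma$ in \cref{prop:tkleq} collide with the relaxation parameters of the algorithm, and one must keep straight that the role of the proposition's coefficient $\alpha_{k}$ is played by $\rho_{k}$ while the role of its $\beta_{k}$ is played by the algorithm's $\alpha_{k}$, so that its requirement $\alpha_{k}+\beta_{k}\leq 1$ is precisely our hypothesis $\rho_{k}+\alpha_{k}\leq 1$.
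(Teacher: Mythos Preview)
Your argument is correct, and it takes a genuinely different route from the paper's proof. The paper works with the \emph{squared}-norm estimate \cref{lemma:xk+1-p}\cref{lemma:xk+1-p:alphak}, fixes a constant $M$ bounding both $\norm{x_{0}-p}$ and $\norm{u-p}+\norm{\delta_{k}e_{k}}/\alpha_{k}+\abs{(1-\alpha_{k}-\beta_{k}-\gamma_{k})/\alpha_{k}}\norm{p}$, and then proves by induction that $\norm{x_{k}-p}\leq 2M$; the inductive step passes through a quadratic inequality in $\norm{x_{k+1}-p}$ that must be solved explicitly. You instead use the \emph{linear}-norm estimate \cref{lemma:xk+1-p}\cref{lemma:xk+1-p:norm}, factor the additive term as $\alpha_{k}b_{k}$ with $(b_{k})$ bounded, and feed the resulting one-step recursion directly into \cref{prop:tkleq}\cref{prop:tkleq:bounded}. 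Your route is shorter and avoids the ad hoc quadratic manipulation; the paper's route has the minor advantage of producing an explicit numerical bound $2M$, and of being self-contained rather than appealing to \cref{prop:tkleq}. Both arguments extract the same algebraic content from the hypotheses, just packaged differently.
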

  
  \begin{proof}
  	Let $p \in \zer A$.		Set $(\forall k \in \mathbb{N})$ $T_{k}:= 2\J_{c_{k } A} -\Id  $.  Because $\frac{\delta_{k}e_{k} }{\alpha_{k}} \to 0$ and $\frac{1-\alpha_{k} -\beta_{k} -\gamma_{k} }{\alpha_{k}} \to 0$, there exists $M \in \mathbb{R}_{++}$ such that 
  	\begin{align*}
(\forall k \in \mathbb{N}) \quad  	\norm{x_{0} -p} \leq M \text{ and } \norm{u-p}+\norm{ \frac{ \delta_{k} e_{k}}{\alpha_{k}}} +\abs{\frac{1-\alpha_{k} -\beta_{k} -\gamma_{k} }{\alpha_{k}} } \norm{p} \leq M.
  	\end{align*}
  	We prove 
  	\begin{align}  \label{eq:prop:alphakek:induction}
  	(\forall k \in \mathbb{N}) \quad \norm{x_{k}-p} \leq 2M.
  	\end{align} 
  	by induction below. 
  	
  	The basic case of \cref{eq:prop:alphakek:induction} follows immediately from the definition of $M$. Suppose that \cref{eq:prop:alphakek:induction} holds for some $k \in \mathbb{N}$.  Employ \cref{lemma:xk+1-p}\cref{lemma:xk+1-p:alphak} in the first inequality and use the assumption  $(\forall k \in \mathbb{N})$ $\alpha_{k}+ \abs{ \beta_{k} +\frac{ \gamma_{k}}{2} }+ \abs{\frac{ \gamma_{k}}{2}  }  \leq 1$  in the second inequality below to observe that 
  	\begin{align*}
  	&\norm{x_{k+1} -p }^{2}\\
  	\leq\,&\left( \abs{   \beta_{k} +\frac{\gamma_{k}}{2} } + \abs{\frac{\gamma_{k}}{2}  } \right)^{2} \norm{ x_{k} -p }^{2}   +2 \innp{ \alpha_{k} u+ \delta_{k} e_{k} - \left( 1-  \beta_{k} -\gamma_{k}   \right) p, x_{k+1} -p}\\
  	=\,& \left( \abs{   \beta_{k} +\frac{\gamma_{k}}{2} } + \abs{\frac{\gamma_{k}}{2}  } \right)^{2} \norm{ x_{k} -p }^{2}  + 2 \alpha_{k} \innp{   u-p+\frac{ \delta_{k} e_{k}}{\alpha_{k}}   - \frac{ 1-\alpha_{k} -  \beta_{k} -\gamma_{k} }{\alpha_{k}}  p, x_{k+1} -p}\\
  	\leq\,& \left( 1-\alpha_{k} \right)^{2} \norm{ x_{k} -p }^{2}  + 2\alpha_{k} M \norm{x_{k+1}-p},
  	\end{align*}
  	which, utilizing the induction hypothesis in the  inequality below, entails that 
  	\begin{align*}
  	\norm{x_{k+1} -p }^{2} \leq 4\left( 1-\alpha_{k} \right)^{2} M^{2} + 2\alpha_{k} M \norm{x_{k+1}-p}. 
  	\end{align*}
  	This guarantees that 
  	\begin{align*}
  	\left( 	\norm{x_{k+1} -p }-\alpha_{k} M   \right)^{2} \leq 4 \left( 1-\alpha_{k} \right)^{2} M^{2}  +( \alpha_{k}M)^{2}. 
  	\end{align*}
  	Moreover, the inequality above ensures that 
  	\begin{align*}
  	\norm{x_{k+1} -p }  \leq \alpha_{k} M + \left(  4\left( 1-\alpha_{k} \right)^{2} M^{2}  +( \alpha_{k}M)^{2} \right)^{\frac{1}{2}} 
   \leq M \left(\alpha_{k} +\left(2 \left( 1-\alpha_{k}\right) +\alpha_{k}\right)  \right) = 2M.  
  	\end{align*}
  	Therefore, \cref{eq:prop:alphakek:induction} holds, which ensures the desired boundedness of $(x_{k})_{k \in \mathbb{N}}$.
  \end{proof}

\subsection*{Asymptotic regularity}
 
In this section, we shall provide sufficient conditions for $ x_{k} -  \J_{c_{k} A}x_{k} \to 0$ or $  \Omega \subseteq   \zer A$.
 
 \begin{proposition} \label{prop:Jckboundedweakcluster}
 	Suppose that $(x_{k})_{k \in \mathbb{N}}$ is bounded.  
 	Then the following assertions hold.
 	\begin{enumerate}
 		\item \label{prop:Jckboundedweakcluster:Jckbounded} Suppose that one of the following holds.
 		\begin{enumerate}
 			\item \label{prop:Jckboundedweakcluster:Jckbounded:a} $\zer A \neq \varnothing$.
 			\item \label{prop:Jckboundedweakcluster:Jckbounded:b}  $\liminf_{k \to \infty} \abs{\gamma_{k}} >0$, $ \sup_{k\in \mathbb{N}} \abs{\alpha_{k}} < \infty$, $ \sup_{k\in \mathbb{N}} \abs{\beta_{k}} < \infty$,  and $ \sup_{k\in \mathbb{N}} \norm{\delta_{k}e_{k}} < \infty$.
 		\end{enumerate} 
 	 Then $(\J_{c_{k} A}x_{k} )_{k \in \mathbb{N}}$ is bounded. 
 		\item  \label{prop:Jckboundedweakcluster:xk+1xk}  
 		Suppose  that  $\alpha_{k} \to 0$, $\beta_{k} \to 0$, $ \gamma_{k} \to 1$,  and $\delta_{k} e_{k} \to 0$.  Then $x_{k+1} - \J_{c_{k} A} x_{k} \to 0$.
 		\item  \label{prop:Jckboundedweakcluster:contained}  
 		Suppose  that $ c_{k}  \to \infty$,  $\alpha_{k} \to 0$, $\beta_{k} \to 0$, $ \gamma_{k} \to 1$,  and $\delta_{k} e_{k} \to 0$.  Then $x_{k+1} - \J_{c_{k} A} x_{k} \to 0$ and $\varnothing \neq \Omega \subseteq   \zer A$.
 	\end{enumerate}
 	
 \end{proposition}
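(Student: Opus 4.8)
The plan is to exploit the recursion \cref{eq:xk} together with the firm nonexpansiveness of the resolvents (\cref{fact:resolvent}\cref{fact:resolvent:FN}) and the cluster-point result \cref{prop:weakclusterCBAR} already at our disposal. Throughout, boundedness of $(x_{k})_{k \in \mathbb{N}}$ is the standing hypothesis.

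For \cref{prop:Jckboundedweakcluster:Jckbounded} I would treat the two hypotheses separately. Under \cref{prop:Jckboundedweakcluster:Jckbounded:a}, fix any $p \in \zer A$; since $\zer A = \Fix \J_{c_{k} A}$ by \cref{fact:resolvent}\cref{fact:resolvent:zer} and $\J_{c_{k} A}$ is nonexpansive, we have $\norm{\J_{c_{k} A}x_{k} - p} = \norm{\J_{c_{k} A}x_{k} - \J_{c_{k} A}p} \leq \norm{x_{k} - p}$, so boundedness of $(x_{k})_{k \in \mathbb{N}}$ transfers directly to $(\J_{c_{k} A}x_{k})_{k \in \mathbb{N}}$. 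Under \cref{prop:Jckboundedweakcluster:Jckbounded:b} I would instead solve \cref{eq:xk} for the resolvent term, writing $\gamma_{k} \J_{c_{k} A}x_{k} = x_{k+1} - \alpha_{k} u - \beta_{k} x_{k} - \delta_{k} e_{k}$, bound the right-hand side by the triangle inequality using boundedness of $(x_{k})_{k \in \mathbb{N}}$ and the uniform bounds on $\abs{\alpha_{k}}$, $\abs{\beta_{k}}$, and $\norm{\delta_{k} e_{k}}$, and then divide by $\abs{\gamma_{k}}$. Because $\liminf_{k \to \infty} \abs{\gamma_{k}} > 0$, there are $c > 0$ and $N \in \mathbb{N}$ with $\abs{\gamma_{k}} \geq c$ for all $k \geq N$, which bounds $\norm{\J_{c_{k} A}x_{k}}$ for large $k$; the finitely many earlier indices are trivially handled.

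For \cref{prop:Jckboundedweakcluster:xk+1xk} the crucial observation is that the hypotheses $\alpha_{k} \to 0$, $\beta_{k} \to 0$, $\gamma_{k} \to 1$, and $\delta_{k} e_{k} \to 0$ already imply the assumptions of \cref{prop:Jckboundedweakcluster:Jckbounded:b} (convergent scalar sequences are bounded and $\liminf_{k \to \infty} \abs{\gamma_{k}} = 1 > 0$), so \cref{prop:Jckboundedweakcluster:Jckbounded} guarantees that $(\J_{c_{k} A}x_{k})_{k \in \mathbb{N}}$ is bounded. I would then rewrite the defect as
\[
x_{k+1} - \J_{c_{k} A}x_{k} = \alpha_{k} u + \beta_{k} x_{k} + (\gamma_{k} - 1) \J_{c_{k} A}x_{k} + \delta_{k} e_{k},
\]
and send each summand to zero: $\alpha_{k} u \to 0$; $\beta_{k} x_{k} \to 0$ by boundedness of $(x_{k})_{k \in \mathbb{N}}$; $(\gamma_{k} - 1) \J_{c_{k} A}x_{k} \to 0$ by boundedness of $(\J_{c_{k} A}x_{k})_{k \in \mathbb{N}}$; and $\delta_{k} e_{k} \to 0$ by hypothesis.

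For \cref{prop:Jckboundedweakcluster:contained}, part \cref{prop:Jckboundedweakcluster:xk+1xk} already yields $x_{k+1} - \J_{c_{k} A}x_{k} \to 0$, and since $(x_{k})_{k \in \mathbb{N}}$ is a bounded sequence in the Hilbert space $\mathcal{H}$ we have $\Omega \neq \varnothing$. To obtain $\Omega \subseteq \zer A$ I would apply \cref{prop:weakclusterCBAR}\cref{prop:weakclusterCBAR:i} with $y_{k} := x_{k}$ and $t := 1$: the required hypotheses $c_{k} \to \infty$, boundedness of $(x_{k})_{k \in \mathbb{N}}$, and $y_{k+1} - \J_{c_{k} A}y_{k} \to 0$ are exactly what we have. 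The only point demanding care is matching the index shift $t = 1$ to the statement $y_{k+t} - \J_{c_{k} A}y_{k} \to 0$ in that proposition; I expect no substantial obstacle beyond verifying that the decay hypotheses of parts \cref{prop:Jckboundedweakcluster:xk+1xk} and \cref{prop:Jckboundedweakcluster:contained} genuinely activate the boundedness established in part \cref{prop:Jckboundedweakcluster:Jckbounded}.
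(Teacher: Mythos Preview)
Your proposal is correct and follows essentially the same approach as the paper: both treat \cref{prop:Jckboundedweakcluster:Jckbounded:a} via nonexpansiveness with a fixed $p\in\zer A$, handle \cref{prop:Jckboundedweakcluster:Jckbounded:b} by solving \cref{eq:xk} for $\J_{c_{k}A}x_{k}$ and dividing by $\abs{\gamma_{k}}$, prove \cref{prop:Jckboundedweakcluster:xk+1xk} by the same rewriting of the defect, and conclude \cref{prop:Jckboundedweakcluster:contained} via \cref{prop:weakclusterCBAR}\cref{prop:weakclusterCBAR:i} with $t=1$. If anything, you are slightly more explicit than the paper in noting that the hypotheses of \cref{prop:Jckboundedweakcluster:xk+1xk} activate \cref{prop:Jckboundedweakcluster:Jckbounded:b}, whereas the paper simply cites \cref{prop:Jckboundedweakcluster:Jckbounded} without comment.
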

 
 \begin{proof}
 	\cref{prop:Jckboundedweakcluster:Jckbounded}:  	If $\zer A \neq \varnothing$, then via \cref{fact:resolvent}\cref{fact:resolvent:FN}$\&$\cref{fact:resolvent:zer}, for every $p \in \zer A$,
 	\begin{align*}
 	(\forall k \in \mathbb{N}) \quad \norm{\J_{c_{k} A}x_{k}} -\norm{p} \leq \norm{\J_{c_{k} A}x_{k} -p}= \norm{\J_{c_{k} A}x_{k} -\J_{c_{k} A}p} \leq \norm{x_{k} -p}.
 	\end{align*} 
 	Hence, the boundedness of $(x_{k})_{k \in \mathbb{N}}$ implies the boundedness of $(\J_{c_{k} A}x_{k} )_{k \in \mathbb{N}}$.

 Assume \cref{prop:Jckboundedweakcluster:Jckbounded:b}  holds. Then $ \hat{\alpha}:= \sup_{k\in \mathbb{N}} \abs{\alpha_{k}} < \infty$, $\hat{\beta}:= \sup_{k\in \mathbb{N}} \abs{\beta_{k}} < \infty$,  $L:= \sup_{k\in \mathbb{N}} \norm{\delta_{k}e_{k}} < \infty$, and $M:=\sup_{k \in \mathbb{N}}\norm{x_{k}} <\infty$. Take $\bar{\gamma} \in \mathbb{R}_{++}$ such that $0 < \bar{\gamma} <\liminf_{k \to \infty} \abs{\gamma_{k}} $. 
Then there exists $N \in \mathbb{N}$ such that 
 	\begin{align*} 
 	(\forall k \geq N) \quad \norm{\J_{c_{k} A} x_{k}} \stackrel{\cref{eq:xk} }{=} \norm{\frac{1 }{\gamma_{k}} \left(
 		x_{k+1} -\alpha_{k} u -\beta_{k} x_{k} -\delta_{k} e_{k} \right)} \leq \frac{1 }{\bar{\gamma}} \left( M+\hat{\alpha}\norm{u} +\hat{\beta}M+L\right),
 	\end{align*}
 	which shows the boundedness of $(\J_{c_{k} A}x_{k} )_{k \in \mathbb{N}}$. 
 	
 	\cref{prop:Jckboundedweakcluster:xk+1xk}:  Due to \cref{prop:Jckboundedweakcluster:Jckbounded}, $(\J_{c_{k} A}x_{k} )_{k \in \mathbb{N}}$ is bounded. Then apply  \cref{eq:xk}  to deduce that 
 	\begin{align*}
 	(\forall k \in \mathbb{N}) \quad \norm{x_{k+1} - \J_{c_{k} A} x_{k}} \leq \norm{\alpha_{k} u +\beta_{k} x_{k} +\delta_{k} e_{k}} +\abs{\gamma_{k} -1}\norm{ \J_{c_{k} A} (x_{k}) },
 	\end{align*}
 	which, by $\norm{\alpha_{k} u +\beta_{k} x_{k} +\delta_{k} e_{k}} +\abs{\gamma_{k} -1}\norm{ \J_{c_{k} A} (x_{k}) } \to 0$, necessitates that $x_{k+1} - \J_{c_{k} A} x_{k} \to 0$.
 	
 	\cref{prop:Jckboundedweakcluster:contained}:
 	In view of \cite[Lemma~2.45]{BC2017},  the boundedness of $(x_{k})_{k \in \mathbb{N}}$  ensures $\Omega  \neq \varnothing$.
 	Hence, the required result is clear from \cref{prop:Jckboundedweakcluster:xk+1xk} and \cref{prop:weakclusterCBAR}\cref{prop:weakclusterCBAR:i}.
 \end{proof}

 The idea of the following proof is motivated by the proof of \cite[Theorem~1]{Rockafellar1976}.
\begin{proposition} \label{prop:Rockafellar}
Suppose that $ (x_{k} )_{k \in \mathbb{N}}$ is bounded and that  $(\forall k \in \mathbb{N})$ 
$ \abs{   \beta_{k} +\frac{\gamma_{k}}{2} } + \abs{\frac{\gamma_{k}}{2}  }   \leq 1$, $\sum_{k \in \mathbb{N}} \abs{\alpha_{k} } <\infty$,  $\sum_{k \in \mathbb{N}} \abs{1-\beta_{k} -\gamma_{k} } <\infty$, $\sum_{k \in \mathbb{N}} \norm{\delta_{k}e_{k} } <\infty$, and $ \gamma_{k} \to 1$.  Then the following statements hold. 
	\begin{enumerate}
		\item \label{prop:Rockafellar:xkJckAxk}  $x_{k} - \J_{c_{k} A}x_{k}  \to 0$. 
 
		 	\item \label{prop:Rockafellar:zerANEQ}  If  $ \inf_{k \in \mathbb{N}}c_{k} >0$ or $c_{k} \to \infty$, then $\varnothing \neq \Omega \subseteq \zer A$. 
	\end{enumerate}	   
\end{proposition}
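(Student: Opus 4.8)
The plan is to prove both parts at once from the machinery already assembled, the crucial point being that $\zer A$ is \emph{not} assumed nonempty, so no fixed point of $\J_{c_{k}A}$ is available for a Fej\'er-type argument; I would circumvent this with the auxiliary operator $\tilde{A}$ of \cref{prop:tildeAzerA}. First I would record the easy consequences of the hypotheses: the summability of $(\abs{\alpha_{k}})_{k\in\mathbb{N}}$, $(\abs{1-\beta_{k}-\gamma_{k}})_{k\in\mathbb{N}}$, and $(\norm{\delta_{k}e_{k}})_{k\in\mathbb{N}}$ together with $\gamma_{k}\to 1$ force $\alpha_{k}\to 0$, $\delta_{k}e_{k}\to 0$, and, since $\beta_{k}+\gamma_{k}\to 1$, also $\beta_{k}\to 0$. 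As $(x_{k})_{k\in\mathbb{N}}$ is bounded and $\liminf_{k\to\infty}\abs{\gamma_{k}}=1>0$, \cref{prop:Jckboundedweakcluster}\cref{prop:Jckboundedweakcluster:Jckbounded} via its hypothesis \cref{prop:Jckboundedweakcluster:Jckbounded:b} shows $(\J_{c_{k}A}x_{k})_{k\in\mathbb{N}}$ is bounded, and then \cref{prop:Jckboundedweakcluster}\cref{prop:Jckboundedweakcluster:xk+1xk} gives $x_{k+1}-\J_{c_{k}A}x_{k}\to 0$. This single convergence feeds both the asymptotic-regularity step and the cluster-point step.

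Next, to manufacture a point playing the role of a zero, I would apply \cref{prop:tildeAzerA} with $y_{k}=x_{k}$: boundedness of $(x_{k})_{k\in\mathbb{N}}$ and of $(\J_{c_{k}A}x_{k})_{k\in\mathbb{N}}$ yields $r\in\mathbb{R}_{++}$ with $\tilde{A}:=A+\partial\iota_{B[0;r]}$ maximally monotone, $\zer\tilde{A}\neq\varnothing$, and $\J_{c_{k}A}x_{k}=\J_{c_{k}\tilde{A}}x_{k}$ for every $k\in\mathbb{N}$. The last identity means $(x_{k})_{k\in\mathbb{N}}$ also obeys the recursion \cref{eq:xk} with $A$ replaced by $\tilde{A}$, so every lemma proved for the scheme applies verbatim to $\tilde{A}$. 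Fixing $p\in\zer\tilde{A}$ and invoking \cref{lemma:xk+1-p}\cref{lemma:xk+1-p:norm} for $\tilde{A}$, the coefficient bound $\abs{\beta_{k}+\frac{\gamma_{k}}{2}}+\abs{\frac{\gamma_{k}}{2}}\leq 1$ collapses the estimate to $\norm{x_{k+1}-p}\leq\norm{x_{k}-p}+b_{k}$, where $b_{k}:=\norm{\alpha_{k}u+\delta_{k}e_{k}-(1-\beta_{k}-\gamma_{k})p}$; the three summability hypotheses give $\sum_{k\in\mathbb{N}}b_{k}<\infty$, so \cref{fact:SequenceConverg} yields that $\lim_{k\to\infty}\norm{x_{k}-p}$ exists. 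With this limit in hand and $x_{k+1}-\J_{c_{k}\tilde{A}}x_{k}=x_{k+1}-\J_{c_{k}A}x_{k}\to 0$, I would apply \cref{prop:ykJckykto0} to $\tilde{A}$ with $t=1$ to conclude $x_{k}-\J_{c_{k}\tilde{A}}x_{k}\to 0$, i.e.\ $x_{k}-\J_{c_{k}A}x_{k}\to 0$, which is \cref{prop:Rockafellar:xkJckAxk}.

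For \cref{prop:Rockafellar:zerANEQ}, boundedness of $(x_{k})_{k\in\mathbb{N}}$ makes $\Omega\neq\varnothing$. If $\inf_{k\in\mathbb{N}}c_{k}>0$, then \cref{prop:Rockafellar:xkJckAxk} supplies exactly the hypothesis of \cref{prop:weakclusterCBAR}\cref{prop:weakclusterCBAR:>0}, whereas if $c_{k}\to\infty$ the convergence $x_{k+1}-\J_{c_{k}A}x_{k}\to 0$ supplies the hypothesis of \cref{prop:weakclusterCBAR}\cref{prop:weakclusterCBAR:i} with $t=1$; in either case $\Omega\subseteq\zer A$, and combined with $\Omega\neq\varnothing$ this incidentally forces $\zer A\neq\varnothing$ as well.

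I expect the only genuine subtlety to be the reduction to $\tilde{A}$: one must check that the resolvent identity from \cref{prop:tildeAzerA} legitimately transports the Fej\'er estimate of \cref{lemma:xk+1-p}\cref{lemma:xk+1-p:norm} and the conclusion of \cref{prop:ykJckykto0} from $A$ to $\tilde{A}$, since it is $\zer\tilde{A}$, not $\zer A$, that is guaranteed nonempty. Once that transport is justified, the remaining steps are bookkeeping with the summability assumptions.
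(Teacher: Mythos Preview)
Your proposal is correct and follows essentially the same route as the paper: reduce to the auxiliary operator $\tilde{A}$ via \cref{prop:tildeAzerA}, use \cref{lemma:xk+1-p}\cref{lemma:xk+1-p:norm} and \cref{fact:SequenceConverg} to make $\lim_{k\to\infty}\norm{x_{k}-p}$ exist for $p\in\zer\tilde{A}$, combine this with $x_{k+1}-\J_{c_{k}\tilde{A}}x_{k}\to 0$ to obtain asymptotic regularity, and finish with \cref{prop:weakclusterCBAR}. The only cosmetic difference is that you invoke \cref{prop:ykJckykto0} for the asymptotic-regularity step, whereas the paper redoes that firm-nonexpansiveness computation inline; the content is identical.
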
 
 
\begin{proof}	
\cref{prop:Rockafellar:xkJckAxk}: 	Note that $\sum_{k \in \mathbb{N}} \abs{1-\beta_{k} -\gamma_{k} } <\infty$ and $ \gamma_{k} \to 1$ entail $\beta_{k} \to 0$.
In view of  \cref{prop:Jckboundedweakcluster}\cref{prop:Jckboundedweakcluster:Jckbounded}, our assumptions guarantee that  $(\J_{c_{k} A}x_{k} )_{k \in \mathbb{N}}$ is bounded. Then
by \cref{prop:tildeAzerA}, there exists a maximally monotone operator $\tilde{A}: \mathcal{H} \to 2^{\mathcal{H}}$ such that 
	\begin{align}  \label{eq:prop:Rockafellar:Atilde}
	\zer \tilde{A} \neq   \varnothing,  \quad   \left( \Omega \cap \zer \tilde{A} \right) \subseteq \zer A, \quad  \text{and} \quad  (\forall  k \in \mathbb{N}) ~ \J_{c_{k} A}x_{k} = \J_{c_{k} \tilde{A}}x_{k}.
	\end{align}
	Let $p \in \zer \tilde{A}$. Because $\tilde{A}$ is maximally monotone, via \cref{lemma:xk+1-p}\cref{lemma:xk+1-p:norm} and \cref{eq:prop:Rockafellar:Atilde},
	\begin{align*}
	(\forall k \in \mathbb{N}) \quad 	\norm{x_{k+1} -p } \leq \left( \abs{   \beta_{k} +\frac{\gamma_{k}}{2} } + \abs{\frac{\gamma_{k}}{2}  } \right) \norm{ x_{k} -p } + \norm{\alpha_{k} u+\delta_{k}e_{k} - (1-\beta_{k} -\gamma_{k}) p},
	\end{align*}  
	which, combining with the assumption and \cref{fact:SequenceConverg}, guarantees that $\lim_{k \to \infty} \norm{ x_{k} -p } $ exists in $\mathbb{R}_{+}$. 
	
	Using the maximal monotonicity of $\tilde{A}$ again and noticing  that  $(\forall k \in \mathbb{N})$ $c_{k} \in \mathbb{R}_{++}$, via \cref{fact:resolvent}\cref{fact:resolvent:FN}$\&$\cref{fact:resolvent:zer} and \cref{defn:FirmNonexpansive}\cref{defn:FirmNonexpansive:Firm}, we observe that $(\forall k \in \mathbb{N})$ $ \norm{\J_{c_{k} \tilde{A}} x_{k} -p}^{2} + \norm{  (\Id- \J_{c_{k} \tilde{A}}  ) x_{k}}^{2} \leq \norm{x_{k} -p}^{2}$,
	which implies that for every $k \in \mathbb{N}$,
	\begin{align*}
  \norm{ x_{k} -  \J_{c_{k} \tilde{A}} x_{k} }^{2} -\norm{x_{k} -p}^{2} +\norm{x_{k+1} -p}^{2} & \leq - \norm{\J_{c_{k} \tilde{A}} x_{k} -p}^{2}  + \norm{x_{k+1} -p}^{2} \\
		&=\innp{ x_{k+1} -p +p - \J_{c_{k} \tilde{A}} x_{k},  x_{k+1} -p + \J_{c_{k} \tilde{A}} x_{k} -p}\\
		&\leq \norm{x_{k+1}   -  \J_{c_{k} \tilde{A}} x_{k}}\left( \norm{x_{k+1} -p } + \norm{ \J_{c_{k} \tilde{A}}  x_{k} -p } \right).
	\end{align*}
	Hence, 
	\begin{align*}
		(\forall k \in \mathbb{N}) \quad  \norm{ x_{k} -  \J_{c_{k} \tilde{A}} x_{k}}^{2} \leq \norm{x_{k} -p}^{2} -\norm{x_{k+1} -p}^{2} + \norm{x_{k+1}   -  \J_{c_{k} \tilde{A}}  x_{k}}\left( \norm{x_{k+1} -p } + \norm{ \J_{c_{k} \tilde{A}}  x_{k} -p } \right).
	\end{align*}
	This together with \cref{prop:Jckboundedweakcluster}\cref{prop:Jckboundedweakcluster:xk+1xk}, the existence of     $\lim_{k \to \infty} \norm{ x_{k} -p } $, and the boundedness of $ (x_{k} )_{k \in \mathbb{N}}$ and $ (\J_{c_{k} \tilde{A}} x_{k} )_{k \in \mathbb{N}}$ leads to  $x_{k} - \J_{c_{k} \tilde{A}}x_{k}  \to 0$, which, due to \cref{eq:prop:Rockafellar:Atilde}, forces $x_{k} - \J_{c_{k} A}x_{k}  \to 0$. 
	
	\cref{prop:Rockafellar:zerANEQ}: 
Note that the boundedness of $ (x_{k} )_{k \in \mathbb{N}}$  forces $ \Omega \lr{ (x_{k} )_{k \in \mathbb{N}} } \neq  \varnothing $. Furthermore, 
based on \cref{prop:Rockafellar:xkJckAxk}, the required inclusion follows immediately from the assumption  and \cref{prop:weakclusterCBAR}\cref{prop:weakclusterCBAR:>0}$\&$\cref{prop:weakclusterCBAR:i}.  
\end{proof}

The following result is inspired by the proof of \cite[Theorem~4]{FHWang2011} which improves the strong convergence of the regularization method for the proximal point algorithm in  \cite[Theorem~3.3]{Xu2006}.

\begin{proposition} \label{prop:weakclustersXY}
	Suppose that $ (x_{k} )_{k \in \mathbb{N}}$ is  bounded and that    
$(\forall k \in \mathbb{N})$ $\beta_{k} +\gamma_{k} \leq 1$,  $\alpha_{k} \to 0$, $ \limsup_{k \to \infty} |\beta_{k}|  < 1$, $1- \alpha_{k} -\beta_{k} -\gamma_{k} \to 0$,  
 $0 < \liminf_{k \to \infty}  1 -\beta_{k} - \frac{\gamma_{k}}{2}  \leq \limsup_{k \to \infty}    1 -\beta_{k} - \frac{\gamma_{k}}{2}   < 1$, $ \delta_{k}  e_{k} \to 0$,  
	and	$1 -\frac{c_{k}}{c_{k+1}} \to 0$.
Then the following hold. 
\begin{enumerate}
	\item \label{prop:weakclustersXY:JckBounded} $ (\J_{c_{k} A} x_{k} )_{k \in \mathbb{N}}$  is bounded.
	\item \label{prop:weakclustersXY:Jck}  $x_{k} - \J_{c_{k} A}(x_{k} ) \to 0$. 
	\item\label{prop:weakclustersXY:OmegaZer} If $\inf_{k \in \mathbb{N}}c_{k} >0$ or $c_{k} \to \infty$, then $\varnothing \neq \Omega \subseteq \zer A$.   
\end{enumerate}
\end{proposition}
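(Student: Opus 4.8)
The first step is to record a consequence of the hypotheses used throughout. Since $\alpha_k \to 0$ and $1 - \alpha_k - \beta_k - \gamma_k \to 0$, we have $\beta_k + \gamma_k \to 1$; combined with $\limsup_k |\beta_k| < 1$ this forces $\liminf_k \gamma_k \geq 1 - \limsup_k |\beta_k| > 0$. Thus $(\gamma_k)_{k\in\mathbb{N}}$ is eventually bounded below by a positive constant, while $(\alpha_k)_{k\in\mathbb{N}}$, $(\beta_k)_{k\in\mathbb{N}}$, and $(\delta_k e_k)_{k\in\mathbb{N}}$ are bounded. Assertion~\cref{prop:weakclustersXY:JckBounded} is then immediate from \cref{prop:Jckboundedweakcluster}\cref{prop:Jckboundedweakcluster:Jckbounded:b}, since $\liminf_k|\gamma_k| > 0$, $\sup_k|\alpha_k| < \infty$, $\sup_k|\beta_k| < \infty$, and $\sup_k\|\delta_k e_k\| < \infty$ all now hold.

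For assertion~\cref{prop:weakclustersXY:Jck} I would cast the recursion into the exact shape required by Suzuki's lemma \cref{fact:sequence:ukvk}. Writing $T_k := 2\J_{c_k A} - \Id$ and invoking \cref{leamma:xkTk}, I set $\theta_k := 1 - \beta_k - \frac{\gamma_k}{2}$ and $v_k := \frac{1}{\theta_k}\bigl(\frac{\gamma_k}{2}T_k(x_k) + \alpha_k u + \delta_k e_k\bigr)$, so that $x_{k+1} = \theta_k v_k + (1-\theta_k)x_k$, where $0 < \liminf_k \theta_k \leq \limsup_k \theta_k < 1$ by hypothesis. Because $\theta_k$ is bounded below, $(x_k)_{k\in\mathbb{N}}$ is bounded, and $(\J_{c_k A}x_k)_{k\in\mathbb{N}}$ is bounded by \cref{prop:weakclustersXY:JckBounded} (hence so is $(T_k x_k)_{k\in\mathbb{N}}$), the sequence $(v_k)_{k\in\mathbb{N}}$ is bounded. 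The crucial elementary observation is that the effective relaxation $a_k := \frac{\gamma_k}{2\theta_k}$ satisfies $a_k - 1 = \frac{\beta_k + \gamma_k - 1}{\theta_k} \to 0$, because $\beta_k + \gamma_k \to 1$ and $\theta_k$ is bounded below; that is, $a_k \to 1$.

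The main obstacle is verifying $\limsup_k\bigl(\|v_{k+1} - v_k\| - \|x_{k+1} - x_k\|\bigr) \leq 0$, which I would do by estimating $v_{k+1} - v_k$ term by term. The contributions of $\frac{\alpha_k}{\theta_k}u$ and $\frac{\delta_k e_k}{\theta_k}$ vanish, as $\alpha_k \to 0$, $\delta_k e_k \to 0$, and $\theta_k$ is bounded below. For the main term I would split $a_{k+1}T_{k+1}(x_{k+1}) - a_k T_k(x_k) = a_{k+1}\bigl(T_{k+1}(x_{k+1}) - T_k(x_k)\bigr) + (a_{k+1} - a_k)T_k(x_k)$; the second summand tends to $0$ since $a_{k+1} - a_k \to 0$ and $(T_k x_k)_{k\in\mathbb{N}}$ is bounded. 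For the first summand I would split further $T_{k+1}(x_{k+1}) - T_k(x_k) = \bigl(T_{k+1}(x_{k+1}) - T_{k+1}(x_k)\bigr) + \bigl(T_{k+1}(x_k) - T_k(x_k)\bigr)$; the first bracket has norm at most $\|x_{k+1} - x_k\|$ by nonexpansiveness of $T_{k+1}$, while the second is controlled by \cref{fact:ineqTIden}\cref{fact:ineqTIden:ineq} (with $\lambda = c_k$, $\mu = c_{k+1}$, $x = x_k$), yielding $\|T_{k+1}(x_k) - T_k(x_k)\| \leq |1 - \tfrac{c_{k+1}}{c_k}| \cdot 2\|\J_{c_k A}x_k - x_k\| \to 0$, since $1 - \tfrac{c_k}{c_{k+1}} \to 0$ is equivalent to $\tfrac{c_{k+1}}{c_k} \to 1$ and $\|\J_{c_k A}x_k - x_k\|$ is bounded by \cref{prop:weakclustersXY:JckBounded}. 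Collecting these gives $\|v_{k+1} - v_k\| \leq a_{k+1}\|x_{k+1} - x_k\| + o(1)$, hence $\|v_{k+1} - v_k\| - \|x_{k+1} - x_k\| \leq (a_{k+1} - 1)\|x_{k+1} - x_k\| + o(1) \to 0$, using $a_{k+1} \to 1$ and the boundedness of $\|x_{k+1} - x_k\|$. This is the delicate step, because it is exactly where the stepsize-stability condition $1 - \tfrac{c_k}{c_{k+1}} \to 0$ and the resolvent identity \cref{fact:ineqTIden} must combine with the convergence $a_k\to1$.

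With every hypothesis of \cref{fact:sequence:ukvk} verified, I conclude $\|v_k - x_k\| \to 0$, and since $x_{k+1} - x_k = \theta_k(v_k - x_k)$ this gives $x_{k+1} - x_k \to 0$. To recover the asymptotic regularity I would rewrite the recursion as $\gamma_k(\J_{c_k A}x_k - x_k) = (x_{k+1} - x_k) - \alpha_k u - (\beta_k + \gamma_k - 1)x_k - \delta_k e_k$; every term on the right tends to $0$ (using $\alpha_k \to 0$, $\beta_k + \gamma_k \to 1$, boundedness of $(x_k)_{k\in\mathbb{N}}$, $\delta_k e_k \to 0$, and $x_{k+1} - x_k \to 0$), so dividing by $\gamma_k$ (bounded below) yields assertion~\cref{prop:weakclustersXY:Jck}. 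Finally, assertion~\cref{prop:weakclustersXY:OmegaZer} is immediate: boundedness of $(x_k)_{k\in\mathbb{N}}$ gives $\Omega \neq \varnothing$, and under either $\inf_k c_k > 0$ or $c_k \to \infty$ the inclusion $\Omega \subseteq \zer A$ follows from assertion~\cref{prop:weakclustersXY:Jck} via \cref{prop:weakclusterCBAR}\cref{prop:weakclusterCBAR:>0} or \cref{prop:weakclusterCBAR}\cref{prop:weakclusterCBAR:i} (with $t = 0$), respectively.
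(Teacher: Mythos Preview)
Your proof is correct and follows the same architecture as the paper's: rewrite the iteration via $T_k = 2\J_{c_k A} - \Id$, cast it as $x_{k+1} = \theta_k v_k + (1-\theta_k)x_k$, verify the hypothesis of Suzuki's lemma \cref{fact:sequence:ukvk}, deduce $x_{k+1}-x_k\to 0$, and then read off asymptotic regularity from the recursion; part \cref{prop:weakclustersXY:OmegaZer} then follows from \cref{prop:weakclusterCBAR} exactly as you say.

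There is one mild streamlining relative to the paper worth noting. You observe directly that $a_k := \gamma_k/(2\theta_k)$ satisfies $a_k - 1 = (\beta_k+\gamma_k-1)/\theta_k \to 0$, which gives both $a_{k+1}-a_k \to 0$ and $(|a_{k+1}|-1)\|x_{k+1}-x_k\|\to 0$ for free. The paper instead establishes the sharper pointwise bound $|\gamma_{k+1}|/(2\eta_{k+1}) \leq 1$ via a case split that uses the hypothesis $\beta_k+\gamma_k\leq 1$, and then shows $\bigl|\tfrac{\gamma_{k+1}}{2\eta_{k+1}} - \tfrac{\gamma_k}{2\eta_k}\bigr|\to 0$ by a separate algebraic computation. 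Your route is shorter and, incidentally, does not actually invoke the standing assumption $\beta_k+\gamma_k\leq 1$ anywhere in part \cref{prop:weakclustersXY:Jck}. The paper's and your final recovery of $x_k - \J_{c_k A}x_k \to 0$ also differ cosmetically: you divide by $\gamma_k$ (bounded below), while the paper divides by $1-|\beta_k|$; both are valid.
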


\begin{proof}
	\cref{prop:weakclustersXY:JckBounded}: According to our assumption, it is easy to see that
	\begin{align*}
\frac{1}{2}\liminf_{k \to \infty} \abs{\gamma_{k}} &=   \liminf_{k \to \infty} \abs{ 1 -\beta_{k} - \frac{\gamma_{k}}{2} - \left( 1- \alpha_{k} -\beta_{k} -\gamma_{k} \right) -\alpha_{k} } \\
&\geq  \liminf_{k \to \infty} \abs{ 1 -\beta_{k} - \frac{\gamma_{k}}{2} } - \limsup_{k \to \infty} \abs{ 1- \alpha_{k} -\beta_{k} -\gamma_{k}} -\limsup_{k \to \infty} \abs{\alpha_{k}}\\
&=  \liminf_{k \to \infty} \abs{ 1 -\beta_{k} - \frac{\gamma_{k}}{2} } >0.
	\end{align*} 
This combined with our assumptions and \cref{prop:Jckboundedweakcluster}\cref{prop:Jckboundedweakcluster:Jckbounded}  entails the boundedness of 	$ (\J_{c_{k} A} x_{k} )_{k \in \mathbb{N}}$.

\cref{prop:weakclustersXY:Jck}:			Denote by $(\forall k \in \mathbb{N})$ $\eta_{k}:= 1 -\beta_{k} - \frac{\gamma_{k}}{2} $.
 Inasmuch as $0 < \liminf_{i \to \infty} \eta_{i} \leq \limsup_{i \to \infty}  \eta_{i} < 1$ and $ \limsup_{k \to \infty} \abs{\beta_{k}}  < 1$, without loss of generality, we assume that 
	\begin{align*}
	(\forall k \in \mathbb{N}) \quad \eta_{k} \in \left]0,1\right] \quad \text{and} \quad \abs{\beta_{k}} <1,
	\end{align*}
	which, in connection with $(\forall k \in \mathbb{N})$ $\beta_{k} +\gamma_{k} \leq 1$, implies that
	\begin{align*}
(\forall k \in \mathbb{N}) \quad 	1+\frac{\gamma_{k}}{2} = \left(1 -\beta_{k} - \frac{\gamma_{k}}{2}\right)+\beta_{k} +\gamma_{k} \leq 2  \quad \text{and} \quad  \frac{\gamma_{k}}{2} +1 \geq 1-\beta_{k}\geq 1-\abs{ \beta_{k}} >0.
	\end{align*}
	Hence, $(\forall k \in \mathbb{N})$ $\gamma_{k} \in \left]-2,2\right]$ and $(\gamma_{i})_{i \in \mathbb{N}}$ is bounded.

Set $(\forall k \in \mathbb{N})$ $T_{k}:= 2\J_{c_{k } A} -\Id  $.  Bearing \cref{leamma:xkTk} in mind, we observe that 
\begin{align} \label{eq:prop:weakclustersXY:eta}
	(\forall k \in \mathbb{N})  \quad  x_{k+1} =  (1-\eta_{k})  x_{k} +\eta_{k} y_{k},
\end{align}
where $(\forall k \in \mathbb{N})$ $y_{k}:= \frac{1}{\eta_{k}} \left( \frac{\gamma_{k}}{2} T_{k}(x_{k}) +\alpha_{k} u+\delta_{k} e_{k} \right)$.  
Note that for every $k \in \mathbb{N}$,
	\begin{align}\label{eq:prop:weakclustersXY:y}
	\norm{y_{k+1} -y_{k}} \leq \norm{ \frac{\gamma_{k+1} }{2\eta_{k+1}} T_{k+1}(x_{k+1}) -\frac{\gamma_{k} }{2\eta_{k}} T_{k}(x_{k})}   + \left| \frac{\alpha_{k+1} }{\eta_{k+1}} -\frac{\alpha_{k} }{\eta_{k }}  \right| \norm{u}+\norm{ \frac{\delta_{k+1} }{\eta_{k+1}}e_{k+1} -\frac{\delta_{k} }{\eta_{k}}e_{k} }.
	\end{align}
Moreover, apply \cref{fact:ineqTIden}\cref{fact:ineqTIden:ineq} and recall that $(\forall k \in \mathbb{N})$ $T_{k}$ is nonexpansive in the following second inequality to see that  for every $k \in \mathbb{N}$,
\begin{subequations} \label{eq:prop:weakclustersXY:Tk}
	\begin{align}
	& \norm{ \frac{\gamma_{k+1} }{2\eta_{k+1}} T_{k+1}(x_{k+1}) -\frac{\gamma_{k} }{2\eta_{k}} T_{k}(x_{k})}  \\
	\leq\, &
	\frac{|\gamma_{k+1}| }{2\eta_{k+1}}   \norm{T_{k+1}(x_{k+1}) - T_{k}(x_{k+1})} +\frac{ |\gamma_{k+1}| }{2\eta_{k+1}}  \norm{T_{k}(x_{k+1}) - T_{k}(x_{k})   }
	+ \left| \frac{\gamma_{k+1} }{2\eta_{k+1}} -  \frac{\gamma_{k} }{2\eta_{k}}  \right|  \norm{T_{k}(x_{k}) }\\
	\leq\, & \frac{ |\gamma_{k+1}| }{2\eta_{k+1}} \left|  1 -\frac{c_{k}}{c_{k+1}}\right| \norm{T_{k+1}(x_{k+1}) -  x_{k+1}}+\frac{|\gamma_{k+1}| }{2\eta_{k+1}}  \norm{ x_{k+1} -  x_{k}   }  + \left| \frac{\gamma_{k+1} }{2\eta_{k+1}} -  \frac{\gamma_{k} }{2\eta_{k}}  \right|  \norm{T_{k}(x_{k}) }.
	\end{align}
\end{subequations}
Because $(\forall k \in \mathbb{N})$ $\abs{ \beta_{k}} <1$ and $\beta_{k} +\gamma_{k} \leq 1$, for every $k \in \mathbb{N}$, if $ \gamma_{k+1} \leq 0$, then  $ |\gamma_{k+1} | +\gamma_{k+1} +2 \beta_{k+1} =2 \beta_{k+1}  \leq 2$; otherwise, $ |\gamma_{k+1} | +\gamma_{k+1} +2 \beta_{k+1} =2( \gamma_{k+1} +\beta_{k+1} ) \leq 2$.  This together  with the equivalence
  $(\forall k \in \mathbb{N})$  $\frac{ |\gamma_{k+1} | }{2\eta_{k+1}} = \frac{ |\gamma_{k+1} | }{2- 2 \beta_{k+1}-\gamma_{k+1}}   \leq 1 \Leftrightarrow |\gamma_{k+1} | +\gamma_{k+1} +2 \beta_{k+1} \leq 2$  implies  that  $(\forall k \in \mathbb{N})$  $\frac{ |\gamma_{k+1} | }{2\eta_{k+1}}    \leq 1 $.
 Then 
combine \cref{eq:prop:weakclustersXY:y} and \cref{eq:prop:weakclustersXY:Tk} to obtain that  
\begin{align} \label{eq:prop:weakclustersXY:Lambda}
(\forall k \in \mathbb{N}) \quad \norm{y_{k+1} -y_{k}} \leq  \norm{ x_{k+1} -  x_{k}   } + \Lambda (k),
\end{align}
where $(\forall k \in \mathbb{N})$ $ \Lambda (k) := \frac{ |\gamma_{k+1}| }{2\eta_{k+1}} \left|  1 -\frac{c_{k}}{c_{k+1}}\right| \norm{T_{k+1}(x_{k+1}) -  x_{k+1}} + \left| \frac{\gamma_{k+1} }{2\eta_{k+1}} -  \frac{\gamma_{k} }{2\eta_{k}}  \right|  \norm{T_{k}(x_{k}) } +  \left| \frac{\alpha_{k+1} }{\eta_{k+1}} -\frac{\alpha_{k} }{\eta_{k }}  \right| \norm{u}+\norm{ \frac{\delta_{k+1} }{\eta_{k+1}}e_{k+1} -\frac{\delta_{k} }{\eta_{k}}e_{k} }$.

Note that  
 the boundedness of $ (x_{k} )_{k \in \mathbb{N}}$ and $ (\J_{c_{k} A} x_{k} )_{k \in \mathbb{N}}$ implies that $\left( \norm{T_{k+1}(x_{k+1}) -  x_{k+1}}  \right)_{k \in \mathbb{N}}$ and $(\norm{T_{k}(x_{k}) } )_{k \in \mathbb{N}}$ are bounded.   Combine this  with $0 < \liminf_{k \to \infty} \eta_{k} $ and the boundedness of $(\gamma_{k})_{k \in \mathbb{N}}$ to deduce the boundedness of   $ (y_{k} )_{k \in \mathbb{N}}$.
  
In addition, by some easy algebra, it is not difficult to verify that 
\begin{align*}
&\left| \frac{\gamma_{k+1} }{2\eta_{k+1}} -  \frac{\gamma_{k} }{2\eta_{k}}  \right| = \frac{1}{2\eta_{k+1}\eta_{k }}   \abs{ \gamma_{k+1} (1-  \beta_{k}) -   \gamma_{k}  (1- \beta_{k+1} )  };\\
&  \gamma_{k+1} (1-  \beta_{k}) -   \gamma_{k}  (1- \beta_{k+1} )    =  \gamma_{k+1} \left(1- \alpha_{k} -\beta_{k} -\gamma_{k}\right) -   \gamma_{k}  \left(1- \alpha_{k+1} -\beta_{k+1} -\gamma_{k+1}\right)  + \gamma_{k+1}  \alpha_{k}  - \gamma_{k}  \alpha_{k+1};\\
&\left| \frac{\alpha_{k+1} }{\eta_{k+1}} -\frac{\alpha_{k} }{\eta_{k }}  \right|  \leq \frac{ |\alpha_{k+1}| +   |\alpha_{k}| }{ \eta_{k+1}\eta_{k }};\\
&\norm{ \frac{\delta_{k+1} }{\eta_{k+1}}e_{k+1} -\frac{\delta_{k} }{\eta_{k}}e_{k} }  \leq \frac{  \norm{ \delta_{k+1}  e_{k+1}} +   \norm{ \delta_{k}  e_{k}}}{ \eta_{k+1}\eta_{k }},
\end{align*}
which, connecting with the assumption, yields  $\lim_{k \to \infty} \Lambda (k) =0$. This and  \cref{eq:prop:weakclustersXY:Lambda}  necessitate
\begin{align*}
\limsup_{k \to \infty} \norm{y_{k+1} -y_{k}} - \norm{ x_{k+1} -  x_{k}   } \leq 0.
\end{align*}
Employing \cref{eq:prop:weakclustersXY:eta} and applying \cref{fact:sequence:ukvk} with $(\forall k \in \mathbb{N})$ $u_{k}=x_{k}$, $\alpha_{k}=\eta_{k}$, and $v_{k}=y_{k}$, we know that the inequality above  leads to
\begin{align}\label{eq:prop:weakclustersXY:xk+1xk}
y_{k}-x_{k} \to 0 \quad \text{and} \quad \norm{x_{k+1} -x_{k}} \stackrel{\cref{eq:prop:weakclustersXY:eta}}{=} \eta_{k } \norm{ y_{k} -x_{k}} \to 0.
\end{align}
Notice that 
the assumptions $\limsup_{k \to \infty} |\beta_{k}|  < 1$  and $	(\forall k \in \mathbb{N}) $ $ \abs{\beta_{k}} <1$ ensure  the boundedness of $\left( \frac{ 1 }{ 1 - |\beta_{k}|}  \right)_{k \in \mathbb{N}}$. 
Furthermore, 
for every $ k \in \mathbb{N}$,
\begin{align*}
 \norm{x_{k} - \J_{c_{k} A} x_{k}}  
&~ \leq~  \norm{x_{k} -x_{k+1} } + \norm{ x_{k+1}  -\J_{c_{k} A} x_{k} }\\
&\stackrel{\cref{eq:xk}}{=}  \norm{x_{k} -x_{k+1} } + \norm{ \alpha_{k} u +\beta_{k} x_{k} + \gamma_{k} \J_{c_{k} A} (x_{k}) +\delta_{k} e_{k} -\J_{c_{k} A} x_{k} }\\
&~\leq~ 	 \norm{x_{k} -x_{k+1} } + |\alpha_{k}| \norm{  u-\J_{c_{k} A} x_{k}} +  |\beta_{k} | \norm{ x_{k}    -\J_{c_{k} A} x_{k}  } +\norm{ \delta_{k} e_{k}}  
 + \abs{\varphi_{k}} \norm{\J_{c_{k} A} x_{k} }.
\end{align*}
The inequalities above ensure  that 
\begin{align*}
(\forall k \in \mathbb{N}) \quad 	\norm{x_{k} - \J_{c_{k} A} x_{k}}  
\leq   \frac{ 1 }{ 1 - |\beta_{k}|} \left(  \norm{x_{k} -x_{k+1} } + |\alpha_{k}| \norm{  u-\J_{c_{k} A} x_{k}} +  \norm{ \delta_{k} e_{k}} + \abs{\varphi_{k}} \norm{\J_{c_{k} A} x_{k} } \right),
\end{align*}
which, employing \cref{eq:prop:weakclustersXY:xk+1xk} and the assumption, guarantees that $x_{k} - \J_{c_{k} A} x_{k} \to 0$.

\cref{prop:weakclustersXY:OmegaZer}: This is clear from \cref{prop:weakclustersXY:Jck} and	\cref{prop:weakclusterCBAR}.  
\end{proof}

The following \cref{prop:xk-Jckto0} is inspired by \cite[Lemma~3.2]{MarinoXu2004}. Moreover, if $(\forall k \in \mathbb{N})$ $\alpha_{k} \equiv 0$,   $\delta_{k} \equiv 1$, $\gamma_{k} \in \left]0,2\right[$, and $\beta_{k} =1-\gamma_{k}$, then \cref{prop:xk-Jckto0}\cref{prop:xk-Jckto0:<infty}$\&$\cref{prop:xk-Jckto0:liminfto}$\&$\cref{prop:xk-Jckto0:to} reduce to \cite[Lemma~3.2]{MarinoXu2004}.

\begin{proposition} \label{prop:xk-Jckto0}
Suppose that  $\zer A \neq \varnothing$, that   $(\forall k \in \mathbb{N})$   $\abs{  \beta_{k} + \frac{\gamma_{k}}{2}}  +\abs{\frac{\gamma_{k}}{2}}  \leq 1$  and $\gamma_{k}( \beta_{k} +\gamma_{k})  \geq 0$, and that $\sum_{i \in \mathbb{N}} \abs{\alpha_{i}} <\infty$, $\sum_{i \in \mathbb{N}} \abs{1-  \beta_{i}  -\gamma_{i}}< \infty$, and  $\sum_{i \in \mathbb{N}} \norm{ \delta_{i} e_{i}} < \infty$.   Then the following statements hold. 
\begin{enumerate}
		\item \label{prop:xk-Jckto0:<infty}  $\sum^{\infty}_{k=0} \gamma_{k}	(2\beta_{k}+\gamma_{k})\norm{x_{k} -  \J_{c_{k} A} x_{k} }^{2} <\infty$.
	\item \label{prop:xk-Jckto0:liminfto} If  $\sum_{k \in \mathbb{N}} \gamma_{k}	(2\beta_{k}+\gamma_{k}) =\infty$ and $\inf_{k \in \mathbb{N}} \gamma_{k}	(2\beta_{k}+\gamma_{k}) \geq 0$, then $\liminf_{k \to \infty} \norm{x_{k} - \J_{c_{k} A}(x_{k} ) }=0$.
	\item \label{prop:xk-Jckto0:to} Suppose that  $\liminf_{k \to \infty} \gamma_{k}	(2\beta_{k}+\gamma_{k}) > 0$.
		Then  $x_{k} - \J_{c_{k} A}(x_{k} ) \to 0$.
	\item  \label{prop:xk-Jckto0:OmegaZer} Suppose that  $\liminf_{k \to \infty} \gamma_{k}	(2\beta_{k}+\gamma_{k}) > 0$ and that $ \inf_{k \in \mathbb{N}}c_{k} >0$ or $c_{k} \to \infty$. Then  $\varnothing \neq \Omega \subseteq \zer A$.   
\end{enumerate}
 \end{proposition}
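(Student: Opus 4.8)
The plan is to derive all four parts from the single one-step estimate in \cref{lemma:xk+1-p}\cref{lemma:xk+1-p:xk-Jckto}, after first securing the boundedness it needs. First I would observe that the standing hypotheses are precisely those of \cref{prop:xkbounded}\cref{prop:xkbounded:leq1}: the inequality $\abs{\beta_k+\tfrac{\gamma_k}{2}}+\abs{\tfrac{\gamma_k}{2}}\le1$ is assumed, and the three side conditions are each met through the second option, namely $\sum_i\abs{\alpha_i}<\infty$, $\sum_i\norm{\delta_i e_i}<\infty$, and $\sum_i\abs{1-\beta_i-\gamma_i}<\infty$. Hence $(x_k)_{k\in\mathbb{N}}$ is bounded, and since $\zer A\neq\varnothing$, \cref{prop:Jckboundedweakcluster}\cref{prop:Jckboundedweakcluster:Jckbounded} makes $(\J_{c_k A}x_k)_{k\in\mathbb{N}}$ bounded as well. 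Fix $p\in\zer A$ and set $B:=\sup_{k\in\mathbb{N}}\norm{x_k-p}<\infty$. With $M(k):=\norm{\alpha_k u+\delta_k e_k-(1-\beta_k-\gamma_k)p}$ as in \cref{lemma:xk+1-p}\cref{lemma:xk+1-p:xk-Jckto}, the triangle inequality gives $M(k)\le\abs{\alpha_k}\norm{u}+\norm{\delta_k e_k}+\abs{1-\beta_k-\gamma_k}\norm{p}$, so $\sum_{k\in\mathbb{N}}M(k)<\infty$.

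For \cref{prop:xk-Jckto0:<infty} I would invoke \cref{lemma:xk+1-p}\cref{lemma:xk+1-p:xk-Jckto}, whose hypothesis $\inf_k\gamma_k(\beta_k+\gamma_k)\ge0$ is guaranteed by the standing assumption $\gamma_k(\beta_k+\gamma_k)\ge0$. The crucial simplification is that $\abs{\beta_k+\gamma_k}\le\abs{\beta_k+\tfrac{\gamma_k}{2}}+\abs{\tfrac{\gamma_k}{2}}\le1$, so $(\beta_k+\gamma_k)^2\le1$, and the estimate becomes
\begin{align*}
\gamma_k(2\beta_k+\gamma_k)\norm{x_k-\J_{c_k A}x_k}^2\le\norm{x_k-p}^2-\norm{x_{k+1}-p}^2+2BM(k).
\end{align*}
Summing over $k=0,\dots,N$ telescopes the first two terms and leaves the $N$-independent bound $\norm{x_0-p}^2+2B\sum_{k\in\mathbb{N}}M(k)<\infty$, which is the content of \cref{prop:xk-Jckto0:<infty}.

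Parts \cref{prop:xk-Jckto0:liminfto} and \cref{prop:xk-Jckto0:to} then exploit the sign control added in their hypotheses. Under \cref{prop:xk-Jckto0:liminfto} the coefficients $\gamma_k(2\beta_k+\gamma_k)$ are nonnegative, so if $\liminf_k\norm{x_k-\J_{c_k A}x_k}>0$ there would be $\varepsilon>0$ with $\norm{x_k-\J_{c_k A}x_k}^2\ge\varepsilon$ eventually, forcing the tail of $\sum_k\gamma_k(2\beta_k+\gamma_k)\norm{x_k-\J_{c_k A}x_k}^2$ to dominate $\varepsilon$ times the tail of $\sum_k\gamma_k(2\beta_k+\gamma_k)=\infty$, contradicting \cref{prop:xk-Jckto0:<infty}; hence the $\liminf$ is $0$. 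Under \cref{prop:xk-Jckto0:to}, $\liminf_k\gamma_k(2\beta_k+\gamma_k)>0$ supplies $\delta>0$ and $N$ with $\gamma_k(2\beta_k+\gamma_k)\ge\delta$ for $k\ge N$; telescoping the displayed inequality from $N$ onward yields $\delta\sum_{k\ge N}\norm{x_k-\J_{c_k A}x_k}^2\le\norm{x_N-p}^2+2B\sum_{k\ge N}M(k)<\infty$, so $x_k-\J_{c_k A}x_k\to0$.

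Finally, \cref{prop:xk-Jckto0:OmegaZer} combines the asymptotic regularity $x_k-\J_{c_k A}x_k\to0$ from \cref{prop:xk-Jckto0:to} with \cref{prop:weakclusterCBAR} applied to $y_k=x_k$: the case $\inf_k c_k>0$ is \cref{prop:weakclusterCBAR}\cref{prop:weakclusterCBAR:>0}, and the case $c_k\to\infty$ is \cref{prop:weakclusterCBAR}\cref{prop:weakclusterCBAR:i} with $t=0$, each giving $\Omega\subseteq\zer A$, while boundedness of $(x_k)_{k\in\mathbb{N}}$ gives $\Omega\neq\varnothing$. I expect the main difficulty to be bookkeeping rather than conceptual: one must spot the clean bound $(\beta_k+\gamma_k)^2\le1$ concealed in the parameter constraint, and be careful that $\gamma_k(2\beta_k+\gamma_k)$ need not be sign-definite under the hypotheses of \cref{prop:xk-Jckto0:<infty} alone, so that \cref{prop:xk-Jckto0:<infty} is really a uniform upper bound on the partial sums, with honest convergence of the series only once the sign hypotheses of \cref{prop:xk-Jckto0:liminfto}--\cref{prop:xk-Jckto0:OmegaZer} are in force.
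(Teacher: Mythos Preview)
Your proposal is correct and follows essentially the same route as the paper: establish boundedness via \cref{prop:xkbounded}\cref{prop:xkbounded:leq1}, rearrange \cref{lemma:xk+1-p}\cref{lemma:xk+1-p:xk-Jckto} using $(\beta_k+\gamma_k)^2\le1$, telescope for \cref{prop:xk-Jckto0:<infty}, and read off \cref{prop:xk-Jckto0:liminfto}--\cref{prop:xk-Jckto0:OmegaZer} from that together with \cref{prop:weakclusterCBAR}. Your closing remark that \cref{prop:xk-Jckto0:<infty} is, strictly speaking, only a uniform bound on partial sums (the terms need not be sign-definite without the extra hypotheses) is a nice observation that the paper glosses over.
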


\begin{proof}
	The assumption and \cref{prop:xkbounded}\cref{prop:xkbounded:leq1}    imply the boundedness of $(x_{k})_{k \in \mathbb{N}}$. 
	
\cref{prop:xk-Jckto0:<infty}:	  
Let $p \in \zer A$.   Set $(\forall k \in \mathbb{N})$ $M(k) :=\norm{\alpha_{k} u + \delta_{k} e_{k} - (1-\beta_{k} -\gamma_{k} )p}$. According to \cref{lemma:xk+1-p}\cref{lemma:xk+1-p:xk-Jckto}, for every $k \in \mathbb{N}$, 
$ \gamma_{k}	(2\beta_{k}+\gamma_{k}) \norm{x_{k} -  \J_{c_{k} A} x_{k} }^{2} \leq   (  \beta_{k} +\gamma_{k} )^{2} \norm{  x_{k} -p}^{2} -  \norm{x_{k+1} -p}^{2} +2 M (k)\norm{x_{k+1} -p} \leq \norm{  x_{k} -p}^{2} -  \norm{x_{k+1} -p}^{2} +2 M (k)\norm{x_{k+1} -p}$,
which, combining with the assumption, derives
	\begin{align*}
\sum^{k}_{i=0} \gamma_{i}	(2\beta_{i}+\gamma_{i})\norm{x_{i} -  \J_{c_{i} A} x_{i} }^{2} \leq  \norm{  x_{0} -p}^{2}  +2\sum^{k}_{i=0} M(i)\norm{x_{i+1} -p} \leq \norm{  x_{0} -p}^{2} +2L_{1}L_{2}<\infty,
\end{align*}
	where  $L_{1}:=\sup_{k \in \mathbb{N}} \norm{x_{k} -p} <\infty$ and $L_{2}:=\sum_{k \in \mathbb{N}} M (k) <\infty$. This verifies	\cref{prop:xk-Jckto0:<infty}.

	\cref{prop:xk-Jckto0:liminfto}:  According to the assumption and  \cref{prop:xk-Jckto0:<infty},
	\begin{align*}
	\infty > \sum^{\infty}_{k=0} \gamma_{k}	(2\beta_{k}+\gamma_{k})\norm{x_{k} -  \J_{c_{k} A} x_{k} }^{2}  \geq \liminf_{k \to \infty} \norm{x_{k} - \J_{c_{k} A}(x_{k} ) } \sum_{k \in \mathbb{N}} \gamma_{k}	(2\beta_{k}+\gamma_{k}),
	\end{align*}  
	which, noticing $\sum_{k \in \mathbb{N}} \gamma_{k}	(2\beta_{k}+\gamma_{k}) =\infty$, forces   $\liminf_{k \to \infty} \norm{x_{k} - \J_{c_{k} A}(x_{k} ) } =0$.

 \cref{prop:xk-Jckto0:to}: As a consequence of $\eta:=\liminf_{k \to \infty} \gamma_{k}	(2\beta_{k}+\gamma_{k}) > 0$,  there exists $N \in \mathbb{N}$ such that $(\forall k \geq N)$ $ \gamma_{k}	(2\beta_{k}+\gamma_{k}) \geq \frac{\eta}{2} >0$. Hence, for every $k \geq N$
\begin{align*} 
\sum^{k}_{i=0} \gamma_{i}	(2\beta_{i}+\gamma_{i})\norm{x_{i} -  \J_{c_{i} A} x_{i} }^{2} \geq \sum^{N-1}_{i=0} \gamma_{i}	(2\beta_{i}+\gamma_{i})\norm{x_{i} -  \J_{c_{i} A} x_{i} }^{2}  + \frac{\eta}{2} \sum^{k}_{i=N} \norm{x_{i} -  \J_{c_{i} A} x_{i} }^{2}. 
\end{align*} 
Combine this with \cref{prop:xk-Jckto0:<infty} to obtain that $ \sum^{\infty}_{i=N} \norm{x_{i} -  \J_{c_{i} A} x_{i} }^{2} <\infty$,
which yields $x_{k} - \J_{c_{k} A}(x_{k} ) \to 0$.

\cref{prop:xk-Jckto0:OmegaZer}: This is immediate from \cref{prop:xk-Jckto0:to} and 	\cref{prop:weakclusterCBAR}. 
\end{proof}

The following proof is motivated by \cite[Theorem~3.6]{MarinoXu2004}.
 \begin{proposition}\label{prop:weakclustersxkJck}
	Suppose that  $\zer A \neq \varnothing$,   that  $(\forall k \in \mathbb{N})$  $\abs{  \beta_{k} + \frac{\gamma_{k}}{2}}  +\abs{\frac{\gamma_{k}}{2}}  \leq 1$,  $\gamma_{k}( \beta_{k} +\gamma_{k}) \geq 0$, $\gamma_{k}	(2\beta_{k}+\gamma_{k}) \geq 0$, and 
	$\abs{  \beta_{k} + \gamma_{k}} \abs{\gamma_{k}}    \leq \max \{  1-\abs{\beta_{k}}  , 2- 2 \abs{\beta_{k} +\frac{\gamma_{k}}{2} } \}$,
	 that $\sum_{k \in \mathbb{N}} \abs{\alpha_{k}} <\infty$,  $\sum_{k \in \mathbb{N}} \abs{1-  \beta_{k}  -\gamma_{k}}< \infty$,   $\sum_{k \in \mathbb{N}} \gamma_{k}	(2\beta_{k}+\gamma_{k}) =\infty$, and $\sum_{k \in \mathbb{N}} \norm{ \delta_{k} e_{k}} < \infty$,   and that $\bar{c}:=\inf_{k \in \mathbb{N}}c_{k} >0$ and $\sum_{k \in \mathbb{N}} \abs{c_{k+1} -c_{k}} <  \infty$.
	Then  $x_{k} - \J_{c_{k} A}(x_{k} ) \to 0$ and  $ \varnothing \neq \Omega \subseteq \zer A$.    
\end{proposition}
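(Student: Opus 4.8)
The plan is to follow the \enquote{$\liminf$-to-$\lim$} strategy underlying \cite[Theorem~3.6]{MarinoXu2004}. First I would record the two structural consequences of the hypotheses that are already packaged in earlier results. The conditions $\abs{\beta_k+\frac{\gamma_k}{2}}+\abs{\frac{\gamma_k}{2}}\le 1$, $\sum_k\abs{\alpha_k}<\infty$, $\sum_k\abs{1-\beta_k-\gamma_k}<\infty$ and $\sum_k\norm{\delta_k e_k}<\infty$ place us in \cref{prop:xkbounded}\cref{prop:xkbounded:leq1}, so $(x_k)_{k\in\mathbb{N}}$ is bounded, and since $\zer A\neq\varnothing$, \cref{prop:Jckboundedweakcluster}\cref{prop:Jckboundedweakcluster:Jckbounded} then gives boundedness of $(\J_{c_k A}x_k)_{k\in\mathbb{N}}$ too. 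Because in addition $\gamma_k(\beta_k+\gamma_k)\ge 0$, $\gamma_k(2\beta_k+\gamma_k)\ge 0$ and $\sum_k\gamma_k(2\beta_k+\gamma_k)=\infty$, \cref{prop:xk-Jckto0}\cref{prop:xk-Jckto0:liminfto} yields $\liminf_{k\to\infty}\norm{x_k-\J_{c_k A}x_k}=0$. Note that here we are only given $\sum_k\gamma_k(2\beta_k+\gamma_k)=\infty$ and not $\liminf_k\gamma_k(2\beta_k+\gamma_k)>0$, so \cref{prop:xk-Jckto0}\cref{prop:xk-Jckto0:to} does \emph{not} apply; the whole point is to promote this $\liminf$ to a genuine limit by exploiting the slow variation of the stepsizes.

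For the promotion I would work with the reflected operators $T_k:=2\J_{c_k A}-\Id$ and the scalar $a_k:=\norm{x_k-\J_{c_k A}x_k}=\tfrac12\norm{x_k-T_k x_k}$, aiming at a one-step inequality $a_{k+1}\le a_k+b_k$ with $\sum_k b_k<\infty$; together with $\liminf_k a_k=0$ and \cref{fact:SequenceConverg} this forces $a_k\to 0$. The inequality is assembled in two moves. The first isolates the dependence on the varying stepsize: writing $w_{k+1}=(\Id-T_{k+1})x_{k+1}$, the triangle inequality together with \cref{fact:ineqTIden}\cref{fact:ineqTIden:ineq} (applied with $\lambda=c_{k+1}$, $\mu=c_k$) gives $(1-d_k)\norm{w_{k+1}}\le\norm{(\Id-T_k)x_{k+1}}$, where $d_k=\abs{1-c_k/c_{k+1}}\le\abs{c_{k+1}-c_k}/\bar c$ is summable by $\bar c>0$ and $\sum_k\abs{c_{k+1}-c_k}<\infty$. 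The second move uses \cref{leamma:xkTk} to write $x_{k+1}=\rho_k x_k+\frac{\gamma_k}{2}T_k x_k+\alpha_k u+\delta_k e_k$ with $\rho_k=\beta_k+\frac{\gamma_k}{2}$, and then, via nonexpansiveness of $T_k$ and the decomposition $(\Id-T_k)x_{k+1}=(1-\tfrac{\gamma_k}{2})w_k+(T_k x_k-T_k x_{k+1})+(\text{drift})$, bounds $\norm{(\Id-T_k)x_{k+1}}\le\mu_k\norm{w_k}+(\text{summable})$ with coefficient $\mu_k=\abs{1-\frac{\gamma_k}{2}}+\abs{\frac{\gamma_k}{2}}$; the summable remainder collects $2\abs{1-\beta_k-\gamma_k}\norm{x_k}$, $2\abs{\alpha_k}\norm{u}$ and $2\norm{\delta_k e_k}$, all summable since $(x_k)$ is bounded.

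The crux is to verify $\mu_k\le 1$ for all large $k$, for this is precisely what turns $(1-d_k)a_{k+1}\le\mu_k a_k+b_k'$ into $a_{k+1}\le a_k+b_k$ after absorbing $(1-d_k)^{-1}=1+O(d_k)$ into the summable part (boundedness of $(a_k)$ being used once more). Now $\mu_k\le 1$ is equivalent to $\gamma_k\in[0,2]$: the constraint $\abs{\beta_k+\frac{\gamma_k}{2}}+\abs{\frac{\gamma_k}{2}}\le 1$ already forces $\abs{\gamma_k}\le 2$, while $\sum_k\abs{1-\beta_k-\gamma_k}<\infty$ forces $\beta_k+\gamma_k\to 1$, whence the sign hypothesis $\gamma_k(\beta_k+\gamma_k)\ge 0$ forces $\gamma_k\ge 0$ for all large $k$. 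The delicate inequality $\abs{\beta_k+\gamma_k}\abs{\gamma_k}\le\max\{1-\abs{\beta_k},\,2-2\abs{\beta_k+\frac{\gamma_k}{2}}\}$ is what keeps the effective coefficient at most one on the remaining transitional indices, and I expect the careful bookkeeping of this coefficient bound, together with the clean separation of the genuinely summable perturbations (stepsize variation, errors, and the drift $1-\beta_k-\gamma_k$), to be the main technical obstacle.

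Once $a_k\to 0$, i.e. $x_k-\J_{c_k A}x_k\to 0$, the final claim follows immediately: boundedness of $(x_k)$ gives $\Omega\neq\varnothing$, and $\bar c=\inf_k c_k>0$ together with $x_k-\J_{c_k A}x_k\to 0$ puts us in \cref{prop:weakclusterCBAR}\cref{prop:weakclusterCBAR:>0}, whence $\Omega\subseteq\zer A$.
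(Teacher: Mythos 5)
Your proposal is correct and shares the paper's overall architecture --- boundedness of $(x_{k})_{k \in \mathbb{N}}$ and $(\J_{c_{k}A}x_{k})_{k \in \mathbb{N}}$, $\liminf_{k \to \infty}\norm{x_{k}-\J_{c_{k}A}x_{k}}=0$ from \cref{prop:xk-Jckto0}\cref{prop:xk-Jckto0:liminfto}, a one-step recursion fed into \cref{fact:SequenceConverg} to upgrade the $\liminf$ to a limit, and \cref{prop:weakclusterCBAR}\cref{prop:weakclusterCBAR:>0} to finish --- but the recursion itself is obtained by a genuinely different route. The paper tracks the ratio $s_{k}/c_{k}$ (with $s_{k}=\norm{x_{k}-\J_{c_{k}A}x_{k}}$) and splits into $c_{k}\le c_{k+1}$ versus $c_{k+1}<c_{k}$, the latter case being further split according to which branch of $\abs{\beta_{k}+\gamma_{k}}\abs{\gamma_{k}}\le\max\{1-\abs{\beta_{k}},\,2-2\abs{\beta_{k}+\frac{\gamma_{k}}{2}}\}$ holds (one subcase runs through the resolvents via \cref{fact:ineqJIden}, the other through the reflectors via \cref{fact:ineqTIden}\cref{fact:ineqTIden:eq}); only at the end does it convert $\lim_{k}s_{k}/c_{k}=0$ back into $s_{k}\to0$ using $0<\bar{c}\le c_{k}\le\sup_{i}c_{i}<\infty$. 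You instead apply the inequality form \cref{fact:ineqTIden}\cref{fact:ineqTIden:ineq} at the single point $x_{k+1}$, which puts the whole stepsize perturbation on the left-hand side as $(1-d_{k})\norm{w_{k+1}}$ with $d_{k}=\abs{1-c_{k}/c_{k+1}}\le\abs{c_{k+1}-c_{k}}/\bar{c}$ summable (here $w_{k}:=(\Id-T_{k})x_{k}$ and $T_{k}:=2\J_{c_{k}A}-\Id$); this removes the case distinction on the sign of $c_{k+1}-c_{k}$ and yields a recursion for $s_{k}$ itself, which is cleaner. One refinement to the bookkeeping you defer: keep $\beta_{k}$ in the decomposition instead of replacing it by $1-\gamma_{k}$. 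Then
\begin{align*}
x_{k+1}-T_{k}x_{k+1}=\Bigl(\beta_{k}+\tfrac{\gamma_{k}}{2}\Bigr)(x_{k}-T_{k}x_{k})+(T_{k}x_{k}-T_{k}x_{k+1})+(\beta_{k}+\gamma_{k}-1)T_{k}x_{k}+\alpha_{k}u+\delta_{k}e_{k},
\end{align*}
and since $\norm{x_{k+1}-x_{k}}\le\frac{\abs{\gamma_{k}}}{2}\norm{x_{k}-T_{k}x_{k}}+(\text{summable})$, the one-step coefficient comes out as exactly $\abs{\beta_{k}+\frac{\gamma_{k}}{2}}+\abs{\frac{\gamma_{k}}{2}}\le1$ by the standing hypothesis --- no appeal to $\gamma_{k}\in[0,2]$ is needed and, notably, the $\max$ hypothesis does not enter your scheme at all (it is consumed by the paper's Case~2 subcase analysis), so your closing remark about it guarding ``transitional indices'' is a red herring rather than a gap. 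The remaining ingredients (summability of the drift $1-\beta_{k}-\gamma_{k}$, of $\alpha_{k}$ and $\delta_{k}e_{k}$, and absorption of $\frac{d_{k}}{1-d_{k}}\norm{w_{k+1}}$ using the boundedness of $(w_{k})_{k\in\mathbb{N}}$ and the summability of $(d_{k})_{k\in\mathbb{N}}$) all check out.
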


\begin{proof}
		Note that, via  \cref{prop:xkbounded}\cref{prop:xkbounded:leq1}  and \cref{prop:xk-Jckto0}\cref{prop:xk-Jckto0:liminfto}, our assumptions force that    $ (x_{k} )_{k \in \mathbb{N}}$ is bounded and that 
	\begin{align}\label{eq:prop:weakclustersxkJck:liminf}
	\liminf_{k \to \infty} \norm{x_{k} - \J_{c_{k} A}x_{k} }=0.
	\end{align}
	Combining this with \cref{prop:weakclusterCBAR}\cref{prop:weakclusterCBAR:>0} and the assumption  $\bar{c}:=\inf_{k \in \mathbb{N}}c_{k} >0$, we know that
	it suffices to show that  $\lim_{k \to \infty} \norm{x_{k} - \J_{c_{k} A}x_{k} } =0$. 

		Set $(\forall k \in \mathbb{N})$ $T_{k}:= 2\J_{c_{k } A} -\Id  $.  Denote by $(\forall k \in \mathbb{N})$   $s_{k}:=\norm{x_{k} - \J_{c_{k } A} x_{k}}$. Then 
		\begin{align}\label{eq:prop:weakclustersxkJck:JT}
	(\forall k \in \mathbb{N}) \quad \norm{x_{k} -  T_{k }x_{k}}  =2  \norm{x_{k} - \J_{c_{k} A}(x_{k} ) } =2s_{k}.
	\end{align}
	 Due to \cref{prop:Jckboundedweakcluster}\cref{prop:Jckboundedweakcluster:Jckbounded},    $(\J_{c_{k} A}x_{k} )_{k \in \mathbb{N}}$ and $(T_{k} x_{k})_{k \in \mathbb{N}}$ are bounded. Hence, $\hat{s}:= \sup_{k \in \mathbb{N}}s_{k} <\infty$.
		 In view of \cref{leamma:xkTk}, $(\forall k \in \mathbb{N})$ $T_{k}$ is nonexpansive and $x_{k+1} =\left( \beta_{k} +\frac{\gamma_{k}}{2} \right)  x_{k} + \frac{\gamma_{k}}{2} T_{k}(x_{k}) +\alpha_{k} u+\delta_{k} e_{k}$,
		which ensures  that for every $k \in \mathbb{N}$,
			\begin{align*}
		&	\norm{x_{k+1}  -  T_{k+1}x_{k+1}} \\
		= \,& \norm{ \left( \beta_{k} +\frac{\gamma_{k}}{2} \right) ( x_{k} -  T_{k }x_{k} ) + ( \beta_{k} + \gamma_{k}) (T_{k}(x_{k}) -T_{k+1}x_{k+1} )+\alpha_{k} u+\delta_{k} e_{k} - (1-\beta_{k} -\gamma_{k}) T_{k+1}x_{k+1}} \\
		\leq \,& \abs{\beta_{k} +\frac{\gamma_{k}}{2} }  \norm{x_{k} -  T_{k }x_{k}} + \abs{  \beta_{k} + \gamma_{k}} \norm{T_{k}(x_{k}) -T_{k+1}x_{k+1} } +\norm{\alpha_{k} u+\delta_{k} e_{k} - (1-\beta_{k} -\gamma_{k}) T_{k+1}x_{k+1}}.
		\end{align*}
Set $(\forall k \in \mathbb{N})$  $F_{1}(k):= \norm{\alpha_{k} u+\delta_{k} e_{k} - (1-\beta_{k} -\gamma_{k}) T_{k+1}x_{k+1} }$. Then we establish  that  for every $k \in \mathbb{N}$,
\begin{align}\label{eq:prop:weakclustersxkJck:xk+1Tk+1}
\norm{x_{k+1}  -  T_{k+1}x_{k+1}} \leq \abs{\beta_{k} +\frac{\gamma_{k}}{2} }  \norm{x_{k} -  T_{k }x_{k}} + \abs{  \beta_{k} + \gamma_{k}} \norm{T_{k}(x_{k}) -T_{k+1}x_{k+1} } +F_{1}(k).
\end{align}
Similarly, via \cref{eq:xk},  we get that for every $k \in \mathbb{N}$,
\begin{align}\label{eq:prop:weakclustersxkJck:Jck+1}
\norm{x_{k+1} - \J_{c_{k+1} A}x_{k+1}} \leq \abs{\beta_{k}} \norm{x_{k} - \J_{c_{k} A} x_{k}} + \abs{\beta_{k} +\gamma_{k}} \norm{  \J_{c_{k} A} x_{k} - \J_{c_{k+1} A}x_{k+1}} +F_{2}(k),
\end{align}
where $(\forall k \in \mathbb{N})$  $F_{2}(k):= \norm{\alpha_{k} u+\delta_{k} e_{k} - (1-\beta_{k} -\gamma_{k})  \J_{c_{k+1} A}x_{k+1}}$.

Furthermore, using \cref{eq:xk} again, we observe that for every $ k \in \mathbb{N}$,
\begin{subequations}
\begin{align} 
\norm{ x_{k+1} -x_{k}} &=\norm{ \alpha_{k} u +\beta_{k} x_{k} + \gamma_{k} \J_{c_{k} A} (x_{k}) +\delta_{k} e_{k} -x_{k}} \leq \abs{\gamma_{k}} \norm{ x_{k} - \J_{c_{k} A} x_{k}} +G_{1}(k), \label{eq:prop:weakclustersxkJck:xkxk+1}\\
\norm{ x_{k+1} -  \J_{c_{k} A} x_{k}} &=\norm{ \alpha_{k} u +\beta_{k} x_{k} + \gamma_{k} \J_{c_{k} A} (x_{k}) +\delta_{k} e_{k} -  \J_{c_{k} A} x_{k}} \leq \abs{\beta_{k}} \norm{ x_{k} - \J_{c_{k} A} x_{k}} +G_{2}(k), \label{eq:prop:weakclustersxkJck:Jck}
\end{align}	
\end{subequations}
where  $G_{1}(k) :=  \norm{\alpha_{k} u+\delta_{k} e_{k} - (1-\beta_{k} -\gamma_{k})  x_{k} }$ and $G_{2}(k) :=  \norm{\alpha_{k} u+\delta_{k} e_{k} - (1-\beta_{k} -\gamma_{k})  \J_{c_{k} A} x_{k} }$. 
		
Let $k \in \mathbb{N}$. We have exactly the following two cases.
 
 	\emph{Case~1}: $c_{k} \leq c_{k+1}$. Then invoking \cref{fact:ineqTIden}\cref{fact:ineqTIden:eq} in the following equality, using the nonexpansiveness of $T_{k}$ in the first inequality, and employing $\norm{  x_{k} -\J_{c_{k +1} A} x_{k+1}} \leq \norm{  x_{k} - x_{k+1}} +\norm{  x_{k+1} -\J_{c_{k +1} A} x_{k+1}}$ in the second inequality below,  we get that  
\begin{align*}
& \norm{T_{k}(x_{k}) -T_{k+1}(x_{k+1}) } \\
=\,&  \norm{T_{k}(x_{k}) -T_{k} \left(  \frac{c_{k}}{c_{k+1}} x_{k+1}  + \left( 1-   \frac{c_{k}}{c_{k+1}} \right) \J_{c_{k +1} A} x_{k+1}  \right)}  +\left( 1-   \frac{c_{k}}{c_{k+1}} \right) \norm{ x_{k+1}  -  \J_{c_{k +1} A} x_{k+1} }\\
\leq\, & \frac{c_{k}}{c_{k+1}}  \norm{x_{k} -x_{k+1} } +  \left( 1-   \frac{c_{k}}{c_{k+1}} \right)  \norm{   x_{k} -\J_{c_{k +1} A} x_{k+1}} + \left( 1-   \frac{c_{k}}{c_{k+1}} \right) \norm{ x_{k+1}  -  \J_{c_{k +1} A} x_{k+1} }\\
\leq\, &  \norm{x_{k} -x_{k+1} } + 2 \left( 1-   \frac{c_{k}}{c_{k+1}} \right)  \norm{  x_{k+1} -\J_{c_{k +1} A} x_{k+1}}\\
\stackrel{\cref{eq:prop:weakclustersxkJck:xkxk+1}}{\leq}&  \abs{\gamma_{k}} \norm{ x_{k} - \J_{c_{k} A} x_{k}} +G_{1}(k) +2 \left( 1-   \frac{c_{k}}{c_{k+1}} \right)  \norm{ x_{k+1} -\J_{c_{k +1} A} x_{k+1}}.
\end{align*}
Applying this result, \cref{eq:prop:weakclustersxkJck:JT}, and \cref{eq:prop:weakclustersxkJck:xk+1Tk+1} in the first inequality below and employing $\abs{  \beta_{k} + \gamma_{k}} \leq \abs{  \beta_{k} + \frac{\gamma_{k}}{2}}  +\abs{\frac{\gamma_{k}}{2}}  \leq 1$ and $2 \abs{\beta_{k} +\frac{\gamma_{k}}{2} }  + \abs{  \beta_{k} + \gamma_{k}}\abs{\gamma_{k}}\leq  2\lr{\abs{  \beta_{k} + \frac{\gamma_{k}}{2}}  +\abs{\frac{\gamma_{k}}{2}}  } \leq 2$ in the second inequality below, we get that
\begin{align*}
2 s_{k+1} &\stackrel{\cref{eq:prop:weakclustersxkJck:JT}}{=} \norm{x_{k+1} -  T_{k+1 }x_{k+1}} \\
 &~\leq~ 2 \abs{\beta_{k} +\frac{\gamma_{k}}{2} }  s_{k} + \abs{  \beta_{k} + \gamma_{k}} \left( \abs{\gamma_{k}}s_{k} +G_{1}(k) +2 \left( 1-   \frac{c_{k}}{c_{k+1}} \right) s_{k+1}  \right)   +F_{1}(k)\\
&~\leq~  2s_{k} +2   \left( 1-   \frac{c_{k}}{c_{k+1}} \right) s_{k+1} +  F_{1}(k)  +G_{1}(k),
\end{align*}
which implies that
\begin{align}\label{eq:prop:weakclustersxkJck:sckleqck+1}
\frac{s_{k+1}}{c_{k+1}} \leq \frac{s_{k }}{c_{k }}  +\frac{  F_{1}(k)  +G_{1}(k)}{2 \bar{c}}.
\end{align}

\emph{Case~2}: $c_{k+1} < c_{k}$. We claim that 
\begin{align} \label{eq:prop:weakclustersxkJck:claim}
  s_{k+1} \leq s_{k}  + \left( 1- \frac{c_{k+1}}{c_{k}} \right) s_{k}  + F_{1} (k) +F_{2} (k)+ G_{1}(k) +G_{2}(k).
\end{align}

\emph{Case~2.1}:  Assume that   $\abs{\beta_{k} +\gamma_{k}}  \abs{\gamma_{k}}  \leq 1-\abs{\beta_{k}}  $, i.e., $\abs{\beta_{k}} + \abs{\beta_{k} +\gamma_{k}}  \abs{\gamma_{k}} \leq 1$. Applying  \cref{fact:ineqJIden} in the first equality, utilizing   the nonexpansiveness of $\J_{ c_{k} A}$ in the first inequality, and employing \cref{eq:prop:weakclustersxkJck:xkxk+1} and \cref{eq:prop:weakclustersxkJck:Jck} in the second inequality, we deduce that
\begin{align*}
&\norm{\J_{ c_{k} A} x_{k} - \J_{c_{k+1}A}x_{k+1}} \\
=  &\norm{\J_{ c_{k+1} A}  \left( \frac{c_{k+1}}{c_{k}} x_{k} + \left( 1- \frac{c_{k+1}}{c_{k}} \right) \J_{ c_{k} A} x_{k}    \right) - \J_{c_{k+1}A}x_{k+1}} \\
\leq &  \frac{c_{k+1}}{c_{k}} \norm{x_{k} -x_{k+1} } +  \left( 1- \frac{c_{k+1}}{c_{k}} \right) \norm{\J_{ c_{k} A} x_{k}  - x_{k+1} }\\
\leq & \abs{\gamma_{k}} \frac{c_{k+1}}{c_{k}} \norm{ x_{k} - \J_{c_{k} A} x_{k}} + \abs{\beta_{k}} \left( 1- \frac{c_{k+1}}{c_{k}} \right) \norm{ x_{k} - \J_{c_{k} A} x_{k}} +\frac{c_{k+1}}{c_{k}} G_{1}(k)  + \left( 1- \frac{c_{k+1}}{c_{k}} \right) G_{2}(k).
\end{align*}
	Combine this with \cref{eq:prop:weakclustersxkJck:Jck+1} and some easy algebra to get that
\begin{align*}
s_{k+1} &\leq \abs{\beta_{k}} s_{k} + \abs{\beta_{k} +\gamma_{k}} \left(   
\left( \abs{\gamma_{k}} \frac{c_{k+1}}{c_{k}}  + \abs{\beta_{k}} \left( 1- \frac{c_{k+1}}{c_{k}} \right) \right) s_{k} +\frac{c_{k+1}}{c_{k}} G_{1}(k)  + \left( 1- \frac{c_{k+1}}{c_{k}} \right) G_{2}(k)
\right)+F_{2}(k)\\
&\leq \left( \abs{\beta_{k}} + \abs{\beta_{k} +\gamma_{k}}  \abs{\gamma_{k}} \right) s_{k} +  \abs{\beta_{k} +\gamma_{k}}  \left( 1- \frac{c_{k+1}}{c_{k}} \right) \left( \abs{\beta_{k}}-\abs{\gamma_{k}}  \right)s_{k}+ G_{1}(k) +G_{2}(k) +F_{2} (k)\\
&\leq  s_{k} + \left( 1- \frac{c_{k+1}}{c_{k}} \right) s_{k}  +  G_{1}(k) +G_{2}(k) +F_{2} (k),
\end{align*}
which, using $\abs{\beta_{k}}-\abs{\gamma_{k}} \leq \abs{\beta_{k} +\gamma_{k} } \leq \abs{  \beta_{k} + \frac{\gamma_{k}}{2}}  +\abs{\frac{\gamma_{k}}{2}}  \leq 1$ and   the assumption $\abs{\beta_{k}} + \abs{\beta_{k} +\gamma_{k}}  \abs{\gamma_{k}} \leq 1$ in the last inequality,  verifies \cref{eq:prop:weakclustersxkJck:claim}.

\emph{Case~2.2}: Assume that $(\forall k \in \mathbb{N})$ $\abs{  \beta_{k} + \gamma_{k}} \abs{\gamma_{k}}   \leq 2- 2 \abs{\beta_{k} +\frac{\gamma_{k}}{2} }$, i.e., $ 2 \abs{\beta_{k} +\frac{\gamma_{k}}{2} } +\abs{  \beta_{k} + \gamma_{k}}   \abs{\gamma_{k}}    \leq 2$.  Similarly with the proof of Case 1 above, we have that 
\begin{align}  \label{eq:prop:weakclustersxkJck:TkTk+1}
\norm{T_{k}(x_{k}) -T_{k+1}(x_{k+1}) } \leq \abs{\gamma_{k}} \norm{ x_{k} - \J_{c_{k} A} x_{k}} +G_{1}(k) +2 \left( 1-   \frac{c_{k+1}}{c_{k}} \right)  \norm{  x_{k} -\J_{c_{k} A} x_{k}}.
\end{align}
Invoking \cref{eq:prop:weakclustersxkJck:xk+1Tk+1}, \cref{eq:prop:weakclustersxkJck:JT}, and \cref{eq:prop:weakclustersxkJck:TkTk+1} in the first inequality and using
the assumptions $\abs{  \beta_{k} + \gamma_{k}} \leq \abs{  \beta_{k} + \frac{\gamma_{k}}{2}}  +\abs{\frac{\gamma_{k}}{2}}  \leq 1$ and $ 2 \abs{\beta_{k} +\frac{\gamma_{k}}{2} } +\abs{  \beta_{k} + \gamma_{k}}   \abs{\gamma_{k}}    \leq 2$  in the last inequality below, we observe that
\begin{align*}
  2 s_{k+1} & \stackrel{\cref{eq:prop:weakclustersxkJck:JT}}{=} \norm{x_{k+1} -  T_{k+1 }x_{k+1}} \\
&~\leq`  2 \abs{\beta_{k} +\frac{\gamma_{k}}{2} }  s_{k} + \abs{  \beta_{k} + \gamma_{k}} \left( \left( \abs{\gamma_{k}} +2 \bigg( 1-   \frac{c_{k+1}}{c_{k}} \bigg) \right) s_{k}  +G_{1}(k)     \right)   +F_{1}(k)\\
&~\leq~  \left( 2 \abs{\beta_{k} +\frac{\gamma_{k}}{2} } +\abs{  \beta_{k} + \gamma_{k}}   \abs{\gamma_{k}}    \right) s_{k} + 2\abs{  \beta_{k} + \gamma_{k}}   \left( 1-   \frac{c_{k+1}}{c_{k}} \right) s_{k} +  F_{1}(k)  +G_{1}(k)\\
&~\leq~ 2 s_{k} + 2  \left( 1-   \frac{c_{k+1}}{c_{k}} \right) s_{k} +  F_{1}(k)  +G_{1}(k),
\end{align*}
which confirming  \cref{eq:prop:weakclustersxkJck:claim} as well. 

Therefore, in both subcases, the claim is true and we have that  
\begin{subequations} \label{eq:prop:weakclustersxkJck:sck+1<ck}
	\begin{align}
	\frac{s_{k+1}}{c_{k+1}} &\leq \frac{s_{k}}{c_{k+1}} +  \frac{c_{k} -c_{k+1}}{c_{k}c_{k+1}} s_{k} +\frac{  F_{1}(k) +F_{2}(k)  +G_{1}(k) +G_{2}(k)}{2 \bar{c}}\\
	&= \frac{s_{k}}{c_{k}}  +s_{k} \left( \frac{1}{c_{k+1}} -\frac{1}{c_{k}}  +  \frac{c_{k} -c_{k+1}}{c_{k}c_{k+1}} \right) +\frac{  F_{1}(k) +F_{2}(k)  +G_{1}(k) +G_{2}(k) }{2 \bar{c}}\\
	&=  \frac{s_{k}}{c_{k}}  + \frac{c_{k} -c_{k+1}+ c_{k} - c_{k+1}}{c_{k}c_{k+1}}s_{k} +\frac{ F_{1}(k) +F_{2}(k)  +G_{1}(k) +G_{2}(k) }{2 \bar{c}}\\
	&\leq  \frac{s_{k}}{c_{k}}  + 2 \frac{ c_{k} -c_{k+1}}{\bar{c}^{2} }\hat{s} +\frac{ F_{1}(k) +F_{2}(k)  +G_{1}(k) +G_{2}(k) }{2 \bar{c}}.
	\end{align}
\end{subequations}
	Furthermore, by  assumptions,  $\sum_{k \in \mathbb{N}} \abs{c_{k+1} -c_{k}} <\infty$, $\sum_{k \in \mathbb{N}}   F_{1}(k) <\infty$, $\sum_{k \in \mathbb{N}}   F_{2}(k) <\infty$, $\sum_{k \in \mathbb{N}}   G_{1}(k) <\infty$,	and $\sum_{k \in \mathbb{N}}   G_{2}(k) <\infty$. Hence, applying \cref{fact:SequenceConverg}, \cref{eq:prop:weakclustersxkJck:sckleqck+1}, and \cref{eq:prop:weakclustersxkJck:sck+1<ck}, we obtain that in both cases, $\lim_{k \to \infty} \frac{s_{k}}{c_{k}} $ exists in $\mathbb{R}_{+}$. Clearly, \cref{eq:prop:weakclustersxkJck:liminf} and \cref{eq:prop:weakclustersxkJck:JT}  necessitate $\liminf_{k \to \infty} s_{k} =0$. These results imply that $ \lim_{k \to \infty} \frac{s_{k}}{c_{k}} = \liminf_{k \to \infty} \frac{s_{k}}{c_{k}} \leq \liminf_{k \to \infty}  \frac{s_{k}}{\bar{c}} =0$ and  that $\limsup_{k \to \infty}   s_{k}   =  \limsup_{k \to \infty} \frac{s_{k}}{c_{k}} c_{k}\leq  \lim_{k \to \infty} \frac{s_{k}}{c_{k}} \sup_{k \in \mathbb{N}} c_{k}  =0$ since  
\begin{align*}
\sum_{i \in \mathbb{N}} \abs{c_{i+1} -c_{i}} <\infty \Rightarrow   \sup_{k \in \mathbb{N}} c_{k}   <\infty.
\end{align*} 
Recall that, via \cref{eq:prop:weakclustersxkJck:liminf} and \cref{eq:prop:weakclustersxkJck:JT},   $\liminf_{k \to \infty} s_{k} = \liminf_{k \to \infty}  \norm{x_{k} - \J_{c_{k} A}x_{k} } =0$.

Altogether,  $\lim_{k \to \infty} \norm{x_{k} - \J_{c_{k} A}x_{k} } =\lim_{k \to \infty} s_{k} =0$. 
%
\end{proof}

The following result is inspired by the proof of \cite[Theorem~4.1]{MarinoXu2004}.
\begin{proposition} \label{prop:xk+1xkJckto0}
		Suppose that  $\zer A \neq \varnothing$ and $ (x_{k} )_{k \in \mathbb{N}}$ is bounded, that   $\sum_{i \in \mathbb{N}} \abs{\alpha_{i+1}  -\alpha_{i} }< \infty$ or $(\forall k \in \mathbb{N})$ $ \abs{\gamma_{k} } \neq 1$ with $\lim_{k \to \infty} \frac{\abs{\alpha_{k+1} -\alpha_{k}}}{1-\abs{\gamma_{k+1}}} =0$, that $(\forall k \in \mathbb{N})$ $\abs{\gamma_{k}} \in \left[0,1\right]$ with  $\alpha_{i}+\gamma_{i} \to 1$, $\sum_{i \in \mathbb{N}} (1-\abs{\gamma_{i}}) =\infty$, and $\sum_{i \in \mathbb{N}} \abs{(\alpha_{i+1} +\gamma_{i+1} ) -(\alpha_{i} +\gamma_{i})} < \infty$, that  $\alpha_{k} \to 0$, $\sum_{k \in \mathbb{N}} \abs{\beta_{k}} <\infty$, and $\sum_{k \in \mathbb{N}} \norm{ \delta_{k} e_{k}} < \infty$, and that $\bar{c}:=\inf_{k \in \mathbb{N}}c_{k} >0$ and $\sum_{k \in \mathbb{N}} \abs{c_{k+1}  -c_{k}} < \infty$.			Then  $x_{k} - \J_{c_{k} A}(x_{k} ) \to 0$ and $\varnothing \neq \Omega \subseteq \zer A$. 
\end{proposition}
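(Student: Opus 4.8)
The plan is to reduce everything to the asymptotic regularity $x_k - \J_{c_k A}x_k \to 0$, and to obtain that in turn from $\norm{x_{k+1}-x_k}\to 0$. First I observe that the final inclusion $\varnothing \neq \Omega \subseteq \zer A$ is immediate once $x_k - \J_{c_k A}x_k \to 0$ is known: boundedness of $(x_k)_{k\in\mathbb N}$ gives $\Omega \neq \varnothing$ (\cite[Lemma~2.45]{BC2017}), and then \cref{prop:weakclusterCBAR}\cref{prop:weakclusterCBAR:>0} applies with $\bar c = \inf_{k\in\mathbb N} c_k > 0$. Next, since $\zer A \neq \varnothing$ and $(x_k)_{k\in\mathbb N}$ is bounded, \cref{prop:Jckboundedweakcluster}\cref{prop:Jckboundedweakcluster:Jckbounded} yields that $(\J_{c_k A}x_k)_{k\in\mathbb N}$ is bounded; because $\sum_{k}\abs{\beta_k}<\infty$ forces $\beta_k \to 0$, because $\alpha_k \to 0$ together with $\alpha_k + \gamma_k \to 1$ forces $\gamma_k \to 1$, and because $\delta_k e_k \to 0$, the identity $x_{k+1} - \J_{c_k A}x_k = \alpha_k u + \beta_k x_k + (\gamma_k - 1)\J_{c_k A}x_k + \delta_k e_k$ (read off from \cref{eq:xk}) shows $x_{k+1} - \J_{c_k A}x_k \to 0$. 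Hence $x_k - \J_{c_k A}x_k = (x_k - x_{k+1}) + (x_{k+1} - \J_{c_k A}x_k)$ will tend to $0$ as soon as $\norm{x_{k+1}-x_k}\to 0$, which becomes the sole remaining goal.

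To prove $\norm{x_{k+1}-x_k} \to 0$, I would set $d_k := \norm{x_{k+1}-x_k}$ and derive a one-step recursion amenable to \cref{fact:sequence:sk}. Write $M := \sup_k \norm{x_k}$, $\hat J := \sup_k \norm{\J_{c_k A}x_k}$, and $\hat s := \sup_k \norm{x_k - \J_{c_k A}x_k}$, all finite by the boundedness just recorded. The crucial estimate controls the resolvent increments: applying \cref{fact:ineqJIden} with $\lambda = c_{k-1}$ and $\mu = c_k$ to rewrite $\J_{c_{k-1}A}x_{k-1}$ as a value of $\J_{c_k A}$, then using firm nonexpansiveness (\cref{fact:resolvent}\cref{fact:resolvent:FN}) and $\abs{1-c_k/c_{k-1}}\leq \abs{c_k-c_{k-1}}/\bar c$, gives
\begin{align*}
\norm{\J_{c_k A}x_k - \J_{c_{k-1}A}x_{k-1}} \le \norm{x_k - x_{k-1}} + \frac{\hat s}{\bar c}\,\abs{c_k - c_{k-1}}.
\end{align*}
Subtracting the recursions for $x_{k+1}$ and $x_k$ from \cref{eq:xk} and grouping the relevant terms through $\theta_k := \alpha_k + \gamma_k$ (so that the difference $\alpha_k u + \gamma_k \J_{c_k A}x_k - \alpha_{k-1}u - \gamma_{k-1}\J_{c_{k-1}A}x_{k-1}$ splits into the main contraction $\gamma_k(\J_{c_k A}x_k - \J_{c_{k-1}A}x_{k-1})$, the term $(\alpha_k - \alpha_{k-1})(u - \J_{c_{k-1}A}x_{k-1})$, and a summable remainder $(\theta_k - \theta_{k-1})\J_{c_{k-1}A}x_{k-1}$), and treating $\beta_k x_k$ and $\delta_k e_k$ as summable perturbations, I obtain
\begin{align*}
d_k \le \abs{\gamma_k}\,d_{k-1} + \abs{\alpha_k - \alpha_{k-1}}\,\norm{u - \J_{c_{k-1}A}x_{k-1}} + \epsilon_k,
\end{align*}
where $\epsilon_k$ collects $\abs{\gamma_k}\frac{\hat s}{\bar c}\abs{c_k-c_{k-1}} + \abs{\theta_k-\theta_{k-1}}\hat J + M(\abs{\beta_k}+\abs{\beta_{k-1}}) + \norm{\delta_k e_k} + \norm{\delta_{k-1}e_{k-1}}$ and is summable by the hypotheses $\sum_k\abs{c_{k+1}-c_k}<\infty$, $\sum_i\abs{(\alpha_{i+1}+\gamma_{i+1})-(\alpha_i+\gamma_i)}<\infty$, $\sum_k\abs{\beta_k}<\infty$, and $\sum_k\norm{\delta_k e_k}<\infty$.

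Finally I would invoke \cref{fact:sequence:sk} with $s_k := \norm{x_k - x_{k-1}}$, $a_k := 1 - \abs{\gamma_k} \in [0,1]$ (so $\sum_k a_k = \sum_k (1-\abs{\gamma_k}) = \infty$ as assumed), and $\epsilon_k$ as above. The disjunctive hypothesis on $(\alpha_k)_{k\in\mathbb N}$ dictates how to handle the middle term: in the case $\sum_i \abs{\alpha_{i+1}-\alpha_i} < \infty$ that term is itself summable, so I absorb it into $\epsilon_k$ and take $b_k \equiv 0$; in the case $\abs{\gamma_k}\neq 1$ with $\abs{\alpha_{k+1}-\alpha_k}/(1-\abs{\gamma_{k+1}})\to 0$, I set $b_k := \frac{\abs{\alpha_k-\alpha_{k-1}}}{1-\abs{\gamma_k}}\norm{u - \J_{c_{k-1}A}x_{k-1}}$, so that $a_k b_k$ is exactly the middle term and $\limsup_k b_k \le 0$, the ratio tending to $0$ and $\norm{u - \J_{c_{k-1}A}x_{k-1}}$ being bounded. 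Either way \cref{fact:sequence:sk} gives $d_k \to 0$, and combining with the first paragraph finishes the proof. I expect the main obstacle to be the algebra of the single-step recursion: verifying, after the grouping via $\theta_k$, that every error term is genuinely summable and that both branches of the $(\alpha_k)_{k\in\mathbb N}$ hypothesis slot into the $(1-a_k)$-contraction template of \cref{fact:sequence:sk}, since the contraction factor $\abs{\gamma_k}$, the telescoping quantity $\theta_k$, and the step-ratio $c_k/c_{k-1}$ all have to be balanced at once.
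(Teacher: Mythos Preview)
Your proposal is correct and follows essentially the same route as the paper: reduce to $x_{k+1}-x_k\to 0$ via the splitting $x_k-\J_{c_kA}x_k=(x_k-x_{k+1})+(x_{k+1}-\J_{c_kA}x_k)$, derive a one-step recursion $d_k\le |\gamma_k|\,d_{k-1}+|\alpha_k-\alpha_{k-1}|\norm{u-\J_{c_{k-1}A}x_{k-1}}+\epsilon_k$ by differencing \cref{eq:xk} and controlling the resolvent increment through \cref{fact:ineqJIden}, and finish with \cref{fact:sequence:sk} under the two alternative hypotheses on $(\alpha_k)$.

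The one small difference worth noting is how the resolvent identity is used. The paper applies \cref{fact:ineqJIden} with $\lambda=c_k$, $\mu=c_{k-1}$ (expressing $\J_{c_kA}x_k$ through $\J_{c_{k-1}A}$), which produces a factor $c_{k-1}/c_k$ in front of $\norm{x_k-x_{k-1}}$; to absorb it, the paper sets $t_k=c_{k-1}\norm{x_k-x_{k-1}}$ and runs the recursion on this rescaled quantity (using $\hat c:=\sup_k c_k<\infty$, available from $\sum_k|c_{k+1}-c_k|<\infty$). You apply the identity in the other direction ($\lambda=c_{k-1}$, $\mu=c_k$), which gives coefficient $1$ on $\norm{x_k-x_{k-1}}$ directly and pushes the step-ratio into the summable remainder via $|1-c_k/c_{k-1}|\le|c_k-c_{k-1}|/\bar c$. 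Your variant is slightly cleaner since it avoids the $c_k$-scaling altogether; the paper's variant makes the contraction factor $|\gamma_k|c_{k-1}/c_k$ explicit but then needs $\hat c<\infty$ to bound the remaining terms. Both lead to the same application of \cref{fact:sequence:sk} (equivalently \cref{prop:tkleq}\cref{prop:tkleq:conve0}).
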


\begin{proof}
	Because $\bar{c} =\inf_{k \in \mathbb{N}}c_{k} >0$, via \cref{prop:weakclusterCBAR}\cref{prop:weakclusterCBAR:>0}, it suffices to prove $x_{k} - \J_{c_{k} A}(x_{k} ) \to 0$.
	 
Because $ (x_{k} )_{k \in \mathbb{N}}$ is bounded and $\zer A \neq \varnothing$, due to \cref{prop:Jckboundedweakcluster}\cref{prop:Jckboundedweakcluster:Jckbounded}, $ ( \J_{c_{k} A}x_{k} )_{k \in \mathbb{N}}$ is bounded.
	Notice that for every $k \in \mathbb{N}$, $\norm{ x_{k} - \J_{c_{k} A}x_{k}} \leq \norm{ x_{k} -x_{k+1}} +\norm{x_{k+1} -  \J_{c_{k} A}x_{k}}$
	and that 
	\begin{align*}
	\norm{x_{k+1} - \J_{c_{k} A}x_{k}} \stackrel{\cref{eq:xk}}{\leq} \norm{\alpha_{k} (u -  \J_{c_{k} A}x_{k}  )} +\abs{\alpha_{k} +\gamma_{k} -1 }\norm{  \J_{c_{k} A}x_{k} } +\norm{\beta_{k}x_{k}} +\norm{\delta_{k}e_{k}} \to 0.
	\end{align*}
	Hence, it remains to show that $ x_{k} -x_{k+1} \to 0$. 
	
Clearly, $\sum_{k \in \mathbb{N}} |c_{k+1}  -c_{k}| < \infty$ leads to $\hat{c} := \sup_{k \in \mathbb{N}}c_{k} <\infty$.			In view of \cref{fact:ineqJIden}, for every $k \in \mathbb{N}$, 
	\begin{subequations}  \label{eq:prop:xk+1xkJckto0:J}
			\begin{align}
		\norm{   \J_{c_{k+1} A}x_{k+1} - \J_{c_{k} A}x_{k}} &= \norm{\J_{c_{k} A} \left( \frac{c_{k}}{c_{k+1}} x_{k+1}   + \left( 1- \frac{c_{k}}{c_{k+1}}   \right) \J_{c_{k+1} A}x_{k+1} \right) - \J_{c_{k} A}x_{k}}\\
		&\leq \frac{c_{k}}{c_{k+1}}  \norm{x_{k+1}-x_{k} } + \abs{ 1- \frac{c_{k}}{c_{k+1}} }\norm{ \J_{c_{k+1} A}x_{k+1}  - x_{k} }.
		\end{align}
	\end{subequations}
Set  $(\forall k \in \mathbb{N} \smallsetminus \{0\})$  $M(k):=\abs{\gamma_{k}} \abs{c_{k}-c_{k-1}} \norm{ \J_{c_{k} A}x_{k}  - x_{k-1} }   + \hat{c} \abs{\alpha_{k} +\gamma_{k}-\alpha_{k-1} -\gamma_{k-1}} \norm{\J_{c_{k-1} A}x_{k-1}} +\hat{c} \norm{\beta_{k}x_{k}- \beta_{k-1}x_{k-1} +\delta_{k}e_{k} -\delta_{k-1}e_{k-1} }$. Based on the assumption, it is clear that $\sum_{k =1}^{\infty} M(k) <\infty$. 
Due to \cref{eq:xk},   $(\forall k \in \mathbb{N} \smallsetminus \{0\})$, $	\norm{ x_{k+1} -x_{k}}  
	\leq  \abs{\alpha_{k} -\alpha_{k-1}} \norm{ u- \J_{c_{k-1} A}x_{k-1}} + \abs{\gamma_{k}}\norm{   \J_{c_{k} A}x_{k} - \J_{c_{k-1} A}x_{k-1}} + \abs{\alpha_{k} +\gamma_{k}-\alpha_{k-1}  -\gamma_{k-1}} \norm{\J_{c_{k-1} A}x_{k-1}} 
 +\norm{\beta_{k}x_{k}- \beta_{k-1}x_{k-1} +\delta_{k}e_{k} -\delta_{k-1}e_{k-1} }$, which, via \cref{eq:prop:xk+1xkJckto0:J}, implies
 \begin{align}\label{eq:prop:xk+1xkJckto0}
 c_{k} \norm{ x_{k+1} -x_{k}}  \leq \abs{\gamma_{k}}c_{k-1} \norm{ x_{k} -x_{k-1}} +\hat{c} \abs{\alpha_{k} -\alpha_{k-1}} \norm{ u- \J_{c_{k-1} A}x_{k-1}} + M(k).
 \end{align}
 If $\sum_{i \in \mathbb{N}} \abs{\alpha_{i+1}  -\alpha_{i} }< \infty$  (resp.\,$(\forall k \in \mathbb{N})$ $ \abs{\gamma_{k}} \neq 1$ with $\lim_{k \to \infty} \frac{\abs{\alpha_{k+1} -\alpha_{k}}}{1-\abs{\gamma_{k+1}}} =0$) is satisfied, then, by \cref{eq:prop:xk+1xkJckto0}, 
 applying \cref{prop:tkleq}\cref{prop:tkleq:conve0}  with $(\forall k \in \mathbb{N} \smallsetminus \{0\} )$ $t_{k} = c_{k-1} \norm{ x_{k} -x_{k-1}} $, $\alpha_{k}= \abs{\gamma_{i}} $, $\beta_{k} =\omega_{k} \equiv0$, and $\gamma_{k} =\hat{c} \abs{\alpha_{k} -\alpha_{k-1}} \norm{ u- \J_{c_{k-1} A}x_{k-1}} + M(k)$  (resp.\,$\beta_{k} = 1- \abs{\gamma_{k}}$, $\omega_{k} = \frac{\hat{c} \abs{\alpha_{k} -\alpha_{k-1}}}{1-\abs{\gamma_{k}}}\norm{ u- \J_{c_{k-1} A}x_{k-1}} $,  and $\gamma_{k} =  M(k)$),
 we obtain that  $\lim_{k \to \infty}  c_{k} \norm{ x_{k+1} -x_{k}} =0 $, which, in connection with $\inf_{k \in \mathbb{N}}c_{k} >0$, guarantees that $ x_{k+1} -x_{k} \to 0 $. Therefore, the proof is complete.
\end{proof}

\subsection*{Equivalence of boundedness and non-emptiness of sets of zeroes}

\cref{Theorem:ZerBounded}\cref{Theorem:ZerBounded:Rockafellar} reduces to the equivalence proved in \cite[Theorem~1]{Rockafellar1976} when $(\forall k \in \mathbb{N})$ $\gamma_{k} =\delta_{k} \equiv 1$ and $\alpha_{k}=\beta_{k} \equiv 0$ and $\inf_{k \in \mathbb{N}}c_{k} >0$.
\begin{theorem}\label{Theorem:ZerBounded} 
	 Suppose  that  $(\forall k \in \mathbb{N})$ 
	 $ \abs{   \beta_{k} +\frac{\gamma_{k}}{2} } + \abs{\frac{\gamma_{k}}{2}  }   \leq 1$, that  $\sum_{k \in \mathbb{N}} \abs{\alpha_{k} } <\infty$, $\sum_{k \in \mathbb{N}} \abs{1-\beta_{k} -\gamma_{k} } <\infty$, and $\sum_{k \in \mathbb{N}} \norm{\delta_{k}e_{k} } <\infty$,  and that one of the following statements holds. 
	\begin{enumerate}
		\item \label{Theorem:ZerBounded:Rockafellar} Suppose  that   $ \gamma_{k} \to 1$ and that $ \inf_{k \in \mathbb{N}}c_{k} >0$ or $c_{k} \to \infty$. 
		
		\item \label{Theorem:ZerBounded:inf} Suppose that    
   $(\forall k \in \mathbb{N})$   $\gamma_{k}( \beta_{k} +\gamma_{k})  \geq 0$,    that  $\liminf_{k \to \infty} \gamma_{k}	(2\beta_{k}+\gamma_{k}) > 0$, and that $ \inf_{k \in \mathbb{N}}c_{k} >0$ or $c_{k} \to \infty$. Assume that $\liminf_{k \to \infty} \abs{\gamma_{k}} >0$ and $ \sup_{k\in \mathbb{N}} \abs{\beta_{k}} < \infty$. 
	 		
		\item  \label{Theorem:ZerBounded:infcbar} Suppose that    $(\forall k \in \mathbb{N})$   $\gamma_{k}( \beta_{k} +\gamma_{k}) \geq 0$, $\gamma_{k}	(2\beta_{k}+\gamma_{k}) \geq 0$, and $\abs{  \beta_{k} + \gamma_{k}} \abs{\gamma_{k}}    \leq \max \{  1-\abs{\beta_{k}}  , 2- 2 \abs{\beta_{k} +\frac{\gamma_{k}}{2} } \}$, that  $\sum_{k \in \mathbb{N}} \gamma_{k}	(2\beta_{k}+\gamma_{k}) =\infty$,   and that $ \inf_{k \in \mathbb{N}}c_{k} >0$ and $\sum_{k \in \mathbb{N}} \abs{c_{k+1} -c_{k}} <  \infty$. Assume that $\liminf_{k \to \infty} \abs{\gamma_{k}} >0$ and $ \sup_{k\in \mathbb{N}} \abs{\beta_{k}} < \infty$. 
	\end{enumerate}
	Then $\zer A \neq \varnothing $ if and only if $(x_{k})_{k \in \mathbb{N}}$ is bounded. 
\end{theorem}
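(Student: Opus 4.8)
The statement is an equivalence, so the plan is to prove the two implications separately, and the forward implication is the easy half. For the direction $\zer A\neq\varnothing\Rightarrow(x_k)_{k\in\mathbb N}$ bounded, I would note that this holds uniformly across all three cases: the standing contraction bound $|\beta_k+\tfrac{\gamma_k}{2}|+|\tfrac{\gamma_k}{2}|\le 1$ together with the three summability hypotheses $\sum_{k\in\mathbb N}|\alpha_k|<\infty$, $\sum_{k\in\mathbb N}|1-\beta_k-\gamma_k|<\infty$, and $\sum_{k\in\mathbb N}\|\delta_k e_k\|<\infty$ are exactly the data needed to satisfy conditions (a), (b), (c) of \cref{prop:xkbounded}\cref{prop:xkbounded:leq1}. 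Since $\zer A\neq\varnothing$ is assumed, that proposition returns the boundedness of $(x_k)_{k\in\mathbb N}$ immediately.

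For the converse I assume $(x_k)_{k\in\mathbb N}$ bounded and argue case by case. Case \cref{Theorem:ZerBounded:Rockafellar} needs nothing new: boundedness of $(x_k)_{k\in\mathbb N}$, the contraction bound, the three summability conditions, $\gamma_k\to 1$, and the alternative $\inf_{k\in\mathbb N}c_k>0$ or $c_k\to\infty$ are precisely the hypotheses of \cref{prop:Rockafellar}, so part \cref{prop:Rockafellar:zerANEQ} yields $\varnothing\neq\Omega\subseteq\zer A$ and hence $\zer A\neq\varnothing$ at once.

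Cases \cref{Theorem:ZerBounded:inf} and \cref{Theorem:ZerBounded:infcbar} are the delicate ones, because the asymptotic-regularity results I want to invoke, namely \cref{prop:xk-Jckto0} and \cref{prop:weakclustersxkJck}, presuppose $\zer A\neq\varnothing$, which is exactly the conclusion sought. The key device is the truncated operator $\tilde A:=A+\partial\iota_{B[0;r]}$ of \cref{prop:tildeAzerA}. Since $\sum_{k\in\mathbb N}|\alpha_k|<\infty$ and $\sum_{k\in\mathbb N}\|\delta_k e_k\|<\infty$ force $\sup_{k\in\mathbb N}|\alpha_k|<\infty$ and $\sup_{k\in\mathbb N}\|\delta_k e_k\|<\infty$, the extra hypotheses $\liminf_{k\to\infty}|\gamma_k|>0$ and $\sup_{k\in\mathbb N}|\beta_k|<\infty$ let me apply \cref{prop:Jckboundedweakcluster}\cref{prop:Jckboundedweakcluster:Jckbounded:b} to conclude that $(\J_{c_kA}x_k)_{k\in\mathbb N}$ is bounded. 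With both $(x_k)_{k\in\mathbb N}$ and $(\J_{c_kA}x_k)_{k\in\mathbb N}$ bounded, \cref{prop:tildeAzerA} produces $r\in\mathbb R_{++}$ such that $\tilde A$ is maximally monotone, $\zer\tilde A\neq\varnothing$, $\Omega\cap\zer\tilde A\subseteq\zer A$, and $\J_{c_kA}x_k=\J_{c_k\tilde A}x_k$ for every $k\in\mathbb N$. Because the resolvents coincide along the iterates, the sequence $(x_k)_{k\in\mathbb N}$ obeys recursion \cref{eq:xk} verbatim with $A$ replaced by $\tilde A$, and now $\zer\tilde A\neq\varnothing$ holds by construction.

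I can therefore run the appropriate asymptotic-regularity proposition on $\tilde A$ rather than on $A$: in case \cref{Theorem:ZerBounded:inf} the parameter conditions match \cref{prop:xk-Jckto0}\cref{prop:xk-Jckto0:OmegaZer}, and in case \cref{Theorem:ZerBounded:infcbar} they match \cref{prop:weakclustersxkJck} (using $\bar c:=\inf_{k\in\mathbb N}c_k>0$ and $\sum_{k\in\mathbb N}|c_{k+1}-c_k|<\infty$). In either case I obtain $\varnothing\neq\Omega\subseteq\zer\tilde A$; combining with $\Omega\cap\zer\tilde A\subseteq\zer A$ gives $\Omega\subseteq\zer A$, and since $\Omega\neq\varnothing$ this forces $\zer A\neq\varnothing$, finishing the converse. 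I expect the main obstacle to be not any new estimate but the bookkeeping of this passage to $\tilde A$: one must check that every parameter hypothesis of the chosen proposition is insensitive to replacing $A$ by $\tilde A$ (it is, since these conditions constrain only the scalars $\alpha_k,\beta_k,\gamma_k,\delta_k,c_k$ and the errors $e_k$) and that $\Omega$ denotes the same set for both operators (it does, being a property of the common sequence $(x_k)_{k\in\mathbb N}$).
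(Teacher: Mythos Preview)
Your proposal is correct and follows essentially the same approach as the paper's proof: the forward implication via \cref{prop:xkbounded}\cref{prop:xkbounded:leq1}, case \cref{Theorem:ZerBounded:Rockafellar} of the converse via \cref{prop:Rockafellar}\cref{prop:Rockafellar:zerANEQ}, and cases \cref{Theorem:ZerBounded:inf} and \cref{Theorem:ZerBounded:infcbar} via the truncation $\tilde A$ from \cref{prop:tildeAzerA} followed by \cref{prop:xk-Jckto0}\cref{prop:xk-Jckto0:OmegaZer} or \cref{prop:weakclustersxkJck} applied to $\tilde A$. Your explicit justification that the summability hypotheses supply the $\sup$ bounds needed for \cref{prop:Jckboundedweakcluster}\cref{prop:Jckboundedweakcluster:Jckbounded:b}, and your remark that the parameter hypotheses and the set $\Omega$ are unchanged under $A\mapsto\tilde A$, make the bookkeeping cleaner than in the paper, but the architecture is the same.
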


\begin{proof}
	If $\zer A \neq \varnothing $, then combine \cref{prop:xkbounded}\cref{prop:xkbounded:leq1}   with the global assumptions to deduce the boundedness of $(x_{k})_{k \in \mathbb{N}}$.
	
	Suppose that $(x_{k})_{k \in \mathbb{N}}$ is bounded. Due to \cref{prop:Rockafellar}, \cref{Theorem:ZerBounded:Rockafellar} necessitates  $\zer A \neq \varnothing $.

Suppose that \cref{Theorem:ZerBounded:inf} or \cref{Theorem:ZerBounded:infcbar} holds. Then,	 via 
	\cref{prop:Jckboundedweakcluster}\cref{prop:Jckboundedweakcluster:Jckbounded},  $(\J_{c_{k} A}x_{k} )_{k \in \mathbb{N}}$ is bounded.  Moreover, apply	\cref{prop:tildeAzerA} to ensure that there exists  $r \in \mathbb{R}_{++}$ such that  $\tilde{A} := A +\partial \iota_{B[0;r]}$  is a maximally monotone operator and that
	\begin{align}\label{eq:zerAneqvarnothing}
	\zer \tilde{A}  \neq \varnothing, \quad 	 \left( \Omega \cap \zer \tilde{A} \right) \subseteq \zer A, \quad  \text{and} \quad  (\forall  k \in \mathbb{N}) ~ \J_{c_{k} A}x_{k} = \J_{c_{k} \tilde{A}}x_{k}.
	\end{align}
	If \cref{Theorem:ZerBounded:inf} (resp.\,\cref{Theorem:ZerBounded:infcbar})  holds, then apply 	\cref{prop:xk-Jckto0}\cref{prop:xk-Jckto0:OmegaZer} (resp.\,\cref{prop:weakclustersxkJck})  with $ A $ replaced by $\tilde{A}$ to obtain that $\varnothing \neq \Omega \subseteq \zer \tilde{A}$, which, in connection with \cref{eq:zerAneqvarnothing}, establishes that $\varnothing \neq \Omega =( \Omega \cap \zer \tilde{A}) \subseteq \zer A$.	
\end{proof}

\begin{theorem} \label{theorem:ZerBoundedIFF}
	Assume that one of the following is satisfied.
	\begin{enumerate}[label=(\Roman*)]
		\item \label{theorem:ZerBoundedIFF:limsup} $ \limsup_{k \to \infty} \left(  \abs{   \beta_{k} +\frac{\gamma_{k}}{2} } + \abs{\frac{\gamma_{k}}{2}  } \right)<1$,    $ \sup_{k\in \mathbb{N}} \abs{\alpha_{k}} < \infty$, and $ \sup_{k\in \mathbb{N}} \norm{\delta_{k}e_{k}} < \infty$. 
		\item  \label{theorem:ZerBoundedIFF:eitheror} $(\forall k \in \mathbb{N})$  $  \abs{   \beta_{k} +\frac{\gamma_{k}}{2} } + \abs{\frac{\gamma_{k}}{2}  } \leq 1$, and  the following hold:
		\begin{enumerate}
			\item    $(\forall k \in \mathbb{N})$    $|\alpha_{k}| +  \abs{   \beta_{k} +\frac{\gamma_{k}}{2} } + \abs{\frac{\gamma_{k}}{2}  } \leq 1$ or $\sum_{i \in \mathbb{N}}  \abs{\alpha_{i}} < \infty$;
			\item    $\left[  (\forall k \in \mathbb{N}   \abs{   \beta_{k} +\frac{\gamma_{k}}{2} } + \abs{\frac{\gamma_{k}}{2}  } +|\delta_{k}| \leq 1 \text{ and } \sup_{i \in \mathbb{N}} \norm{e_{i}} < \infty \right]$ or $\sum_{i \in \mathbb{N}}  \norm{\delta_{i} e_{i}}< \infty$;
			\item    $ (\forall k \in \mathbb{N})  \abs{   \beta_{k} +\frac{\gamma_{k}}{2} } + \abs{\frac{\gamma_{k}}{2}  }+\abs{1 - \beta_{k}-\gamma_{k}}  \leq 1 $  or $\sum_{i \in \mathbb{N}}  \abs{1 - \beta_{i}-\gamma_{i}}< \infty$.
		\end{enumerate}	
		\item \label{theorem:ZerBoundedIFF:alpha} $(\forall k \in \mathbb{N})$ $\abs{\alpha_{k}} + \abs{   \beta_{k} +\frac{\gamma_{k}}{2} } + \abs{\frac{\gamma_{k}}{2}} \leq 1$,   $\sum_{k \in \mathbb{N}} \abs{1-\alpha_{k}-\beta_{k} -\gamma_{k}}< \infty$, and $\sum_{k \in \mathbb{N}} \norm{\delta_{k} e_{k}} < \infty$.
	\item \label{theorem:ZerBoundedIFF:delta} $\sup_{k \in \mathbb{N}} \norm{e_{k}  } < \infty$,    $\sum_{k \in \mathbb{N}} \abs{\alpha_{k} }< \infty$,  $\sum_{k \in \mathbb{N}} \abs{1-\beta_{k} -\gamma_{k} -\delta_{k}}< \infty$, and $(\forall k \in \mathbb{N})$ $ \abs{   \beta_{k} +\frac{\gamma_{k}}{2} } + \abs{\frac{\gamma_{k}}{2}  } +\abs{\delta_{k} } \leq 1$.
		\item \label{theorem:ZerBoundedIFF:to} $(\forall k \in \mathbb{N})$ $\alpha_{k} \in \left]0,1\right]$ and $\alpha_{k}+ \abs{ \beta_{k} +\frac{ \gamma_{k}}{2} }+ \abs{\frac{ \gamma_{k}}{2}  }  \leq 1$,  
	  $\frac{\delta_{k}e_{k} }{\alpha_{k}} \to 0$, and $\frac{1-\alpha_{k} -\beta_{k} -\gamma_{k} }{\alpha_{k}} \to 0$.
	\end{enumerate}
	Then the following statements hold. 
	\begin{enumerate}
		\item  \label{theorem:ZerBoundedIFF:bounded} $\zer A \neq \varnothing $ implies the boundedness of $(x_{k})_{k \in \mathbb{N}}$.
		\item  \label{theorem:ZerBoundedIFF:nonempty} Suppose additionally that  one of the following  holds.
		\begin{enumerate}
			\item  \label{theorem:ZerBoundedIFF:nonempty:ck} $ c_{k}  \to \infty$,  $\alpha_{k} \to 0$, $\beta_{k} \to 0$, $ \gamma_{k} \to 1$,  and $\delta_{k} e_{k} \to 0$.   
			\item  \label{theorem:ZerBoundedIFF:nonempty:etak}	Suppose that    
			$(\forall k \in \mathbb{N})$ $\beta_{k} +\gamma_{k} \leq 1$,  $\alpha_{k} \to 0$, $ \limsup_{k \to \infty} |\beta_{k}|  < 1$, $1- \alpha_{k} -\beta_{k} -\gamma_{k} \to 0$,  
			$0 < \liminf_{k \to \infty}  1 -\beta_{k} - \frac{\gamma_{k}}{2}  \leq \limsup_{k \to \infty}    1 -\beta_{k} - \frac{\gamma_{k}}{2}   < 1$, $ \delta_{k}  e_{k} \to 0$,  
			and	$1 -\frac{c_{k}}{c_{k+1}} \to 0$, and that $\inf_{k \in \mathbb{N}}c_{k} >0$ or $c_{k} \to \infty$.
 	
			\item  \label{theorem:ZerBoundedIFF:sum} Suppose that     $\sum_{i \in \mathbb{N}} \abs{\alpha_{i+1}  -\alpha_{i} }< \infty$ or $(\forall k \in \mathbb{N})$ $ \abs{\gamma_{k} } \neq 1$ with $\lim_{k \to \infty} \frac{\abs{\alpha_{k+1} -\alpha_{k}}}{1-\abs{\gamma_{k+1}}} =0$, that $(\forall k \in \mathbb{N})$ $\abs{\gamma_{k}} \in \left[0,1\right]$, that  $\alpha_{k} \to 0$ and
			$\alpha_{k}+\gamma_{k} \to 1$, that $\sum_{k \in \mathbb{N}} \abs{(\alpha_{k+1} +\gamma_{k+1} ) -(\alpha_{k} +\gamma_{k})} < \infty$,  $\sum_{k \in \mathbb{N}} \abs{\beta_{k}} <\infty$, $\sum_{k \in \mathbb{N}} (1-\abs{\gamma_{k}}) =\infty$, and $\sum_{k \in \mathbb{N}} \norm{ \delta_{k} e_{k}} < \infty$, and that $\inf_{k \in \mathbb{N}}c_{k} >0$ and $\sum_{k \in \mathbb{N}} \abs{c_{k+1}  -c_{k}} < \infty$.		
		\end{enumerate}
		
		Then  $\zer A \neq \varnothing $ if and only if $(x_{k})_{k \in \mathbb{N}}$ is bounded. 
	\end{enumerate}	
	
\end{theorem}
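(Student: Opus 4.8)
The plan is to read off the two implications from the boundedness results and the asymptotic-regularity propositions of the previous subsection, matching each hypothesis to the statement that was engineered for it.

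First I would prove \cref{theorem:ZerBoundedIFF:bounded}, that $\zer A \neq \varnothing$ forces $(x_{k})_{k\in\mathbb{N}}$ to be bounded. This is essentially a dictionary exercise: conditions \cref{theorem:ZerBoundedIFF:limsup}, \cref{theorem:ZerBoundedIFF:eitheror}, \cref{theorem:ZerBoundedIFF:alpha}, and \cref{theorem:ZerBoundedIFF:delta} are verbatim the hypotheses of \cref{prop:xkbounded}\cref{prop:xkbounded:suprhobounded}, \cref{prop:xkbounded}\cref{prop:xkbounded:leq1}, \cref{prop:xkbounded}\cref{prop:xkbounded:alpha}, and \cref{prop:xkbounded}\cref{prop:xkbounded:delta}, respectively, while \cref{theorem:ZerBoundedIFF:to} is exactly the hypothesis of \cref{prop:xkboundedalphak}. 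Thus in each of the five regimes boundedness follows at once from the cited result together with the standing assumption $\zer A\neq\varnothing$.

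For \cref{theorem:ZerBoundedIFF:nonempty} the only nontrivial direction is that boundedness implies $\zer A \neq \varnothing$, since the reverse is already \cref{theorem:ZerBoundedIFF:bounded}. Assuming $(x_{k})_{k\in\mathbb{N}}$ bounded, I would split into the three additional regimes. In case \cref{theorem:ZerBoundedIFF:nonempty:ck} the hypotheses are exactly those of \cref{prop:Jckboundedweakcluster}\cref{prop:Jckboundedweakcluster:contained}, which requires only boundedness and yields $\varnothing\neq\Omega\subseteq\zer A$ directly, whence $\zer A\neq\varnothing$. In case \cref{theorem:ZerBoundedIFF:nonempty:etak} the hypotheses are exactly those of \cref{prop:weakclustersXY}, and \cref{prop:weakclustersXY}\cref{prop:weakclustersXY:OmegaZer} again delivers $\varnothing\neq\Omega\subseteq\zer A$ using boundedness alone. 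So these two regimes are immediate citations.

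The delicate regime is \cref{theorem:ZerBoundedIFF:sum}, because the proposition tailored to it, \cref{prop:xk+1xkJckto0}, carries $\zer A\neq\varnothing$ among its own hypotheses---precisely the conclusion I am trying to reach---and I cannot presuppose it. The remedy is the perturbation device of \cref{prop:tildeAzerA}. The conditions in \cref{theorem:ZerBoundedIFF:sum} give $\gamma_{k}\to 1$ (from $\alpha_{k}\to 0$ and $\alpha_{k}+\gamma_{k}\to 1$), hence $\liminf_{k}\abs{\gamma_{k}}>0$, together with finiteness of $\sup_{k}\abs{\alpha_{k}}$, $\sup_{k}\abs{\beta_{k}}$, and $\sup_{k}\norm{\delta_{k}e_{k}}$; so \cref{prop:Jckboundedweakcluster}\cref{prop:Jckboundedweakcluster:Jckbounded} (its second hypothesis) makes $(\J_{c_{k}A}x_{k})_{k\in\mathbb{N}}$ bounded without any appeal to $\zer A$. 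With $(x_{k})$ and $(\J_{c_{k}A}x_{k})$ both bounded, \cref{prop:tildeAzerA} furnishes $r\in\mathbb{R}_{++}$ and a maximally monotone $\tilde{A}:=A+\partial\iota_{B[0;r]}$ satisfying $\zer\tilde{A}\neq\varnothing$, $(\Omega\cap\zer\tilde{A})\subseteq\zer A$, and $\J_{c_{k}A}x_{k}=\J_{c_{k}\tilde{A}}x_{k}$ for every $k$. This last identity is the crux: it shows $(x_{k})$ is simultaneously the trajectory of the recursion \cref{eq:xk} for $A$ and for $\tilde{A}$, and since the hypotheses of \cref{prop:xk+1xkJckto0} constrain only $\alpha_{k},\beta_{k},\gamma_{k},\delta_{k},e_{k},c_{k}$, they persist verbatim for $\tilde{A}$, with $\zer\tilde{A}\neq\varnothing$ now available. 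Applying \cref{prop:xk+1xkJckto0} with $A$ replaced by $\tilde{A}$ yields $\varnothing\neq\Omega\subseteq\zer\tilde{A}$, whence $\Omega=\Omega\cap\zer\tilde{A}\subseteq\zer A$ forces $\zer A\neq\varnothing$. I expect the sole point requiring care to be this last regime, namely the justification that replacing $A$ by $\tilde{A}$ alters neither the iterates nor the validity of the parameter hypotheses; the other cases reduce to direct invocations.
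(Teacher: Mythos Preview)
Your proposal is correct and follows essentially the same approach as the paper's own proof: part \cref{theorem:ZerBoundedIFF:bounded} is a direct dictionary match to \cref{prop:xkbounded} and \cref{prop:xkboundedalphak}, cases \cref{theorem:ZerBoundedIFF:nonempty:ck} and \cref{theorem:ZerBoundedIFF:nonempty:etak} are immediate from \cref{prop:Jckboundedweakcluster}\cref{prop:Jckboundedweakcluster:contained} and \cref{prop:weakclustersXY}\cref{prop:weakclustersXY:OmegaZer}, and for \cref{theorem:ZerBoundedIFF:sum} the paper uses exactly the $\tilde{A}$ perturbation device via \cref{prop:tildeAzerA} followed by \cref{prop:xk+1xkJckto0} applied to $\tilde{A}$. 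Your treatment of the last case is in fact more explicit than the paper's, since you spell out why hypothesis \cref{prop:Jckboundedweakcluster:Jckbounded:b} of \cref{prop:Jckboundedweakcluster}\cref{prop:Jckboundedweakcluster:Jckbounded} (rather than \cref{prop:Jckboundedweakcluster:Jckbounded:a}, which would be circular) is available under the assumptions of \cref{theorem:ZerBoundedIFF:sum}.
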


\begin{proof}
	\cref{theorem:ZerBoundedIFF:bounded}: This is clear from  \cref{prop:xkbounded} and \cref{prop:xkboundedalphak}.

	\cref{theorem:ZerBoundedIFF:nonempty}:  In view of \cref{theorem:ZerBoundedIFF:bounded}, it remains to prove that the boundedness of  $(x_{k})_{k \in \mathbb{N}}$ leads to $\zer A \neq \varnothing $.
	
In the rest of the proof we assume that $(x_{k})_{k \in \mathbb{N}}$ is bounded. Then via \cite[Lemma~2.45]{BC2017}, $ \Omega \neq \varnothing$.  If \cref{theorem:ZerBoundedIFF:nonempty:ck} (resp.\,\cref{theorem:ZerBoundedIFF:nonempty:etak}) is satisfied, then $\varnothing \neq \Omega \subseteq   \zer A$ follows immediately from 
	\cref{prop:Jckboundedweakcluster}\cref{prop:Jckboundedweakcluster:contained}  
	(resp.\,\cref{prop:weakclustersXY}\cref{prop:weakclustersXY:OmegaZer}).	
	
Suppose that  \cref{theorem:ZerBoundedIFF:sum}  is satisfied. Due to \cref{prop:Jckboundedweakcluster}\cref{prop:Jckboundedweakcluster:Jckbounded},   $(\J_{c_{k} A}x_{k} )_{k \in \mathbb{N}}$ is bounded. 
	Then apply	\cref{prop:tildeAzerA} to ensure that there exists  $r \in \mathbb{R}_{++}$ such that  $\tilde{A} := A +\partial \iota_{B[0;r]}$  is a maximally monotone operator and that
	\begin{align}\label{eq:theorem:ZerBoundedIFF:zerAneqvarnothing}
	\zer \tilde{A}  \neq \varnothing, \quad 	 \left( \Omega \cap \zer \tilde{A} \right) \subseteq \zer A, \quad  \text{and} \quad  (\forall  k \in \mathbb{N}) ~ \J_{c_{k} A}x_{k} = \J_{c_{k} \tilde{A}}x_{k}.
	\end{align}
Furthermore,  apply   \cref{prop:xk+1xkJckto0} with $ A $ replaced by $\tilde{A}$ to obtain that $\varnothing \neq \Omega \subseteq \zer \tilde{A}$, which, connecting with \cref{eq:theorem:ZerBoundedIFF:zerAneqvarnothing}, establishes that $\varnothing \neq \Omega =( \Omega \cap \zer \tilde{A}) \subseteq \zer A$.	
\end{proof}


\section{Convergence of generalized proximal point algorithms} \label{sec:Convergence}
Because convergence implies boundedness, based on the equivalence of the boundedness of $(x_{k})_{k \in \mathbb{N}}$  and $\zer A \neq \varnothing$ shown in \Cref{Theorem:ZerBounded,theorem:ZerBoundedIFF},  to study the convergence of $(x_{k})_{k \in \mathbb{N}}$, we always assume  $\zer A \neq \varnothing$.
 
We first uphold our general assumptions and notations that
\begin{empheq}[box = \mybluebox]{equation*}
A: \mathcal{H} \to 2^{\mathcal{H}} \text{ is a maximally monotone operator with } \zer A \neq \varnothing,
\end{empheq}
that $u \in \mathcal{H}$ and $x_{0} \in \mathcal{H}$ are arbitrary but fixed,  and that
\begin{align} \label{eq:xk:sequence}
(\forall k \in \mathbb{N}) \quad x_{k+1} = \alpha_{k} u +\beta_{k} x_{k} + \gamma_{k} \J_{c_{k} A} (x_{k}) +\delta_{k} e_{k},
\end{align}
where  $(\forall k \in \mathbb{N})$ $e_{k} \in \mathcal{H}$, 
$c_{k} \in \mathbb{R}_{++}$, 
and $\{ \alpha_{k},  \beta_{k}, \gamma_{k}, \delta_{k}  \} \subseteq \mathbb{R}$. Recall that
\begin{empheq}[box = \mybluebox]{equation*}
\Omega  \text{ is the set of all weak sequential cluster points of } (x_{k} )_{k \in \mathbb{N}}.
\end{empheq}


\subsection*{Weak convergence}

\begin{theorem} \label{theorem:WeakConverg}
Suppose that  $(\forall k \in \mathbb{N})$ $\abs{   \beta_{k} +\frac{\gamma_{k}}{2} } + \abs{\frac{\gamma_{k}}{2}  }  \leq 1$, that $\sum_{k \in \mathbb{N}} \abs{\alpha_{k}} <\infty$,   $\sum_{k \in \mathbb{N}} \abs{1- \beta_{k} -\gamma_{k}} <\infty$, and $\sum_{k \in \mathbb{N}} \norm{\delta_{k}e_{k}} <\infty$, and that one of the following statements holds.
	\begin{enumerate}
		\item  \label{theorem:WeakConverg:Rockafellar} Assume  that  $ \gamma_{k} \to 1$ and that  $ \inf_{k \in \mathbb{N}}c_{k} >0$ or $c_{k} \to \infty$.    
 
		\item \label{theorem:WeakConverg:sum}  
		Assume that    
		 $ \limsup_{k \to \infty} \abs{\beta_{k}}  < 1$ and
		$0 < \liminf_{k \to \infty}  1 -\beta_{k} - \frac{\gamma_{k}}{2}  \leq \limsup_{k \to \infty}    1 -\beta_{k} - \frac{\gamma_{k}}{2}   < 1$,   that 
 $1 -\frac{c_{k}}{c_{k+1}} \to 0$, 
		and that  $ \inf_{k \in \mathbb{N}}c_{k} >0$ or $c_{k} \to \infty$.

\item  \label{theorem:WeakConverg:liminf} Assume  that $\inf_{k \in \mathbb{N}}  \gamma_{k}( \beta_{k} +\gamma_{k})  \geq 0$  and $\liminf_{k \to \infty} \gamma_{k}	(2\beta_{k}+\gamma_{k}) > 0$, and that $ \inf_{k \in \mathbb{N}}c_{k} >0$ or $c_{k} \to \infty$.

\item \label{theorem:WeakConverg:absck+1ck} Assume that  $\inf_{k \in \mathbb{N}} \gamma_{k}( \beta_{k} +\gamma_{k}) \geq 0$ and $ \inf_{k \in \mathbb{N}} \gamma_{k} (2\beta_{k}+\gamma_{k}) \geq 0$,  that  $\sum_{k \in \mathbb{N}} \gamma_{k}	(2\beta_{k}+\gamma_{k}) =\infty$,   that $(\forall k \in \mathbb{N})$ $\abs{  \beta_{k} + \gamma_{k}} \abs{\gamma_{k}}    \leq \max \{  1-\abs{\beta_{k}}  , 2- 2 \abs{\beta_{k} +\frac{\gamma_{k}}{2} } \}$,   and that $ \inf_{k \in \mathbb{N}}c_{k} >0$ and $\sum_{k \in \mathbb{N}} \abs{c_{k+1} -c_{k}} <  \infty$.					
	\end{enumerate}
	Then $ (x_{k})_{k \in \mathbb{N}}$ converges weakly to a point in $\zer A$.
\end{theorem}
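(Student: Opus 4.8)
The plan is to deduce the result from \cref{fact:weakconveRockEcks} applied with $C = \zer A$, which is nonempty by the standing hypothesis. That fact reduces weak convergence to two ingredients: first, that every weak sequential cluster point lies in $\zer A$, i.e.\ $\Omega \subseteq \zer A$; and second, that for each $z \in \zer A$ the limit $\lim_{k \to \infty} \norm{x_k - z}$ exists in $\mathbb{R}_+$.

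The second (Fej\'er-type) ingredient I would settle once and for all using only the four global hypotheses, which are common to every case. Fix $p \in \zer A$. Combining \cref{lemma:xk+1-p}\cref{lemma:xk+1-p:norm} with the standing bound $\abs{\beta_k + \frac{\gamma_k}{2}} + \abs{\frac{\gamma_k}{2}} \leq 1$ and the triangle inequality gives
\begin{align*}
\norm{x_{k+1} - p} \leq \norm{x_k - p} + \abs{\alpha_k}\norm{u} + \norm{\delta_k e_k} + \abs{1 - \beta_k - \gamma_k}\norm{p},
\end{align*}
and the trailing three terms are summable because $\sum_{k}\abs{\alpha_k} < \infty$, $\sum_k \norm{\delta_k e_k} < \infty$, and $\sum_k \abs{1 - \beta_k - \gamma_k} < \infty$. \cref{fact:SequenceConverg} then yields the existence of $\lim_k \norm{x_k - p}$; in particular $(x_k)_{k \in \mathbb{N}}$ is bounded.

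For the first ingredient I would treat the four cases separately, in each checking that the case-specific hypotheses, together with the global ones and the boundedness just obtained, are exactly what the matching asymptotic-regularity proposition demands, and then quoting its conclusion $\varnothing \neq \Omega \subseteq \zer A$. Case \cref{theorem:WeakConverg:Rockafellar} invokes \cref{prop:Rockafellar}\cref{prop:Rockafellar:zerANEQ} (boundedness and $\gamma_k \to 1$). Case \cref{theorem:WeakConverg:sum} invokes \cref{prop:weakclustersXY}\cref{prop:weakclustersXY:OmegaZer}, after noting the derived facts $\beta_k + \gamma_k \leq 1$ (since $\beta_k + \gamma_k = (\beta_k + \frac{\gamma_k}{2}) + \frac{\gamma_k}{2}$ and the global bound), $\alpha_k \to 0$, $\delta_k e_k \to 0$, and $1 - \alpha_k - \beta_k - \gamma_k \to 0$. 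Case \cref{theorem:WeakConverg:liminf} invokes \cref{prop:xk-Jckto0}\cref{prop:xk-Jckto0:OmegaZer}, where $\inf_k \gamma_k(\beta_k+\gamma_k) \geq 0$ supplies the required pointwise sign and $\liminf_k \gamma_k(2\beta_k+\gamma_k) > 0$ is used directly. Case \cref{theorem:WeakConverg:absck+1ck} invokes \cref{prop:weakclustersxkJck}, the two infimum conditions supplying both pointwise signs and the remaining step-size and summability conditions matching verbatim.

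Having secured both ingredients, \cref{fact:weakconveRockEcks} immediately gives weak convergence of $(x_k)_{k \in \mathbb{N}}$ to a point of $\zer A$. I expect the only genuinely careful step to be the hypothesis-matching in the treatment of the first ingredient: the theorem phrases its conditions in terms of quantities such as $1 - \beta_k - \frac{\gamma_k}{2}$ and the $\max$-expression of case \cref{theorem:WeakConverg:absck+1ck}, and these must be translated into the exact forms required by \cref{prop:weakclustersXY,prop:weakclustersxkJck}; no estimate beyond those already-proved propositions is needed.
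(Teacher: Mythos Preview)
Your proposal is correct and follows essentially the same approach as the paper: establish Fej\'er monotonicity up to summable errors via \cref{lemma:xk+1-p}\cref{lemma:xk+1-p:norm} and \cref{fact:SequenceConverg}, then reduce to $\Omega\subseteq\zer A$ via \cref{fact:weakconveRockEcks}, dispatching the four cases by \cref{prop:Rockafellar}, \cref{prop:weakclustersXY}, \cref{prop:xk-Jckto0}, and \cref{prop:weakclustersxkJck} respectively. The hypothesis-matching you flag (e.g., $\beta_k+\gamma_k\leq 1$, $\alpha_k\to 0$, and the infimum conditions yielding pointwise signs) is exactly what the paper does as well.
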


\begin{proof}
According to	\cref{lemma:xk+1-p}\cref{lemma:xk+1-p:norm},
	\begin{align*}
(\forall p \in \zer A) (\forall k \in \mathbb{N}) \quad	\norm{x_{k+1} -p } \leq \left( \abs{   \beta_{k} +\frac{\gamma_{k}}{2} } + \abs{\frac{\gamma_{k}}{2}  } \right) \norm{ x_{k} -p } + \norm{\alpha_{k} u+\delta_{k}e_{k} - (1-\beta_{k} -\gamma_{k}) p},
	\end{align*}
	which, combining with the global assumptions and \cref{fact:SequenceConverg},   ensures that  $(\forall p \in \zer A)$ $\lim_{k \to \infty} \norm{x_{k} -p} $ exists in $\mathbb{R}_{+}$ and that $(x_{k})_{k \in \mathbb{N}}$ is bounded. 

Therefore, via \cref{fact:weakconveRockEcks}, it suffices to prove $\Omega \subseteq \zer A$ under the assumption \cref{theorem:WeakConverg:Rockafellar},   \cref{theorem:WeakConverg:sum},  \cref{theorem:WeakConverg:liminf}, or \cref{theorem:WeakConverg:absck+1ck}.

If \cref{theorem:WeakConverg:Rockafellar} is true, the desired inclusion is immediate from \cref{prop:Rockafellar}. 

Note that 	$(\forall k \in \mathbb{N})$ $\beta_{k} +\gamma_{k} \leq \abs{\beta_{k} +\gamma_{k} } \leq \abs{   \beta_{k} +\frac{\gamma_{k}}{2} } + \abs{\frac{\gamma_{k}}{2}  } $;  $\sum_{k \in \mathbb{N}} \abs{\alpha_{k}} <\infty$ and   $\sum_{k \in \mathbb{N}} \abs{1- \beta_{k} -\gamma_{k}} <\infty$ imply that $\alpha_{k} \to 0$, $1- \beta_{k} -\gamma_{k} \to 0$, and $ 1- \alpha_{k} -\beta_{k} -\gamma_{k} \to 0$.
As a consequence of \cref{prop:weakclustersXY}\cref{prop:weakclustersXY:OmegaZer}, \cref{theorem:WeakConverg:sum} implies $\Omega \subseteq \zer A$.

In addition, it is easy to see that the required inclusion is also immediate from \cref{theorem:WeakConverg:liminf} and \cref{prop:xk-Jckto0}\cref{prop:xk-Jckto0:OmegaZer} (or from \cref{theorem:WeakConverg:absck+1ck} and \cref{prop:weakclustersxkJck}).
\end{proof}

\begin{remark} \label{remark:WeakConvergence}
	We compare  \cref{theorem:WeakConverg} with related existed results on the weak convergence of generalized proximal point algorithms below. 
	\begin{enumerate}
		\item Suppose that  $(\forall k \in \mathbb{N})$  $\alpha_{k}=\beta_{k} \equiv 0$ and $\gamma_{k} =\delta_{k} \equiv 1$, and that $\inf_{k \in \mathbb{N}}c_{k} >0$ and $\sum_{k\in \mathbb{N}} \norm{e_{k}} <\infty$. Then \cref{theorem:WeakConverg}\cref{theorem:WeakConverg:Rockafellar} reduces to the weak convergence proved in \cite[Theorem~1]{Rockafellar1976}. 
		
	\item The relaxed proximal point algorithm presented in \cite[Algorithm~5.2]{Xu2002} is a special case of the scheme \cref{eq:xk:sequence} with $(\forall k \in \mathbb{N})$ $\alpha_{k}\equiv 0$,  $\beta_{k} \in \left[0,1\right[$, and $\gamma_{k} =\delta_{k} = 1-\beta_{k}$.
		Because in the Step~2 of the proof of \cite[Theorem~5.2]{Xu2002}, the author requires \enquote{repeating the proof of the second part of step~2 of Theorem~5.1} and in the Step~2 of  \cite[Theorem~5.1]{Xu2002}, $\beta_{k} \to 0$ is a critical assumption, we assume $\beta_{k} \to 0$ is a necessary  assumption of \cite[Theorem~5.2]{Xu2002}. Therefore, \cref{theorem:WeakConverg}\cref{theorem:WeakConverg:Rockafellar} is also a generalized result of \cite[Theorem~5.2]{Xu2002} which requires that $(\forall k \in \mathbb{N})$ $\alpha_{k}\equiv 0$,  $\beta_{k} \in \left[0,1-\delta\right]$ for some $\delta \in \left]0,1\right[$, $\beta_{k}\to 0$, and  $\gamma_{k} =\delta_{k} = 1-\beta_{k}$, and that $c_{k} \to \infty$ and $\sum_{k\in \mathbb{N}} \norm{e_{k}} <\infty$.
	
		\item  Consider \cref{eq:xk:sequence} with $(\forall k \in \mathbb{N})$ $\alpha_{k} \equiv 0$,  $\beta_{k} \in \left[0,1\right]$, $\gamma_{k}= 1 -\beta_{k}$, and $\delta_{k} \equiv 1$.
	In this case $(\forall k \in \mathbb{N})$ $\eta_{k} := 1- \beta_{k} -\frac{\gamma_{k}}{2}=\frac{\gamma_{k}}{2}$, so $0 < \liminf_{i \to \infty} \eta_{i} \leq \limsup_{i \to \infty}  \eta_{i} < 1$ follows immediately from $0 < \liminf_{i \to \infty} \gamma_{i} \leq \limsup_{i \to \infty}  \gamma_{i} < 2$. Moreover,  
	it is easy to see that $\bar{c} :=\inf_{k \in \mathbb{N}}c_{k} >0$ and $c_{k+1}-c_{k} \to 0$ imply that $1 -\frac{c_{k}}{c_{k+1}} \to 0$, since $\abs{1 -\frac{c_{k}}{c_{k+1}} } =\abs{\frac{c_{k+1} -c_{k}}{c_{k+1}}} \leq \frac{ \abs{c_{k+1} -c_{k}} }{\bar{c}} $.    	
	Hence, we know that 
	\cref{theorem:WeakConverg}\cref{theorem:WeakConverg:sum}  improves \cite[Theorem~3.2]{YaoNoor2008}.

	\item 	Note that if $(\forall k \in \mathbb{N})$ $\gamma_{k} \in \mathbb{R}_{+}$ and $\beta_{k} =1-\gamma_{k}$ such that $0 < \bar{\gamma} :=\inf_{k \in \mathbb{N}} \gamma_{k} \leq \hat{\gamma} :=\sup_{k \in \mathbb{N}} \gamma_{k}  <2$, then $ \inf_{k \in \mathbb{N}} \gamma_{k}( \beta_{k} +\gamma_{k}) =\inf_{k \in \mathbb{N}} \gamma_{k} \geq 0$ and $\liminf_{k \to \infty} \gamma_{k}	(2\beta_{k}+\gamma_{k}) = \liminf_{k \to \infty} \gamma_{k}	(2- \gamma_{k}) \geq  \liminf_{k \to \infty} \gamma_{k} \liminf_{k \to \infty}  2- \gamma_{k} \geq   \bar{\gamma} (2- \hat{\gamma} ) > 0$. Therefore, we see that 
		\cref{theorem:WeakConverg}\cref{theorem:WeakConverg:liminf} covers \cite[Theorem~3]{EcksteinBertsekas1992} in which the assumptions $(\forall k \in \mathbb{N})$ $\alpha_{k} \equiv 0$,  $\gamma_{k} \in \mathbb{R}_{+}$ with   $0 < \bar{\gamma} =\inf_{k \in \mathbb{N}} \gamma_{k} \leq \hat{\gamma}= \sup_{k \in \mathbb{N}} \gamma_{k}   <2$, $\beta_{k} =1-\gamma_{k}$, and $\delta_{k} \equiv \gamma_{k}$, $\inf_{k \in \mathbb{N}}c_{k} >0$, and $\sum_{k \in \mathbb{N}} \norm{e_{k}} < \infty$ are required.

	\item Note that the generalized proximal point algorithms studied in \cite[Section~3]{MarinoXu2004} are \cref{eq:xk:sequence} satisfying that $0 < \inf_{k \in \mathbb{N}} c_{k} \leq \sup_{k \in \mathbb{N}} c_{k} < \infty$ and that $(\forall k \in \mathbb{N})$ $\alpha_{k} \equiv 0$,  $\gamma_{k} \in \left]0,2\right[$, $\beta_{k} =1-\gamma_{k}$, and $\delta_{k} \equiv 1$. In this case, the conditions $\inf_{k \in \mathbb{N}} \gamma_{k}( \beta_{k} +\gamma_{k}) \geq 0$,  $\inf_{k \in \mathbb{N}} \gamma_{k}	(2\beta_{k}+\gamma_{k}) \geq 0$, $\abs{   \beta_{k} +\frac{\gamma_{k}}{2} } + \abs{\frac{\gamma_{k}}{2}  }  \leq 1$, and
	$\abs{  \beta_{k} + \gamma_{k}} \abs{\gamma_{k}}    \leq \max \{  1-\abs{\beta_{k}}  , 2- 2 \abs{\beta_{k} +\frac{\gamma_{k}}{2} } \}$ hold trivially. Moreover, since $(\forall k \in \mathbb{N})$  $\gamma_{k} (2-\gamma_{k})  = \gamma_{k}	(2\beta_{k}+\gamma_{k})$, thus 
	$\sum_{k} \gamma_{k} (2-\gamma_{k}) =\infty$ is exactly 	$\sum_{i \in \mathbb{N}} \gamma_{i}	(2\beta_{k}+\gamma_{k}) =\infty$.
	Therefore, we know that 	\cref{theorem:WeakConverg}\cref{theorem:WeakConverg:absck+1ck} 	covers \cite[Theorems~3.5 and 3.6]{MarinoXu2004}.
 	\end{enumerate}
\end{remark}


\subsection*{Strong convergence} \label{sec:StrongConvergence}

Note that based on \cref{fact:resolvent}\cref{fact:resolvent:closedconvex}, the maximal monotoneness of $A$ and the assumption $\zer A \neq \varnothing$ above guarantee that the projection $\Pro_{\zer A} u$ is well-defined. 
In this subsection, we shall specify sufficient conditions on coefficients of \cref{eq:xk:sequence} for $x_{k} \to \Pro_{\zer A} u $.
Notice that when $u=0$ (resp.\,$u=x_{0}$) in \cref{eq:xk:sequence},  the strong limit $ \Pro_{\zer A} u$   of $(x_{k} )_{k \in \mathbb{N}}$   is the minimum-norm  solution in  $\zer A$ (resp.\,the closest point to the initial point $x_{0}$ onto $\zer A$).

\begin{proposition} \label{prop:StrongConver}
	Suppose that $(x_{k})_{k \in \mathbb{N}}$ is bounded and that   $\Omega \subseteq \zer A$. 
	Suppose that $(\forall k \in \mathbb{N})$ $ \abs{   \beta_{k} +\frac{\gamma_{k}}{2} } + \abs{\frac{\gamma_{k}}{2}  }  \leq 1$ with  $\sum_{i \in \mathbb{N}} 1- \left(  \abs{   \beta_{k} +\frac{\gamma_{k}}{2} } + \abs{\frac{\gamma_{k}}{2}  }  \right)^{2} =\infty$, and
 that one of the following holds. 
	\begin{enumerate}
		\item  \label{prop:StrongConver:Alpha:sum} $(\forall k \in \mathbb{N})$ $\beta_{k} +\gamma_{k} \geq 0$ and $\left(\abs{   \beta_{k} +\frac{\gamma_{k}}{2} } + \abs{\frac{\gamma_{k}}{2}  } \right)^{2}  -\beta_{k}-\gamma_{k} \leq 0$, $\sum_{k \in \mathbb{N}} \abs{1- \alpha_{k}-\beta_{k} -\gamma_{k}} <\infty$, and $\sum_{k \in \mathbb{N}} \norm{\delta_{k}e_{k}} <\infty$.
		\item  \label{prop:StrongConver:Alpha:frac} $(\forall k \in \mathbb{N})$  $\abs{   \beta_{k} +\frac{\gamma_{k}}{2} } + \abs{\frac{\gamma_{k}}{2}  } <1 $, $\sup_{k \in \mathbb{N}} \frac{ 1-\beta_{k} -\gamma_{k}}{ 1- \left(\abs{   \beta_{k} +\frac{\gamma_{k}}{2} } + \abs{\frac{\gamma_{k}}{2}  } \right)^{2} } < \infty$, $\sum_{k \in \mathbb{N}} \abs{1- \alpha_{k}-\beta_{k} -\gamma_{k}} <\infty$, and $\sum_{k \in \mathbb{N}} \norm{\delta_{k}e_{k}} <\infty$.

			\item \label{prop:StrongConver:alpha} $(\forall k \in \mathbb{N})$  $\alpha_{k} \in \left[ 0,1 \right]$ and $\alpha_{k}+  \left(\abs{ \beta_{k} +\frac{ \gamma_{k}}{2} }+ \abs{\frac{ \gamma_{k}}{2}  }  \right)^{2} \leq 1$,  $\sum_{k \in \mathbb{N}} \abs{1- \alpha_{k}-\beta_{k} -\gamma_{k}} <\infty$,   $\sum_{k \in \mathbb{N}} \norm{\delta_{k}e_{k}} <\infty$.
		
		\item \label{prop:StrongConver:beta} $(\forall k \in \mathbb{N})$ $ \abs{   \beta_{k} +\frac{\gamma_{k}}{2} } + \abs{\frac{\gamma_{k}}{2}  }  <1$ and $\alpha_{k} \geq 0$,  $\sup_{k \in \mathbb{N}} \frac{ \alpha_{k} }{  1-  \left(\abs{ \beta_{k} +\frac{ \gamma_{k}}{2} }+ \abs{\frac{ \gamma_{k}}{2}  } \right)^{2}  } < \infty$, $\sum_{k \in \mathbb{N}} \abs{1- \alpha_{k}-\beta_{k} -\gamma_{k}} <\infty$, and $\sum_{k \in \mathbb{N}} \norm{\delta_{k}e_{k}} <\infty$.
		
		\item \label{prop:StrongConver:alphaNEQ0} $(\forall k \in \mathbb{N})$  $\alpha_{k} \in \left] 0,1 \right]$ and $\alpha_{k}+  \left( \abs{ \beta_{k} +\frac{ \gamma_{k}}{2} }+ \abs{\frac{ \gamma_{k}}{2}  }  \right)^{2} \leq 1$,  $\frac{\delta_{k}e_{k} }{\alpha_{k}} \to 0$, and $\frac{1-\alpha_{k} -\beta_{k} -\gamma_{k} }{\alpha_{k}} \to 0$.
		
		\item \label{prop:StrongConver:NEQ0}  $(\forall k \in \mathbb{N})$  $ \abs{   \beta_{k} +\frac{\gamma_{k}}{2} } + \abs{\frac{\gamma_{k}}{2}  }  <1$ and $\alpha_{k} \in \mathbb{R}_{++}$,   $\sup_{k \in \mathbb{N}} \frac{ \alpha_{k} }{  1-  \left(\abs{ \beta_{k} +\frac{ \gamma_{k}}{2} }+ \abs{\frac{ \gamma_{k}}{2}  } \right)^{2}  } < \infty$,  $\frac{\delta_{k}e_{k} }{\alpha_{k}} \to 0$, and $\frac{1-\alpha_{k} -\beta_{k} -\gamma_{k} }{\alpha_{k}} \to 0$.
	\end{enumerate}
Then $x_{k} \to \Pro_{\zer A} u$.
\end{proposition}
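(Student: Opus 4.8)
The plan is to put $p := \Pro_{\zer A} u$, to abbreviate $\rho_k := \abs{\beta_k + \frac{\gamma_k}{2}} + \abs{\frac{\gamma_k}{2}}$ and $t_k := \norm{x_k - p}^2$, and to prove $t_k \to 0$. The backbone is the quadratic recursion \cref{lemma:xk+1-p}\cref{lemma:xk+1-p:alphak}, which reads
\[
t_{k+1} \le \rho_k^2\, t_k + 2\innp{\alpha_k u + \delta_k e_k - (1-\beta_k-\gamma_k)p,\; x_{k+1}-p}.
\]
Writing $r_k := 1 - \alpha_k - \beta_k - \gamma_k$ and splitting off the anchor direction $u-p$ via $\alpha_k u + \delta_k e_k - (1-\beta_k-\gamma_k)p = \alpha_k(u-p) + \delta_k e_k - r_k p$ turns this into $t_{k+1}\le \rho_k^2 t_k + 2\alpha_k\innp{u-p,x_{k+1}-p} + 2\innp{\delta_k e_k - r_k p, x_{k+1}-p}$.

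Two facts drive everything. First, since $(x_k)_{k\in\mathbb{N}}$ is bounded, so is $(x_{k+1})_{k\in\mathbb{N}}$, and its set of weak sequential cluster points is again $\Omega\subseteq\zer A$; hence \cref{prop:weakcluster:Pu} yields $\limsup_{k\to\infty}\innp{u-p,x_{k+1}-p}\le 0$. Second, the standing hypothesis $\sum_k\bigl(1-\rho_k^2\bigr)=\infty$ supplies the divergent series needed to force $t_k\to0$. With these in hand I would recast the recursion in the template $t_{k+1}\le\rho_k^2 t_k + \beta_k'\omega_k + \gamma_k'$ of \cref{prop:tkleq} (with first coefficient $\rho_k^2\in[0,1]$, and primes marking quantities internal to that proposition) and invoke part \cref{prop:tkleq:conve0} or \cref{prop:tkleq:betakalphak}.

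The six regimes differ only in how the linear term is repackaged. In \cref{prop:StrongConver:alpha} and \cref{prop:StrongConver:beta} I keep $\omega_k:=2\innp{u-p,x_{k+1}-p}$ with coefficient $\beta_k'=\alpha_k\ge0$ and send $2\innp{\delta_k e_k-r_k p,x_{k+1}-p}$, bounded by $2\sup_j\norm{x_{j+1}-p}\bigl(\norm{\delta_k e_k}+\abs{r_k}\norm{p}\bigr)$, into the summable remainder $\gamma_k'$ through $\sum_k\norm{\delta_k e_k}<\infty$ and $\sum_k\abs{r_k}<\infty$; the conditions $\alpha_k+\rho_k^2\le1$ and $\sup_k\alpha_k/(1-\rho_k^2)<\infty$ then match \cref{prop:tkleq:conve0} and \cref{prop:tkleq:betakalphak} respectively. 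In \cref{prop:StrongConver:Alpha:sum} and \cref{prop:StrongConver:Alpha:frac}, where only $\beta_k+\gamma_k$ is controlled, I instead write $\alpha_k=(1-\beta_k-\gamma_k)-r_k$, use $\beta_k'=1-\beta_k-\gamma_k$ as the convex-combination coefficient, and push $-2r_k\innp{u-p,x_{k+1}-p}$ (summable) into $\gamma_k'$; here the chain $0\le\rho_k^2\le\beta_k+\gamma_k\le\rho_k\le1$ gives both $\beta_k'\in[0,1]$ and $\rho_k^2+\beta_k'\le1$, as well as the ratio bound, again matching \cref{prop:tkleq:conve0} and \cref{prop:tkleq:betakalphak}. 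Finally, in \cref{prop:StrongConver:alphaNEQ0} and \cref{prop:StrongConver:NEQ0}, where the errors are merely $o(\alpha_k)$, I would factor $\alpha_k$ out of the entire linear term, setting $\omega_k:=2\innp{u-p+\alpha_k^{-1}\delta_k e_k-\alpha_k^{-1}r_k p,\;x_{k+1}-p}$; since $\alpha_k^{-1}\delta_k e_k\to0$ and $\alpha_k^{-1}r_k\to0$, one still has $\limsup_k\omega_k\le0$, while $\gamma_k'\equiv0$, so the same two parameter conditions close the argument. In every case $t_k\to0$, that is, $x_k\to\Pro_{\zer A}u$.

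The conceptual heart is the estimate $\limsup_k\innp{u-p,x_{k+1}-p}\le0$, which is exactly where the hypothesis $\Omega\subseteq\zer A$ enters through \cref{prop:weakcluster:Pu}; granting that, the remainder is organizational. I expect the main obstacle to be the bookkeeping for \cref{prop:StrongConver:Alpha:sum} and \cref{prop:StrongConver:Alpha:frac}: there the parameter hypotheses are phrased through $\beta_k+\gamma_k$ rather than $\alpha_k$, so one must carefully exploit $\rho_k^2\le\beta_k+\gamma_k\le\rho_k$ and the summability of $r_k$ to show that the coefficient $1-\beta_k-\gamma_k$ lands in $[0,1]$ with $\rho_k^2+(1-\beta_k-\gamma_k)\le1$ and that the discrepancy $\alpha_k-(1-\beta_k-\gamma_k)$ is absorbable. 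Verifying these inequalities, together with confirming that $\limsup_k\omega_k\le0$ survives the $o(\alpha_k)$ perturbations in the last two cases, is the only genuinely delicate point.
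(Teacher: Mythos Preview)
Your proposal is correct and follows essentially the same route as the paper: both arguments hinge on the quadratic recursion from \cref{lemma:xk+1-p}, the inequality $\limsup_k\innp{u-p,x_{k+1}-p}\le0$ from \cref{prop:weakcluster:Pu}, and the convergence lemma \cref{prop:tkleq}\cref{prop:tkleq:conve0}\&\cref{prop:tkleq:betakalphak}. The only organizational difference is in cases \cref{prop:StrongConver:Alpha:sum} and \cref{prop:StrongConver:Alpha:frac}: the paper invokes the more elaborate estimate \cref{lemma:xk+1-p}\cref{lemma:xk+1-p:square} (with its $F(k)G(k)$ tail), whereas you stay with \cref{lemma:xk+1-p}\cref{lemma:xk+1-p:alphak} throughout and split $\alpha_k=(1-\beta_k-\gamma_k)-r_k$ by hand---a slight streamlining that avoids introducing a second inequality. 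One wording caveat: the chain $\rho_k^2\le\beta_k+\gamma_k$ you cite holds only under hypothesis \cref{prop:StrongConver:Alpha:sum}; in \cref{prop:StrongConver:Alpha:frac} you do not have it, but you do not need it either, since \cref{prop:tkleq}\cref{prop:tkleq:betakalphak} requires only $\beta_k'\ge0$ (which follows from $\beta_k+\gamma_k\le\rho_k<1$) together with the assumed bound $\sup_k(1-\beta_k-\gamma_k)/(1-\rho_k^2)<\infty$.
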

\begin{proof}
		Because $\Omega \subseteq \zer A$ and $(x_{k})_{k \in \mathbb{N}}$ is bounded, due to \cref{prop:weakcluster:Pu}, we get that 
	\begin{align} \label{eq:prop:StrongConver:leq0}
	\limsup_{k \to \infty} \innp{u-  \Pro_{\zer A} u, x_{k} - \Pro_{\zer A} u} \leq 0.
	\end{align}	
	Set $p:=\Pro_{\zer A} u$ and $(\forall k \in \mathbb{N})$ $T_{k}:= 2\J_{c_{k } A} -\Id  $. 	
		Since $\zer A \neq \varnothing$, due to \cref{prop:Jckboundedweakcluster}\cref{prop:Jckboundedweakcluster:Jckbounded},   we know that  $ (\J_{c_{k} A} x_{k} )_{k \in \mathbb{N}}$ and $(T_{k}x_{k})_{k \in \mathbb{N}}$ are bounded. 
	
	Denote by $(\forall k \in \mathbb{N})$  $\xi_{k} := \left( \abs{   \beta_{k} +\frac{\gamma_{k}}{2} } + \abs{\frac{\gamma_{k}}{2}  } \right)^{2}$,  $ \phi_{k}:= 1 -\beta_{k}- \gamma_{k}$,  $\varphi_{k}:= 1- \alpha_{k} -\beta_{k}- \gamma_{k}$, $F(k):= \norm{\delta_{k} e_{k} -\varphi_{k}u} $, and 
	$G(k):= F(k) +2\norm{ \left( \beta_{k} +\frac{\gamma_{k}}{2} \right)  (x_{k} -p )+ \frac{\gamma_{k}}{2} (T_{k}(x_{k}) -p)+\phi_{k} \left( u-p \right) } $. 	
	Because $p= \Pro_{\zer A} u \in \zer A$, via \cref{lemma:xk+1-p}\cref{lemma:xk+1-p:square}$\&$\cref{lemma:xk+1-p:alphak}, we have that  for every $ k \in \mathbb{N}$,
	\begin{subequations}
			\begin{align}  
		\norm{x_{k+1} -p }^{2}  
	&	\leq  \xi_{k}  \norm{ x_{k} -p }^{2}   
		+ 2  \phi_{k}   \innp{u-p, x_{k+1}-p - \delta_{k} e_{k}  + \varphi_{k} u}  + F(k)G(k); \label{eq:theorem:WeakStrong}\\
		\norm{x_{k+1} - p }^{2} & \leq  \xi_{k} \norm{ x_{k} -p}^{2}   +2 \alpha_{k}  \innp{ u-p, x_{k+1} -p} + 2 \innp{   \delta_{k} e_{k} - \varphi_{k} p, x_{k+1} -p}. \label{eq:prop:StrongConver:alpha}
		\end{align}
	\end{subequations}

	We separate the remaining proof into the following three cases.
	
	\emph{Case~1}: Assume \cref{prop:StrongConver:Alpha:sum}  or \cref{prop:StrongConver:Alpha:frac} is satisfied.  	Note that $(\forall k \in \mathbb{N})$ $0 \leq 1 -\beta_{k}- \gamma_{k} \leq 1 \Leftrightarrow 0 \leq \beta_{k} +  \gamma_{k} \leq 1$ and that $(\forall k \in \mathbb{N})$ $\beta_{k} +  \gamma_{k} \leq \abs{   \beta_{k} +\frac{\gamma_{k}}{2} } + \abs{\frac{\gamma_{k}}{2}  }  \leq 1 $.  Exploiting $\sum_{k \in \mathbb{N}} \abs{1- \alpha_{k}-\beta_{k} -\gamma_{k}} <\infty$ and  $\sum_{k \in \mathbb{N}} \norm{\delta_{k}e_{k}} <\infty$, we know that  $\varphi_{k} \to 0$, $\delta_{k}e_{k} \to 0$, and $\innp{u-p, \delta_{k}e_{k}-\varphi_{k}u} \to 0$, that $\sup_{k \in \mathbb{N}} G(k) <\infty$, and that $\sum_{k \in \mathbb{N}} F(k)G(k) <\infty$.  Moreover, combining $\varphi_{k} \to 0$ and $\delta_{k}e_{k} \to 0$ with \cref{eq:prop:StrongConver:leq0}, we observe that 
	\begin{align} \label{eq:prop:StrongConver:limsup}
	\limsup_{k \to \infty} 2  \innp{u-p, x_{k+1}-p - \delta_{k} e_{k}  + \varphi_{k} u} \leq 0.
	\end{align} 
	
If \cref{prop:StrongConver:Alpha:sum} (resp.\,\cref{prop:StrongConver:Alpha:frac}) holds, then using \cref{eq:theorem:WeakStrong} and applying \cref{prop:tkleq}\cref{prop:tkleq:conve0} (resp.\,\cref{prop:tkleq}\cref{prop:tkleq:betakalphak}) with $(\forall k \in \mathbb{N})$ $t_{k} = \norm{ x_{k} -p }^{2} $, $\alpha_{k}= \xi_{k}$, $\beta_{k} =  \phi_{k} $, $\omega_{k}=   2  \innp{u-p, x_{k+1}-p - \delta_{k} e_{k}  + \varphi_{k} u} $, and $\gamma_{k} = F(k)G(k)$,  we obtain the required convergence.

\emph{Case~2}: Assume \cref{prop:StrongConver:alpha} or \cref{prop:StrongConver:beta} is true. 
	If \cref{prop:StrongConver:alpha} (resp.\,\cref{prop:StrongConver:beta}) holds,  employing \cref{eq:prop:StrongConver:alpha} and  applying 	\cref{prop:tkleq}\cref{prop:tkleq:conve0}  (resp.\,\cref{prop:tkleq}\cref{prop:tkleq:betakalphak}) with $(\forall k \in \mathbb{N})$ $t_{k}=  \norm{ x_{k} -p }^{2} $, $\alpha_{k} =\xi_{k}$, $\beta_{k} =\alpha_{k}$, $\omega_{k} =  2\innp{ u-p, x_{k+1} -p}$, and $\gamma_{k} = 2 \innp{   \delta_{k} e_{k} - \varphi_{k} p, x_{k+1} -p}$, we get the required convergence. 
	
\emph{Case~3}: Assume \cref{prop:StrongConver:alphaNEQ0} or \cref{prop:StrongConver:NEQ0} is true. In view of \cref{eq:prop:StrongConver:alpha},  for every $k \in \mathbb{N}$,  
\begin{align} \label{eq:prop:StrongConver:beta} 
\norm{x_{k+1} - p }^{2}  	\leq   \xi_{k}  \norm{ x_{k} -p }^{2} + 2\alpha_{k} \left( \innp{u-p, x_{k+1} -p} + \innp{  \frac{\delta_{k} e_{k} }{\alpha_{k}}- \frac{ \varphi_{k}}{\alpha_{k}} p, x_{k+1} -p}  \right).
\end{align}
Because $(x_{k})_{k \in \mathbb{N}}$ is bounded,   $\frac{\delta_{k}e_{k} }{\alpha_{k}} \to 0$ and $\frac{1-\alpha_{k} -\beta_{k} -\gamma_{k} }{\alpha_{k}} \to 0$  yield   $\innp{  \frac{\delta_{k} e_{k} }{\alpha_{k}}- \frac{1-\alpha_{k} -  \beta_{k} -\gamma_{k}}{\alpha_{k}}  p, x_{k+1} -p} \to 0$.
	This in connection with \cref{eq:prop:StrongConver:leq0} leads to
	\begin{align*}
	\limsup_{k \to \infty}2 \left( \innp{u-p, x_{k+1} -p} + \innp{  \frac{\delta_{k} e_{k} }{\alpha_{k}}- \frac{ \varphi_{k}}{\alpha_{k}} p, x_{k+1} -p}  \right) \leq 0.
	\end{align*}
	If \cref{prop:StrongConver:alphaNEQ0} (resp.\,\cref{prop:StrongConver:NEQ0})  is true, then utilizing \cref{eq:prop:StrongConver:beta}  and   applying 	\cref{prop:tkleq}\cref{prop:tkleq:conve0}  (resp.\,\cref{prop:tkleq}\cref{prop:tkleq:betakalphak})  with $(\forall k \in \mathbb{N})$ $t_{k}=  \norm{ x_{k} -p }^{2} $, $\alpha_{k} =\xi_{k}$, $\beta_{k} =\alpha_{k}$, $\omega_{k} =  2 \left( \innp{u-p, x_{k+1} -p} + \innp{  \frac{\delta_{k} e_{k} }{\alpha_{k}}- \frac{ \varphi_{k}}{\alpha_{k}} p, x_{k+1} -p}  \right)$, and $\gamma_{k} \equiv 0$, we get the desired convergence. 

 Altogether, the proof is complete.
\end{proof}
 
Note that \Cref{prop:xkbounded,prop:xkboundedalphak} provide sufficient conditions for the boundedness of $(x_{k})_{k \in \mathbb{N}}$, 
that
 \Cref{prop:Jckboundedweakcluster,prop:Rockafellar,prop:weakclustersXY,prop:xk-Jckto0,prop:weakclustersxkJck,prop:xk+1xkJckto0} specify conditions for $\Omega \subseteq \zer A$,
  and that  \cref{prop:StrongConver} present conditions for the strong convergence under the assumptions of the boundedness of $(x_{k})_{k \in \mathbb{N}}$  and  $\Omega \subseteq \zer A$. Clearly, combining these results, we are able to deduce many sufficient conditions for the strong convergence of the generalized proximal point algorithm generated by the scheme \cref{eq:xk:sequence}.  For simplicity, we present only some easy sufficient conditions for the strong convergence below.

\begin{theorem} \label{theorem:StrongConvergence}
  Suppose that   $\sum_{i \in \mathbb{N}} 1- \left(  \abs{   \beta_{k} +\frac{\gamma_{k}}{2} } + \abs{\frac{\gamma_{k}}{2}  }  \right)^{2} =\infty$ and that one of the following holds.
	\begin{enumerate}
		\item  \label{theorem:StrongConvergence:Xu2002} 
		Assume that $(\forall k \in \mathbb{N})$ $\abs{\alpha_{k}} + \abs{   \beta_{k} +\frac{\gamma_{k}}{2} } + \abs{\frac{\gamma_{k}}{2}} \leq 1$, $\beta_{k} +\gamma_{k} \geq 0$, and $\left(\abs{   \beta_{k} +\frac{\gamma_{k}}{2} } + \abs{\frac{\gamma_{k}}{2}  } \right)^{2}  -\beta_{k}-\gamma_{k} \leq 0$,
		that  
		$\sum_{k \in \mathbb{N}} \abs{1-\alpha_{k}-\beta_{k} -\gamma_{k}}< \infty$ and $\sum_{k \in \mathbb{N}} \norm{\delta_{k} e_{k}} < \infty$, 
	and that  $\alpha_{k} \to 0$, $\beta_{k} \to 0$, $ \gamma_{k} \to 1$,  and $ c_{k}  \to \infty$.

		\item     \label{theorem:StrongConvergence:Wang}	
	Assume that	$(\forall k \in \mathbb{N})$ $\abs{\alpha_{k}} + \abs{   \beta_{k} +\frac{\gamma_{k}}{2} } + \abs{\frac{\gamma_{k}}{2}} \leq 1$,   $\beta_{k} +\gamma_{k} \geq 0$, and $\left(\abs{   \beta_{k} +\frac{\gamma_{k}}{2} } + \abs{\frac{\gamma_{k}}{2}  } \right)^{2}  -\beta_{k}-\gamma_{k} \leq 0$,	
		that $\sum_{k \in \mathbb{N}} \abs{1-\alpha_{k}-\beta_{k} -\gamma_{k}}< \infty$ and $\sum_{k \in \mathbb{N}} \norm{\delta_{k} e_{k}} < \infty$,
  that $\alpha_{k} \to 0$, $ \limsup_{k \to \infty} \abs{\beta_{k}}  < 1$,  
		$0 < \liminf_{k \to \infty}  1 -\beta_{k} - \frac{\gamma_{k}}{2}  \leq \limsup_{k \to \infty}    1 -\beta_{k} - \frac{\gamma_{k}}{2}   < 1$,   
		and	$1 -\frac{c_{k}}{c_{k+1}} \to 0$, 
		and that $\inf_{k \in \mathbb{N}}c_{k} >0$ or $c_{k} \to \infty$.

		\item \label{theorem:StrongConvergence:MarinoXu} Assume that $(\forall k \in \mathbb{N})$ $\abs{\alpha_{k}} + \abs{   \beta_{k} +\frac{\gamma_{k}}{2} } + \abs{\frac{\gamma_{k}}{2}} \leq 1$,  $\beta_{k} +\gamma_{k} \geq 0$, $\left(\abs{   \beta_{k} +\frac{\gamma_{k}}{2} } + \abs{\frac{\gamma_{k}}{2}  } \right)^{2}  -\beta_{k}-\gamma_{k} \leq 0$, $\abs{\gamma_{k}} \in \left[0,1\right]$, 
		that $\sum_{k \in \mathbb{N}} \abs{(\alpha_{k+1} +\gamma_{k+1} ) -(\alpha_{k} +\gamma_{k})} < \infty$, $\sum_{k \in \mathbb{N}} \abs{1-\alpha_{k}-\beta_{k} -\gamma_{k}}< \infty$, $\sum_{k \in \mathbb{N}} \abs{\beta_{k}} <\infty$, $\sum_{k \in \mathbb{N}} (1-\abs{\gamma_{k}}) =\infty$,  and $\sum_{k \in \mathbb{N}} \norm{\delta_{k} e_{k}} < \infty$,
		that  $\sum_{k \in \mathbb{N}} \abs{\alpha_{k+1}  -\alpha_{k} }< \infty$ or $(\forall k \in \mathbb{N})$ $ \abs{\gamma_{k} } \neq 1$ with $\lim_{k \to \infty} \frac{\abs{\alpha_{k+1} -\alpha_{k}}}{1-\abs{\gamma_{k+1}}} =0$, 
	that $\alpha_{k} \to 0$ and $\alpha_{k}+\gamma_{k} \to 1$,  and that $\inf_{k \in \mathbb{N}}c_{k} >0$ and $\sum_{i \in \mathbb{N}} \abs{c_{k+1}  -c_{k}} < \infty$.

	\item  \label{theorem:StrongConvergence:BM} Assume that $(\forall k \in \mathbb{N})$ $\alpha_{k} \in \left]0,1\right]$ and   $\alpha_{k}+ \abs{ \beta_{k} +\frac{ \gamma_{k}}{2} }+ \abs{\frac{ \gamma_{k}}{2}  }  \leq 1$,  
and that $\alpha_{k} \to 0$, $\beta_{k} \to 0$, $ \gamma_{k} \to 1$,  $\frac{\delta_{k}e_{k} }{\alpha_{k}} \to 0$,   $\frac{1-\alpha_{k} -\beta_{k} -\gamma_{k} }{\alpha_{k}} \to 0$, and $ c_{k}  \to \infty$.

	\end{enumerate}
	Then $x_{k} \to \Pro_{\zer A} u$.
\end{theorem}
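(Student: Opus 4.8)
The plan is to treat the theorem as a corollary of the machinery already assembled: for each of the four hypothesis sets, the task is to (a) verify that $(x_{k})_{k\in\mathbb{N}}$ is bounded, (b) verify that $\Omega \subseteq \zer A$, and then (c) invoke \cref{prop:StrongConver} to conclude $x_{k} \to \Pro_{\zer A} u$. The global hypothesis $\sum_{k\in\mathbb{N}} 1 - (\abs{\beta_{k}+\frac{\gamma_{k}}{2}}+\abs{\frac{\gamma_{k}}{2}})^{2} = \infty$ is precisely the standing assumption shared by every item of \cref{prop:StrongConver}, so it will be available throughout. Before the case analysis I would record two elementary consequences of the summability hypotheses that recur everywhere: $\sum_{k}\norm{\delta_{k} e_{k}}<\infty$ forces $\delta_{k} e_{k} \to 0$, and $\sum_{k} \abs{1-\alpha_{k}-\beta_{k}-\gamma_{k}}<\infty$ forces $1-\alpha_{k}-\beta_{k}-\gamma_{k} \to 0$; I would also note that $\abs{\beta_{k}+\frac{\gamma_{k}}{2}}+\abs{\frac{\gamma_{k}}{2}}\le 1$ (hence $\beta_{k}+\gamma_{k}\le 1$) follows from the constraint $\abs{\alpha_{k}}+\abs{\beta_{k}+\frac{\gamma_{k}}{2}}+\abs{\frac{\gamma_{k}}{2}}\le 1$ present in the first three cases.

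For cases \cref{theorem:StrongConvergence:Xu2002}, \cref{theorem:StrongConvergence:Wang}, and \cref{theorem:StrongConvergence:MarinoXu}, boundedness follows uniformly from \cref{prop:xkbounded}\cref{prop:xkbounded:alpha}, whose three hypotheses $\abs{\alpha_{k}}+\abs{\beta_{k}+\frac{\gamma_{k}}{2}}+\abs{\frac{\gamma_{k}}{2}}\le 1$, $\sum\abs{1-\alpha_{k}-\beta_{k}-\gamma_{k}}<\infty$, and $\sum\norm{\delta_{k} e_{k}}<\infty$ are assumed in each of these items. For $\Omega \subseteq \zer A$ the three cases diverge: in \cref{theorem:StrongConvergence:Xu2002} the conditions $c_{k}\to\infty$, $\alpha_{k}\to 0$, $\beta_{k}\to 0$, $\gamma_{k}\to 1$, and $\delta_{k} e_{k} \to 0$ are exactly those of \cref{prop:Jckboundedweakcluster}\cref{prop:Jckboundedweakcluster:contained}; in \cref{theorem:StrongConvergence:Wang} the gap conditions on $1-\beta_{k}-\frac{\gamma_{k}}{2}$ together with $\limsup\abs{\beta_{k}}<1$, $1-\frac{c_{k}}{c_{k+1}}\to 0$, and $\inf_{k} c_{k}>0$ or $c_{k}\to\infty$ match \cref{prop:weakclustersXY}\cref{prop:weakclustersXY:OmegaZer}; and in \cref{theorem:StrongConvergence:MarinoXu} the battery of summability conditions on $\alpha_{k+1}-\alpha_{k}$, $(\alpha_{k+1}+\gamma_{k+1})-(\alpha_{k}+\gamma_{k})$, $\beta_{k}$, $\delta_{k} e_{k}$, and $c_{k+1}-c_{k}$, plus $\abs{\gamma_{k}}\in[0,1]$, $\sum(1-\abs{\gamma_{k}})=\infty$, $\alpha_{k}+\gamma_{k}\to 1$, and $\inf_{k} c_{k}>0$, are precisely the hypotheses of \cref{prop:xk+1xkJckto0}. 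In all three, the parameter inequalities $\beta_{k}+\gamma_{k}\ge 0$ and $(\abs{\beta_{k}+\frac{\gamma_{k}}{2}}+\abs{\frac{\gamma_{k}}{2}})^{2}-\beta_{k}-\gamma_{k}\le 0$ then permit closing the argument via \cref{prop:StrongConver}\cref{prop:StrongConver:Alpha:sum}.

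For case \cref{theorem:StrongConvergence:BM}, boundedness is instead supplied by \cref{prop:xkboundedalphak}, whose hypotheses $\alpha_{k}\in\left]0,1\right]$, $\alpha_{k}+\abs{\beta_{k}+\frac{\gamma_{k}}{2}}+\abs{\frac{\gamma_{k}}{2}}\le 1$, $\frac{\delta_{k} e_{k}}{\alpha_{k}}\to 0$, and $\frac{1-\alpha_{k}-\beta_{k}-\gamma_{k}}{\alpha_{k}}\to 0$ are exactly what is assumed. Since $\delta_{k} e_{k} = \alpha_{k}\cdot\frac{\delta_{k} e_{k}}{\alpha_{k}}\to 0$ and $c_{k}\to\infty$, $\alpha_{k}\to 0$, $\beta_{k}\to 0$, $\gamma_{k}\to 1$, the inclusion $\Omega\subseteq\zer A$ again follows from \cref{prop:Jckboundedweakcluster}\cref{prop:Jckboundedweakcluster:contained}; and I would finish with \cref{prop:StrongConver}\cref{prop:StrongConver:alphaNEQ0}, observing that its requirement $\alpha_{k}+(\abs{\beta_{k}+\frac{\gamma_{k}}{2}}+\abs{\frac{\gamma_{k}}{2}})^{2}\le 1$ is implied by the assumed $\alpha_{k}+(\abs{\beta_{k}+\frac{\gamma_{k}}{2}}+\abs{\frac{\gamma_{k}}{2}})\le 1$, because $\abs{\beta_{k}+\frac{\gamma_{k}}{2}}+\abs{\frac{\gamma_{k}}{2}}\le 1$ makes the square no larger than the quantity itself.

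The real work here is not any single computation but the bookkeeping: the theorem's four hypothesis lists are deliberately engineered so that each splits into a boundedness package, an asymptotic-regularity package, and a strong-convergence package, each matching one prior result verbatim up to trivial implications among the parameter inequalities. The one place demanding genuine care is verifying these implications — that $\delta_{k} e_{k}\to 0$ and $1-\alpha_{k}-\beta_{k}-\gamma_{k}\to 0$ follow from summability, that $(\cdot)^{2}\le(\cdot)$ holds under the bound $\le 1$, and that $\beta_{k}+\gamma_{k}\le 1$ — so that every invoked proposition's premises are literally met; the anticipated obstacle is thus simply ensuring that no hypothesis of a cited result is silently dropped when the pieces are stitched together.
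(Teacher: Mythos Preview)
Your proposal is correct and follows essentially the same route as the paper's proof: for each of \cref{theorem:StrongConvergence:Xu2002}--\cref{theorem:StrongConvergence:MarinoXu} boundedness comes from \cref{prop:xkbounded}\cref{prop:xkbounded:alpha}, the inclusion $\Omega\subseteq\zer A$ from \cref{prop:Jckboundedweakcluster}\cref{prop:Jckboundedweakcluster:contained}, \cref{prop:weakclustersXY}\cref{prop:weakclustersXY:OmegaZer}, and \cref{prop:xk+1xkJckto0} respectively, and the conclusion from \cref{prop:StrongConver}\cref{prop:StrongConver:Alpha:sum}; for \cref{theorem:StrongConvergence:BM} the three steps go through \cref{prop:xkboundedalphak}, \cref{prop:Jckboundedweakcluster}\cref{prop:Jckboundedweakcluster:contained}, and \cref{prop:StrongConver}\cref{prop:StrongConver:alphaNEQ0}. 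Your bookkeeping of the implied conditions (summability forcing convergence to zero, $(\cdot)^{2}\le(\cdot)$ under the $\le 1$ bound, and $\beta_{k}+\gamma_{k}\le 1$) is exactly what is needed and is, if anything, more explicit than the paper's own write-up.
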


\begin{proof}
If \cref{theorem:StrongConvergence:Xu2002} (resp.\,\cref{theorem:StrongConvergence:Wang} or \cref{theorem:StrongConvergence:MarinoXu}) holds, the boundedness of $(x_{k})_{k \in \mathbb{N}}$ comes from 
\cref{prop:xkbounded}\cref{prop:xkbounded:alpha}; $\Omega \subseteq \zer A$ follows from  
\cref{prop:Jckboundedweakcluster}\cref{prop:Jckboundedweakcluster:contained} (resp.\,\cref{prop:weakclustersXY}\cref{prop:weakclustersXY:OmegaZer} or \cref{prop:xk+1xkJckto0});  and the strong convergence  is  clear from \cref{prop:StrongConver}\cref{prop:StrongConver:Alpha:sum}. 

If  \cref{theorem:StrongConvergence:BM}  is satisfied, employing \cref{prop:xkboundedalphak} 
and \cref{prop:Jckboundedweakcluster}\cref{prop:Jckboundedweakcluster:contained}, we establish the boundedness of $(x_{k})_{k \in \mathbb{N}}$ and  $\Omega \subseteq \zer A$, respectively. Then the strong convergence follows immediately from \cref{prop:StrongConver}\cref{prop:StrongConver:alphaNEQ0}.
\end{proof}

\cref{theorem:StrongConvergence:YS}\cref{theorem:StrongConvergence:YS:u0} is  inspired by the proof of  \cite[Theorem~3.1]{YaoShahzad2012} and illustrates that comparing with the closest point to the initial point $x_{0}$ onto $\zer A$,  the minimum-norm  solution in  $\zer A$ is easier to find in some circumstances.   

\begin{theorem} \label{theorem:StrongConvergence:YS} 
		Denote by $(\forall k \in \mathbb{N})$  $\xi_{k} :=  \abs{   \beta_{k} +\frac{\gamma_{k}}{2} } + \abs{\frac{\gamma_{k}}{2}  }  $.
	Suppose that $(\forall k \in \mathbb{N})$ $\xi_{k} <1$,   
	that $\sum_{i \in \mathbb{N}} 1- \xi_{k}^{2} =\infty$, 
	that   $ \limsup_{k \to \infty} \abs{\beta_{k}}  < 1$, 
	$0 < \liminf_{k \to \infty}  1 -\beta_{k} - \frac{\gamma_{k}}{2}  \leq \limsup_{k \to \infty}    1 -\beta_{k} - \frac{\gamma_{k}}{2}   < 1$, $ \frac{\delta_{k}e_{k}}{ 1-\beta_{k} -\gamma_{k} } \to 0$, and  $1 -\frac{c_{k}}{c_{k+1}} \to 0$, that  $\sup_{k \in \mathbb{N}} \frac{ 1-\beta_{k} -\gamma_{k}}{ 1- \xi_{k}^{2} } < \infty$, and   that  $\inf_{k \in \mathbb{N}}c_{k} >0$ or $c_{k} \to \infty$.
	Suppose further that one of the following holds.
	\begin{enumerate}
		\item \label{theorem:StrongConvergence:YS:u0} $u=0$, $\sum_{k \in \mathbb{N}} \abs{1-\beta_{k} -\gamma_{k} -\delta_{k}}< \infty$,  $\sup_{k \in \mathbb{N}} \norm{e_{k} } < \infty$, $(\forall k \in \mathbb{N})$  $ \xi_{k}+\abs{\delta_{k} } \leq 1$ and  $\alpha_{k} \equiv 0$, and $1  -\beta_{k} -\gamma_{k} \to 0$.
		\item \label{theorem:StrongConvergence:YS:frac} 
		$\sup_{k \in \mathbb{N}} \norm{e_{k} } < \infty$,	$\sum_{k \in \mathbb{N}} \abs{\alpha_{k} }< \infty$, $\sum_{k \in \mathbb{N}} \abs{1-\beta_{k} -\gamma_{k} -\delta_{k}}< \infty$,		$ \frac{ 1- \alpha_{k} -\beta_{k}- \gamma_{k} }{1-\beta_{k} -\gamma_{k}} \to 0$, and $(\forall k \in \mathbb{N})$  $ \xi_{k}+\abs{\delta_{k} } \leq 1$.
		\item   \label{theorem:StrongConvergence:YS:alphak}  $(\forall k \in \mathbb{N})$  $\abs{\alpha_{k}} +\xi_{k} \leq 1$, $\alpha_{k} \to 0$, $\frac{1-\alpha_{k} -\beta_{k} -\gamma_{k}}{1 -\beta_{k} -\gamma_{k}} \to 0$, $\sum_{k \in \mathbb{N} } \norm{\delta_{k} e_{k}} < \infty$, and $\sum_{k \in \mathbb{N}} \abs{1-\alpha_{k} -\beta_{k} -\gamma_{k} } < \infty$.
	\end{enumerate}
	Then $x_{k} \to \Pro_{\zer A} u$.
\end{theorem}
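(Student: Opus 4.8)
The plan is to reduce the statement, in all three cases, to the two standing inputs of a Halpern-type argument — boundedness of $(x_{k})_{k\in\mathbb{N}}$ and $\varnothing\neq\Omega\subseteq\zer A$ — and then to force $\norm{x_{k}-\Pro_{\zer A}u}\to 0$ through a single scalar recursion handled by \cref{prop:tkleq}\cref{prop:tkleq:betakalphak}. Throughout I abbreviate $\phi_{k}:=1-\beta_{k}-\gamma_{k}$ and recall $\xi_{k}=\abs{\beta_{k}+\frac{\gamma_{k}}{2}}+\abs{\frac{\gamma_{k}}{2}}$, so that $\beta_{k}+\gamma_{k}\leq\abs{\beta_{k}+\gamma_{k}}\leq\xi_{k}<1$ gives $\phi_{k}\geq 1-\xi_{k}>0$; in particular $\phi_{k}$ never vanishes. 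First I would record the elementary consequences of the global hypotheses: since $0<\liminf(1-\beta_{k}-\frac{\gamma_{k}}{2})$, $\limsup(1-\beta_{k}-\frac{\gamma_{k}}{2})<1$ and $\limsup\abs{\beta_{k}}<1$ together bound $(\gamma_{k})_{k\in\mathbb{N}}$, the sequence $(\phi_{k})_{k\in\mathbb{N}}$ is bounded, whence $\frac{\delta_{k}e_{k}}{\phi_{k}}\to 0$ yields $\delta_{k}e_{k}\to 0$.

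For boundedness, in \cref{theorem:StrongConvergence:YS:u0} and \cref{theorem:StrongConvergence:YS:frac} the hypotheses $\xi_{k}+\abs{\delta_{k}}\leq 1$, $\sup_{k}\norm{e_{k}}<\infty$, $\sum_{k}\abs{1-\beta_{k}-\gamma_{k}-\delta_{k}}<\infty$ and $\sum_{k}\abs{\alpha_{k}}<\infty$ (with $\alpha_{k}\equiv 0$ in the former) are exactly those of \cref{prop:xkbounded}\cref{prop:xkbounded:delta}, while in \cref{theorem:StrongConvergence:YS:alphak} the hypotheses $\abs{\alpha_{k}}+\xi_{k}\leq 1$, $\sum_{k}\abs{1-\alpha_{k}-\beta_{k}-\gamma_{k}}<\infty$ and $\sum_{k}\norm{\delta_{k}e_{k}}<\infty$ are those of \cref{prop:xkbounded}\cref{prop:xkbounded:alpha}; either way $(x_{k})_{k\in\mathbb{N}}$ is bounded. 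Next, to obtain $\varnothing\neq\Omega\subseteq\zer A$ I would invoke \cref{prop:weakclustersXY}\cref{prop:weakclustersXY:OmegaZer}, checking its hypotheses: $\beta_{k}+\gamma_{k}\leq 1$ from $\xi_{k}\leq 1$; $\alpha_{k}\to 0$ (trivial in (i), from $\sum\abs{\alpha_{k}}<\infty$ in (ii), stated in (iii)); $\delta_{k}e_{k}\to 0$ as above; and $1-\alpha_{k}-\beta_{k}-\gamma_{k}\to 0$ (equal to $\phi_{k}\to 0$ in (i), the product of $\frac{1-\alpha_{k}-\beta_{k}-\gamma_{k}}{\phi_{k}}\to 0$ with the bounded $(\phi_{k})_{k\in\mathbb{N}}$ in (ii), and by summability in (iii)); the remaining conditions $\limsup\abs{\beta_{k}}<1$, $0<\liminf(1-\beta_{k}-\frac{\gamma_{k}}{2})\leq\limsup<1$, $1-\frac{c_{k}}{c_{k+1}}\to 0$ and $\inf c_{k}>0$ or $c_{k}\to\infty$ being global.

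The strong-convergence step I would carry out uniformly. Introduce the normalized anchor $v_{k}:=\frac{\alpha_{k}u+\delta_{k}e_{k}}{\phi_{k}}$, well defined since $\phi_{k}>0$. Writing $\frac{\alpha_{k}}{\phi_{k}}=1-\frac{1-\alpha_{k}-\beta_{k}-\gamma_{k}}{\phi_{k}}$ and using $\frac{1-\alpha_{k}-\beta_{k}-\gamma_{k}}{\phi_{k}}\to 0$ (the ratio hypothesis in (ii) and (iii); in (i) one has $\alpha_{k}\equiv 0$ and $u=0$) together with $\frac{\delta_{k}e_{k}}{\phi_{k}}\to 0$, one checks $v_{k}\to u$ in every case. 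Set $p:=\Pro_{\zer A}u$, well defined by \cref{fact:resolvent}\cref{fact:resolvent:closedconvex}. From \cref{lemma:xk+1-p}\cref{lemma:xk+1-p:alphak} and the identity $\alpha_{k}u+\delta_{k}e_{k}-\phi_{k}p=\phi_{k}(v_{k}-p)$ I obtain
\begin{align*}
\norm{x_{k+1}-p}^{2}\leq\xi_{k}^{2}\,\norm{x_{k}-p}^{2}+\phi_{k}\,\omega_{k},\qquad \omega_{k}:=2\innp{v_{k}-p,\,x_{k+1}-p}.
\end{align*}
Since $(x_{k})_{k\in\mathbb{N}}$ is bounded with $\Omega\subseteq\zer A$, \cref{prop:weakcluster:Pu} gives $\limsup_{k}\innp{u-p,x_{k+1}-p}\leq 0$, and as $v_{k}\to u$ with $(x_{k+1}-p)_{k\in\mathbb{N}}$ bounded we deduce $\limsup_{k}\omega_{k}\leq 0$. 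Finally I would apply \cref{prop:tkleq}\cref{prop:tkleq:betakalphak} with $t_{k}=\norm{x_{k}-p}^{2}$, regularization coefficient $\xi_{k}^{2}\in\left[0,1\right[$, multiplier $\phi_{k}\in\mathbb{R}_{+}$, the above $\omega_{k}$, and vanishing error term; its two hypotheses $\sup_{k}\frac{\phi_{k}}{1-\xi_{k}^{2}}<\infty$ and $\sum_{k}(1-\xi_{k}^{2})=\infty$ are precisely the global assumptions, so $\norm{x_{k}-p}^{2}\to 0$, i.e.\ $x_{k}\to\Pro_{\zer A}u$.

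The main obstacle, and the point demanding care, is exactly this last step: one must resist routing it through \cref{prop:StrongConver}, because under only $\sup_{k}\norm{e_{k}}<\infty$ in cases (i) and (ii) the term $\delta_{k}e_{k}$ is \emph{not} summable and therefore cannot be placed in the summable slot of \cref{prop:tkleq} that those cases rely on. The correct device is to retain $\delta_{k}e_{k}$ inside the inner product via \cref{lemma:xk+1-p}\cref{lemma:xk+1-p:alphak} and to read off the normalized anchor $v_{k}=\frac{\alpha_{k}u+\delta_{k}e_{k}}{1-\beta_{k}-\gamma_{k}}\to u$; establishing $v_{k}\to u$ is precisely where the two ratio hypotheses $\frac{\delta_{k}e_{k}}{1-\beta_{k}-\gamma_{k}}\to 0$ and $\frac{1-\alpha_{k}-\beta_{k}-\gamma_{k}}{1-\beta_{k}-\gamma_{k}}\to 0$ are consumed, the latter compensating for the absence of a nonvanishing $\alpha_{k}$-anchor in case (i).
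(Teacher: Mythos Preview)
Your proof is correct and follows the paper's overall architecture (boundedness via \cref{prop:xkbounded}\cref{prop:xkbounded:delta}/\cref{prop:xkbounded:alpha}, then $\Omega\subseteq\zer A$ via \cref{prop:weakclustersXY}\cref{prop:weakclustersXY:OmegaZer}, then a scalar recursion finished by \cref{prop:tkleq}\cref{prop:tkleq:betakalphak}), but the strong-convergence step is organized differently. The paper builds on \cref{lemma:xk+1-p}\cref{lemma:xk+1-p:square}, which produces the inequality
\[
\norm{x_{k+1}-p}^{2}\leq\xi_{k}^{2}\norm{x_{k}-p}^{2}+2\phi_{k}\innp{u-p,\,x_{k+1}-p-\delta_{k}e_{k}+\varphi_{k}u}+F(k)G(k),
\]
and then absorbs the extra term by showing $\frac{F(k)G(k)}{\phi_{k}}\to 0$ (this is where the ratio hypotheses are spent) before invoking \cref{prop:tkleq}\cref{prop:tkleq:betakalphak} with $\omega_{k}=2\innp{u-p,\cdots}+\frac{F(k)G(k)}{\phi_{k}}$. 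You instead start from the cleaner inequality \cref{lemma:xk+1-p}\cref{lemma:xk+1-p:alphak}, factor $\alpha_{k}u+\delta_{k}e_{k}-\phi_{k}p=\phi_{k}(v_{k}-p)$ with $v_{k}=\frac{\alpha_{k}u+\delta_{k}e_{k}}{\phi_{k}}$, and spend the same ratio hypotheses on the single fact $v_{k}\to u$. Your route is somewhat more transparent: it avoids the $F(k)G(k)$ bookkeeping and the implicit verification that $\sup_{k}G(k)<\infty$, and the normalized-anchor viewpoint makes plain why the two ratio conditions are the exact currency needed. The paper's route, in exchange, keeps the anchor $u$ fixed throughout the inner product and isolates all perturbations in a separate additive term, which is closer to how \cref{prop:StrongConver} is organized elsewhere in the paper.
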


\begin{proof}
We separate the proof into the following three steps. 

\emph{Step~1}: If the assumption \cref{theorem:StrongConvergence:YS:u0}  or \cref{theorem:StrongConvergence:YS:frac} (resp.\,\cref{theorem:StrongConvergence:YS:alphak}) holds, then combining the assumptions with \cref{prop:xkbounded}\cref{prop:xkbounded:delta} (resp.\,\cref{prop:xkbounded}\cref{prop:xkbounded:alpha}) and \cref{prop:weakclustersXY}\cref{prop:weakclustersXY:OmegaZer}, we deduce that   $(x_{k} )_{k \in \mathbb{N}}$ is bounded and that $\Omega \subseteq \zer A$.

\emph{Step~2}: 	Denote by $(\forall k \in \mathbb{N})$  
$ \phi_{k}:= 1 -\beta_{k}- \gamma_{k}$,  $\varphi_{k}:= 1- \alpha_{k} -\beta_{k}- \gamma_{k}$, 
\begin{align*}
 F(k):= \norm{\delta_{k} e_{k} -\varphi_{k}u} \text{ and } 
 G(k):= F(k) +2\norm{ \left( \beta_{k} +\frac{\gamma_{k}}{2} \right)  (x_{k} -p )+ \frac{\gamma_{k}}{2} (T_{k}(x_{k}) -p)+\phi_{k} \left( u-p \right) }. 
\end{align*}
Note that $(\forall k \in \mathbb{N})$ $\beta_{k} +\gamma_{k} \leq \abs{   \beta_{k} +\frac{\gamma_{k}}{2} } + \abs{\frac{\gamma_{k}}{2}  }   <1$.	Because $p := \Pro_{\zer A} u \in \zer A$, due to \cref{lemma:xk+1-p}\cref{lemma:xk+1-p:square}, we have that  for every $ k \in \mathbb{N}$,
	\begin{align}  \label{eq:theorem:StrongConvergence:YS} 
 	\norm{x_{k+1} -p }^{2}  
	\leq   \xi_{k}^{2}  \norm{ x_{k} -p }^{2}   
	+ 2  \phi_{k}   \innp{u-p, x_{k+1}-p - \delta_{k} e_{k}  + \varphi_{k} u}  + F(k)G(k). 
	\end{align}
Because 
$ \frac{\delta_{k}e_{k}}{ 1-\beta_{k} -\gamma_{k}} \to 0$ and $\sup_{k \in \mathbb{N}} G(k) < \infty$, any one of the assumptions  \cref{theorem:StrongConvergence:YS:u0}, \cref{theorem:StrongConvergence:YS:frac}, and \cref{theorem:StrongConvergence:YS:alphak} ensures that  for every $ k \in \mathbb{N}$,
\begin{align*}
 \frac{F(k) G(k)}{ \phi_{k} } 
=   \norm{ \frac{\delta_{k}e_{k} }{\phi_{k}}  - \frac{\varphi_{k} }{\phi_{k}} u} G(k) 
\leq   \left( \frac{ \norm{ \delta_{k} e_{k}} }{1-\beta_{k} -\gamma_{k}} + \frac{\abs{\varphi_{k}} }{1-\beta_{k} -\gamma_{k}} \norm{u}  \right) \sup_{k \in \mathbb{N}} G(k)  \to 0,
\end{align*}
which, connecting with \cref{prop:weakcluster:Pu},  entails that 
\begin{align*}
\limsup_{k \to \infty}   2  \innp{u-p, x_{k+1}-p - \delta_{k} e_{k}  + \varphi_{k} u} + \frac{ F(k)G(k) }{\phi_{k}}  
\leq  0.
\end{align*}

\emph{Step~3}: 
Considering \cref{eq:theorem:StrongConvergence:YS}  and applying \cref{prop:tkleq}\cref{prop:tkleq:betakalphak} with $(\forall k \in \mathbb{N})$ $t_{k}=\norm{ x_{k} -p }^{2}$, $\alpha_{k} =\xi_{k}^{2}$,  $\beta_{k}= \phi_{k}$,  $\omega_{k} =2  \innp{u-p, x_{k+1}-p - \delta_{k} e_{k}  + \varphi_{k} u}+ \frac{ F(k)G(k) }{\phi_{k}} $, and $\gamma_{k} \equiv 0$, we obtain the required results. 	 
\end{proof}

Notice that none of the  sums of
\begin{align*}
\alpha_{k} +\beta_{k} +\gamma_{k}, \quad \beta_{k} +\gamma_{k}, \quad \text{or} \quad \beta_{k} +\gamma_{k} +\delta_{k}  
\end{align*}
in the particular examples of \cref{example:SpecificInstances} is 1. Therefore, none of the examples in \cref{example:SpecificInstances} is covered in previous papers in the literature. 

\begin{example} \label{example:SpecificInstances}
	Suppose that $\zer A \neq \varnothing$ and that  one of the following holds. 
	\begin{enumerate}
		\item $(\forall k \in \mathbb{N})$ $\alpha_{k} = \frac{1}{k+3}$, $\beta_{k} = \frac{1}{k+2}$, $\gamma_{k}=\frac{k}{k+2}$, $\delta_{k} \equiv 1$, $c_{k}=k$, and $\norm{e_{k}} \leq \frac{1}{(k+2)^2}$. (Employ \cref{theorem:StrongConvergence}\cref{theorem:StrongConvergence:Xu2002}.)
		\item $u=0$, $(\forall k \in \mathbb{N})$ $\alpha_{k}\equiv 0$, $\beta_{k} = \gamma_{k} \equiv \frac{k+1}{2(k+2)} $, $\delta_{k} =\frac{1}{k+3}$, $c_{k} \equiv c \in \mathbb{R}_{++}$, and $e_{k} \to 0$. (Apply \cref{theorem:StrongConvergence:YS}\cref{theorem:StrongConvergence:YS:u0}.)
		\item $(\forall k \in \mathbb{N})$ $\alpha_{k} = \frac{1}{k+3}$, $\beta_{k} = \gamma_{k}=\frac{k}{2(k+2)}$, $\delta_{k} \equiv 1$, $c_{k} \equiv c \in \mathbb{R}_{++}$,  and $\norm{e_{k}} \leq \frac{1}{(k+2)^2}$. (Adopt \cref{theorem:StrongConvergence:YS}\cref{theorem:StrongConvergence:YS:alphak}.)
	\end{enumerate}
	Then $x_{k} \to \Pro_{\zer A}u$.
\end{example}

To end this paper, we compare our conditions for the strong convergence of generalized proximal point algorithm with that of related references in the remark below. 
\begin{remark}\label{reamark:StrongConvergence}
	\begin{enumerate}
		\item \label{reamark:StrongConvergence:Xu} The modified proximal point algorithm in \cite[Algorithm~5.1]{Xu2002} is \cref{eq:xk:sequence} with $u=x_{0}$ and $(\forall k \in \mathbb{N})$ $\alpha_{k} \in \left[0,1\right]$, $\beta_{k}=0$, and $\gamma_{k} =\delta_{k}=1-\alpha_{k}$. In this case, the conditions  $(\forall k \in \mathbb{N})$ $\abs{\alpha_{k}} + \abs{   \beta_{k} +\frac{\gamma_{k}}{2} } + \abs{\frac{\gamma_{k}}{2}} \leq 1$, $\beta_{k} +\gamma_{k} \geq 0$, and $\left(\abs{   \beta_{k} +\frac{\gamma_{k}}{2} } + \abs{\frac{\gamma_{k}}{2}  } \right)^{2}  -\beta_{k}-\gamma_{k} \leq 0$,
		$\sum_{k \in \mathbb{N}} \abs{1-\alpha_{k}-\beta_{k} -\gamma_{k}}< \infty$, 
		and $\beta_{k} \to 0$ hold trivially;  moreover, $\alpha_{k} \to 0$ implies that $\gamma_{k} \to 1$; furthermore, since  $(\forall k \in \mathbb{N})$  $ 1- \left(  \abs{   \beta_{k} +\frac{\gamma_{k}}{2} } + \abs{\frac{\gamma_{k}}{2}  }  \right)^{2} =1-\left(  1-\alpha_{k}\right)^{2} \geq \alpha_{k}$, thus $\sum_{k \in \mathbb{N}} \alpha_{k} =\infty$ necessitates  $\sum_{i \in \mathbb{N}} 1- \left(  \abs{   \beta_{k} +\frac{\gamma_{k}}{2} } + \abs{\frac{\gamma_{k}}{2}  }  \right)^{2} =\infty$. 
	
		Hence, it is clear that  \cref{theorem:StrongConvergence}\cref{theorem:StrongConvergence:Xu2002}  covers \cite[Theorem~5.1]{Xu2002}.
	
		 	\item The contraction proximal point algorithm in \cite{YaoNoor2008} is \cref{eq:xk:sequence} satisfying that  $(\forall k \in \mathbb{N})$ $\{  \alpha_{k},  \beta_{k}, \gamma_{k} \} \subseteq \left]0,1\right[$  with $\alpha_{k} +\beta_{k} +\gamma_{k} =1$ and $\delta_{k} \equiv 1$. In this case,  it is trivial that 	$(\forall k \in \mathbb{N})$ $\abs{\alpha_{k}} + \abs{   \beta_{k} +\frac{\gamma_{k}}{2} } + \abs{\frac{\gamma_{k}}{2}} \leq 1$,   $\beta_{k} +\gamma_{k} \geq 0$,   $\left(\abs{   \beta_{k} +\frac{\gamma_{k}}{2} } + \abs{\frac{\gamma_{k}}{2}  } \right)^{2}  -\beta_{k}-\gamma_{k} \leq 0$, and
		 $  \abs{1-\alpha_{k}-\beta_{k} -\gamma_{k}} =0$.  Since  $(\forall k \in \mathbb{N})$  $1- \left(  \abs{   \beta_{k} +\frac{\gamma_{k}}{2} } + \abs{\frac{\gamma_{k}}{2}  }  \right)^{2} \geq \alpha_{k}$, thus  $\sum_{k \in \mathbb{N}} \alpha_{k} =\infty$ leads to $\sum_{i \in \mathbb{N}} 1- \left(  \abs{   \beta_{k} +\frac{\gamma_{k}}{2} } + \abs{\frac{\gamma_{k}}{2}  }  \right)^{2} =\infty$. Moreover, because $(\forall k \in \mathbb{N})$  $ 1 -\beta_{k} - \frac{\gamma_{k}}{2}  =\frac{1}{2} +\frac{\alpha_{k}}{2} -\frac{\beta_{k}}{2}$, the assumptions $\alpha_{k} \to 0$ and $0 < \liminf_{k \to \infty} \beta_{k} \leq \limsup_{k \to \infty}  \beta_{k} < 1$ in \cite[Theorem~3.3]{YaoNoor2008} necessitate $0 < \liminf_{i \to \infty}1 -\beta_{k} - \frac{\gamma_{k}}{2}  \leq \limsup_{i \to \infty}  1 -\beta_{k} - \frac{\gamma_{k}}{2} < 1$. In addition,  it is easy to see that $c_{k+1}-c_{k} \to 0$ and $\inf_{k \in \mathbb{N}} c_{k} >0$ guarantee  $1 -\frac{c_{k}}{c_{k+1}} \to 0$.  
		 
		 	Hence, \cref{theorem:StrongConvergence}\cref{theorem:StrongConvergence:Wang} generalizes \cite[Theorem~3.3]{YaoNoor2008}.

 \item \label{reamark:StrongConvergence:Wang} As stated in \cite[Page~637]{BoikanyoMorosanu2010},   the expression $(\forall k \in \mathbb{N})$ $y_{k+1} =\J_{c_{k} A} \left( (1-\alpha_{k}) y_{k} +\alpha_{k} u +e_{k} \right)$ can be rewrite as $(\forall k \in \mathbb{N})$ $x_{k+1} = \alpha_{k+1} u+ (1- \alpha_{k+1})  \J_{ c_{k} A}(x_{k})   +e_{k+1}$, where $(\forall k \in \mathbb{N})$  $x_{k}:= (1-\alpha_{k})y_{k} +\alpha_{k} u +e_{k}$. Hence, \cref{eq:xk:sequence} with $(\forall k \in \mathbb{N})$   $\alpha_{k} \in \left]0,1\right[$, $\beta_{k}=0$, $\gamma_{k} =1-\alpha_{k}$, and $\delta_{k} =1$ is the generalized proximal point algorithm studied in \cite{BoikanyoMorosanu2010}, \cite{Xu2006}, and \cite{FHWang2011}. Note that in this case the conditions 
 $(\forall k \in \mathbb{N})$ $\abs{\alpha_{k}} + \abs{   \beta_{k} +\frac{\gamma_{k}}{2} } + \abs{\frac{\gamma_{k}}{2}} \leq 1$,   $\beta_{k} +\gamma_{k} \geq 0$,  and  $\left(\abs{   \beta_{k} +\frac{\gamma_{k}}{2} } + \abs{\frac{\gamma_{k}}{2}  } \right)^{2}  -\beta_{k}-\gamma_{k} \leq 0$,	
 $\sum_{k \in \mathbb{N}} \abs{1-\alpha_{k}-\beta_{k} -\gamma_{k}}< \infty$, and
 $ \limsup_{k \to \infty} \abs{\beta_{k}}  < 1$ hold trivially;   because $(\forall k \in \mathbb{N})$  $  1 -\beta_{k} - \frac{\gamma_{k}}{2} = 1-\frac{1}{2} \left( 1-\alpha_{k} \right) =\frac{1 -\alpha_{k}}{2} $, $\alpha_{k} \to 0$ implies directly $\lim_{k \to \infty}  1 -\beta_{k} - \frac{\gamma_{k}}{2}  =\frac{1}{2}  \in \left]0,1\right[$; furthermore, similarly with our statement in \cref{reamark:StrongConvergence:Xu} above, $\sum_{k \in \mathbb{N}} \alpha_{k} =\infty$ implies $\sum_{i \in \mathbb{N}} 1- \left(  \abs{   \beta_{k} +\frac{\gamma_{k}}{2} } + \abs{\frac{\gamma_{k}}{2}  }  \right)^{2} =\infty$.  
 
 Therefore, we know that \cref{theorem:StrongConvergence}\cref{theorem:StrongConvergence:Wang} and   \cref{theorem:StrongConvergence}\cref{theorem:StrongConvergence:BM}  improve \cite[Theorem~4]{FHWang2011} and \cite[Theorem~1]{BoikanyoMorosanu2010}, respectively. Moreover, because actually \cite[Theorem~4]{FHWang2011} refines \cite[Theorem~3.3]{Xu2006}, \cref{theorem:StrongConvergence}\cref{theorem:StrongConvergence:Wang}  naturally improves \cite[Theorem~3.3]{Xu2006} as well.  
	
		\item The contraction-proximal point algorithm is  \cref{eq:xk:sequence} with $(\forall k \in \mathbb{N})$   $\alpha_{k} \in \left]0, 1\right]$, $\beta_{k}=0$, $\gamma_{k} =1-\alpha_{k}$, and $\delta_{k} =1$. Repeating some analysis presented in \cref{reamark:StrongConvergence:Wang} and noticing that now $(\forall k \in \mathbb{N})$ 
	$  \abs{(\alpha_{k+1} +\gamma_{k+1} ) -(\alpha_{k} +\gamma_{k})} =0 $,   $  \abs{\beta_{k}} =0$, and  $ \frac{\abs{\alpha_{k+1} -\alpha_{k}}}{1-\abs{\gamma_{k+1}}}  =\abs{ \frac{ \alpha_{k+1} -\alpha_{k} }{ \alpha_{k+1}}  } =\abs{1- \frac{ \alpha_{k}}{\alpha_{k+1}}}$, we observe easily that 
	\cref{theorem:StrongConvergence}\cref{theorem:StrongConvergence:MarinoXu} covers \cite[Theorem~4.1]{MarinoXu2004}.
	
	\item The proximal point algorithm with error terms studied in \cite{YaoShahzad2012} is   \cref{eq:xk:sequence} such that  $u=0$,  $(\forall k \in \mathbb{N})$ $\alpha_{k} \equiv 0$ and $\{ \beta_{k}, \gamma_{k}, \delta_{k} \} \subseteq \left]0,1\right[$ with $\beta_{k}+\gamma_{k} +\delta_{k} =1$. In this case, it is trivial that  
		$(\forall k \in \mathbb{N})$  $\abs{   \beta_{k} +\frac{\gamma_{k}}{2} } + \abs{\frac{\gamma_{k}}{2}  } <1 $ and  $ \abs{   \beta_{k} +\frac{\gamma_{k}}{2} } + \abs{\frac{\gamma_{k}}{2}  } +\abs{\delta_{k} } \leq 1$,  and  that   $\sum_{k \in \mathbb{N}} \abs{1-\beta_{k} -\gamma_{k} -\delta_{k}}< \infty$. 
 	In this case, $\delta_{k}  \to 0$ and $e_{k} \to 0$ imply that $\sup_{k \in \mathbb{N}} \frac{ 1-\beta_{k} -\gamma_{k}}{ 1- \left(\abs{   \beta_{k} +\frac{\gamma_{k}}{2} } + \abs{\frac{\gamma_{k}}{2}  } \right)^{2} } < \infty$,     $\sup_{k \in \mathbb{N}} \norm{e_{k} -p} < \infty$, and $ \frac{\delta_{k}e_{k}}{ 1-\beta_{k} -\gamma_{k} } \to 0$; moreover, since $(\forall k \in \mathbb{N})$ $1-\beta_{k} - \frac{\gamma_{k}}{2} =\frac{1}{2} - \frac{\beta_{k}}{2}+\frac{\delta_{k}}{2}$, thus $\delta_{k} \to 0$ and 	$0 < \liminf_{i \to \infty} \beta_{i} \leq \limsup_{i \to \infty}  \beta_{i} < 1$ necessitate that    $0 < \liminf_{k \to \infty} 1-\beta_{k} - \frac{\gamma_{k}}{2} \leq \limsup_{k \to \infty}  1-\beta_{k} - \frac{\gamma_{k}}{2} < 1$.

	Therefore, \cref{theorem:StrongConvergence:YS}  generalizes \cite[Theorem~3.1]{YaoShahzad2012}

	\end{enumerate}
\end{remark}


\addcontentsline{toc}{section}{References}

\bibliographystyle{abbrv}

\end{document}